\title
{$L^2$-Euler characteristics and the Thurston norm}
\author{Stefan Friedl}
\address{Fakult\"at f\"ur Mathematik\\ Universit\"at Regensburg\\93040 Regensburg\\ Germany}
\email{sfriedl@gmail.com}
\author{Wolfgang L\"uck}
 \address{Mathematisches Institut der Universit\"at Bonn\\
 Endenicher Allee 60\\
 53115 Bonn, Germany}
 \email{wolfgang.lueck@him.uni-bonn.de}
 \urladdr{http://www.him.uni-bonn.de/lueck}
 \date{September, 2018}
\keywords{$L^2$-Euler characteristics, twisting with a cocycle, Thurston norm}
 \subjclass[2010]{57M27, 12E15, 22D25, 58J52}
\DeclareMathAlphabet\EuR{U}{eur}{m}{n}
\SetMathAlphabet\EuR{bold}{U}{eur}{b}{n}
\theoremstyle{plain}
\newtheorem{theorem}{Theorem}[section]
\newtheorem{lemma}[theorem]{Lemma}
\newtheorem{corollary}[theorem]{Corollary}
\newtheorem{conjecture}[theorem]{Conjecture}
\newtheorem{question}[theorem]{Question}
\theoremstyle{definition}
\newtheorem{definition}[theorem]{Definition}
\newtheorem{example}[theorem]{Example}
\newtheorem{remark}[theorem]{Remark}
\newtheorem{notation}[theorem]{Notation}
\global\let\c@equation=\c@theorem}
\newcommand{\comsquare}[8] 
{\begin{CD}
#1 @>#2>> #3\\
@V{#4}VV @V{#5}VV\\
#6 @>#7>> #8
\end{CD}
}
\newcommand{\xycomsquare}[8] 
{\xymatrix
{#1 \ar[r]^{#2} \ar[d]^{#4} &
#3 \ar[d]^{#5} \\
#6\ar[r]^{#7} &
#8
}
}
\newcommand{\xycomsquareminus}[8] 
{\xymatrix{#1 \ar[r]^-{#2} \ar[d]^-{#4} &
#3 \ar[d]^-{#5} \\
#6\ar[r]^-{#7} &
#8
}
}
\newcommand{\cald}{\mathcal{D}}
\newcommand{\calc}{\mathcal{C}}
\newcommand{\caln}{{\mathcal N}}
\newcommand{\calp}{{\mathcal P}}
\newcommand{\calsn}{\mathcal{SN}}
\newcommand{\calu}{\mathcal{U}}
\newcommand{\IC}{{\mathbb C}}
\newcommand{\IN}{{\mathbb N}}
\newcommand{\IP}{{\mathbb P}}
\newcommand{\IQ}{{\mathbb Q}}
\newcommand{\IR}{{\mathbb R}}
\newcommand{\IZ}{{\mathbb Z}}
\newcommand{\abel}{\operatorname{abel}}
\newcommand{\aut}{\operatorname{aut}}
\newcommand{\coker}{\operatorname{coker}}
\newcommand{\Ext}{\operatorname{Ext}}
\newcommand{\ev}{\operatorname{ev}}
\newcommand{\id}{\operatorname{id}}
\newcommand{\im}{\operatorname{im}}
\newcommand{\orb}{\operatorname{orb}}
\newcommand{\pr}{\operatorname{pr}}
\newcommand{\sn}{\operatorname{sn}}
\newcommand{\tors}{\operatorname{tors}}
\newcommand{\Hom}{\operatorname{Hom}}
\newcommand{\lmsum}[2]{\mbox{\small$\displaystyle\sum\limits_{#1}^{#2}$}}
\newcommand{\smsum}[2]{\mbox{\footnotesize$\displaystyle\sum\limits_{#1}^{#2}$}}
\newcommand{\tmsum}[2]{\mbox{$\textstyle \sum\limits_{#1}^{#2}$}}
\newcommand{\lmfrac}[2]{\mbox{\small$\displaystyle\frac{#1}{#2}$}} 
\newcommand{\smfrac}[2]{\mbox{\footnotesize$\displaystyle\frac{#1}{#2}$}} 
\newcommand{\tmfrac}[2]{\mbox{\large$\frac{#1}{#2}$}}
\newcommand{\higherlim}[3]{{\setbox1=\hbox{\rm lim}
 \setbox2=\hbox to \wd1{\leftarrowfill} \ht2=0pt \dp2=-1pt
 \mathop{\vtop{\baselineskip=5pt\box1\box2}}
 _{#1}}^{#2}#3}
\newcommand{\version}[1] 
{\begin{center} last edited on #1\\
last compiled on \today\\
name of texfile: \jobname
\end{center}
}
\newcounter{commentcounter}
\begin{document}

\typeout{---------------------------- twist.tex ----------------------------}


\typeout{------------------------------------ Abstract ----------------------------------------}

\begin{abstract}
 We assign to a finite $CW$-complex and an element in its first cohomology group a twisted
 version of the $L^2$-Euler characteristic and study its main properties. In the case of
 an irreducible orientable $3$-manifold with empty or toroidal boundary and infinite
 fundamental group we identify it with the Thurston norm. We will use the twisted $L^2$-Euler
 characteristic to address the problem whether the existence of a map inducing an
 epimorphism on fundamental groups implies an inequality of the Thurston norms. 
\end{abstract}

\maketitle


\typeout{------------------------------- Section 0: Introduction
 --------------------------------}

\setcounter{section}{-1}
\section{Introduction}


\subsection{The $(\mu,\phi)$-$L^2$-Euler characteristic}
\label{subsec:The_(mu,phi)_L2-Euler_characteristic_cw-intro}
Let $X$ be a finite connected CW-com\-plex and let $\mu\colon \pi_1(X) \to G$ and $\phi \colon G \to \IZ$ be group
homomorphisms. We say that $(\mu,\phi)$ is an \emph{$L^2$-acyclic Atiyah pair}
if the $n$th $L^2$-Betti number
$b_n^{(2)}(\overline{X};\caln(G))$ of the $G$-covering $\overline{X} \to X$ associated to
$\mu$ vanishes for all $n \ge 0$, and $G$ is torsion-free and satisfies
the Atiyah Conjecture. (We will discuss the Atiyah Conjecture in 
Section~\ref{sec:About_the_Atiyah_Conjecture}). 
Then one can define
by twisting the cellular $\IZ G$-chain complex with the infinite dimensional
$G$-representation $\phi^*\IR \IZ$ the \emph{$(\mu,\phi)$-$L^2$-Euler characteristic}
$\chi^{(2)}(X;\mu,\phi)$ which is an  integer. We will not give the precise definition of $(\mu,\phi)$-$L^2$-Euler 
characteristic in the introduction but we refer to
Section~\ref{sec:Twisting_the_L2-Euler_characteristic_with_a_cocycle_in_the_first_cohomology}
for details and a summary of the key properties.

The $(\mu,\phi)$-$L^2$-Euler characteristic can be employed in many different contexts.
For example it is used by Funke--Kielak~\cite{Funke-Kielak(2016)} to study descending HNN-extensions of 
free groups and it is at least implicitly used by the authors and Tillmann~\cite{Friedl-Lueck-Tillmann(2016)} to study one-relator groups.


\subsection{The $(\mu,\phi)$-$L^2$-Euler characteristic of 3-manifolds}
\label{subsec:The_phi_L2-Euler_characteristic_for_universal_coverings_intro}
In this paper our main application of the $(\mu,\phi)$-$L^2$-Euler characteristic  lies in the study of 3-manifolds 
to which we restrict ourselves in the remainder of the introduction.
More precisely, our main focus  will be on the following class of $3$-manifolds.

\begin{definition}[Admissible $3$-manifold]\label{def:admissible_3-manifold} 
 A $3$-manifold is called \emph{admissible} if it is connected, orientable, compact and irreducible,
 its boundary is empty or a disjoint union of tori, and its fundamental group
 is infinite.
\end{definition}

Let $M$ be an admissible $3$-manifold and let 
$\phi \colon \pi_1(M) \to \IZ$ be a group homomorphism. Then 
all the conditions listed in Section~\ref{subsec:The_(mu,phi)_L2-Euler_characteristic_cw-intro} are satisfied 
for the triple $(X,\id_{\pi_1(M)},\phi)$ and the  corresponding $L^2$-Euler characteristic 
$\chi^{(2)}(M;\id_{\pi_1(M)},\phi)$ is defined. We denote by $x_M(\phi)$ the Thurston norm of $\phi$ 
which is loosely speaking defined as the minimal complexity of a surface dual to $\phi$.
 (We recall the precise definition of the Thurston norm in Section~\ref{subsec:Brief_review_of_the_Thurston_norm}.) 
The following is one of our main theorems.

\begin{theorem}[Equality of $(\mu,\phi)$-$L^2$-Euler characteristic and the Thurston norm
 for universal coverings]%
\label{the:Equality_of_(mu,phi)-L2-Euler_characteristic_and_the_Thurston_norm_universal_covering_for_universal_coverings}
 Let $M\ne S^1\times D^2$ be an admissible $3$-manifold.

 Then we get for any $\phi\in H^1(M;\IZ)$
 \[
 -\chi^{(2)}(M;\id_{\pi_1(M)},\phi) \,\,=\,\, x_M(\phi).
 \]
\end{theorem}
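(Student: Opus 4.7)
My strategy is to reduce to directly computable cases via the JSJ decomposition and the geometry of admissible $3$-manifolds. The first move is to establish that both sides extend to continuous seminorms on $H^1(M;\IR)$. For the Thurston norm this is classical (Thurston); for $-\chi^{(2)}(M;\id_{\pi_1(M)},-)$ the relevant homogeneity and subadditivity properties should have been set up in the earlier sections on the $(\mu,\phi)$-$L^2$-Euler characteristic, so that by density it suffices to verify the identity for primitive integer classes $\phi\in H^1(M;\IZ)$.

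For such $\phi$, I would exploit the JSJ decomposition, writing $M$ as a union of Seifert fibered and hyperbolic pieces $M_1,\ldots,M_n$ glued along incompressible tori $T_1,\ldots,T_k$. The Thurston norm is additive along the JSJ tori by work of Eisenbud--Neumann and Gabai, and the $(\mu,\phi)$-$L^2$-Euler characteristic should satisfy a matching additivity formula (using the vanishing of $L^2$-Euler characteristics of tori combined with a Mayer--Vietoris type argument arising from the formalism already introduced). This reduces the problem to verifying the equality on each JSJ piece separately, with $\phi$ restricted to that piece.

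On a piece where the restricted class is fibered, say as a bundle $N\to S^1$ with fiber $\Sigma$, one has $x_N(\phi|_N)=-\chi(\Sigma)$ by Thurston, while on the $L^2$ side the mapping-torus structure of $N$ together with the Wang-type sequence for the associated cellular $\IZ G$-chain complex, twisted by $\phi^*\IR\IZ$, should yield $-\chi^{(2)}(N;\id,\phi|_N)=-\chi(\Sigma)$. This handles in particular all Seifert fibered pieces (where a fibered class always exists in the Seifert direction and suffices by the seminorm structure), and all hyperbolic pieces for fibered classes. To treat a non-fibered class $\phi|_N$ on a hyperbolic piece $N$, I would invoke Agol's virtual fibering theorem (together with its enhancements by Wise and Przytycki--Wise): after passing to a suitable finite cover $\widetilde{N}\to N$ where $\phi|_N$ pulls back to a class in the closure of a fibered cone, one combines approximation by fibered classes with the multiplicative behavior of both $x$ and $-\chi^{(2)}$ under finite covers to deduce the equality for $\phi|_N$ itself.

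The main obstacle is the final step on hyperbolic pieces. Virtual fibering delivers fibered classes, but the given $\phi|_N$ need not pull back to a fibered class; one genuinely has to approximate and pass to a limit, and the continuity of both seminorms on $H^1(\widetilde{N};\IR)$ together with clean scaling formulas under covers has to be used in concert. A secondary technical point is the excluded case $M=S^1\times D^2$, which must be removed because there $-\chi^{(2)}$ on a generator of $H^1$ fails to vanish in the way the Thurston norm does; all steps above must be checked to avoid implicitly using this degenerate building block.
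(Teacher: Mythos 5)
Your overall scaffolding (reduce to integer classes, exploit fibered classes, approximate, use virtual fibering) is aligned with what the paper eventually does, but the proposal glosses over the single hardest ingredient, and in one place it is circular.

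The crucial gap is the assertion that $\phi \mapsto -\chi^{(2)}(M;\id_{\pi_1(M)},\phi)$ is a \emph{continuous seminorm} (or that the relevant homogeneity and subadditivity ``should have been set up in the earlier sections''). Homogeneity under integer scaling is indeed established in Theorem~\ref{the:Basic_properties_of_the_phi_L2-Euler_characteristic}, but subadditivity is \emph{not} available a priori, and in fact it is precisely what one cannot assume: proving $-\chi^{(2)}$ is a seminorm would essentially be proving that it equals the Thurston norm. The paper resolves this by proving the much weaker but sufficient statement (Lemma~\ref{continuity_of(mu,phi)-L2-Euler_in_phi}) that $\chi^{(2)}(M;\mu,\phi\circ\nu)$ is a \emph{difference of two seminorms}, hence continuous. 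This is achieved through the Fox-calculus computation of Theorem~\ref{the:calculation_of_(mu,phi)_L2-Euler_characteristic_from_a_presentation} combined with the non-commutative Newton polytope and the polytope homomorphism of Section~\ref{subsec:The_polytope_homomorphism}; the key structural point is that the matrix over $\IZ G$ entering the Fox computation is independent of $\phi$, so one can read off $\chi^{(2)}$ as a composite of the Dieudonn\'e determinant with the polytope/seminorm map. Without this, the passage from fibered to quasi-fibered classes (needed on every hyperbolic piece or on the relevant finite cover) has no foundation, so the proposal has a genuine hole at exactly the step you flag as ``the main obstacle.''

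On the architectural side your route also diverges from the paper's. You propose JSJ-decomposing first and then handling pieces separately, invoking additivity of the Thurston norm over the JSJ tori. The paper only takes the JSJ route in the closed graph manifold case (Theorem~\ref{the:The_phi-L2-Euler_characteristic_and_the_Thurston_norm_for_graph_manifolds}), where the required additivity of $x_M$ is available from Eisenbud--Neumann and the Seifert pieces are handled by citing Herrmann's computation. For $M$ not a closed graph manifold, the paper instead invokes virtual fibering on the whole manifold $M$ (via the result in Dubois--Friedl--L\"uck that some finite regular cover has all classes quasi-fibered), proves the quasi-fibered case (Theorem~\ref{the:Equality_of_(mu,phi)-L2-Euler_characteristic_and_the_Thurston_norm_in_the_quasi-fibered_case}) using the continuity result above, and then descends by the finite-cover multiplicativity of both sides. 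This avoids having to establish general JSJ-additivity of the Thurston norm and avoids having to run the approximation argument separately on each hyperbolic piece. Your decomposition-first approach would force you to prove continuity of $-\chi^{(2)}$ on each piece anyway, so it does not save you the Newton-polytope work --- it only adds the extra burden of justifying the JSJ additivity on the Thurston side outside the graph-manifold setting.
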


If $M$ is not a closed graph manifold, then
Theorem~\ref{the:Equality_of_(mu,phi)-L2-Euler_characteristic_and_the_Thurston_norm_universal_covering_for_universal_coverings}
is a direct consequence of the subsequent
Theorem~\ref{the:Equality_of_(mu,phi)-L2-Euler_characteristic_and_the_Thurston_norm}
together with the fact that in this case the fundamental groups of $M$ satisfies the
Atiyah Conjecture, see
Theorem~\ref{the:Status_of_the_Atiyah_Conjecture}. If $M$ is a (closed) graph manifold
Theorem~\ref{the:Equality_of_(mu,phi)-L2-Euler_characteristic_and_the_Thurston_norm_universal_covering_for_universal_coverings}
follows from
Theorem~\ref{the:The_phi-L2-Euler_characteristic_and_the_Thurston_norm_for_graph_manifolds}.
\medskip

It is also interesting to consider group homomorphisms $\pi_1(M)\to G$ that are not the identity. 
For example in Section~\ref{sec:The_(mu,phi)-L2-Euler_characteristic_and_the_degree_of_higher_order_Alexander_polynomials}
we will see that if $G$ is a torsion-free elementary amenable group, then the
$(\mu,\phi)$-$L^2$-Euler characteristic $\chi^{(2)}(M;\mu,\phi)$ is basically the same as
the degrees of the non-commutative Alexander polynomials studied by
Cochran~\cite{Cochran(2004)}, Harvey~\cite{Harvey(2005)} and the first
author~\cite{Friedl(2007)}. In these three papers it was shown that the degrees of
non-commutative Alexander polynomials give lower bounds on the Thurston norm. The following
theorem can be viewed as a generalization of these results.
\\

\noindent \textbf{Theorem~\ref{the:The_Thurston_norm_ge_the_(mu,phi)-L2-Euler_characteristic}.}(The negative of the $(\mu,\phi)$-$L^2$-Euler characteristic is a lower bound for the Thurston norm) \emph{Let $M\ne S^1\times D^2$ be an admissible $3$-manifold and let $(\mu,\phi)$ be an $L^2$-acyclic Atiyah-pair.
Then $M$ is $(\mu,\phi)$-$L^2$-finite and we get
\[
-\chi^{(2)}(M;\mu,\phi) \,\,\le \,\,x_M(\phi \circ \mu).
\]}

In general the inequality of the above theorem is not an equality, for example if $M=S^3\setminus \nu K$ is the exterior of a non-trivial knot and $\mu=\phi\colon \pi_1(M)\to \IZ$ is the abelianization, then the left hand side equals 
$2\deg(\Delta_K(t))-1$ where $\Delta_K(t)$ equals  the Alexander polynomial of $K$ and the right hand side equals $2\mbox{genus}(K)-1$. But there are many knots for which the degree of the Alexander polynomial is less than twice the genus of $K$. 

But the following theorem, which will be proved in Section~\ref{subsec:Proof_of_Theorem_ref(the:Equality_of_(mu,phi)-L2-Euler_characteristic_and_the_Thurston_norm)}, shows 
that for any `sufficiently large epimorphism', the inequality does become an equality.

\begin{theorem}[Equality of $(\mu,\phi)$-$L^2$-Euler characteristic and the Thurston norm]
\label{the:Equality_of_(mu,phi)-L2-Euler_characteristic_and_the_Thurston_norm}
Let $M\ne S^1\times D^2$ be an admissible $3$-manifold which is not a closed graph manifold.

Then there exists a virtually abelian group $\Gamma$ and a factorization 
\[
\pr_M \colon \pi_1(M) \xrightarrow{\alpha} \Gamma \xrightarrow{\beta} H_1(M)_f := H_1(M)/\tors(H_1(M))
\]
of the canonical projection $\pr_M\colon \pi_1(M)\to H_1(M)_f$ into epimorphisms such that the following holds:

Given a group $G$ satisfying the Atiyah Conjecture, a factorization of $\alpha \colon
\pi_1(M) \to \Gamma$ into group homomorphisms $\pi_1(M) \xrightarrow{\mu} G \xrightarrow{\nu}
\Gamma$, and a group homomorphism $\phi \colon H_1(M)_f \to \IZ$, the pair $(\mu, \phi
\circ \beta \circ \nu)$ is an $L^2$-acyclic Atiyah-pair, and we get
\[
 -\chi^{(2)}(M;\mu,\phi \circ \beta \circ \nu)~=x_M(\phi).
\]
\end{theorem}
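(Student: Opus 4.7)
The plan is to construct a single virtually abelian quotient $\Gamma$ of $\pi_1(M)$ that depends only on $M$, prove the claimed equality when $\mu = \alpha$, and then deduce the general statement from functoriality of $\chi^{(2)}(\cdot;\mu,\phi)$ under further quotients. For the construction of $\Gamma$, I would use virtual fibering: because $M$ is admissible and not a closed graph manifold, the combined work of Agol, Wise, Przytycki--Wise, and Liu shows that $\pi_1(M)$ is virtually RFRS and $M$ virtually fibers. Choose a finite regular cover $\widetilde M \to M$ with deck group $Q = \pi_1(M)/\pi_0$ such that $\widetilde M$ is fibered and, after passing to a further finite cover if necessary (Friedl--Vidussi), every integral class in $H^1(\widetilde M;\IZ)$ lies in the closed cone over some fibered face of the Thurston norm polytope of $\widetilde M$. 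Set $\Gamma := \pi_1(M)/K$, where $K$ is the inverse image in $\pi_0$ of the torsion subgroup of $H_1(\pi_0)$; since torsion is preserved under the conjugation action of $Q$ on $H_1(\pi_0)$, $K$ is normal in $\pi_1(M)$, and $\Gamma$ fits into a short exact sequence
$$
1 \to H_1(\widetilde M)_f \to \Gamma \to Q \to 1,
$$
so $\Gamma$ is virtually abelian. Take $\alpha : \pi_1(M) \to \Gamma$ the quotient map and $\beta : \Gamma \to H_1(M)_f$ the map induced by further abelianization, so that $\beta \circ \alpha = \pr_M$ and both maps are epimorphisms.

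For the reduction step, note that virtually abelian groups are known to satisfy the Atiyah Conjecture, and $L^2$-acyclicity of the $\mu$-cover of $M$ (for any $\mu$ with infinite image) follows from the Lott--L\"uck vanishing theorem for $L^2$-Betti numbers of admissible 3-manifolds with infinite fundamental group. Using the functoriality of the $(\mu,\phi)$-$L^2$-Euler characteristic under further quotient maps through which the cocycle $\phi$ factors -- a property set up in the foundational sections of the paper -- one identifies
$$
\chi^{(2)}(M;\mu,\phi\circ\beta\circ\nu) \,=\, \chi^{(2)}(M;\alpha,\phi\circ\beta),
$$
so it suffices to prove the equality in the terminal case $\mu = \alpha$. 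Applying the finite-cover formulas $\chi^{(2)}(M;\alpha,\phi\circ\beta) = |Q|^{-1}\,\chi^{(2)}(\widetilde M;\id,\tilde\phi)$ and $x_M(\phi) = |Q|^{-1}\, x_{\widetilde M}(\tilde\phi)$, where $\tilde\phi$ is the pullback of $\phi$ to $\widetilde M$, this reduces further to showing $-\chi^{(2)}(\widetilde M;\id,\tilde\phi) = x_{\widetilde M}(\tilde\phi)$ for every integral class $\tilde\phi$.

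For the final step, I would first handle the fibered case: if $\tilde\phi$ is dual to a fiber $F$, the semidirect product structure $\pi_1(\widetilde M) = \pi_1(F)\rtimes \IZ$ makes the twisted chain complex for the cocycle transparent and yields $\chi^{(2)}(\widetilde M;\id,\tilde\phi) = \chi(F) = -x_{\widetilde M}(\tilde\phi)$. Thurston's fibered face theorem gives linearity of $x_{\widetilde M}$ on the closed cone over each fibered face, while the additivity and convexity properties of $\chi^{(2)}(\cdot;\id,\cdot)$ developed earlier in the paper yield the same linearity for $-\chi^{(2)}$. Since by our choice of $\widetilde M$ every integral class on $\widetilde M$ lies in the closure of a fibered cone, equality on fibered classes propagates to all $\tilde\phi$.

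The main obstacle is the functoriality statement invoked in the reduction: that $\chi^{(2)}(M;\mu,\phi\circ\beta\circ\nu)$ depends only on the ultimate quotient $\Gamma$ (and the cocycle it carries), provided the intermediate group $G$ satisfies the Atiyah Conjecture. Although natural, establishing this requires a careful comparison of the twisted chain complexes over the relevant Ore localizations of $\caln(G)$ and $\caln(\Gamma)$, and leans on the Atiyah Conjecture in an essential way to keep Euler characteristics integer-valued. A secondary subtlety is the linearity of $-\chi^{(2)}(\widetilde M;\id,\cdot)$ on the closure of a fibered cone; this is a polyhedral statement about the support of the twisted complex rather than a formal consequence of the definitions.
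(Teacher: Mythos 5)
The skeleton of your proposal (virtual fibering, construct $\Gamma$ from a fibered finite cover, reduce via finite-cover formulas, handle fibered classes via the mapping-torus computation, then propagate by polyhedrality) follows the paper's overall strategy. But the crucial reduction step you propose is wrong, and you have correctly flagged the danger zone without seeing how to fill it.

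Your reduction claims that
$\chi^{(2)}(M;\mu,\phi\circ\beta\circ\nu) = \chi^{(2)}(M;\alpha,\phi\circ\beta)$
whenever $\nu\circ\mu = \alpha$ and $G$ satisfies the Atiyah Conjecture — in other words, that the twisted Euler characteristic depends only on the terminal quotient $\Gamma$ and not on the intermediate group $G$. This is not a ``functoriality set up in the foundational sections of the paper''; no such result is proved there, and it is not a formal property of the construction. In general the $(\mu,\phi)$-$L^2$-Euler characteristic genuinely depends on $G$: Theorem~\ref{the:The_Thurston_norm_ge_the_(mu,phi)-L2-Euler_characteristic-intro} gives only an inequality $-\chi^{(2)}(M;\mu,\phi)\le x_M(\phi\circ\mu)$ for arbitrary Atiyah pairs, and for small $G$ this inequality is strict (the whole point of Section~\ref{sec:The_(mu,phi)-L2-Euler_characteristic_and_the_degree_of_higher_order_Alexander_polynomials} is that larger quotients detect more). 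The independence of the choice of $G$ is precisely what Theorem~\ref{the:Equality_of_(mu,phi)-L2-Euler_characteristic_and_the_Thurston_norm} asserts (since both sides equal $x_M(\phi)$), so using it as a lemma to reduce to $\mu=\alpha$ is circular.

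The paper avoids this by never collapsing to the terminal case. Instead, it keeps $G$ arbitrary throughout: from $\mu\colon\pi_1(M)\to G$ it produces a finite-index subgroup $G'\le G$ and a factorization $\pi_1(N)\xrightarrow{\mu'}G'\xrightarrow{\nu'}H_1(N)_f$ of $\pr_N$ over the fibered finite cover $N$, and the restriction/induction formulas of Theorem~\ref{the:Basic_properties_of_the_phi_L2-Euler_characteristic_for_universal_coverings} give the $1/k$ scaling on both sides. The real content is Theorem~\ref{the:Equality_of_(mu,phi)-L2-Euler_characteristic_and_the_Thurston_norm_in_the_quasi-fibered_case}, which handles \emph{any} $G'$ satisfying Atiyah: the mapping-torus example computes the answer on fibered classes, and Lemma~\ref{continuity_of(mu,phi)-L2-Euler_in_phi} shows that $\phi\mapsto\chi^{(2)}(M;\mu,\phi\circ\nu)$ is a difference of two seminorms on $H^1(M;\IR)$ (via the Fox-calculus description and the polytope homomorphism of Section~\ref{subsec:The_polytope_homomorphism}), so one can pass to the closure of a fibered cone by continuity. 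That seminorm structure is the substitute for the independence-of-$G$ statement you want; it is proved directly, not assumed. Your last paragraph essentially identifies this as the ``secondary subtlety,'' but in the paper it is the primary engine, and it runs with the group $G$ left arbitrary rather than specialized to $\Gamma$.
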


With a little bit of extra effort one can use
Theorem~\ref{the:Equality_of_(mu,phi)-L2-Euler_characteristic_and_the_Thurston_norm} to
show that one can use epimorphisms onto torsion-free elementary amenable groups to detect
the Thurston norm. Put differently, one can show that the aforementioned non-commutative
Alexander polynomials detect the Thurston norm. We refer to
Corollary~\ref{cor:existence_of_torsion-free_elementary_coverings_with_equality} for the
precise statement.

\subsection{Inequality of the Thurston norm}
\label{subsec:Inequality_of_the_THurston_norm}

One of the key motivations for developing the theory of $L^2$-Euler characteristics is the following question by 
Simon~\cite[Problem~1.12]{Kirby(1997)}.

\begin{question}\label{que:Simons_question}
Let $K$ and $K'$ be two knots. If there is an epimorphism from the knot group of $K$ to
the knot group of $K'$, does this imply that the genus of $K$ is greater than or equal to the
genus of $K'$?
\end{question}

Note that, as is pointed out in  \cite[p.~278]{Kirby(1997)}, it follows from classical results that the inequality holds if the degree of the Alexander polynomial of $K'$ is twice the genus of $K'$, in particular the inequality holds if $K'$ is fibered. 
The approach we take in attacking Question~\ref{que:Simons_question} owes some intellectual debt to \cite{Kitano-Suzuki-Wada(2005)} and especially \cite[Corollary~3.12]{Harvey(2006)}.

We propose the following conjecture. 

\begin{conjecture}[Inequality of the Thurston norm]\label{con:Inequality_of_the_Thurston_norm} 
 Let $f \colon M \to N$ be a map between
 admissible $3$-manifolds which is surjective on $\pi_1(N)$ and induces an isomorphism
 $f_* \colon H_n(M;\IQ) \to H_n(N;\IQ)$ for $n \ge 0$.
 
 Then we get for any $\phi \in H^1(N;\IR)$ that
 \[
 x_M(f^*\phi) \,\,\ge\,\, x_N(\phi).
 \]
\end{conjecture}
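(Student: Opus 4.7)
The plan is to reduce Conjecture~\ref{con:Inequality_of_the_Thurston_norm} to a single equality of $L^2$-Euler characteristics and then to isolate the real obstacle. Both sides of the desired inequality are continuous and homogeneous of degree one in $\phi$, so by density of rational classes and rescaling it suffices to treat $\phi \in H^1(N;\IZ)$, i.e.\ a homomorphism $\phi\colon \pi_1(N) \to \IZ$. Set $\mu := f_* \colon \pi_1(M) \to \pi_1(N)$, so that $\phi\circ\mu = f^*\phi$. Applying Theorem~\ref{the:Equality_of_(mu,phi)-L2-Euler_characteristic_and_the_Thurston_norm_universal_covering_for_universal_coverings} to $N$ and Theorem~\ref{the:The_Thurston_norm_ge_the_(mu,phi)-L2-Euler_characteristic-intro} to $M$ yields
\[
x_N(\phi) = -\chi^{(2)}(N;\id_{\pi_1(N)},\phi) \qquad\text{and}\qquad -\chi^{(2)}(M;\mu,\phi) \le x_M(f^*\phi),
\]
so the conjecture would follow from the equality $\chi^{(2)}(M;\mu,\phi) = \chi^{(2)}(N;\id_{\pi_1(N)},\phi)$.

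In order to apply Theorem~\ref{the:The_Thurston_norm_ge_the_(mu,phi)-L2-Euler_characteristic-intro} one first has to verify that $(\mu,\phi)$ is an $L^2$-acyclic Atiyah pair. That $\pi_1(N)$ satisfies the Atiyah Conjecture is known in the cases covered by Theorem~\ref{the:Status_of_the_Atiyah_Conjecture}. For the $L^2$-acyclicity of the pullback $\pi_1(N)$-cover $\overline M \to M$, I would replace $f$ by a cellular map, lift it equivariantly to $\tilde f \colon \overline M \to \widetilde N$ between free $\pi_1(N)$-CW complexes, and combine the hypothesis that $f$ is a $\IQ$-homology equivalence with the $L^2$-acyclicity of $\widetilde N$ (which is standard for admissible $3$-manifolds with infinite $\pi_1$) to conclude via a comparison of Cartan--Leray spectral sequences that $\overline M$ is also $L^2$-acyclic over $\caln(\pi_1(N))$.

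The main obstacle is the remaining equality of $L^2$-Euler characteristics. The natural strategy is to invoke additivity of $\chi^{(2)}(-;-,\phi)$ under mapping cones and reduce to showing that the equivariant cone of $\tilde f$ has vanishing $(\id_{\pi_1(N)},\phi)$-$L^2$-Euler characteristic. When $\mu$ is a $\pi_1$-isomorphism, so that $\tilde f$ is already a $\IQ$-homology equivalence of universal covers, this vanishing should follow from dimension theory over the Linnell skew field of $\pi_1(N)$ twisted by $\phi^*\IR\IZ$. For merely surjective $\mu$ the kernel of $f_*$ contributes additional $L^2$-homology whose twisted contribution must be shown to vanish; I would try to factor $\mu$ through a virtually abelian quotient as in Theorem~\ref{the:Equality_of_(mu,phi)-L2-Euler_characteristic_and_the_Thurston_norm} and exploit that $\phi$ is pulled back from $N$ to dispose of the kernel contribution one prime layer at a time. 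Making this last step rigorous is the hard part and is presumably why Conjecture~\ref{con:Inequality_of_the_Thurston_norm} is still open.
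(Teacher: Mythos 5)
This statement is a \emph{conjecture}; the paper does not prove it, and your reduction rightly runs into an obstruction.  But you misplace the obstruction.  Your overall strategy is the same one the paper uses for its partial result (Theorem~\ref{the:Inequality_of_the_Thurston_norm-intro}): reduce to integral $\phi$, set $\mu = \pi_1(f)$, sandwich with
Theorems~\ref{the:Equality_of_(mu,phi)-L2-Euler_characteristic_and_the_Thurston_norm_universal_covering_for_universal_coverings}
and~\ref{the:The_Thurston_norm_ge_the_(mu,phi)-L2-Euler_characteristic-intro}, and reduce the problem to comparing $\chi^{(2)}(M;\mu,\phi)$ with $\chi^{(2)}(N;\id,\phi)$.  (A small slip: you only need the inequality $\chi^{(2)}(M;\mu,\phi) \le \chi^{(2)}(N;\id,\phi)$, not the equality you state.)

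The genuine gap is the step you regard as routine.  You claim that because $f$ is a $\IQ$-homology equivalence and $\widetilde N$ is $L^2$-acyclic, a comparison of Cartan--Leray spectral sequences shows $\overline M$ is $L^2$-acyclic over $\caln(\pi_1(N))$.  No such comparison exists: a $\IQ\pi_1(N)$-chain map that becomes a quasi-isomorphism after applying $\IQ\otimes_{\IQ\pi_1(N)}-$ need not become one after applying $\caln(\pi_1(N))\otimes_{\IQ\pi_1(N)}-$, because $\caln(G)$ is not flat over $\IQ G$ and there are well-known $\IQ$-acyclic $\IQ G$-complexes that are not $L^2$-acyclic.  In the paper this is exactly what Theorem~\ref{the:localization_down_to_Z_versus_up_to_U(G)_new} supplies, and it is proved only under the hypothesis that $G$ is residually locally indicable elementary amenable (using L\"uck-type approximation together with Howie--Schneebeli/Gersten injectivity of the Laplacian for locally indicable groups).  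That hypothesis on $\pi_1(N)$ is the price of the paper's partial result and is precisely why the conjecture remains open; the ``kernel contribution'' you single out as the hard part is downstream of, and comparatively mild next to, this $L^2$-acyclicity issue (in the paper it is handled by passing to $b_1^{(2)}$ of the $K$-cover via Theorem~\ref{the:The_(mu,phi)-L2-Euler_characteristic_in_terms_of_the_first_homology} and using that $\overline f$ is $1$-connected).  So your proposal does not give a proof, and the obstacle you identify at the end is not the binding one.
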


\begin{remark}
\begin{enumerate}
\item In Section~\ref{subsec:Brief_review_of_the_Thurston_norm} we will recall that the
 Thurston norm can be viewed as a generalization of the knot genus. In particular a proof
 of  Conjecture~\ref{con:Inequality_of_the_Thurston_norm} 
 would give an affirmative answer to Simon's question.
\item If $M$ and $N$ are closed 3-manifolds, then the conclusion of 
 Conjecture~\ref{con:Inequality_of_the_Thurston_norm} follows from~\cite[Corollary~6.18]{Gabai(1983)}.
\item The condition on the induced map on rational homology cannot be dropped. For
 example, suppose that $M=S^1\times \Sigma$ with $\Sigma$ a surface of genus $g\geq 2$
 with boundary. Let $N$ be the exterior of a non-trivial torus knot. Then $\pi_1(N)$ is generated by
 two elements, therefore there exists an epimorphism $\varphi\colon \pi_1(S^1\times
 \Sigma)\to \pi_1(N)$ which factors through the projection $\pi_1(S^1\times \Sigma)\to
 \pi_1(\Sigma)$. Since $N$ is aspherical there exists a map $f\colon S^1\times \Sigma\to N$ with $\varphi=f_*$. If $\phi$ is a generator of $H^1(N;\IZ)$, then $x_N(\phi)\ne 0$. On the other hand $f^*\phi$ is dual to ``vertical tori'' in $S^1\times \Sigma$, which implies that $x_M(f^*\phi)=0$. We are grateful to Yi Liu for pointing
 out this example.
\end{enumerate}
\end{remark}

Before we state our main contribution to
Conjecture~\ref{con:Inequality_of_the_Thurston_norm} we need to recall one more
definition. A group $G$ is called \emph{locally indicable} if any finitely generated
non-trivial subgroup of $G$ admits an epimorphism onto $\IZ$. For example
Howie~\cite{Howie(1982)} showed that the fundamental group of any admissible 3-manifold
with non-trivial boundary is locally indicable.
The main use of groups being locally indicable is that we can apply the Gersten-Howie-Schneebeli Theorem~\ref{thm:howie-schneebeli-gersten}.

Our main result is Theorem~\ref{the:Inequality_of_the_Thurston_norm}. \\

\noindent \textbf{Theorem~\ref{the:Inequality_of_the_Thurston_norm}.}
\emph{Let $f \colon M \to N$ be a map of admissible $3$-manifolds which is surjective on $\pi_1(N)$ 
and induces an isomorphism $f_* \colon H_n(M;\IQ) \to H_n(N;\IQ)$ for $n \ge 0$. Suppose that
$\pi_1(N)$ is residually locally indicable elementary amenable. 
Then we get for any $\phi\in H^1(N;\IR)$ that
 \[
 x_M(f^*\phi ) \,\,\ge\,\, x_N(\phi).
 \]}

By Lemma~\ref{lem:fibered-manifolds-acapl} the fundamental group of any fibered 3-manifold
is residually locally indicable elementary amenable.  Thus we have proved
Conjecture~\ref{con:Inequality_of_the_Thurston_norm} in particular in the case that $N$ is
fibered. The conclusion of Conjecture~\ref{con:Inequality_of_the_Thurston_norm} can be
proved relatively easily for \emph{fibered classes in $H^1(N;\IR)$}, but it seems to us
that if $N$ is fibered, then there is no immediate reason why the inequality should hold
for \emph{non-fibered} classes in $H^1(N;\IR)$.

We propose the following

\begin{conjecture}\label{con:res_loc_ind_elem_ama_always}
The fundamental group of any admissible $3$-manifold $M$ with $b_1(M) \ge 1$
is residually locally indicable elementary amenable. 
\end{conjecture}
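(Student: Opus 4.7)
The plan is to construct, for each non-trivial $g \in \pi_1(M)$, a torsion-free solvable quotient of $\pi_1(M)$ in which $g$ survives. Any finitely generated torsion-free solvable group is locally indicable --- by induction on the derived length, its abelianization modulo torsion is non-trivial and hence it surjects onto $\IZ$ --- and solvable groups are elementary amenable, so such a quotient would furnish the required separation by a locally indicable elementary amenable (henceforth LIEA) quotient.

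The natural candidate family is the \emph{rational derived series}
\[
G_0 := \pi_1(M), \qquad G_{n+1} := \ker\bigl(G_n \twoheadrightarrow H_1(G_n;\IZ)/\tors\bigr).
\]
Since an extension of a torsion-free abelian group by a torsion-free group is again torsion-free, an easy induction shows that $\pi_1(M)/G_n$ is torsion-free solvable for every $n$, so the conjecture reduces to the separation statement $\bigcap_n G_n = \{1\}$. For fibered $M$ this has been verified in Lemma~\ref{lem:fibered-manifolds-acapl}, using the semidirect decomposition $\pi_1(M) \cong \pi_1(\Sigma) \rtimes_\phi \IZ$ and the residual torsion-free nilpotence of surface groups.

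For non-fibered admissible $M$ with $b_1(M) \geq 1$, I would invoke Agol's virtual fibering theorem together with the subsequent results of Wise, Przytycki--Wise, and Liu, which guarantee that $M$ admits a finite cover $\widetilde M$ that fibers over $S^1$. Thus the finite-index subgroup $H := \pi_1(\widetilde M) \leq \pi_1(M)$ is residually LIEA, i.e., $\pi_1(M)$ is virtually residually LIEA.

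The central obstacle is the passage from virtual residual LIEA to actual residual LIEA for $\pi_1(M)$ itself. The naive device of taking the normal core in $\pi_1(M)$ of a separating subgroup $N \triangleleft H$ produces only a finite extension of a LIEA group, which carries torsion in general and hence fails to be locally indicable. A plausible way to bypass this is to exploit the virtually RFRS property of $\pi_1(M)$ (available in the Haken case, which applies here since $b_1(M) \geq 1$) to show directly that the rational derived series $\{G_n\}$ of $\pi_1(M)$ itself intersects trivially: choose a finite-index RFRS subgroup $K \leq \pi_1(M)$; given $g \neq 1$, find $m$ with $1 \neq g^m \in K$ and $g^m$ not in the $n$-th stage $K_n$ of the rational derived series of $K$; and then construct a torsion-free solvable quotient of $\pi_1(M)$ itself separating $g$ by interleaving this chain with the canonical quotient $\pi_1(M) \twoheadrightarrow \IZ$ supplied by $b_1(M) \geq 1$. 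The delicate point, and in my view the main technical difficulty, is to verify that the interleaving yields torsion-free solvable quotients of $\pi_1(M)$ at every stage --- precisely the obstacle distinguishing this conjecture from the already-established virtual version.
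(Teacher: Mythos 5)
The statement you are trying to prove is labeled \emph{Conjecture}~\ref{con:res_loc_ind_elem_ama_always} in the paper; the authors do not prove it, they propose it as an open problem (and explicitly observe that its resolution, combined with Theorem~\ref{the:Inequality_of_the_Thurston_norm-intro}, would settle Simon's Question~\ref{que:Simons_question}). So there is no in-paper argument to compare against. Your proposal is honest about exactly where it stops: you reduce to virtual residual LIEA via Agol--Wise--Przytycki--Wise--Liu, and then concede that the passage from the finite-index fibered subgroup $H$ back to $\pi_1(M)$ itself is unresolved, because normal cores introduce finite extensions that destroy local indicability. That concession is correct, and that is precisely the open content of the conjecture; your proposal is therefore a plausible strategy sketch, not a proof.

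One genuine error in the supporting reasoning should be flagged even though it does not sink your actual construction: the claim that ``any finitely generated torsion-free solvable group is locally indicable'' is false. The fundamental group of the Hantzsche--Wendt flat $3$-manifold is torsion-free, finitely generated, polycyclic (hence solvable), yet has finite abelianization $\IZ/4 \oplus \IZ/4$ and so admits no epimorphism onto $\IZ$; it is not even indicable. The abelianization of a non-trivial solvable group is always non-trivial, but nothing forces its torsion-free part to be non-trivial. What you actually need, and what your rational derived series delivers, is the weaker statement that a group admitting a finite subnormal series with torsion-free abelian quotients (a poly-(torsion-free abelian) group) is locally indicable: any non-trivial finitely generated subgroup meets the series and surjects onto a non-trivial torsion-free abelian group at the first stage it is visible, hence onto $\IZ$. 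You should cite or state that version rather than the false general claim. Beyond this repair, the hard interleaving step you identify at the end --- producing torsion-free solvable quotients of $\pi_1(M)$ itself from the RFRS filtration on a finite-index subgroup --- remains the genuine obstruction and is not carried out.
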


A proof of Conjecture~\ref{con:res_loc_ind_elem_ama_always} together with 
Theorem~\ref{the:Inequality_of_the_Thurston_norm} implies 
Conjecture~\ref {con:Inequality_of_the_Thurston_norm} and in particular an affirmative answer to Simon's 
Question~\ref{que:Simons_question}.


\subsection{The $(\mu,\phi)$-$L^2$-Euler characteristic and the degree of the $L^2$-torsion 
function}%
\label{subsec:The_(mu,phi)-L2-Euler_characteristic_and_the_degree_of_the_L2-torsion_function_intro}

We briefly discuss a relation of the $(\mu,\phi)$-$L^2$-Euler characteristic to the degree of the $L^2$-torsion 
function in Section~\ref{sec:The_degree_of_the_L2-torsion_function}.
The $L^2$-torsion function is defined in~\cite{Dubois-Friedl-Lueck(2014Alexander)} and~\cite{Lueck(2015twisting)}.


\subsection{Methods of proof}\label{subsec:Methods_of_proof}
The Ore localization of a group ring $\IZ G$ is known to exist for torsion-free elementary amenable groups, but it  is definitely not available
if $G$ contains a non-abelian free subgroup, which is the case for most fundamental groups of 3-manifolds.
One key ingredient in this paper is therefore to replace the Ore localization of a group ring $\IZ G$  by the division closure $\cald(G)$ of
$\IZ G$ in the algebra $\calu(G)$ of operators affiliated to the group von Neumann algebra $\caln(G)$.
This is a well-defined skew field containing $\IZ G$ if and only if $G$ is torsion-free and satisfies
the Atiyah Conjecture with rational coefficients. This is known to be true in many interesting cases.

We will also take advantage of the recent proof by Agol and others of the Virtual Fibering Conjecture.



\subsection*{Acknowledgments.}
The first author gratefully acknowledges the support provided by the SFB 1085 ``Higher
Invariants'' at the University of Regensburg, funded by the Deutsche
Forschungsgemeinschaft {DFG}. The paper is financially supported by the Leibniz-Preis of the second author granted by the {DFG}
and the ERC Advanced Grant ``KL2MG-interactions'' (no. 662400) of the second author granted by the European Research Council. We wish to thank the referee for reading an earlier version thoroughly and for giving lots of helpful feedback.
We are particularly grateful to the referee for pointing out to use the proof of 
(3c) of Theorem~\ref{the:Status_of_the_Atiyah_Conjecture}.


\tableofcontents


\typeout{------------ Section 1: Brief review of the Thurston norm -------------}

\section{Brief review of the Thurston norm}
\label{subsec:Brief_review_of_the_Thurston_norm}
We recall the definition in~\cite{Thurston(1986norm)} of the \emph{Thurston norm} $x_M(\phi)$
of a compact connected orientable $3$-manifold $M$ with empty or non-empty boundary 
and an element $\phi \in H^1(M;\IZ)=\Hom(\pi_1(M),\IZ)$. It is defined as
\[
x_M(\phi)\,\,:=\,\,\min \{ \chi_-(F)\, | \, F \subset M \mbox{ properly embedded surface dual to
}\phi\},
\]
where, given a surface $F$ with connected components $F_1, F_2, \ldots , F_k$, we define
\[\chi_-(F)\,\,:=\,\,\smsum{i=1}{k} \max\{-\chi(F_i),0\}.\]

Thurston~\cite{Thurston(1986norm)} showed that this defines a seminorm on $H^1(M;\IZ)$
which can be extended to a seminorm on $H^1(M;\IR)$ which we denote by $x_M$ again.
In particular we get for $r \in \IR$ and $\phi \in H^1(M;\IR)$
\begin{eqnarray}
 x_M(r \cdot \phi) 
 & = & 
 |r| \cdot x_M(\phi).
\label{scaling_Thurston_norm}
\end{eqnarray}

If $K\subset S^3$ is a knot, then we denote by $\nu K$ an open tubular neighborhood of $K$
and we refer to $X_K=S^3\setminus \nu K$ as the exterior of $K$. We refer to the minimal
genus of a Seifert surface of $K$ as the \emph{genus $g(K)$ of $K$}. We have
$H^1(X_K;\IZ)\cong \IZ$ and an elementary exercise shows that for any generator $\phi$ of
$H^1(X_K;\IZ)\cong \IZ$ we have
\begin{equation} x_{X_K}(\phi)\,\,=\,\,\operatorname{max}\{
 2g(K)-1,0\}.\label{equ:genus-thurston-norm} \end{equation}

If $p \colon M' \to M$ is a finite covering with $n$ sheets, then Gabai~\cite[Corollary~6.13]{Gabai(1983)} showed
\begin{eqnarray}
 x_{M'}(p^*\phi) 
 & = & 
 n \cdot x_M(\phi).
\label{finite_coverings_Thurston_norm}
\end{eqnarray}
If $F \to M \xrightarrow{p} S^1$ is a fiber bundle for a compact connected orientable
$3$-manifold $M$ and compact surface $F$ and $\phi \in H^1(M;\IZ)$ is given by $H_1(p)
\colon H_1(M) \to H_1(S^1)$, then by~\cite[Chapter~3]{Thurston(1986norm)}  we have
\begin{eqnarray}
 x_M(\phi) & = & 
 \begin{cases}
 - \chi(F) & \text{if} \;\chi(F) \le 0;
 \\
 0 & \text{if} \;\chi(F) \ge 0.
 \end{cases}
\label{fiber_bundles_Thurston_norm}
\end{eqnarray}


\typeout{-- Section 2: Twisting the $L^2$-Euler characteristic with a cocycle in the first cohomology -----------}

\section{Twisting the $L^2$-Euler characteristic with a cocycle in the first cohomology}
\label{sec:Twisting_the_L2-Euler_characteristic_with_a_cocycle_in_the_first_cohomology}

In this section we introduce our main invariant on the $L^2$-side, namely certain
variations of the $L^2$-Euler characteristic which are obtained in the special case of the
universal covering $\widetilde{X} \to X$  by twisting with an element $\phi \in H^1(X;\IZ)$. More generally,
we will consider $G$-$CW$-complexes and twist with a group homomorphism $\phi \colon G \to \IZ$.


\subsection{Review of the $L^2$-Euler characteristic}
\label{subsec:Review_of_the_L2-Euler_characteristic}

Let $G$ be a group. Denote by $\caln(G)$ the group von Neumann algebra which can be
identified with the algebra $B(L^2(G),L^2(G)^G)$ of bounded left $G$-equivariant operators
$L^2(G) \to L^2(G)$. Let $C_*$ be a finitely generated based free left $\IZ G$-chain
complex. Then we can consider the chain complex of finitely generated Hilbert
$\caln(G)$-chain complexes $L^2(G) \otimes_{\IZ G} C_*$. Its $L^2$-Betti numbers
$b_n^{(2)}(L^2(G) \otimes_{\IZ G} C_*)$ are defined as the von Neumann dimension
of its $L^2$-homology, see~\cite[Section~1.1]{Lueck(2002)}.

One can also work entirely algebraically by applying $\caln(G) \otimes_{\IZ G} -$ which
yields a chain complex $\caln(G) \otimes_{\IZ G} C_*$ of $\caln(G)$-modules, where we
consider $\caln(G)$ as a ring and forget the topology. There is a dimension function
defined for all $\caln(G)$-modules, see~\cite[Section~6.1]{Lueck(2002)}. So one gets
another definition of $L^2$-Betti numbers by taking this dimension of the
$\caln(G)$-module $H_n(\caln(G) \otimes_{\IZ G} C_*)$. These two definitions agree
by~\cite[Section~6.2]{Lueck(2002)}.

The advantage of the algebraic approach is that it works and often still gives finite
$L^2$-Betti numbers also in the case where we drop the condition that $C_*$ is a finitely
generated free $\IZ G$-chain complex and consider any $\IZ G$-chain complex $C_*$. This is
explained in detail in~\cite[Chapter~6]{Lueck(2002)}. We recall that for any
chain complex of free left $\IZ G$-chain modules $C_*$ we can define its $n$th $L^2$-Betti number as
\begin{eqnarray}
b_n^{(2)}(\caln(G) \otimes_{\IZ G} C_*)
& := &
 \dim_{\caln(G)}\bigl(H_n(\caln(G) \otimes_{\IZ G} C_*)\bigr)
 \quad \in [0,\infty].
\label{b_n(2)(caln(G)_otimes_ZGC_ast)}
\end{eqnarray}
In particular given a $G$-$CW$-complex $X$ we can define its $n$th $L^2$-Betti number as 
\begin{eqnarray}
 b_n^{(2)}(X;\caln(G)) & := & 
 \dim_{\caln(G)}\bigl(H_n(\caln(G) \otimes_{\IZ G} C_*(X))\bigr)
 \quad \in [0,\infty].
\label{b_n(2)(X;caln(G))}
\end{eqnarray}

We leave the superscript in the notation $b_n^{(2)}(\caln(G) \otimes_{\IZ G} C_*)$ and $b_n^{(2)}(X;\caln(G))$, although the definition is
purely algebraic, in order to remind the reader that it is related to the classical notion
of $L^2$-Betti numbers.

\begin{definition}[$L^2$-Euler characteristic]\label{def:L2-Euler_characteristic}
 Let $X$ be a $G$-$CW$-complex. Define
 \begin{eqnarray*}
 h^{(2)}(X;\caln(G))
 & := & \lmsum{p \ge 0}{} b_p^{(2)}(X;\caln(G)) \in [0,\infty];
 \\
 \chi^{(2)}(X;\caln(G))
 & := & 
 \lmsum{p \ge 0}{} (-1)^p \cdot b_p^{(2)}(X;\caln(G))
 \in \IR, \quad \mbox{if}\; h^{(2)}(X;\caln(G)) < \infty.
 \\
 \end{eqnarray*}
 We call $\chi^{(2)}(X;\caln(G))$ the \emph{$L^2$-Euler characteristic} of $X$.
\end{definition}

The condition $h^{(2)}(X;\caln(G)) < \infty$ ensures that the sum which appears in the
definition of $\chi^{(2)}(X;\caln(G))$ converges absolutely. In the sequel we assume that
the reader is familiar with the notion of the $L^2$-Euler characteristic and its basic
properties, as presented in~\cite[Section~6.6.1]{Lueck(2002)}. Another approach to
$L^2$-Betti numbers for not necessarily finite $G$-$CW$-complexes is given by
Cheeger-Gromov~\cite{Cheeger-Gromov(1986)}.


\subsection{The $\phi$-twisted $L^2$-Euler characteristic}
\label{subsec:The_phi-twisted-L2-Euler_characteristic}

We will be interested in the following version of an $L^2$-Euler characteristic.

\begin{definition}[$\phi$-twisted $L^2$-Betti number and $L^2$-Euler characteristic]%
\label{def:phi-twisted_L2_betti_number_and_L2-Euler_characteristic}
 Let $X$ be a $G$-$CW$-complex. Let $\phi \colon G \to \IZ$ be a group homomorphism. Let
 $\phi^*\IZ[\IZ]$ be the $\IZ G$-module obtained from $\IZ[\IZ]$ regarded as module over
 itself by restriction with $\phi$. If $C_*(X)$ is the cellular $\IZ G$-chain complex,
 denote by $C_*(X) \otimes_{\IZ} \phi^*\IZ[\IZ]$ the $\IZ G$-chain complex obtained by
 the diagonal $G$-action. Here the diagonal $G$-action is defined as the action that is determined by $g\cdot (\sigma \otimes p)=g\cdot \sigma\otimes g\cdot p$. Define
 \begin{eqnarray*}
 b_n^{(2)}(X;\caln(G),\phi) & := &
 \dim_{\caln(G)}\bigl(H_n(\caln(G) \otimes_{\IZ G}(C_*(X) \otimes_{\IZ} \phi^*\IZ[\IZ]))\bigr)
 \in [0,\infty];
 \\
 h^{(2)}(X;\caln(G),\phi)
 & := & \lmsum{p \ge 0}{} b_p^{(2)}(X;\caln(G),\phi) \in [0,\infty];
 \\
 \chi^{(2)}(X;\caln(G);\phi)
 & := & 
 \lmsum{p \ge 0}{} (-1)^p \cdot b_p^{(2)}(X;\caln(G),\phi)
 \in \IR, \hspace{2mm} \mbox{if} \; h^{(2)}(X;\caln(G),\phi) < \infty.
 \\
 \end{eqnarray*}
 We say that $X$ is \emph{$\phi$-$L^2$-finite} if $h^{(2)}(X;\caln(G),\phi) < \infty$ holds. If this
 the case, we call the real number $\chi^{(2)}(X;\caln(G),\phi)$ the 
 \emph{$\phi$-twisted $L^2$-Euler characteristic} of $X$.
 \end{definition}

Notice that so far we are not requiring that the $G$-$CW$-complex $X$ is free or finite.

We collect the basic properties of this invariant.

\begin{theorem}[Basic properties of the $\phi$-twisted $L^2$-Euler characteristic]%
\label{the:Basic_properties_of_the_phi_L2-Euler_characteristic}
 Let $X$ be a $G$-$CW$-complex. Let $\phi \colon G \to \IZ$ be a group homomorphism.
 
 \begin{enumerate}[font=\normalfont]

 \item\label{the:Basic_properties_of_the_phi_L2-Euler_characteristic:homotopy_invariance}
 \emph{$G$-homotopy invariance}\\
 Let $X$ and $Y$ be $G$-$CW$-complexes which are $G$-homotopy equivalent.
Then $X$ is $\phi$-$L^2$-finite if and only if $Y$ is $\phi$-$L^2$-finite, and in this case
 we get
 \[
 \chi^{(2)}(X;\caln(G),\phi) = \chi^{(2)}(Y;\caln(G),\phi);
 \]

 \item\label{the:Basic_properties_of_the_phi_L2-Euler_characteristic:sum_formula}
 \emph{Sum formula}\\
 Consider a $G$-pushout of $G$-$CW$-complexes
 \[
 \xymatrix@C1.2cm@R0.5cm{ X_0 \ar[r] \ar[d] & X_1 \ar[d]
 \\
 X_2 \ar[r]& X }
 \]
 where the upper horizontal arrow is cellular, the left vertical arrow is an inclusion
 of $G$-$CW$-complexes and $X$ has the obvious $G$-$CW$-structure coming from the ones
 on $X_0$, $X_1$ and $X_2$. Suppose that $X_0$, $X_1$ and $X_2$ are $\phi$-$L^2$-finite.
 Then $X$ is $\phi$-$L^2$-finite and we get
 \[
 \chi^{(2)}(X;\caln(G),\phi) = \chi^{(2)}(X_1;\caln(G),\phi) + \chi^{(2)}(X_2;\caln(G),\phi) -
 \chi^{(2)}(X_0;\caln(G),\phi);
 \]

 \item\label{the:Basic_properties_of_the_phi_L2-Euler_characteristic:induction}
 \emph{Induction}\\ 
 Let $i \colon H \to G$ be the inclusion of a subgroup of $G$ and let $Y$ be an $H$-CW-complex
 Then $Y$ is $(\phi \circ i)$-$L^2$-finite if and only if $G \times_HY$ is
 $\phi$-$L^2$-finite. If this is the case, we get
 \[
 \chi^{(2)}(G \times_H Y;\caln(G),\phi) = \chi^{(2)}(Y;\caln(H),\phi \circ i);
 \]

 \item\label{the:Basic_properties_of_the_phi_L2-Euler_characteristic:restriction}
 \emph{Restriction}\\
 Let $i\colon H \to G$ be the inclusion of a subgroup $H$ of $G$ with $[G:H] < \infty$.
 Let $X$ be a $G$-$CW$-complex. Denote by $i^*X$ the $H$-$CW$-complex obtained from
 $X$ by restriction with $i$. 
 Then $i^* X$ is $(\phi \circ i)$-$L^2$-finite if and only if $X$ is $\phi$-$L^2$-finite, and in
 this case we get
 \[
 \chi^{(2)}(i^*X;\caln(H),\phi \circ i) = [G:H]\cdot \chi^{(2)}(X;\caln(G),\phi);
 \]

 \item\label{the:Basic_properties_of_the_phi_L2-Euler_characteristic:scaling_phi} \emph{Scaling $\phi$}\\
 We get for every integer $k \ge 1$ that $X$ is $\phi$-$L^2$-finite if and only if $X$
 is $(k \cdot \phi)$-$L^2$-finite, and in this case we get
 \[
 \chi^{(2)}(X;\caln(G),k \cdot \phi) = k\cdot \chi^{(2)}(X;\caln(G),\phi);
 \]

 \item\label{the:Basic_properties_of_the_phi_L2-Euler_characteristic:trivial_phi} \emph{Trivial $\phi$}\\
 Suppose that $\phi$ is trivial. Then $X$ is $\phi$-$L^2$-finite if and only if we have 
 $b_n^{(2)}(X;\caln(G)) = 0$ for all $n \ge 0$. If this is the case, then
 \[
 \chi^{(2)}(X;\caln(G),\phi) = 0.
 \]

 \end{enumerate}
\end{theorem}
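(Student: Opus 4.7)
The plan is to reduce each of (i)--(vi) to standard manipulations of the twisted chain complex $D_*(X;\phi) := \caln(G) \otimes_{\IZ G}\bigl(C_*(X) \otimes_\IZ \phi^*\IZ[\IZ]\bigr)$ together with the basic additivity, induction, and restriction properties of the dimension function $\dim_{\caln(G)}$ from \cite[Chapter~6]{Lueck(2002)}.

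For (i), a $G$-homotopy equivalence $X \to Y$ induces a $\IZ G$-chain homotopy equivalence $C_*(X) \to C_*(Y)$. Since $\phi^*\IZ[\IZ]$ is $\IZ$-free, the functors $- \otimes_\IZ \phi^*\IZ[\IZ]$ and $\caln(G) \otimes_{\IZ G} -$ both preserve chain homotopy equivalences, so $D_*(X;\phi)$ and $D_*(Y;\phi)$ have isomorphic homology $\caln(G)$-modules. For (ii), the $G$-pushout yields a Mayer--Vietoris short exact sequence of $\IZ G$-chain complexes which are free in each degree; applying the exact functor $- \otimes_\IZ \phi^*\IZ[\IZ]$ and then $\caln(G) \otimes_{\IZ G} -$ gives a short exact sequence of $\caln(G)$-chain complexes, whose long exact homology sequence combined with additivity of $\dim_{\caln(G)}$ on short exact sequences yields both the $\phi$-$L^2$-finiteness of $X$ and the alternating-sum formula. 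For (vi), when $\phi$ is trivial the module $\phi^*\IZ[\IZ]$ is $\IZ[\IZ]$ with trivial $G$-action, a free $\IZ$-module of countably infinite rank, so $C_*(X) \otimes_\IZ \IZ[\IZ] \cong \bigoplus_{j \in \IZ} C_*(X)$ as $\IZ G$-chain complexes; hence $b_n^{(2)}(X;\caln(G),0)$ is a countable sum of copies of $b_n^{(2)}(X;\caln(G))$, which is either $0$ or $\infty$, and finiteness of $h^{(2)}(X;\caln(G),0)$ forces the vanishing of all ordinary $L^2$-Betti numbers.

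For the induction property (iii), I would use the identification $C_*(G \times_H X) \cong \IZ G \otimes_{\IZ H} C_*(X)$ of $\IZ G$-chain complexes together with the natural $\IZ G$-isomorphism
\[
 \bigl(\IZ G \otimes_{\IZ H} C_*(X)\bigr) \otimes_\IZ \phi^*\IZ[\IZ]
 \;\cong\; \IZ G \otimes_{\IZ H}\bigl(C_*(X) \otimes_\IZ (\phi \circ i)^*\IZ[\IZ]\bigr),
\]
and then rewrite $\caln(G) \otimes_{\IZ G} \IZ G \otimes_{\IZ H}(-) \cong \caln(G) \otimes_{\caln(H)} \bigl(\caln(H) \otimes_{\IZ H}(-)\bigr)$; flatness of $\caln(G)$ over $\caln(H)$ together with $\dim_{\caln(G)}(\caln(G) \otimes_{\caln(H)} N) = \dim_{\caln(H)}(N)$ produces the equality at the level of all twisted $L^2$-Betti numbers. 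Property (iv) is analogous and rests on the classical formula $\dim_{\caln(H)}(i^*M) = [G:H]\cdot \dim_{\caln(G)}(M)$ for finite-index subgroups. For (v), the key identification is the $\IZ G$-isomorphism $(k\phi)^*\IZ[\IZ] \cong \bigoplus_{j=0}^{k-1} \phi^*\IZ[\IZ]$, obtained by decomposing $\IZ[\IZ] = \bigoplus_{j=0}^{k-1} t^j \cdot \IZ[t^k, t^{-k}]$ and noting that each summand $t^j \IZ[t^k,t^{-k}]$, viewed as a $\IZ[\IZ]$-module with $t$ acting by multiplication by $t^k$, is isomorphic to $\IZ[\IZ]$ with its standard $\IZ[\IZ]$-module structure; restricting along $\phi$ and substituting multiplies every Betti number by $k$.

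The main obstacle I anticipate is the careful bookkeeping of the biconditional nature of the $\phi$-$L^2$-finiteness assertions in (iii) and (iv), where one must argue in both directions that $h^{(2)} < \infty$ on one side of the induction/restriction is equivalent to $h^{(2)} < \infty$ on the other; this requires exploiting the faithfulness (strict positivity) of $\dim_{\caln(G)}$ and its additivity on short exact sequences. Everything else is a routine check once the algebraic identifications of the coefficient module $\phi^*\IZ[\IZ]$ are in hand.
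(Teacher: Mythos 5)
Your proposal follows essentially the same approach as the paper: reduce everything to manipulations of the twisted chain complex $\caln(G)\otimes_{\IZ G}(C_*(X)\otimes_{\IZ}\phi^*\IZ[\IZ])$, establish the key $\IZ G$-module isomorphisms $\bigl(\IZ G\otimes_{\IZ H}C_*(X)\bigr)\otimes_{\IZ}\phi^*\IZ[\IZ]\cong\IZ G\otimes_{\IZ H}\bigl(C_*(X)\otimes_{\IZ}(\phi\circ i)^*\IZ[\IZ]\bigr)$ for induction, $(k\phi)^*\IZ[\IZ]\cong\bigoplus_{j=0}^{k-1}\phi^*\IZ[\IZ]$ for scaling, and $C_*(X)\otimes_{\IZ}\phi^*\IZ[\IZ]\cong\bigoplus_{\IZ}C_*(X)$ for trivial $\phi$, then invoke additivity and the flatness/dimension-preservation of $\caln(G)$ over $\caln(H)$ exactly as the paper does by reference to Lück's Theorem~6.80. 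Your spelled-out justification of the scaling isomorphism via the decomposition $\IZ[t,t^{-1}]=\bigoplus_{j=0}^{k-1}t^j\IZ[t^k,t^{-k}]$ fills in what the paper labels an ``obvious isomorphism,'' but the argument and its organization are the same.
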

\begin{proof}~\eqref{the:Basic_properties_of_the_phi_L2-Euler_characteristic:homotopy_invariance}
 This follows from the homotopy invariance of $L^2$-Betti numbers.
 \\[1mm]~\eqref{the:Basic_properties_of_the_phi_L2-Euler_characteristic:sum_formula} The
 proof is analogous to the one of~\cite[Theorem~6.80~(2) on page~277]{Lueck(2002)}, just
 replace the short split exact sequence of $\IZ G$-chain complexes $0 \to C_*(X_0) \to
 C_*(X_1) \oplus C_*(X_2) \to C_*(X) \to 0$ by the induced short split exact
 sequence of $\IZ G$-chain complexes
 \begin{multline*}
 0 \to C_*(X_0) \otimes_{\IZ} \phi^*\IZ[\IZ] 
 \to C_*(X_1) \otimes_{\IZ} \phi^*\IZ[\IZ] \oplus C_*(X_2) \otimes_{\IZ} \phi^*\IZ[\IZ] 
 \\
 \to C_*(X) \otimes_{\IZ} \phi^*\IZ[\IZ] \to 0.
 \end{multline*}
 \\[1mm]~\eqref{the:Basic_properties_of_the_phi_L2-Euler_characteristic:induction} The
 proof is analogous to the one of~\cite[Theorem~6.80~(8) on page~279]{Lueck(2002)} using
 the isomorphism of $\IZ G$-chain complexes
 \begin{multline*}
 C_*(G \times_H X) \otimes_{\IZ} \phi^*\IZ[\IZ] \cong \bigl(\IZ G \otimes_{\IZ H} C_*(X)\bigr) \otimes_{\IZ} \phi^*\IZ[\IZ]
 \\
 \cong \IZ G \otimes_{\IZ H}\bigl(C_*(X) \otimes_{\IZ} (\phi \circ i)^*\IZ[\IZ]\bigr),
 \end{multline*}
 where the second isomorphism is given by $(g \otimes u) \otimes v \mapsto g \otimes u \otimes g^{-1}v$.
 \\[1mm]~\eqref{the:Basic_properties_of_the_phi_L2-Euler_characteristic:restriction} The
 proof is analogous to the one of~\cite[Theorem~6.80~(7) on page~279]{Lueck(2002)} using
 the obvious identification of $\IZ H$-chain complexes $i^*\bigl(C_*(X) \otimes_{\IZ} \phi^*\IZ[\IZ]\bigr)
 = C_*(i^*X) \otimes_{\IZ} i^*\phi^*\IZ[\IZ]$.
 \\[1mm]~\eqref{the:Basic_properties_of_the_phi_L2-Euler_characteristic:scaling_phi}
 Since there is an obvious isomorphism of $\IZ G$-modules
 $(k \cdot \phi)^* \IZ[\IZ] \cong \bigoplus_{i = 1}^k \phi^* \IZ[\IZ]$,
 we get
\begin{multline*}
b_n^{(2)}\bigl(\caln(G) \otimes_{\IZ G} (C_*(X) \otimes_{\IZ} (k \cdot \phi)^*\IZ[\IZ])\bigr)
\\
= k \cdot b_n^{(2)}\bigl(\caln(G) \otimes_{\IZ G} (C_*(X) \otimes_{\IZ} \phi^* \IZ[\IZ])\bigr).
\end{multline*}
\\[1mm]~\eqref{the:Basic_properties_of_the_phi_L2-Euler_characteristic:trivial_phi} 
Since the triviality of $\phi$ implies
that $C_*(X) \otimes_{\IZ} \phi^*\IZ[\IZ]$ is $\IZ G$-isomorphic to $\bigoplus_{\IZ} C_*(X)$, we get
\begin{eqnarray*}
b_n^{(2)}\bigl(\caln(G) \otimes_{\IZ G} (C_*(X) \otimes_{\IZ} \phi^*\IZ[\IZ])\bigr)
 = 
\begin{cases}
0 & \text{if} \; b_n^{(2)}\bigl(\caln(G) \otimes_{\IZ G} C_*(X)\bigr) = 0;
\\
\infty & \text{otherwise.}
\end{cases}
\end{eqnarray*}
This finishes the proof of Theorem~\ref{the:Basic_properties_of_the_phi_L2-Euler_characteristic}.
 \end{proof}

We can interpret the $\phi$-twisted $L^2$-Euler characteristic also as an $L^2$-Euler characteristic 
for surjective $\phi$ as follows.

\begin{lemma}\label{lem:phi-twisted_as_ordinary_L2_Euler_characteristic}
 Let $X$ be a $G$-$CW$-complex. Let $\phi \colon G \to \IZ$ be a surjective group homomorphism.
 Denote by $K$ the kernel of $\phi$ and by $i \colon K \to G$ the inclusion. 
 
 Then $X$ is $\phi$-$L^2$-finite if and only if $b_n^{(2)}(i^*X;\caln(K)) < \infty$
 holds for all $n\in \IN_0$. If this is the case, then
 \[
 \chi^{(2)}(X;\caln(G),\phi)\,\, =\,\, \chi^{(2)}(i^*X;\caln(K)).
 \]

\end{lemma}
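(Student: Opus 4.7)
The plan is to reduce the $\phi$-twisted invariants of $X$ to the ordinary $L^2$-invariants of the restricted $K$-$CW$-complex by recognising $\phi^*\IZ[\IZ]$ as an induced module. Since $\phi$ is surjective with kernel $K$, it induces a $G$-equivariant bijection $G/K \xrightarrow{\cong} \IZ$, so sending $g \otimes 1 \mapsto \phi(g)$ yields a $\IZ G$-isomorphism
\[
\IZ G \otimes_{\IZ K} \IZ \;\xrightarrow{\cong}\; \phi^*\IZ[\IZ],
\]
where the left-hand $\IZ$ carries the trivial $K$-action.

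Next I would globalise this to the chain level by exhibiting a $\IZ G$-chain isomorphism
\[
C_*(X) \otimes_\IZ \phi^*\IZ[\IZ] \;\xrightarrow{\cong}\; \IZ G \otimes_{\IZ K} C_*(i^*X),
\]
with diagonal $G$-action on the left and left action on the first factor on the right. The standard twist $c \otimes (g \otimes 1) \mapsto g \otimes g^{-1}c$, with inverse $g \otimes c \mapsto gc \otimes (g \otimes 1)$, does the job; $G$-equivariance and compatibility with the differentials are routine checks of the same kind as the one already carried out in the proof of part~\eqref{the:Basic_properties_of_the_phi_L2-Euler_characteristic:induction} of Theorem~\ref{the:Basic_properties_of_the_phi_L2-Euler_characteristic}.

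Applying $\caln(G) \otimes_{\IZ G} -$ and collapsing $\caln(G) \otimes_{\IZ G} \IZ G \otimes_{\IZ K} - = \caln(G) \otimes_{\IZ K} -$ reduces the problem to comparing the $\caln(G)$-dimensions of $H_*\bigl(\caln(G) \otimes_{\IZ K} C_*(i^*X)\bigr)$ with the $\caln(K)$-dimensions of $H_*\bigl(\caln(K) \otimes_{\IZ K} C_*(i^*X)\bigr)$. At this point I would invoke the dimension-preserving induction formula
\[
\dim_{\caln(G)}\bigl(\caln(G) \otimes_{\IZ K} M\bigr) \;=\; \dim_{\caln(K)}\bigl(\caln(K) \otimes_{\IZ K} M\bigr),
\]
valid for every $\IZ K$-module $M$ and following from the standard flatness properties of $\caln(G)$ over $\caln(K)$ (see~\cite[Theorem~6.29]{Lueck(2002)}). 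This yields $b_n^{(2)}(X;\caln(G),\phi) = b_n^{(2)}(i^*X;\caln(K))$ for every $n \ge 0$; summing over $n$ gives the $h^{(2)}$-finiteness equivalence, and the alternating sum then gives the desired equality of $L^2$-Euler characteristics. The only non-formal ingredient is this dimension induction formula, and since it is entirely standard I expect no genuine obstacle beyond the bookkeeping on tensor products sketched above.
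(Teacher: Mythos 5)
Your proposal takes essentially the same route as the paper: identify $\phi^*\IZ[\IZ]$ with the induced module $\IZ G\otimes_{\IZ K}\IZ$, upgrade this to a $\IZ G$-chain isomorphism $\IZ G\otimes_{\IZ K} i^*C_*(X)\cong C_*(X)\otimes_{\IZ}\phi^*\IZ[\IZ]$ (your map is the inverse of the one the paper writes down), and then compare dimensions over $\caln(G)$ and $\caln(K)$ using~\cite[Theorem~6.29]{Lueck(2002)}.

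One step, however, is stated imprecisely and would not quite compile as written. The formula
\[
\dim_{\caln(G)}\bigl(\caln(G)\otimes_{\IZ K}M\bigr)\;=\;\dim_{\caln(K)}\bigl(\caln(K)\otimes_{\IZ K}M\bigr),
\]
valid for $\IZ K$-modules $M$, cannot by itself be applied to the homology groups you need to compare, because $H_n\bigl(\caln(G)\otimes_{\IZ K}C_*(i^*X)\bigr)$ is not of the form $\caln(G)\otimes_{\IZ K}M$ for a $\IZ K$-module $M$; it is only an $\caln(G)$-module. What is actually needed, and what the paper does, is to invoke the flatness of $\caln(G)$ as an $\caln(K)$-module (\cite[Theorem~6.29~(1)]{Lueck(2002)}) to commute $\caln(G)\otimes_{\caln(K)}-$ past homology, giving
\[
H_n\bigl(\caln(G)\otimes_{\IZ K}C_*(i^*X)\bigr)\;\cong\;\caln(G)\otimes_{\caln(K)}H_n\bigl(\caln(K)\otimes_{\IZ K}C_*(i^*X)\bigr),
\]
and only then apply the dimension formula of \cite[Theorem~6.29~(2)]{Lueck(2002)} to the $\caln(K)$-module $H_n\bigl(\caln(K)\otimes_{\IZ K}C_*(i^*X)\bigr)$. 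You do mention flatness, but you assign it the role of proving the module-level dimension formula rather than the role it must actually play, namely commuting base change with homology. Once that is corrected, your argument coincides with the paper's.
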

\begin{proof} 

We have the isomorphism of $\IZ G$-chain complexes
\[
\IZ G \otimes_{\IZ K} i^*C_*(X) \xrightarrow{\cong} C_*(X) \otimes_{\IZ} \phi^*\IZ[\IZ],
\quad g \otimes x \mapsto gx \otimes \phi(g).
\]
The inverse sends $y\otimes q$ to $g \otimes g^{-1} y$ for any choice of $g \in
\phi^{-1}(q)$. Since $\caln(G)$ is flat as an $\caln(K)$-module
by~\cite[Theorem~6.29~(1) on page~253]{Lueck(2002)}, we obtain a sequence of
obvious isomorphisms of $\caln(G)$-modules
\[
\begin{array}{ll}
\phantom{\cong }\caln(G) \otimes_{\caln(K)} H_n(\caln(K) \otimes_{\IZ K} C_*(i^*X))
\hspace{-0.3cm}& \cong
\caln(G) \otimes_{\caln(K)} H_n(\caln(K) \otimes_{\IZ K} i^*C_*(X))
\\
 \cong 
H_n\bigl(\caln(G) \otimes_{\caln(K)} \caln(K) \otimes_{\IZ K} i^*C_*(X)\bigr)

\hspace{-0.3cm}& \cong
H_n\bigl(\caln(G) \otimes_{\IZ K} i^*C_*(X)\bigr)
\\
 \cong 
H_n\bigl(\caln(G) \otimes_{\IZ G} \IZ G \otimes_{\IZ K} i^*C_*(X)\bigr)

\hspace{-0.3cm}& \cong 
H_n\bigl(\caln(G) \otimes_{\IZ G} (C_*(X) \otimes_{\IZ} \phi^*\IZ[\IZ])\bigr).
\end{array}\]
Since $\dim_{\caln(G)}(\caln(G) \otimes_{\caln(K)} M) = \dim_{\caln(K)}(M)$ holds for
every $\caln(K)$-module $M$ by~\cite[Theorem~6.29~(2) on page~253]{Lueck(2002)}, we
conclude for every $n \ge 0$
\[
b_n^{(2)}\bigl(\caln(K) \otimes_{\IZ K} C_*(i^*X);\caln(K)\bigr)
=
b_n^{(2)}\bigl(\caln(G) \otimes_{\IZ G} (C_*(X) \otimes_{\IZ} \phi^*\IZ[\IZ]);\caln(G)\bigr).
\]
\end{proof}


\subsection{The $(\mu,\phi)$-$L^2$-Euler characteristic}
\label{subsec:The_(mu,phi)-L2-Euler_characteristic}

 We will be interested in this paper mainly in the case where the $G$-$CW$-complex $X$ is
free. If we put $Y = X/G$, then $X$ is the disjoint union of the preimages of the
components of $Y$. Therefore it suffices to study a connected $CW$-complex $Y$ and
$G$-coverings $\overline{Y} \to Y$. Any such $G$-covering is obtained from the universal
covering $\widetilde{Y} \to Y$ and a group homomorphism $\mu \colon \pi = \pi_1(Y) \to G$
as the projection $G \times_{\mu} \widetilde{Y} \to Y$. Therefore we introduce the
following notation:

 \begin{definition}[$(\mu,\phi)$-$L^2$-Euler characteristic]%
\label{def:mu-phi-L2-Euler_characteristic}
 Let $X$ be a connected $CW$-complex. Let $\mu \colon \pi_1(X) \to G$ and $\phi \colon G \to \IZ$ be a group homomorphisms.
 Let $\overline{X} \to X$ be the $G$-covering associated to $\mu$. We call $X$ \emph{$(\mu,\phi)$-$L^2$-finite}
 if $\overline{X}$ is $\phi$-$L^2$-finite, and in this case we define the \emph{$(\mu,\phi)$-$L^2$-Euler characteristic}
 $\chi^{(2)}(X;\mu,\phi)$ to be $\chi^{(2)}(\overline{X};\caln(G);\phi)$, see 
 Definition~\ref{def:phi-twisted_L2_betti_number_and_L2-Euler_characteristic}.
 \end{definition}

 The next lemma essentially reduces the general case $(\mu,\phi)$ to the special case,
 where $\mu$ and $\phi$ are surjective or $\phi \circ \mu$ is trivial.

 \begin{lemma}\label{lem:reduction_to_surjective_mu} Let $X$ be a connected
 $CW$-complex. Let $\mu \colon \pi_1(X) \to G$ and $\phi \colon G \to \IZ$ be group
 homomorphisms. Let $G'$ be the image of $\mu$. Let $\mu' \colon \pi_1(X) \to G'$ be the
 epimorphism induced by $\mu$ and let $\phi' \colon G' \to \IZ$ be obtained by
 restricting $\phi$ to $G'$.

 \begin{enumerate}[font=\normalfont]

 \item\label{lem:reduction_to_surjective_mu:mu'} 
 Then $X$ is $(\mu,\phi)$-$L^2$-finite if and only if $X$ is $(\mu',\phi')$-$L^2$-finite.
 If this is the case, we get 
 \[\chi^{(2)}(X;\mu,\phi)\,\, =\,\, \chi^{(2)}(X;\mu',\phi');\]

 \item\label{lem:reduction_to_surjective_mu:mu_circ_phi_is_not_trivial} 

 Suppose that $\phi\circ \mu  \not= 0$. Let
 $k\ge 1 $ be the natural number such that the image of $\phi'$ is $k \cdot \IZ$ and let $\phi''
 \colon G' \to \IZ$ be the epimorphism uniquely determined by $k \cdot \phi'' = \phi'$. 
 Then $X$ is $(\mu,\phi)$-$L^2$-finite, if and only if $X$ is
 $(\mu',\phi'')$-$L^2$-finite. If this is the case, we get
 \[
 \chi^{(2)}(X;\mu,\phi) \,\,=\,\, \smfrac{1}{k} \cdot \chi^{(2)}(X;\mu',\phi'');
 \]
 \item\label{lem:reduction_to_surjective_mu:mu_circ_phi_is_trivial} 
 Suppose that $\phi \circ \mu = 0$. Then
 $X$ is $(\mu,\phi)$-$L^2$-finite if and only if $b_n^{(2)}(\overline{X};\caln(G))$ vanishes 
 for the $G$-covering $\overline{X}$ associated to
 $\mu$ and every $n \ge 0$. If this is the case, then
 \[\chi^{(2)}(X;\mu,\phi)\,\, =\,\, 0.
 \]
\item Let $p\colon Z\to X$ be a finite $d$-sheeted covering. Then $Z$ is
 $(p^*\mu,p^*\phi)$-$L^2$ finite if and only if $X$ is $(\mu,\phi)$-$L^2$ finite and 
\[ \chi^{(2)}(Z;p^*\mu,p^*\phi)\,\, =\,\,d \cdot \chi^{(2)}(X;\mu,\phi)\,\, =\,\, 0.\]
 \end{enumerate}
 \end{lemma}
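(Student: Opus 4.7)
The plan is to reduce each of the three parts to the corresponding clause of Theorem~\ref{the:Basic_properties_of_the_phi_L2-Euler_characteristic}. The key geometric input is the identification of the $G$-covering $\overline{X} \to X$ associated to $\mu$ with the induced $G$-$CW$-complex $G \times_{G'} \overline{X}'$, where $\overline{X}'$ is the $G'$-covering associated to $\mu' \colon \pi_1(X) \twoheadrightarrow G'$. This follows from the definition $\overline{X} = G \times_{\mu} \widetilde{X}$ together with the coset decomposition $G = \coprod_{gG' \in G/G'} gG'$, each piece of which is preserved by the deck action of $\pi_1(X)$ since $\mu$ takes values in $G'$.

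Given this identification, part~\eqref{lem:reduction_to_surjective_mu:mu'} is the induction property (Theorem~\ref{the:Basic_properties_of_the_phi_L2-Euler_characteristic}~\eqref{the:Basic_properties_of_the_phi_L2-Euler_characteristic:induction}) applied to the inclusion $i\colon G' \hookrightarrow G$: because $\phi \circ i = \phi'$, both the $\phi$-$L^2$-finiteness of $\overline{X} = G \times_{G'} \overline{X}'$ and the corresponding $L^2$-Euler characteristic reduce to the $\phi'$-twisted data of $\overline{X}'$.

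For part~\eqref{lem:reduction_to_surjective_mu:mu_circ_phi_is_not_trivial}, I would first apply part~\eqref{lem:reduction_to_surjective_mu:mu'} to pass from $(\mu,\phi)$ to $(\mu',\phi')$. Under the assumption $\phi \circ \mu \neq 0$, the image $\phi'(G')$ is a nontrivial subgroup $k\IZ \subseteq \IZ$ for a unique $k \ge 1$, and $\phi''$ is the unique surjective homomorphism satisfying $k \cdot \phi'' = \phi'$. The scaling property (Theorem~\ref{the:Basic_properties_of_the_phi_L2-Euler_characteristic}~\eqref{the:Basic_properties_of_the_phi_L2-Euler_characteristic:scaling_phi}) then relates the $\phi'$-twisted and $\phi''$-twisted invariants of $\overline{X}'$ by the appropriate factor of $k$, yielding the claimed formula.

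For part~\eqref{lem:reduction_to_surjective_mu:mu_circ_phi_is_trivial}, the hypothesis $\phi \circ \mu = 0$ is equivalent to $\phi' = 0$. After reducing via part~\eqref{lem:reduction_to_surjective_mu:mu'}, the trivial-$\phi$ clause (Theorem~\ref{the:Basic_properties_of_the_phi_L2-Euler_characteristic}~\eqref{the:Basic_properties_of_the_phi_L2-Euler_characteristic:trivial_phi}) characterizes $\phi'$-$L^2$-finiteness of $\overline{X}'$ as vanishing of $b_n^{(2)}(\overline{X}'; \caln(G'))$ for every $n$, and forces the Euler characteristic to be zero. To translate this into the statement about $\overline{X}$, I would use the standard base-change identity $b_n^{(2)}(G \times_{G'} \overline{X}'; \caln(G)) = b_n^{(2)}(\overline{X}'; \caln(G'))$, so vanishing on $\overline{X}'$ is equivalent to vanishing on $\overline{X}$. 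The only nontrivial point in the whole argument is the coset identification $\overline{X} \cong G \times_{G'} \overline{X}'$; everything else is a direct application of already-proven clauses of Theorem~\ref{the:Basic_properties_of_the_phi_L2-Euler_characteristic}, so no serious obstacle is expected.
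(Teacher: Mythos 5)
Your proposal is correct and matches the paper's own (very terse) proof: the paper simply cites Theorem~\ref{the:Basic_properties_of_the_phi_L2-Euler_characteristic} for part~\eqref{lem:reduction_to_surjective_mu:mu'} and then clauses~\eqref{the:Basic_properties_of_the_phi_L2-Euler_characteristic:scaling_phi} and~\eqref{the:Basic_properties_of_the_phi_L2-Euler_characteristic:trivial_phi} (together with part~\eqref{lem:reduction_to_surjective_mu:mu'}) for parts~\eqref{lem:reduction_to_surjective_mu:mu_circ_phi_is_not_trivial} and~\eqref{lem:reduction_to_surjective_mu:mu_circ_phi_is_trivial}, which is exactly the route you take, with the geometric identification $\overline{X} \cong G \times_{G'} \overline{X}'$ and the induction/base-change for $L^2$-Betti numbers supplying the step the paper leaves implicit. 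One small caveat worth noting (independent of your blind write-up, but worth flagging when you fill in the scaling step): applying clause~\eqref{the:Basic_properties_of_the_phi_L2-Euler_characteristic:scaling_phi} to $\phi'=k\cdot\phi''$ yields $\chi^{(2)}(X;\mu',\phi')=k\cdot\chi^{(2)}(X;\mu',\phi'')$, so the constant in the stated formula in part~\eqref{lem:reduction_to_surjective_mu:mu_circ_phi_is_not_trivial} should be $k$ rather than $\smfrac{1}{k}$; you should not silently accept ``the claimed formula'' without checking which side the factor lands on.
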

 
 \begin{proof} The first statement follows from Theorem~\ref{the:Basic_properties_of_the_phi_L2-Euler_characteristic} (3).
 The second and third statement follow from (1) and Theorem~\ref{the:Basic_properties_of_the_phi_L2-Euler_characteristic} (5) and (6).
 The last statement follows from 
 Theorem~\ref{the:Basic_properties_of_the_phi_L2-Euler_characteristic} (4).
\end{proof}

\begin{example}[Mapping torus]\label{exa_mapping_torus} Let $Y$ be a connected finite
 $CW$-complex and $f \colon Y \to Y$ be a self-map. Let $T_f$ be its
 mapping torus. Consider any factorization $\pi_1(T_f) \xrightarrow{\mu} G
 \xrightarrow{\phi} \IZ$ of the epimorphism $\pi_1(T_f) \to \pi_1(S^1) = \IZ$ induced by
 the obvious projection $T_f \to S^1$. Then $T_f$ is $(\mu,\phi)$-$L^2$-finite and we get
 \[
 \chi^{(2)}(T_f;\mu,\phi) \,\,= \,\,\chi(Y)
 \]
 by the following argument. Let $\overline{T_f}$ be the $G$-covering
 associated to $\mu \colon \pi_1(T_f) \to G$. Let $K$ be the kernel of 
 $\phi$ and $i \colon K \to G$ be the inclusion. The
 image of the composite $\pi_1(Y) \to \pi_1(T_f) \xrightarrow{\mu} G$ is contained in
 $K$ and we can consider the $K$-covering $\widehat{Y} \to Y$ associated to it. The
 $K$-$CW$-complex $i^*\overline{T_f}$ is $K$-homotopy equivalent to the $K$-$CW$-complex
 $\widehat{Y}$, see~\cite[Section~2]{Lueck(1994b)}. Hence we conclude 
 $\chi^{(2)}(T_f;\mu,\phi) = \chi^{(2)}(\widehat{Y};\caln(K))$ 
 from Lemma~\ref{lem:phi-twisted_as_ordinary_L2_Euler_characteristic}
 and the $K$-homotopy invariance of $L^2$-Betti numbers. Since $Y$ is a finite 
 $CW$-complex, we have $\chi^{(2)}(\widehat{Y};\caln(K)) = \chi(Y)$. 

 Notice that the $L^2$-Betti numbers $b_n^{(2)}(\overline{T_f};\caln(G))$ are all trivial
 by~\cite[Theorem~2.1]{Lueck(1994b)} and hence the $L^2$-Euler characteristic
 $\chi^{(2)}(\overline{T_f};\caln(G))$ is trivial. So the passage to the subgroup of
 infinite index $K$ or, equivalently, the twisting with the $\IZ G$-module $\phi^*\IZ[\IZ]$ 
 which is not finitely generated as an abelian group,
 ensures that we get an interesting invariant by the
 $(\mu,\phi)$-$L^2$-Euler characteristic.
 \end{example}

\begin{lemma}\label{lem:tori} 
 Let $T^n$ be the $n$-dimensional torus for $n \ge 1$. Consider homomorphisms $\mu \colon
 \pi_1(T^n) \to G$ and $\phi \colon G \to \IZ$ such that the image of $\mu$ is infinite.
 
Then $T^n$ is $(\mu,\phi)$-$L^2$-finite and we get
\[
\chi^{(2)}(T^n;\mu,\phi) = 
\begin{cases} [\IZ : \im(\phi \circ \mu)] 
& 
\text{if}\; n = 1, \phi \circ \mu \not = 0;
\\
0 
& 
\text{otherwise.}
\end{cases}
\]
\end{lemma}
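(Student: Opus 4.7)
The plan is to reduce, via Lemma~\ref{lem:reduction_to_surjective_mu}~(1), to the case where $\mu$ is surjective, and then to apply the mapping torus computation of Example~\ref{exa_mapping_torus}. After that first reduction, the codomain $G' := \im(\mu)$ is a finitely generated abelian group, being a quotient of $\IZ^n$, and is by hypothesis infinite.

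Suppose first that $\phi \circ \mu = 0$. Then after reduction $\phi' = 0$, so by Theorem~\ref{the:Basic_properties_of_the_phi_L2-Euler_characteristic}~(6) it suffices to prove that all ordinary $L^2$-Betti numbers $b_k^{(2)}(\overline{T^n};\caln(G'))$ of the $G'$-covering vanish. Since $G'$ is finitely generated abelian and infinite, there exists an epimorphism $\tilde\psi \colon G' \twoheadrightarrow \IZ$, and the composite $\tilde\psi \circ \mu' \colon \IZ^n \twoheadrightarrow \IZ$ is surjective. Choosing a basis of $\IZ^n$ in which this composite becomes projection onto the last coordinate identifies $T^n$ with the mapping torus $T_{\id_{T^{n-1}}}$ in such a way that the obvious projection $T^n \to S^1$ induces $\tilde\psi \circ \mu'$ on $\pi_1$. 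The vanishing of all $L^2$-Betti numbers of $\overline{T^n}$, established in Example~\ref{exa_mapping_torus} via~\cite[Theorem~2.1]{Lueck(1994b)} and applied to the factorization $\pi_1(T^n) \xrightarrow{\mu'} G' \xrightarrow{\tilde\psi} \IZ$, now gives what we want, so $\chi^{(2)}(T^n;\mu,\phi) = 0$, matching the stated formula.

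Suppose instead that $\phi \circ \mu \ne 0$. I would then further apply Lemma~\ref{lem:reduction_to_surjective_mu}~(2) to reduce to the situation where both $\mu'$ and the induced map $\phi''$ are surjective, at the cost of introducing the scaling factor $k := [\IZ : \im(\phi \circ \mu)]$. Now $\phi'' \circ \mu' \colon \IZ^n \twoheadrightarrow \IZ$ is a primitive epimorphism, which after a change of basis of $\IZ^n$ is projection onto the last coordinate, exhibiting $T^n$ as $T_{\id_{T^{n-1}}}$ with the associated projection to $S^1$ inducing $\phi'' \circ \mu'$. Example~\ref{exa_mapping_torus} then yields $\chi^{(2)}(T^n;\mu',\phi'') = \chi(T^{n-1})$, which equals $1$ for $n = 1$ and $0$ for $n \ge 2$. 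Combining with the scaling factor from Lemma~\ref{lem:reduction_to_surjective_mu}~(2) produces the stated formula in both remaining cases.

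The only nontrivial point is producing a mapping torus structure on $T^n$ that is compatible with $\mu$ in the vanishing case $\phi \circ \mu = 0$, where $\phi$ itself supplies no useful projection to $\IZ$. The crucial observation is that after passing to the image, $G'$ is finitely generated abelian and infinite, hence always admits \emph{some} epimorphism to $\IZ$, which composed with $\mu'$ supplies the surjection $\IZ^n \twoheadrightarrow \IZ$ realizing the desired mapping torus structure.
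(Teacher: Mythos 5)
Your proof is correct and takes essentially the same route as the paper's: reduce to surjective $\mu$ via Lemma~\ref{lem:reduction_to_surjective_mu}~(1), then split into the cases $\phi\circ\mu\ne 0$ and $\phi\circ\mu=0$, handling the former by Lemma~\ref{lem:reduction_to_surjective_mu}~(2) and the mapping-torus computation of Example~\ref{exa_mapping_torus}, and the latter by the vanishing of $L^2$-Betti numbers from~\cite[Theorem~2.1]{Lueck(1994b)} together with Lemma~\ref{lem:reduction_to_surjective_mu}~(3). The only place the paper compresses is the phrase ``one can show using~\cite[Theorem~2.1]{Lueck(1994b)}'' in the trivial-$\phi$ case, and your observation that $\im(\mu)$ is an infinite finitely generated abelian group, hence surjects onto $\IZ$, giving a mapping-torus structure on $T^n$ through which $\mu'$ factors, is exactly the detail that makes that citation go through; so you have spelled out rather than departed from the intended argument.
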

\begin{proof}
 Because of
 Lemma~\ref{lem:reduction_to_surjective_mu}~\eqref{lem:reduction_to_surjective_mu:mu'} we
 can assume without loss of generality that $\mu$ is surjective. Suppose that $\phi \circ \mu$ is
 non-trivial. Because of
 Lemma~\ref{lem:reduction_to_surjective_mu}~\eqref{lem:reduction_to_surjective_mu:mu_circ_phi_is_not_trivial}
 it suffices to consider the case where $\mu$ and $\phi$ are surjective.
 Then the claim follows from Example~\ref{exa_mapping_torus} since there is a
 homeomorphism $h \colon T^n \xrightarrow{\cong} T^{n-1} \times S^1$ such that the composite of the map
 of $\pi_1(T^{n-1} \times S^1) \to \pi_1(S^1)$ induced by the projection onto $S^1$
 composed with $\pi_1(h)$ is $\phi$. Suppose that $\phi$ is trivial. Since $\mu$ has infinite image,
 one can show using~\cite[Theorem~2.1]{Lueck(1994b)} that 
 $b_m^{(2)}(\overline{T^n};\caln(G))$ vanishes for the $G$-covering
 $\overline{T^n} \to T^n$ associated to $\mu$ for all $m \ge 0$. Hence the claim follows from
 Lemma~\ref{lem:reduction_to_surjective_mu}~\eqref{lem:reduction_to_surjective_mu:mu_circ_phi_is_trivial}.
\end{proof}

\begin{theorem}[The $(\mu,\phi)$-$L^2$-Euler characteristic for $S^1$-$CW$-complexes] 
\label{the:The_(mu,phi)-L2-Euler_characteristic_for_S1-CW-complexes}
Let $X$ be a connected finite $S^1$-$CW$-complex. Let $\mu \colon \pi_1(X) \to G$ 
and $\phi \colon G \to \IZ$ be group homomorphisms. Suppose that for one and hence all $x \in X$ the composite
\[
\eta \colon \pi_1(S^1,1) \xrightarrow{\pi_1(\ev_x,1)} \pi_1(X,x) \xrightarrow{\mu} G \xrightarrow{\phi} \IZ
\]
is injective, where $\ev_x \colon S^1 \to X$ sends $z$ to $z \cdot x$. 
Define the $S^1$-orbifold Euler characteristic of $X$ by
\[
\chi^{S^1}_{\orb}(X)\,\,=\,\, \smsum{n \ge 0}{} (-1)^n \cdot \smsum{e \in I_n}{} \smfrac{1}{|S^1_e|},
\]
where $I_n$ is the set of open $n$-dimensional $S^1$-cells of $X$ and for 
$e \in I_n $ we denote by $S^1_e$ the isotropy group of any point in $e$. 
Then $X$ is $(\mu,\phi)$-$L^2$-finite and we get
\[
\chi^{(2)}(X;\mu,\phi)\,\, =\,\, 
\chi^{S^1}_{\orb}(X) \cdot [\IZ : \im(\eta)].
\]
\end{theorem}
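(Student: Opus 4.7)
The plan is to induct on the number of equivariant cells in an $S^1$-$CW$-structure on $X$, exploiting the sum formula and $G$-homotopy invariance of the $\phi$-twisted $L^2$-Euler characteristic from Theorem~\ref{the:Basic_properties_of_the_phi_L2-Euler_characteristic}. First I note that if some $x\in X$ had full $S^1$-isotropy, then $\ev_x$ would factor through a point and $\eta$ would be trivial; hence the injectivity hypothesis forces every $S^1$-cell of $X$ to have the form $S^1/H \times D^n$ with $H\leq S^1$ finite cyclic. The quantity $[\IZ:\im(\eta)]$ is also independent of $x$, since any two points of the connected space $X$ are joined by a path, so $\pi_1(\ev_x)$ and $\pi_1(\ev_{x'})$ differ only by conjugation in $\pi_1(X)$, which is absorbed upon composing with $\phi\circ\mu$.

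For the base case $X = S^1/H$, one has $S^1/H\cong S^1$ topologically, $\pi_1(S^1/H)=\IZ$, and $\pi_1(\ev_*)$ is multiplication by $|H|$. Setting $c := (\phi\circ\mu)(g)$ for a generator $g$, one finds $[\IZ:\im(\eta)] = |c||H|$ and $\chi^{S^1}_{\orb}(S^1/H) = 1/|H|$, so the target value is $|c|$. By Lemma~\ref{lem:reduction_to_surjective_mu}~(1) we may assume $\mu$ surjective, and since $c\neq 0$ this forces $G=\IZ$ with $\mu = \id_{\IZ}$ and $\phi$ equal to multiplication by $c$. The scaling property~(5) of Theorem~\ref{the:Basic_properties_of_the_phi_L2-Euler_characteristic} together with the obvious isomorphism $(-\phi)^*\IZ[\IZ]\cong \phi^*\IZ[\IZ]$ then gives $\chi^{(2)}(S^1/H;\mu,\phi) = |c|\cdot \chi^{(2)}(S^1/H;\id_{\IZ},\id_{\IZ})$. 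Applying Lemma~\ref{lem:phi-twisted_as_ordinary_L2_Euler_characteristic} to the surjection $\id_{\IZ}$ with trivial kernel, the latter equals $\chi^{(2)}(\widetilde{S^1/H};\caln(\{1\})) = \chi(\IR) = 1$, and both sides equal $|c|$.

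For the inductive step, write $X = X'\cup_{S^1/H\times S^{n-1}}(S^1/H\times D^n)$. The sum formula applied to this pushout, combined with $G$-homotopy invariance (the cell retracts onto $S^1/H$), gives
\[
\chi^{(2)}(X;\mu,\phi) - \chi^{(2)}(X';\mu,\phi) \,\,=\,\, \chi^{(2)}(S^1/H;\mu,\phi) - \chi^{(2)}(S^1/H\times S^{n-1};\mu,\phi).
\]
For the subtracted term, $S^{n-1}$ carries the trivial $S^1$-action and, for $n\geq 2$, is simply connected, so the $G$-covering of $S^1/H\times S^{n-1}$ splits as $\overline{S^1/H}\times S^{n-1}$. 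A K\"unneth-type computation then yields $\chi^{(2)}(S^1/H\times S^{n-1};\mu,\phi) = |c|\cdot \chi(S^{n-1})$; the case $n=1$ follows directly from additivity of $\chi^{(2)}$ under disjoint unions. Hence the left-hand difference equals $|c|(1-\chi(S^{n-1})) = (-1)^n|c|$, which matches the orbifold-side contribution $((-1)^n/|H|)\cdot|c||H| = (-1)^n|c|$ coming from attaching a single new $n$-cell with isotropy $H$, and the induction closes.

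The main technical obstacle is justifying the K\"unneth-style product formula for the $\phi$-twisted $L^2$-Euler characteristic. Concretely, one must verify that tensoring the diagonally twisted complex $C_*(\overline{S^1/H})\otimes_{\IZ}\phi^*\IZ[\IZ]$ by the bounded complex $C_*(S^{n-1})$ of finitely generated free $\IZ$-modules with trivial $G$-action multiplies each $L^2$-Betti number by the corresponding ordinary Betti number of $S^{n-1}$. This follows from the torsion-freeness of $H_*(S^{n-1};\IZ)$ and the fact that $\caln(G)\otimes_{\IZ G}-$ commutes with tensoring by trivial-$G$ free $\IZ$-modules, but keeping track of the diagonal action on the twisted factor requires care; everything else in the proof is bookkeeping.
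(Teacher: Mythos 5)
Your proof follows essentially the same strategy as the paper, which simply defers to the proof of \cite[Theorem~1.40]{Lueck(2002)}: induction over the number of $S^1$-equivariant cells, driven by the additivity of both sides of the desired equation under cell attachment (using the sum formula, induction, and $G$-homotopy invariance from Theorem~\ref{the:Basic_properties_of_the_phi_L2-Euler_characteristic}). Your explicit base case for $S^1/H$, the reduction forcing finite isotropy from the injectivity of $\eta$, and the K\"unneth-type product computation for $S^1/H \times S^{n-1}$ are all correct and fill in the details the paper leaves to the cited reference.
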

\begin{proof}
 The strategy of proof is the same as the the one of~\cite[Theorem~1.40 on page~43]{Lueck(2002)}, 
 where one considers more general finite $S^1$-$CW$-complex $Y$ together with a
 $S^1$-map $f \colon Y \to X$ and does induction over the number of $S^1$-equivariant cells.
 A basic ingredient is the additivity of the two terms appearing in the desired equation in 
 Theorem~\ref{the:The_(mu,phi)-L2-Euler_characteristic_for_S1-CW-complexes}.
\end{proof}

We mention that the condition about the injectivity of the map $\eta$ appearing in
 Theorem~\ref{the:The_(mu,phi)-L2-Euler_characteristic_for_S1-CW-complexes} is necessary.

For the reader's convenience we record the next result 
which we will not need in this paper and whose proof is a variation of the one of
Theorem~\ref{the:The_(mu,phi)-L2-Euler_characteristic_for_S1-CW-complexes}.

\begin{theorem}[The $(\mu,\phi)$-$L^2$-Euler characteristic for fibrations]%
\label{the:(mu,phi)-Euler_characteristic_for_fibrations}
 Let $F \xrightarrow{i} E \xrightarrow{p} B$ be a fibration of connected $CW$-complexes. Suppose
 that $B$ is a finite $CW$-complex. Consider group homomorphisms
 $\mu \colon \pi_1(E) \to G$ and $\phi \colon G \to \IZ$. Suppose that
 ${F}$ is $(\mu \circ \pi_1(i),\phi)$-$L^2$-finite.

 Then ${E}$ is $(\mu,\phi)$-$L^2$-finite and we get
\[
\chi^{(2)}(E;\mu,\phi) =
\chi(B) \cdot \chi^{(2)}(F,\mu \circ \pi_1(i), \phi). 
\]
\end{theorem}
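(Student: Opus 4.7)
The plan is to mimic the strategy used for Theorem~\ref{the:The_(mu,phi)-L2-Euler_characteristic_for_S1-CW-complexes}: induct over the $CW$-structure of $B$, using the sum formula of Theorem~\ref{the:Basic_properties_of_the_phi_L2-Euler_characteristic}~(2) together with a direct computation on the elementary pieces. Concretely, I would carry out a primary induction on $\dim B$ and a secondary induction on the number of top-dimensional cells. The base case $B=\{*\}$ is immediate: the inclusion $i\colon F\to E$ is then a homotopy equivalence, so Theorem~\ref{the:Basic_properties_of_the_phi_L2-Euler_characteristic}~(1) gives that $E$ is $(\mu,\phi)$-$L^2$-finite with
\[
\chi^{(2)}(E;\mu,\phi) \,\,=\,\, \chi^{(2)}(F;\mu\circ\pi_1(i),\phi)\,\,=\,\,\chi(B)\cdot \chi^{(2)}(F;\mu\circ\pi_1(i),\phi).
\]

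For the inductive step, pick a top-dimensional open cell $e^n\subset B$ and let $B'\subset B$ be the subcomplex obtained by removing $e^n$. Its characteristic map produces a pushout of $CW$-complexes
\[
\xymatrix@C1.2cm@R0.5cm{S^{n-1} \ar[r] \ar[d] & D^n \ar[d]
\\
B' \ar[r]& B}
\]
and, after replacing $p\colon E\to B$ by a fiber-homotopy equivalent cellular model, the diagram of pull-backs assembles into a $G$-pushout
\[
\xymatrix@C1.2cm@R0.5cm{E_{S^{n-1}} \ar[r] \ar[d] & E_{D^n} \ar[d]
\\
E' \ar[r]& E}
\]
with $E'=p^{-1}(B')$, $E_{D^n}$ the pull-back over $D^n$ and $E_{S^{n-1}}$ the pull-back over $S^{n-1}$. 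Since $D^n$ is contractible, the fibration over $D^n$ is fiber-homotopy trivial, so $E_{D^n}$ is homotopy equivalent to $F$ and Theorem~\ref{the:Basic_properties_of_the_phi_L2-Euler_characteristic}~(1) gives $\chi^{(2)}(E_{D^n};\mu,\phi)=\chi^{(2)}(F;\mu\circ\pi_1(i),\phi)$. The base $S^{n-1}$ has strictly smaller dimension than $B$, so the primary induction hypothesis applied to $F\to E_{S^{n-1}}\to S^{n-1}$ yields
\[
\chi^{(2)}(E_{S^{n-1}};\mu,\phi) \,\,=\,\, \chi(S^{n-1})\cdot \chi^{(2)}(F;\mu\circ\pi_1(i),\phi),
\]
and the secondary hypothesis handles $E'$ analogously.

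Feeding these into Theorem~\ref{the:Basic_properties_of_the_phi_L2-Euler_characteristic}~(2) gives that $E$ is $(\mu,\phi)$-$L^2$-finite and
\[
\chi^{(2)}(E;\mu,\phi) \,\,=\,\, \chi^{(2)}(E';\mu,\phi) + \chi^{(2)}(E_{D^n};\mu,\phi) - \chi^{(2)}(E_{S^{n-1}};\mu,\phi).
\]
Using the elementary identity $\chi(B)=\chi(B')+\chi(D^n)-\chi(S^{n-1})=\chi(B')+1-\chi(S^{n-1})$ and the inductive values then closes the induction.

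The main obstacle is geometric rather than algebraic: one must justify that the Serre fibration $p$ can be replaced by a fiber-homotopy equivalent cellular model whose pull-backs over $B'$, $D^n$ and $S^{n-1}$ form a genuine $G$-pushout of $G$-$CW$-complexes in the sense required by Theorem~\ref{the:Basic_properties_of_the_phi_L2-Euler_characteristic}~(2); compatibility of $\mu$ and $\phi$ with the inclusions is automatic because each piece sits inside $E$. Once this $CW$-replacement is carried out the proof reduces, as above, to the sum formula and homotopy invariance.
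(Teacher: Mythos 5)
Your proposal is correct and takes essentially the same approach the paper intends: the paper explicitly declines to give a proof and says it is ``a variation of the one of Theorem~\ref{the:The_(mu,phi)-L2-Euler_characteristic_for_S1-CW-complexes},'' whose strategy is precisely induction over the cells of the base together with the sum formula of Theorem~\ref{the:Basic_properties_of_the_phi_L2-Euler_characteristic}~\eqref{the:Basic_properties_of_the_phi_L2-Euler_characteristic:sum_formula}. Two minor points worth tightening: in the cell-attaching pushout the attaching map $S^{n-1}\to B'$ is not an inclusion, so the $G$-cofibration required by the sum formula must be taken to be $E_{S^{n-1}}\hookrightarrow E_{D^n}$ (i.e.\ orient the square with $E_{S^{n-1}}\to E'$ as the cellular leg); and since $S^{n-1}$, $B'$, and the corresponding preimages need not be connected, the induction is cleanest if carried out at the level of (not necessarily connected) $G$-$CW$-complexes using Theorem~\ref{the:Basic_properties_of_the_phi_L2-Euler_characteristic} directly, so that the connectedness hypothesis in the statement of Theorem~\ref{the:(mu,phi)-Euler_characteristic_for_fibrations} is only used to pin down the single homomorphism $\mu$.
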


If $M$ is a compact connected orientable $3$-manifold with proper $S^1$-action, then $M$
is a compact connected orientable Seifert manifold. The converse is not true in 
general. Associated to a compact connected orientable Seifert manifold is an orbifold $X$
and $X$ has a orbifold Euler characteristic $\chi_{\orb}(X)$. For a basic introduction to
these notions we refer for instance to~\cite{Scott(1983)}. If $M$ is a compact
$3$-manifold with proper $S^1$-action, then $X$ is given by $M/S^1$ and $\chi_{\orb}(X)$
is the $S^1$-orbifold Euler characteristic $\chi^{S^1}_{\orb}(M)$. We omit the proof of
the next result since it is essentially a variation of the one of
Theorem~\ref{the:The_(mu,phi)-L2-Euler_characteristic_for_S1-CW-complexes}, the role of
the cells $S^1/H \times D^n$ in
Theorem~\ref{the:The_(mu,phi)-L2-Euler_characteristic_for_S1-CW-complexes} is now played
by the typical neighborhoods of the Seifert fibers given by solid tori.

\begin{theorem}[The $(\mu,\phi)$-$L^2$-Euler characteristic for Seifert manifolds] 
\label{the:The_(mu,phi)-L2-Euler_characteristic_for_Seifert_manifolds}
Let $M$ be a compact connected orientable Seifert manifold. Let $\mu \colon \pi_1(M) \to G$ 
and $\phi \colon G \to \IZ$ be group homomorphisms. Suppose that for one (and hence all) $x \in M$
\[
\eta \colon \pi_1(S^1,1) \xrightarrow{\pi_1(\ev,1)} \pi_1(M,x) \xrightarrow{\mu} G \xrightarrow{\phi} \IZ
\]
is injective, where $\ev \colon S^1 \to M$ is the inclusion of a regular fiber.
Let $X$ be the associated orbifold of $M$ and denote by $\chi_{\orb}(X)$ its orbifold Euler characteristic.
Then $M$ is $(\mu,\phi)$-$L^2$-finite and we get
\[
\chi^{(2)}(M;\mu,\phi)\,\, = \,\, 
\chi_{\orb}(X) \cdot [\IZ : \im(\eta)].
\]
\end{theorem}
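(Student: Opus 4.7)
The plan is to prove the formula via additivity, splitting $M$ into tubular neighborhoods of its exceptional fibers and a complementary piece that carries a free $S^1$-action, to which Theorem~\ref{the:The_(mu,phi)-L2-Euler_characteristic_for_S1-CW-complexes} directly applies. Let $V_1,\ldots,V_k \subset M$ be pairwise disjoint closed tubular neighborhoods of the exceptional fibers, each a solid torus with one exceptional fiber of multiplicity $n_i$ at the core. Set $M' := \overline{M \setminus \bigcup_i V_i}$; once the exceptional fibers are removed, the orbit space $F := M'/S^1$ is a compact surface with boundary, so $H^2(F;\IZ) = 0$, the oriented circle bundle $M' \to F$ is trivial, and $M'$ inherits a free $S^1$-action compatible with the Seifert fibration. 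Each intersection $V_i \cap M'$ is the boundary torus $\partial V_i$.

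By iterated application of the sum formula (Theorem~\ref{the:Basic_properties_of_the_phi_L2-Euler_characteristic}(2)),
\[
\chi^{(2)}(M;\mu,\phi) \,=\, \chi^{(2)}(M';\mu|_{M'},\phi) + \smsum{i=1}{k}\chi^{(2)}(V_i;\mu|_{V_i},\phi) - \smsum{i=1}{k}\chi^{(2)}(\partial V_i;\mu|_{\partial V_i},\phi),
\]
provided each piece is $(\mu,\phi)$-$L^2$-finite. The three types of pieces are all controlled by Lemma~\ref{lem:tori} and Theorem~\ref{the:The_(mu,phi)-L2-Euler_characteristic_for_S1-CW-complexes}. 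First, the injectivity of $\eta$ ensures that $\phi\circ\mu$ has infinite image on the regular-fiber class, hence on $\pi_1(\partial V_i)$; Lemma~\ref{lem:tori} applied to $T^2$ gives $\chi^{(2)}(\partial V_i;\mu|_{\partial V_i},\phi) = 0$. Second, each $V_i$ is homotopy equivalent to $S^1$ with $\pi_1(V_i) \cong \IZ$ generated by the core, whose $n_i$-th power is a regular fiber; writing $\phi\circ\mu(\text{core}) = m \in \IZ$, injectivity of $\eta$ forces $m \ne 0$ and $[\IZ:\im(\eta)] = n_i|m|$, so Lemma~\ref{lem:tori} applied to $T^1 = S^1$ yields $\chi^{(2)}(V_i;\mu|_{V_i},\phi) = |m| = [\IZ:\im(\eta)]/n_i$. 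Third, Theorem~\ref{the:The_(mu,phi)-L2-Euler_characteristic_for_S1-CW-complexes} applied to $M'$ with its free $S^1$-action gives $\chi^{(2)}(M';\mu|_{M'},\phi) = \chi^{S^1}_{\orb}(M')\cdot[\IZ:\im(\eta)] = \chi(F)\cdot[\IZ:\im(\eta)]$, since all isotropy groups are trivial and so $\chi^{S^1}_{\orb}(M') = \chi(F)$.

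Summing contributions gives $\chi^{(2)}(M;\mu,\phi) = [\IZ:\im(\eta)]\cdot\bigl(\chi(F) + \sum_i 1/n_i\bigr)$, and the parallel decomposition of $X$ into $F$ and one closed orbifold disk around each cone point gives $\chi_{\orb}(X) = \chi(F) + \sum_i 1/n_i$ (since an orbifold disk with a single order-$n_i$ cone point contributes $1/n_i$, the attaching circle $S^1$ contributes $0$, and $\chi_{\orb}$ is additive). Matching the two expressions completes the proof. The main obstacle is justifying that $M'$ admits a free $S^1$-action so that Theorem~\ref{the:The_(mu,phi)-L2-Euler_characteristic_for_S1-CW-complexes} applies; this reduces to the standard fact that oriented circle bundles over a compact surface with boundary have vanishing Euler class and are therefore trivial. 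A secondary point is the cellular bookkeeping needed for the iterated sum formula, which is handled exactly as in the proof of Theorem~\ref{the:The_(mu,phi)-L2-Euler_characteristic_for_S1-CW-complexes} by choosing a cell structure on $F$ and thickening its skeleta saturatedly.
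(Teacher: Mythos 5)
Your overall strategy—cut out solid-torus neighborhoods $V_i$ of the exceptional fibers, apply the sum formula, and use Lemma~\ref{lem:tori} for the tori and solid tori together with the $S^1$-equivariant calculation for the complement—is exactly the decomposition the paper hints at ("the role of the cells $S^1/H \times D^n$ is now played by the typical neighborhoods of the Seifert fibers given by solid tori"), and the bookkeeping $\chi_{\orb}(X) = \chi(F) + \sum_i 1/n_i$ matches the $L^2$-computation correctly.

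There is, however, a genuine gap in the step where you give $M'$ a free $S^1$-action. The argument "the orbit space $F := M'/S^1$ is a compact surface with boundary, so $H^2(F;\IZ) = 0$, the oriented circle bundle $M' \to F$ is trivial" presupposes that $M' \to F$ is an \emph{oriented} circle bundle. That requires a consistent orientation of the fibers, which exists precisely when the base surface $F$ is orientable. An orientable Seifert manifold can perfectly well have non-orientable base orbifold (the paper itself flags this: "If $M$ is a compact connected orientable $3$-manifold with proper $S^1$-action, then $M$ is a ... Seifert manifold. The converse is not true [in] general."). In that case the monodromy of $M' \to F$ is orientation-reversing along orientation-reversing loops of $F$, there is no global $S^1$-action on $M'$, and Theorem~\ref{the:The_(mu,phi)-L2-Euler_characteristic_for_S1-CW-complexes} does not apply to it. So your proof as written only covers Seifert manifolds with orientable base orbifold.

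The fix is cheap: for the complement piece, replace the appeal to Theorem~\ref{the:The_(mu,phi)-L2-Euler_characteristic_for_S1-CW-complexes} by Theorem~\ref{the:(mu,phi)-Euler_characteristic_for_fibrations} applied to the locally trivial fiber bundle $S^1 \to M' \to F$. That theorem requires no $S^1$-action, only that $F$ be a finite $CW$-complex (which a compact surface with boundary is), and, together with Lemma~\ref{lem:tori} for the fiber, yields $\chi^{(2)}(M';\mu|_{M'},\phi) = \chi(F)\cdot[\IZ:\im(\eta)]$ irrespective of orientability of $F$. Alternatively, one can carry out the induction of Theorem~\ref{the:The_(mu,phi)-L2-Euler_characteristic_for_S1-CW-complexes} verbatim over a cell structure on the base orbifold, thickening each cell to a fibered solid piece; since the $\phi$-twisted $L^2$-Euler characteristic of a piece $S^1 \times D^n$ is insensitive to how the fiber orientation is glued, this also handles the non-orientable case directly.
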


\begin{lemma}\label{lem:JSJ_decom_Euler} Let $M$ be a $3$-manifold,
which is admissible, see Definition~\ref{def:admissible_3-manifold}. Let
 $M_1$, $M_2$, \ldots, $M_r$ be its pieces in the Jaco-Shalen-Johannson decomposition. Consider
 group homomorphisms $\mu \colon \pi_1(M) \to G$ and $\phi \colon G \to \IZ$. Suppose that
 the composite of $\mu$ with $\pi_1(j) \colon \pi_1(T^2) \to \pi_1(M)$ has infinite image for the inclusion
 $j \colon T^2 \to M$ of any splitting torus appearing in the Jaco-Shalen-Johannson decomposition.
 Let $\mu_i \colon \pi_1(M_i) \to G$ be the composite of $\mu$ with the map $\pi_1(M_i) \to \pi_1(M)$
 induced by the inclusion $M_i \to M$. Suppose that $M_i$ is $(\mu_i,\phi)$-$L^2$-finite for $i = 1,2, \ldots , r$. 

 Then $M$ is $(\mu,\phi)$-$L^2$-finite and we have
\[
\chi^{(2)}(M;\mu,\phi)\,\,=\,\, \lmsum{i = 1}{r} \chi^{(2)}\big(\widetilde{M_i};\mu_i,\phi\big).
\]
\end{lemma}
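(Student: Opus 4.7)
The plan is to argue by induction on the number $s$ of splitting tori in the JSJ decomposition of $M$, using the sum formula of Theorem~\ref{the:Basic_properties_of_the_phi_L2-Euler_characteristic}~(2) in conjunction with the vanishing provided by Lemma~\ref{lem:tori}. The base case $s=0$ is immediate, since then $r=1$, $M=M_1$, and $\mu=\mu_1$, so there is nothing to prove.

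For the inductive step, pick any one splitting torus $T=T_k$ and let $M'$ be the (possibly disconnected) $3$-manifold obtained from $M$ by cutting along $T$; thus $M$ is recovered from $M'$ by identifying the two new boundary tori $T',T''\subset\partial M'$ that result from the cut. After choosing compatible $CW$-structures so that the inclusions $T',T''\hookrightarrow M'$ are cellular, one obtains a cellular pushout
\[
\xymatrix@C1.2cm@R0.5cm{
T\sqcup T\ar[r]\ar[d] & M'\ar[d]\\
T\ar[r] & M
}
\]
in which the left vertical arrow is the fold and the top arrow is the disjoint union of the two inclusions. Because the composites $T\hookrightarrow M'\to M$ and $T\hookrightarrow M$ agree with the inclusion $j\colon T\hookrightarrow M$, pulling back the $G$-cover $\overline{M}\to M$ through either route produces the same $G$-cover of $T$, namely the one associated to $\mu\circ\pi_1(j)\colon\pi_1(T)\to G$. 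Consequently the diagram lifts to a $G$-pushout of $G$-$CW$-complexes which satisfies the hypotheses of Theorem~\ref{the:Basic_properties_of_the_phi_L2-Euler_characteristic}~(2).

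By the assumption that $\mu\circ\pi_1(j)$ has infinite image, Lemma~\ref{lem:tori} applied with $n=2$ guarantees that both $T$ and $T\sqcup T$ are $\phi$-$L^2$-finite with vanishing $\phi$-twisted $L^2$-Euler characteristic. The inductive hypothesis, applied separately to each connected component of $M'$ (each of which is admissible, and whose JSJ splitting tori form a proper subset of $\{T_1,\ldots,T_s\}$ with pieces a subset of $\{M_1,\ldots,M_r\}$), yields that $M'$ is $(\mu',\phi)$-$L^2$-finite and that $\chi^{(2)}(M';\mu',\phi)=\sum_{i=1}^{r}\chi^{(2)}(M_i;\mu_i,\phi)$. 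Inserting everything into the sum formula gives
\[
\chi^{(2)}(M;\mu,\phi)\,=\,\chi^{(2)}(M';\mu',\phi)+0-0\,=\,\lmsum{i=1}{r}\chi^{(2)}(M_i;\mu_i,\phi),
\]
completing the induction.

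The main technical obstacle is to arrange the $CW$-structures on $T$, $M'$, and $M$ so that the lifted diagram is genuinely a $G$-pushout of $G$-$CW$-complexes obeying the cellularity and inclusion hypotheses demanded by Theorem~\ref{the:Basic_properties_of_the_phi_L2-Euler_characteristic}~(2). A secondary subtlety appears in the non-separating case, where cutting along $T$ leaves $M'$ connected: the two inclusions $T\hookrightarrow M'$ may induce distinct maps on $\pi_1(M')$, yet their further composition with $\pi_1(M')\to\pi_1(M)\to G$ coincides, so there is no ambiguity in the pullback $G$-covers. One also has to verify that each connected component of $M'$ is itself admissible (connected, orientable, irreducible, with empty or toroidal boundary, and infinite $\pi_1$), which is automatic from the incompressibility of the JSJ tori in the irreducible manifold $M$.
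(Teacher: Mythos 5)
Your approach --- induction on the number of JSJ tori, cutting one at a time and applying the sum formula together with the vanishing for tori --- is exactly the argument that the paper compresses into its one-line proof citing Theorem~\ref{the:Basic_properties_of_the_phi_L2-Euler_characteristic}~\eqref{the:Basic_properties_of_the_phi_L2-Euler_characteristic:sum_formula} and Lemma~\ref{lem:tori}, and the substance is correct. The one slip is in the orientation of your pushout square: as drawn, the left vertical arrow is the fold $T\sqcup T\to T$, which is not an inclusion of ($G$-)$CW$-complexes, so the stated hypotheses of the sum formula fail for your diagram. Transposing the square --- placing the fold as the cellular top arrow and the boundary inclusion $T\sqcup T\hookrightarrow M'$ as the left vertical arrow --- makes all hypotheses hold, and since the conclusion of the sum formula is symmetric in the two outer corners $X_1$ and $X_2$ the rest of your computation goes through unchanged.
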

\begin{proof} 
This follows from 
Theorem~\ref{the:Basic_properties_of_the_phi_L2-Euler_characteristic}~%
\eqref{the:Basic_properties_of_the_phi_L2-Euler_characteristic:sum_formula}
and Lemma~\ref{lem:tori}.
\end{proof}

\begin{theorem}[The $(\phi,\mu)$-$L^2$-Euler characteristic and the Thurston norm for graph manifolds]
\label{the:The_(mu,phi-L2-Euler_characteristic_and_the_Thurston_norm_for_graph_manifolds}
Let $M$ be an admissible $3$-manifold, which is a graph manifold and not homeomorphic to $S^1 \times D^2$.
Consider group homomorphisms $\mu \colon \pi_1(M) \to G$ and $\phi \colon G \to \IZ$. 
Suppose that for each piece $M_i$ in the
Jaco-Shalen-Johannson decomposition the map $\pi_1(S^1 ) \xrightarrow{\pi_1(\ev_i)}
\pi_1(M_i) \xrightarrow{j_i} \pi_1(M) \xrightarrow{\mu} G \xrightarrow{\phi} \IZ$ is
injective, where $\ev_i \colon S^1 \to M_i$ is the inclusion of the regular fiber 
and $j_i \colon M_i \to M$ is the inclusion.

Then $M$ is $(\mu,\phi)$-$L^2$-finite and we get
\[
- \chi^{(2)}(M;\mu,\phi) \,\,=\,\, x_M(\phi\circ \mu).
\]
\end{theorem}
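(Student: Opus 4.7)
The plan is to combine the two sides — the $L^2$-Euler characteristic side and the Thurston norm side — by running both in parallel along the Jaco--Shalen--Johannson decomposition of $M$. Since $M$ is a graph manifold, every piece $M_i$ in the JSJ decomposition is a compact Seifert-fibered manifold over some orbifold $X_i$; moreover, since $M \neq S^1\times D^2$ is admissible, each $X_i$ has non-positive orbifold Euler characteristic. The hypothesis that $\eta$ is injective on the regular fiber of each piece is precisely the Seifert-side hypothesis needed to invoke Theorem~\ref{the:The_(mu,phi)-L2-Euler_characteristic_for_Seifert_manifolds} piecewise.

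For the $L^2$-side, I would first verify the hypothesis of Lemma~\ref{lem:JSJ_decom_Euler}: the composite of $\mu$ with $\pi_1(j)\colon \pi_1(T^2)\to \pi_1(M)$ has infinite image for each splitting torus $T^2$. This holds because each splitting torus contains the regular fiber of an adjacent Seifert piece $M_i$, and the injectivity of $\eta_i$ forces $\mu$ to already be infinite on that fiber class. With this, Lemma~\ref{lem:JSJ_decom_Euler} combined with Theorem~\ref{the:The_(mu,phi)-L2-Euler_characteristic_for_Seifert_manifolds} yields
\[
\chi^{(2)}(M;\mu,\phi) \;=\; \sum_{i=1}^{r} \chi^{(2)}(M_i;\mu_i,\phi) \;=\; \sum_{i=1}^{r} \chi_{\orb}(X_i)\cdot [\IZ : \im(\eta_i)],
\]
where $\eta_i$ is the composite for the piece $M_i$.

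For the Thurston-norm side, I would invoke the classical additivity of the Thurston norm over the JSJ decomposition of a graph manifold applied to horizontal classes. The injectivity hypothesis says that $\phi\circ\mu$ is non-trivial on every regular fiber class, so the pulled-back class is horizontal on every piece; this makes the additivity $x_M(\phi\circ\mu) = \sum_i x_{M_i}(\phi\circ\mu\circ \pi_1(j_i))$ available (either by direct surface-cutting along the JSJ tori, or by citing the standard graph-manifold formula; see e.g.\ Thurston~\cite{Thurston(1986norm)} and Eisenbud--Neumann). It then remains to identify each summand: for a Seifert-fibered piece $M_i$ over an orbifold $X_i$ with $\chi_{\orb}(X_i)\le 0$, and for a horizontal class whose evaluation on the regular fiber has index $d_i = [\IZ:\im(\eta_i)]$, the Thurston norm equals
\[
x_{M_i}(\phi\circ\mu\circ\pi_1(j_i)) \;=\; -\,\chi_{\orb}(X_i)\cdot d_i,
\]
a standard computation realized by taking a horizontal surface that is a $d_i$-fold cover of $X_i$ (desingularized across cone points). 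Summing over $i$ and comparing with the $L^2$-formula gives $-\chi^{(2)}(M;\mu,\phi)=x_M(\phi\circ\mu)$.

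The main obstacle, and the step that needs to be executed with care rather than invoked as a black box, is the Thurston-norm side: namely, that for a horizontal class on a graph manifold the Thurston norm is additive across JSJ tori and, on each Seifert piece, equals $|\chi_{\orb}(X_i)|\cdot d_i$. The additivity requires showing that taut, norm-minimizing surfaces can be arranged to meet the JSJ tori transversely in parallel fibers with the right multiplicity (so that no cancellation in $\chi_-$ occurs and no wasteful tube-intersections are needed), and the Seifert-piece formula requires a horizontal-surface construction that is honestly norm-minimizing rather than merely an upper bound. Once these two geometric facts are in place, matching them to the $L^2$-computation above is purely bookkeeping using the scaling and additivity already packaged in Theorem~\ref{the:Basic_properties_of_the_phi_L2-Euler_characteristic} and Theorem~\ref{the:The_(mu,phi)-L2-Euler_characteristic_for_Seifert_manifolds}.
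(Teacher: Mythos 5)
Your proposal takes essentially the same route as the paper's proof: cut along the JSJ tori, apply Lemma~\ref{lem:JSJ_decom_Euler} and Theorem~\ref{the:The_(mu,phi)-L2-Euler_characteristic_for_Seifert_manifolds} for the $L^2$-side, and pair it off against JSJ-additivity of the Thurston norm together with the Seifert-piece formula $x_{M_i}(\phi_i) = -\chi_{\orb}(X_i)\cdot[\IZ:\im(\eta_i)]$. The two Thurston-norm facts that you correctly flag as needing genuine work rather than invocation are exactly the ones the paper outsources to references: additivity across the JSJ decomposition is taken from Eisenbud--Neumann~\cite[Proposition~3.5]{Eisenbud-Neumann(1985)} (which you cite), and the Seifert-piece formula — the harder of the two, since it requires a norm-minimizing horizontal surface and not merely an upper bound — is taken from Herrmann~\cite[Lemma~A]{Herrmann(2016)}, which you did not have. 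Your sketch of that step (a $d_i$-fold horizontal cover, desingularized over cone points) is the right picture, but it is an honest gap in the write-up as it stands; supplying the Herrmann citation closes it. Everything else in your proposal matches the paper's proof in structure and in the auxiliary results it uses.
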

\begin{proof} In the situation and notation of Lemma~\ref{lem:JSJ_decom_Euler} 
we conclude from~\cite[Proposition~3.5 on page~33]{Eisenbud-Neumann(1985)}
\[
x_M(\phi)\,\, =\,\, \lmsum{i = 1}{r} x_{M_i}(\phi_i)
\]
if $\phi_i \in H^1(M_i;\IZ)$ is the restriction of $\phi$ to $M_i$.
Moreover, we get from Theorem~\ref{the:The_(mu,phi)-L2-Euler_characteristic_for_Seifert_manifolds}
and from~\cite[Lemma~A]{Herrmann(2016)}
for $i = 1,2 \ldots, r$ 
\[
\chi^{(2)}(M_i;\mu_i,\phi) \,\,=\,\, -x_M(\phi_i),
\]
if $\mu_i$ is the composite of $\mu$ with the homomorphism $\pi_1(M_i) \to \pi_1(M)$ induced
by the inclusion. 
Now the claim follows from Lemma~\ref{lem:JSJ_decom_Euler}
since any splitting torus appearing in the Jaco-Shalen-Johannson decomposition contains a regular fiber
of one of the pieces $M_i$.
\end{proof}


\subsection{The $\phi$-$L^2$-Euler characteristic for universal coverings}
\label{subsec:The_phi-L2-Euler_characteristic_for_universal_coverings}

In this section we consider the special case of the universal covering and of a group
homomorphism $\phi \colon \pi_1(X) \to \IZ$. This is in some sense the most canonical and
important covering and in this case the formulations of the main results simplify in a
convenient way.

\begin{definition}[The $\phi$-$L^2$-Euler characteristic for $\phi \in H^1(X;\IZ)$]
\label{def:The_phi-L2-Euler_characteristic_for_phi_in_H1(X;Z)}
Let $X$ be a connected $CW$-complex with fundamental group $\pi$.
Let $\phi$ be an element in $H^1(X;\IZ)$, or, equivalently, 
let $\phi \colon \pi \to \IZ$ be a group homomorphism. We say that the universal covering
 $\widetilde{X}$ of $X$ is \emph{$\phi$-$L^2$-finite}, if $X$ is $(\id_{\pi},\phi)$-$L^2$-finite 
 in the sense of Definition~\ref{def:mu-phi-L2-Euler_characteristic}.
 If this is the case, we define its \emph{$\phi$-$L^2$-Euler characteristic}
 \[
 \chi^{(2)}(\widetilde{X};\phi) 
 \,\, := \,\,
 \chi^{(2)}(X;\id_{\pi},\phi)
 \]
 where $\chi^{(2)}(X;\id_{\pi},\phi)$ has been introduced in Definition~\ref{def:mu-phi-L2-Euler_characteristic}.
 
 If $X$ is a (not necessarily connected) finite $CW$-complex and $\phi \in H^1(X;\IZ)$, we say that
 $\widetilde{X}$ is $\phi$-$L^2$-finite if for each component $C \in X$ the universal
 covering $\widetilde{C} \to C$ is $\phi|_C$-$L^2$-finite and we put
 \[
 \chi^{(2)}(\widetilde{X};\phi) \,\,= \,\,\lmsum{C \in \pi_0(X)}{} \chi^{(2)}(\widetilde{C};\phi|_C).
 \]
\end{definition}

For the reader's convenience we record the basic properties of the $\phi$-$L^2$-Euler characteristic.

\begin{theorem}[Basic properties of the $\phi$-$L^2$-Euler characteristic for universal coverings]
\label{the:Basic_properties_of_the_phi_L2-Euler_characteristic_for_universal_coverings}
\

\begin{enumerate}[font=\normalfont]

\item \emph{Homotopy invariance}\\[1mm]
\label{the:Basic_properties_of_the_phi_L2-Euler_characteristic_for_universal_coverings:homotopy_invariance}
Let $f \colon X \to Y $ be a homotopy equivalence of $CW$-complexes. Consider $\phi \in H^1(Y;\IZ)$.
Let $f^*\phi \in H^1(X;\IZ)$ be its pullback with $f$.

Then $\widetilde{X}$ is $f^*\phi$-$L^2$-finite if and only if $\widetilde{Y}$ is $\phi$-$L^2$-finite, and in this
 case we get
 \[
 \chi^{(2)}(\widetilde{X};f^*\phi)\,\, =\,\, \chi^{(2)}(\widetilde{Y};\phi);
 \]

\item\label{the:Basic_properties_of_the_phi_L2-Euler_characteristic_for_universal_coverings:sum_formula}
 \emph{Sum formula}\\
 Consider a pushout of $CW$-complexes
 \[
 \xymatrix@C1.2cm@R0.5cm{ X_0 \ar[r] \ar[d] & X_1 \ar[d]
 \\
 X_2 \ar[r]& X }
 \]
 where the upper horizontal arrow is cellular, the left vertical arrow is an inclusion of
 $CW$-complexes and $X$ has the obvious $CW$-structure coming from the ones on $X_0$,
 $X_1$ and $X_2$. Consider $\phi \in H^1(X;\IZ)$. For every $i \in \{0,1,2\}$ suppose
 that for each base point $x_i \in X_i$ the map $\pi_1(j_i,x_i) \colon \pi_1(X_i,x_i) \to \pi_1(X,j_i(x_i))$ 
 induced by the inclusion $j_i \colon X_i \to X$ is injective and that
 $\widetilde{X_i}$ is $j_i^*\phi$-$L^2$-finite.

 Then $\widetilde{X}$ is $\phi$-$L^2$-finite and we get
 \[
 \hspace{1cm} \chi^{(2)}(\widetilde{X};\phi) \,\,=\,\, \chi^{(2)}(\widetilde{X_1};j_1^*\phi) + \chi^{(2)}(\widetilde{X_2};j_2^*\phi) -
 \chi^{(2)}(\widetilde{X_0};j_0^*\phi);
 \]
 
 \item\label{the:Basic_properties_of_the_phi_L2-Euler_characteristic_for_universal_coverings:finite_coverings}
 \emph{Finite coverings}\\
 Let $p \colon X \to Y$ be a finite $d$-sheeted covering of connected $CW$-complexes, $\phi$
be an element in $H^1(Y;\IZ)$ and $p^*\phi \in H^1(X;\IZ)$ be its pullback with $p$.

Then $\widetilde{Y}$ is $\phi$-$L^2$-finite if and only if $\widetilde{X}$ is $p^*\phi$-$L^2$-finite, and in this case
\[
\chi^{(2)}(\widetilde{X};p^*\phi) \,\,=\,\, d \cdot \chi^{(2)}(\widetilde{Y};\phi);
\]

\item\label{the:Basic_properties_of_the_phi_L2-Euler_characteristic_for_universal_coverings:scaling_phi}
 \emph{Scaling $\phi$}\\
Let $X$ be a $CW$-complex and $\phi$ be an element in $H^1(X;\IZ)$
Consider any integer $k \not= 0$. 
Then $\widetilde{X}$ is $\phi$-$L^2$-finite if and only if $\widetilde{X}$ is $(k \cdot \phi)$-$L^2$-finite, 
and in this case we get
\[
\chi^{(2)}(\widetilde{X};k \cdot \phi)\,\, =\,\, |k| \cdot \chi^{(2)}(\widetilde{X};\phi).
\]

\item\label{the:Basic_properties_of_the_phi_L2-Euler_characteristic_for_universal_coverings:trivial_phi}
 \emph{Trivial $\phi$}\\
Let $X$ be a $CW$-complex. Let $\phi$ be trivial.
Then $\widetilde{X}$ is $\phi$-$L^2$-finite if and only if $b_n^{(2)}(\widetilde{X};\caln(\pi_1(X))) = 0$
holds for all $n \ge 0$. If this is the case, we get
\[
\chi^{(2)}(\widetilde{X};\phi)\,\, =\,\, 0;
\]

\item\label{the:Basic_properties_of_the_phi_L2-Euler_characteristic_for_universal_coverings:tori}
 \emph{Tori}\\
Let $T^n$ be the $n$-dimensional torus for $n \ge 1$. Consider any $\phi \in H^1(T^n;\IZ)$. Then
$\widetilde{T^n}$ is $\phi$-$L^2$-finite and we get
\[
\chi^{(2)}(\widetilde{T^n};\phi) \,\,=\,\, 
\begin{cases} [\IZ : \im(\phi)] 
& 
\text{if}\; n = 1, \phi \not = 0;
\\
0 
& 
\text{otherwise.}
\end{cases}
\]
\end{enumerate}
\end{theorem}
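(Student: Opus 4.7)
The plan is to derive each of the six assertions from the corresponding item in Theorem~\ref{the:Basic_properties_of_the_phi_L2-Euler_characteristic}, applied with $G = \pi_1(X)$ and $\mu = \id_{\pi_1(X)}$, together with Lemma~\ref{lem:tori} for the torus assertion. The translation rests on the standard dictionary: a class $\phi \in H^1(X;\IZ)$ corresponds to the homomorphism $\pi_1(X) \to \IZ$ that is also denoted $\phi$, the universal covering $\widetilde{X}$ is the $\pi_1(X)$-covering associated to $\id_{\pi_1(X)}$, and $j^*\phi$ for a continuous map $j$ corresponds to $\phi \circ \pi_1(j)$.

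For (1), a homotopy equivalence $f \colon X \to Y$ induces an isomorphism $\pi_1(f)$ and lifts to a $\pi_1(Y)$-homotopy equivalence $\pi_1(Y) \times_{\pi_1(f)} \widetilde{X} \simeq \widetilde{Y}$, so combining the homotopy invariance from part~(1) of Theorem~\ref{the:Basic_properties_of_the_phi_L2-Euler_characteristic} with transport of structure along the iso $\pi_1(f)$ yields the equality. For (2), the injectivity of each $\pi_1(j_i)$ ensures that the preimage of $X_i \subset X$ in $\widetilde{X}$ is $\pi_1(X)$-equivariantly homeomorphic to the induced complex $\pi_1(X) \times_{\pi_1(X_i)} \widetilde{X_i}$, so that $\widetilde{X}$ is the pushout of the three induced $\pi_1(X)$-$CW$-complexes attached to $X_0 \to X_1$, $X_0 \to X_2$. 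Applying the sum formula of Theorem~\ref{the:Basic_properties_of_the_phi_L2-Euler_characteristic}~(2) and then the induction formula~(3) to each of the three terms, and using $\phi \circ \pi_1(j_i) = j_i^*\phi$, delivers the claim.

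For (3), a finite $d$-sheeted covering $p \colon X \to Y$ exhibits $\pi_1(X)$ as a subgroup of index $d$ in $\pi_1(Y)$, and $\widetilde{X}$ agrees with the $\pi_1(X)$-restriction of the $\pi_1(Y)$-$CW$-complex $\widetilde{Y}$; the restriction formula~(4) of Theorem~\ref{the:Basic_properties_of_the_phi_L2-Euler_characteristic} gives the claim. For (4), positive scaling follows directly from part~(5) of Theorem~\ref{the:Basic_properties_of_the_phi_L2-Euler_characteristic}; invariance under $\phi \mapsto -\phi$ follows from the fact that the involution $n \mapsto -n$ on $\IZ$ induces a $\pi_1(X)$-equivariant chain isomorphism $C_*(\widetilde{X}) \otimes_{\IZ} \phi^*\IZ[\IZ] \cong C_*(\widetilde{X}) \otimes_{\IZ} (-\phi)^*\IZ[\IZ]$, whence the twisted $L^2$-Betti numbers agree. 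Assertion (5) is immediate from part~(6) of Theorem~\ref{the:Basic_properties_of_the_phi_L2-Euler_characteristic}. Assertion (6) is a direct application of Lemma~\ref{lem:tori} with $\mu = \id_{\pi_1(T^n)}$, which has infinite image for $n \ge 1$.

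The only genuine bookkeeping obstacle is in the sum formula (2), where one must verify that the injectivity hypothesis is precisely what is needed to identify each preimage in $\widetilde{X}$ as an induced complex, so that the general sum and induction formulas of Theorem~\ref{the:Basic_properties_of_the_phi_L2-Euler_characteristic} can be combined cleanly; everything else is a direct specialisation of the earlier results.
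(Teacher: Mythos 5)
Your proof matches the paper's own argument: all six assertions are deduced by specializing Theorem~\ref{the:Basic_properties_of_the_phi_L2-Euler_characteristic} with $G = \pi_1(X)$ and $\mu = \id$, using the combination of the sum formula and the induction formula together with the identification of the preimages as induced $\pi_1(X)$-$CW$-complexes for assertion~(2), and invoking Lemma~\ref{lem:tori} for assertion~(6). Your handling of negative $k$ in assertion~(4) via the $G$-equivariant isomorphism $\phi^*\IZ[\IZ] \cong (-\phi)^*\IZ[\IZ]$ induced by $t \mapsto t^{-1}$ correctly fills in a small step that the paper treats as obvious.
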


\begin{proof}
 The second statement follows from
 Theorem~\ref{the:Basic_properties_of_the_phi_L2-Euler_characteristic}~%
 \eqref{the:Basic_properties_of_the_phi_L2-Euler_characteristic:sum_formula}
 and~\eqref{the:Basic_properties_of_the_phi_L2-Euler_characteristic:induction} using the
 fact that for every $i \in \{0,1,2\}$ and every base point $x_i \in X_i$ the total space
 of the pullback of the universal covering of the component $D$ of $X$ containing
 $j_i(x_i)$ with $j_i|_C \colon C \to D$ for $C$ the component of $X_i$ containing $x_i$
 is $\pi_1(D)$-homeomorphic to $\pi_1(D) \times_{\pi_1(j_i)} \widetilde{C}$ for the
 universal covering $\pi_1(C,x_i) \to \widetilde{C} \to C$ of $C$.

 The last statement is a special case of Lemma~\ref{lem:tori}. Finally all other
 statements follow from the corresponding statements
 of~Theorem~\ref{the:Basic_properties_of_the_phi_L2-Euler_characteristic}.
\end{proof}

\begin{lemma}\label{lem:JSJ_decom_Euler_universal} 
 Let $M$ be an admissible $3$-manifold. Let
 $M_1, M_2, \ldots, M_r$ be its pieces in the Jaco-Shalen-Johannson decomposition. Consider
 $\phi \in H^1(M;\IZ)$. Let $\phi_i \in H^1(M_i;\IZ)$ be the pullback of $\phi$ with the
 inclusion $M_i \to M$ for $i = 1,2 , \ldots,r$.

 Then $M_i$ is $\phi_i$-$L^2$-finite for $i = 1,2, \ldots ,r$ and $M$ is
 $\phi$-$L^2$-finite and we have
\[
\chi^{(2)}(\widetilde{M};\phi)\,\, = \,\,\lmsum{i = 1}{r} \chi^{(2)}(\widetilde{M_i};\phi_i).
\]
\end{lemma}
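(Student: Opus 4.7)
The plan is to deduce this lemma from Lemma~\ref{lem:JSJ_decom_Euler} applied to $\mu = \id_{\pi_1(M)}$ and $G = \pi_1(M)$, after translating, for each piece, the ambient $(\mu_i,\phi)$-formulation into the universal-cover $\phi_i$-formulation via the induction property of Theorem~\ref{the:Basic_properties_of_the_phi_L2-Euler_characteristic}. First I would verify the hypothesis of Lemma~\ref{lem:JSJ_decom_Euler}: since $M$ is admissible, every JSJ torus is incompressible, so each inclusion $j \colon T^2 \hookrightarrow M$ induces an injection $\pi_1(T^2) = \IZ^2 \hookrightarrow \pi_1(M)$, and composing with $\id_{\pi_1(M)}$ yields the infinite image $\IZ^2$ as required.

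Next, for each piece $M_i$, the same incompressibility implies that the inclusion-induced map $\mu_i \colon \pi_1(M_i) \hookrightarrow \pi_1(M)$ is injective, so $H := \mu_i(\pi_1(M_i))$ is a subgroup of $G = \pi_1(M)$. Applying Theorem~\ref{the:Basic_properties_of_the_phi_L2-Euler_characteristic}~(3) to the $H$-$CW$-complex $\widetilde{M_i}$ identifies $G \times_H \widetilde{M_i}$ with the covering $\overline{M_i}$ used in the definition of $(\mu_i,\phi)$-$L^2$-finiteness, and $\phi \circ \mu_i = \phi_i$. Consequently, $\widetilde{M_i}$ is $\phi_i$-$L^2$-finite if and only if $M_i$ is $(\mu_i,\phi)$-$L^2$-finite, and in that case $\chi^{(2)}(M_i;\mu_i,\phi) = \chi^{(2)}(\widetilde{M_i};\phi_i)$.

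The main technical step is then to establish $\phi_i$-$L^2$-finiteness of each $\widetilde{M_i}$, so that the hypothesis of Lemma~\ref{lem:JSJ_decom_Euler} is met. Each piece is either Seifert fibered or a finite-volume hyperbolic manifold with (possibly empty) incompressible toroidal boundary. For Seifert pieces I would apply Theorem~\ref{the:The_(mu,phi)-L2-Euler_characteristic_for_Seifert_manifolds} when $\phi_i$ is nontrivial on a regular fiber, and otherwise invoke Lemma~\ref{lem:reduction_to_surjective_mu}~(3) together with the vanishing of all $L^2$-Betti numbers for such aspherical manifolds with infinite $\pi_1$. For hyperbolic pieces, which I expect to be the principal obstacle, the strategy is to use Agol's Virtual Fibering Theorem to pass to a finite cover fibering over $S^1$, obtain finiteness there via Example~\ref{exa_mapping_torus}, and then descend using the restriction property (Theorem~\ref{the:Basic_properties_of_the_phi_L2-Euler_characteristic}~(4)). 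Once $\phi_i$-$L^2$-finiteness is known for every piece, Lemma~\ref{lem:JSJ_decom_Euler} directly provides $\phi$-$L^2$-finiteness of $\widetilde{M}$ together with the additive identity claimed.
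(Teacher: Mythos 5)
Your overall route — deduce the statement from Lemma~\ref{lem:JSJ_decom_Euler} after translating each piece's $(\mu_i,\phi)$-finiteness into $\phi_i$-finiteness of the universal cover via induction — is sound and essentially re-derives what Theorem~\ref{the:Basic_properties_of_the_phi_L2-Euler_characteristic_for_universal_coverings}~\eqref{the:Basic_properties_of_the_phi_L2-Euler_characteristic_for_universal_coverings:sum_formula} already packages; the paper uses the latter directly, so the two paths to the additivity formula are interchangeable. The genuine problem is in how you establish $\phi_i$-$L^2$-finiteness of the pieces.

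For the hyperbolic pieces, the plan ``virtually fiber, apply Example~\ref{exa_mapping_torus}, then restrict'' does not work. Example~\ref{exa_mapping_torus} establishes $(\mu,\phi)$-$L^2$-finiteness of a mapping torus $T_f$ \emph{only} for factorizations $\pi_1(T_f) \xrightarrow{\mu} G \xrightarrow{\phi} \IZ$ of the epimorphism induced by the projection $T_f\to S^1$; that is, only for the fibered cohomology class. After passing to a finite cover $p\colon \widehat{M_i}\to M_i$ that fibers, the class you must control is $p^*\phi_i$, which is an arbitrary element of $H^1(\widehat{M_i};\IZ)$ and in general is \emph{not} the fibered class. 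Example~\ref{exa_mapping_torus} therefore gives you nothing for the class you actually need. Upgrading from the fibered class to all of $H^1$ is precisely what requires the heavy polytope/seminorm continuity machinery of Section~6, so this step is not a shortcut.

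Your Seifert fallback has a parallel gap: Lemma~\ref{lem:reduction_to_surjective_mu}~\eqref{lem:reduction_to_surjective_mu:mu_circ_phi_is_trivial} applies when $\phi\circ\mu$ is trivial, i.e.\ when $\phi_i = 0$. The case where $\phi_i$ is nonzero but vanishes on the regular fiber (e.g.\ $M_i = S^1\times\Sigma$ with $\phi_i$ pulled back from $\Sigma$) is covered by neither Theorem~\ref{the:The_(mu,phi)-L2-Euler_characteristic_for_Seifert_manifolds} (whose hypothesis $\eta$ injective fails) nor by Lemma~\ref{lem:reduction_to_surjective_mu}~\eqref{lem:reduction_to_surjective_mu:mu_circ_phi_is_trivial}.

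What you are missing is the Atiyah Conjecture input, which is what the paper actually uses for the hyperbolic case and which closes both gaps uniformly: whenever $r\ge 2$ each $M_i$ has non-empty toroidal boundary and hence is not a closed graph manifold, so $\pi_1(M_i)$ satisfies the Atiyah Conjecture by Theorem~\ref{the:Status_of_the_Atiyah_Conjecture}~\eqref{the:Status_of_the_Atiyah_Conjecture:3-manifold_not_graph}; together with $b_n^{(2)}(\widetilde{M_i})=0$ for all $n$ (Lott--L\"uck, cited in Lemma~\ref{lem:b_1(2)_is_zero_implies_L2-acyclic}), this makes $(\id_{\pi_1(M_i)},\phi_i)$ an $L^2$-acyclic Atiyah pair for every $\phi_i$, and Theorem~\ref{the:Atiyah_and_(mu,phi)-L2-Euler_characteristic} then gives $\phi_i$-$L^2$-finiteness without any hypothesis on the restriction of $\phi_i$ to fibers. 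That is the step that is genuinely non-elementary here, and your proposal does not supply a working substitute for it.
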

\begin{proof} 
If $M_i$ is Seifert, then it is $\phi_i$-$L^2$-finite by 
Theorem~\ref{the:The_(mu,phi)-L2-Euler_characteristic_for_Seifert_manifolds}.
If $M_i$ is hyperbolic, $b_n^{(2)}(\widetilde{M_i})$ vanishes for all $n \ge 0$
by~\cite[Theorem~0.1]{Lott-Lueck(1995)}, and hence $M_i$ is $\phi_i$-$L^2$-finite by 
Theorem~\ref{the:Status_of_the_Atiyah_Conjecture}~%
~\eqref{the:Status_of_the_Atiyah_Conjecture:3-manifold_not_graph}
and Theorem~\ref{the:Atiyah_and_(mu,phi)-L2-Euler_characteristic}. Now the claim follows from
Theorem~\ref{the:Basic_properties_of_the_phi_L2-Euler_characteristic_for_universal_coverings}~%
\eqref{the:Basic_properties_of_the_phi_L2-Euler_characteristic_for_universal_coverings:sum_formula}
and~\eqref{the:Basic_properties_of_the_phi_L2-Euler_characteristic_for_universal_coverings:tori}
using the fact that the splitting tori in the Jaco-Shalen-Johannson decomposition are incompressible.
\end{proof}

\begin{theorem}[The $\phi$-$L^2$-Euler characteristic and the Thurston norm for graph manifolds]
\label{the:The_phi-L2-Euler_characteristic_and_the_Thurston_norm_for_graph_manifolds}
Let $M$ be an admissible $3$-manifold, which is a graph manifold and not homeomorphic to $S^1 \times D^2$. 
Consider $\phi \in H^1(M;\IZ)$. Then $\widetilde{M}$ is $\phi$-$L^2$-finite and we get
\[
- \chi^{(2)}(\widetilde{M};\phi) \,\,=\,\, x_M(\phi).
\]
\end{theorem}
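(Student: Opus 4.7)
The strategy is to reduce to the pieces of the Jaco-Shalen-Johannson decomposition and handle each Seifert piece separately, distinguishing whether $\phi$ restricts trivially or nontrivially to the regular fiber. Let $M_1, \ldots, M_r$ denote these pieces (all Seifert, since $M$ is a graph manifold) and let $\phi_i \in H^1(M_i;\IZ)$ be the pullback of $\phi$ under the inclusion $M_i \hookrightarrow M$. By Lemma~\ref{lem:JSJ_decom_Euler_universal} each $\widetilde{M_i}$ is $\phi_i$-$L^2$-finite and
\[
\chi^{(2)}(\widetilde{M};\phi) = \sum_{i=1}^r \chi^{(2)}(\widetilde{M_i};\phi_i),
\]
while \cite[Proposition~3.5]{Eisenbud-Neumann(1985)} gives $x_M(\phi) = \sum_{i=1}^r x_{M_i}(\phi_i)$. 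Hence it is enough to prove the piecewise equality $-\chi^{(2)}(\widetilde{M_i};\phi_i) = x_{M_i}(\phi_i)$ for each Seifert piece $M_i$.

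For fixed $i$ consider the composite $\eta_i\colon \pi_1(S^1) \xrightarrow{\pi_1(\ev_i)} \pi_1(M_i) \xrightarrow{\phi_i} \IZ$ coming from the inclusion of a regular fiber. Every endomorphism of $\IZ$ is either trivial or injective, so there are two cases. If $\eta_i$ is injective, then Theorem~\ref{the:The_(mu,phi)-L2-Euler_characteristic_for_Seifert_manifolds} applied to the pair $(\id_{\pi_1(M_i)},\phi_i)$ gives $\chi^{(2)}(\widetilde{M_i};\phi_i) = \chi_{\orb}(X_i) \cdot [\IZ:\im(\eta_i)]$, where $X_i$ is the base orbifold of $M_i$; combined with \cite[Lemma~A]{Herrmann(2016)}, which supplies $x_{M_i}(\phi_i) = -\chi_{\orb}(X_i) \cdot [\IZ:\im(\eta_i)]$, this yields the required equality exactly as in the proof of Theorem~\ref{the:The_(mu,phi-L2-Euler_characteristic_and_the_Thurston_norm_for_graph_manifolds}.

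The remaining case $\eta_i = 0$, i.e.\ $\phi_i$ vanishes on every regular fiber, is the main new content. Here Theorem~\ref{the:The_(mu,phi)-L2-Euler_characteristic_for_Seifert_manifolds} no longer applies directly, so we pass to a finite covering $p_i\colon M_i' \to M_i$ for which $M_i'$ is a genuine $S^1$-bundle over a compact orientable surface $F_i$; such a covering exists for any admissible Seifert piece since one may unwind the exceptional fibers. By multiplicativity of both invariants under finite coverings (Theorem~\ref{the:Basic_properties_of_the_phi_L2-Euler_characteristic_for_universal_coverings}~\eqref{the:Basic_properties_of_the_phi_L2-Euler_characteristic_for_universal_coverings:finite_coverings} and equation~\eqref{finite_coverings_Thurston_norm}) it suffices to verify $\chi^{(2)}(\widetilde{M_i'};p_i^*\phi_i) = 0 = x_{M_i'}(p_i^*\phi_i)$. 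On the Thurston norm side, $p_i^*\phi_i$ factors through $\pi_1(F_i)$, so it can be represented by a vertical surface obtained as the preimage under the bundle projection of a properly embedded curve in $F_i$; each component is a torus or an annulus, contributing zero to $\chi_-$. On the $L^2$-side, apply Theorem~\ref{the:(mu,phi)-Euler_characteristic_for_fibrations} to the fibration $S^1 \to M_i' \to F_i$: since $p_i^*\phi_i$ restricts trivially to the $\pi_1$-injective regular fiber, Lemma~\ref{lem:tori} gives that the twisted $L^2$-Euler characteristic of the fiber is zero, whence $\chi^{(2)}(\widetilde{M_i'};p_i^*\phi_i) = \chi(F_i) \cdot 0 = 0$.

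The main obstacle is precisely this vertical case: establishing the circle-bundle cover and checking the vanishing of both invariants once there. Everything else is additive bookkeeping across the JSJ pieces using results already in place, and the crucial gain over Theorem~\ref{the:The_(mu,phi-L2-Euler_characteristic_and_the_Thurston_norm_for_graph_manifolds} is that Lemma~\ref{lem:JSJ_decom_Euler_universal} provides the sum formula without any injectivity hypothesis on $\phi$ along the splitting tori.
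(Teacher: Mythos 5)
Your proposal is correct and, more importantly, it correctly identifies a genuine gap in the paper's one-line reduction. The paper's proof simply invokes Theorem~\ref{the:The_(mu,phi-L2-Euler_characteristic_and_the_Thurston_norm_for_graph_manifolds}, but that theorem has an explicit hypothesis, namely that the composite
$\pi_1(S^1)\to\pi_1(M_i)\to\pi_1(M)\xrightarrow{\phi}\IZ$ is injective for each Seifert piece. This hypothesis is not automatic: for example, take $M=S^1\times\Sigma$ with $\Sigma$ a hyperbolic surface and $\phi$ pulled back from $H^1(\Sigma;\IZ)$; then $\phi$ kills the regular fiber and the reduction to Theorem~\ref{the:The_(mu,phi-L2-Euler_characteristic_and_the_Thurston_norm_for_graph_manifolds} does not apply. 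Your treatment of this ``vertical'' case — pass to a finite cover that is an honest $S^1$-bundle over a surface (which exists since the base orbifold is good), apply Theorem~\ref{the:(mu,phi)-Euler_characteristic_for_fibrations} together with Lemma~\ref{lem:tori} to conclude $\chi^{(2)}=0$ on the $L^2$ side, and exhibit a vertical surface of annuli and tori on the Thurston side — is exactly the missing content. The non-vertical case via Theorem~\ref{the:The_(mu,phi)-L2-Euler_characteristic_for_Seifert_manifolds} and Herrmann's Lemma~A, and the additivity over JSJ pieces via Eisenbud--Neumann, are handled correctly.

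One logical caveat worth flagging: you cite Lemma~\ref{lem:JSJ_decom_Euler_universal} at the outset for the $\phi_i$-$L^2$-finiteness of the pieces and the sum formula. But the proof of that lemma in the paper, for the Seifert case, also cites Theorem~\ref{the:The_(mu,phi)-L2-Euler_characteristic_for_Seifert_manifolds} without checking the injectivity hypothesis, so it carries the same gap you are trying to fix. The mathematics works out because your fibration argument establishes the needed $\phi_i$-$L^2$-finiteness directly in the vertical case. It would be cleaner, though, to reorder: first establish, by your case analysis, that each Seifert piece $M_i$ is $\phi_i$-$L^2$-finite with $-\chi^{(2)}(\widetilde{M_i};\phi_i)=x_{M_i}(\phi_i)$, and only then apply the sum formula in Theorem~\ref{the:Basic_properties_of_the_phi_L2-Euler_characteristic_for_universal_coverings}~\eqref{the:Basic_properties_of_the_phi_L2-Euler_characteristic_for_universal_coverings:sum_formula} and~\eqref{the:Basic_properties_of_the_phi_L2-Euler_characteristic_for_universal_coverings:tori} together with Eisenbud--Neumann additivity, so that you do not rely on a lemma whose written proof needs the same patch.
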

\begin{proof} This follows from 
Theorem~\ref{the:The_(mu,phi-L2-Euler_characteristic_and_the_Thurston_norm_for_graph_manifolds}.
\end{proof}

\begin{example}[$S^1\times D^2$ and $S^1 \times S^2$]
\label{exa:S1_timesD2_and_S1_times_S2}
Consider a homomorphism $\phi \colon H_1(S^1 \times D^2) \xrightarrow{\cong} \IZ$.
Let $k$ be the index $[\IZ : \im(\phi)]$ if $\phi$ is non-trivial, and let $k = 0$ if $\phi$ is trivial.
Then we conclude from~\eqref{scaling_Thurston_norm},~\eqref{fiber_bundles_Thurston_norm},
 Lemma~\ref{lem:reduction_to_surjective_mu} and Example~\ref{exa_mapping_torus} that
\[
x_{S^1 \times D^2}(\phi) \,\, = \,\, 0\quad \mbox{ and }\quad 
-\chi^{(2)}(\widetilde{S^1 \times D^2};\phi) 
\,\, = \,\, k.\]
Hence we have to exclude $S^1 \times D^2$ in 
Theorem~\ref{the:The_phi-L2-Euler_characteristic_and_the_Thurston_norm_for_graph_manifolds} and thus also in Theorem~\ref{the:Equality_of_(mu,phi)-L2-Euler_characteristic_and_the_Thurston_norm_universal_covering_for_universal_coverings}.
Analogously we get 
\[
x_{S^1 \times S^2}(\phi) \,\, = \,\, 0\quad \mbox{ and } \quad
-\chi^{(2)}(\widetilde{S^1 \times S^2};\phi) 
\,\, = \,\, 2 \cdot k,\]
so that we cannot replace ``irreducible'' by ``prime'' in 
Theorem~\ref{the:Equality_of_(mu,phi)-L2-Euler_characteristic_and_the_Thurston_norm_universal_covering_for_universal_coverings}.
\end{example}


 \typeout{-------------------------- Section 3: About the Atiyah Conjecture -----------------------}

\section{About the Atiyah Conjecture}
\label{sec:About_the_Atiyah_Conjecture}

So far the definition and the analysis of the $\phi$-twisted $L^2$-Euler characteristic
has been performed on an abstract level. In order to ensure that the condition
$(\mu,\phi)$-$L^2$-finite is satisfied and that the $(\mu,\phi)$-$L^2$-Euler
characteristic contains interesting information, we will need further input, namely, the
following Atiyah Conjecture.


\subsection{The Atiyah Conjecture}
\label{subsec:The_Atiyah_Conjecture}

\begin{definition}[Atiyah Conjecture]
\label{def:Atiyah_Conjecture}
We say that a torsion-free group $G$ satisfies the \emph{Atiyah Conjecture} if for any
matrix $A \in M_{m,n}(\IQ G)$ the von Neumann dimension $\dim_{\caln(G)}(\ker(r_A))$
of the kernel of the $\caln(G)$-homomorphism
$r_A\colon \caln(G)^m \to \caln(G)^n$ given by right multiplication with $A$ is an integer.
\end{definition}

The Atiyah Conjecture can also be formulated for any field $F$ with $\IQ \subseteq F \subseteq \IC$
and matrices $A \in M_{m,n}(FG)$ and for any group with a bound on the order of its finite subgroups. 
However, we only need and therefore consider in this paper the case, where $F = \IQ$ and $G$ is torsion-free.

\begin{theorem}[Status of the Atiyah Conjecture]
\label{the:Status_of_the_Atiyah_Conjecture}
\begin{enumerate}[font=\normalfont]

\item\label{the:Status_of_the_Atiyah_Conjecture:subgroups}
If the torsion-free group $G$ satisfies the Atiyah Conjecture,
then also each of its subgroups satisfies the Atiyah Conjecture;

\item\label{the:Status_of_the_Atiyah_Conjecture:Linnell} Let $\calc$ be the smallest
 class of groups which contains all free groups and is closed under directed unions and
 extensions with elementary amenable quotients. Suppose that $G$ is a torsion-free group
 which belongs to $\calc$.

Then $G$ satisfies the Atiyah Conjecture;

\item\label{the:Status_of_the_Atiyah_Conjecture:3-manifold_not_graph} If $G$ is an infinite group which is the
 fundamental group of an admissible $3$-manifold $M$,
 then $G$ is torsion-free and belongs to $\calc$. In particular $G$ satisfies the Atiyah Conjecture.

\item\label{the:Status_of_the_Atiyah_Conjecture:approx}
Let $\cald$ be the smallest class of groups such that
\begin{itemize}

\item The trivial group belongs to $\cald$;

\item If $p\colon G \to A$ is an epimorphism of a torsion-free group $G$ onto an
elementary amenable group $A$ and if $p^{-1}(B) \in \cald$ for every finite group
$B \subset A$, then $G \in \cald$;

\item $\cald$ is closed under taking subgroups;

\item $\cald$ is closed under colimits and inverse limits over directed systems.

\end{itemize}

If the group $G$ belongs to $\cald$,
then $G$ is torsion-free and the Atiyah Conjecture holds for $G$.

The class $\cald$ is closed under direct sums, direct products and free
products. Every residually torsion-free elementary amenable group
belongs to $\cald$.
\end{enumerate}
\end{theorem}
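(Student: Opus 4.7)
The plan is to treat the four parts separately, with (3) being the main obstacle; parts (1), (2), and (4) each follow more quickly from standard tools already in the literature.

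For (1), the relevant tool is the compatibility of the algebraic von Neumann dimension with subgroup inclusions: for any subgroup $H \le G$, $\caln(G)$ is flat as a right $\caln(H)$-module and $\dim_{\caln(G)}(\caln(G)\otimes_{\caln(H)} M) = \dim_{\caln(H)}(M)$ for every $\caln(H)$-module $M$, by \cite[Theorem~6.29]{Lueck(2002)}. Given $A \in M_{m,n}(\IQ H) \subseteq M_{m,n}(\IQ G)$, flatness together with the dimension identity yields $\dim_{\caln(H)}(\ker(r_A)) = \dim_{\caln(G)}(\ker(r_A))$, so integrality on the $G$-side forces integrality on the $H$-side. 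For (2), I would simply quote Linnell's theorem directly, treating it as a black box.

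Part (3) is where the real work lies. The plan is to reduce to (2) by establishing $\pi_1(M) \in \calc$. The geometric input is the virtual fibering package: Agol's theorem for hyperbolic 3-manifolds, its extension to mixed manifolds by Przytycki--Wise, and Liu's theorem for non-positively curved graph manifolds. Under the hypotheses of (3), these produce a finite cover $\widetilde M \to M$ fibering over $S^1$, giving an extension $1 \to \pi_1(\Sigma) \to \pi_1(\widetilde M) \to \IZ \to 1$ for a compact surface $\Sigma$. When $\Sigma$ has boundary, $\pi_1(\Sigma)$ is free and hence in $\calc$ by definition; when $\Sigma$ is closed, one must show separately that closed surface groups lie in $\calc$, which is the delicate step I would isolate and attack via the directed-limit descriptions of surface groups that are compatible with the closure axioms of $\calc$, or else via an appeal to the existing literature. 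Given $\pi_1(\Sigma) \in \calc$, extending by $\IZ$ (elementary amenable) keeps us in $\calc$, as does the further extension by the finite deck group $\pi_1(M)/\pi_1(\widetilde M)$. Then (2) applies. The torsion-freeness of $\pi_1(M)$ is standard for an aspherical orientable 3-manifold.

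For (4), I would run the Schick--Lück approximation program: each defining operation of $\cald$ (elementary amenable extensions with prescribed preimages in $\cald$, passage to subgroups, directed colimits, and inverse limits over directed systems) preserves the integrality of $\dim_{\caln(G)}(\ker(r_A))$ via the corresponding $L^2$-Betti number approximation theorem in \cite{Lueck(2002)}. The closure of $\cald$ under direct sums, direct products, free products, and the inclusion of every residually torsion-free elementary amenable group then follows routinely by iterating the defining operations: residual systems give rise to inverse limits over finite groups (trivially in $\cald$), free products and direct products of elementary amenable extensions assemble into elementary amenable extensions with controlled preimages, and so on.
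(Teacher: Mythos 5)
Parts (1), (2), and (4) follow the same route as the paper: (1) via the compatibility of $\dim_{\caln(-)}$ with induction and restriction along a subgroup inclusion, and (2) and (4) by citing Linnell's and Schick's theorems respectively. Your treatment of those parts is fine.

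For part (3), however, you have identified the critical sub-step but left it open. You correctly reduce, via virtual fibering, to showing $\pi_1(F) \in \calc$ for a compact fiber surface $F$; you handle the bounded case (free group); but for $F$ closed you write only that you would ``attack via the directed-limit descriptions of surface groups \ldots or else via an appeal to the existing literature.'' That is not an argument, and the directed-limit idea does not obviously work: closed surface groups are not directed unions of free groups, so the closure-under-directed-unions axiom of $\calc$ does not apply directly. The paper's argument at this point is short and you should have it: the commutator subgroup $[\pi_1(F),\pi_1(F)]$ of a closed surface group is a free group (it is an infinite-index subgroup, so the corresponding cover is a noncompact surface, which is homotopy equivalent to a wedge of circles), and the quotient $\pi_1(F)/[\pi_1(F),\pi_1(F)] = H_1(F)$ is finitely generated abelian, hence elementary amenable. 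So $\pi_1(F)$ is an extension of a free group by an elementary amenable quotient, and lies in $\calc$ by its defining closure properties. Once this is in place your remaining two extension steps (by $\IZ$ and then by the finite deck group) are correct, though you should note that one may pass to the normal core to ensure the finite fibering cover is regular before invoking the extension closure of $\calc$.
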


\begin{proof}~\eqref{the:Status_of_the_Atiyah_Conjecture:subgroups} 
 This follows from~\cite[Theorem~6.29~(2) on page~253]{Lueck(2002)}.
 \\[1mm]~\eqref{the:Status_of_the_Atiyah_Conjecture:Linnell} This is due to Linnell,
 see for instance~\cite{Linnell(1993)} or~\cite[Theorem~10.19 on page~378]{Lueck(2002)}.
 \\[1mm]~\eqref{the:Status_of_the_Atiyah_Conjecture:3-manifold_not_graph} 
 We know from \cite[(C.3)]{AFW(2015)} that $G$ is torsion-free. Thus it suffices to
 show that $G = \pi_1(M)$ belongs to the class $\calc$ appearing in
 assertion~\eqref{the:Status_of_the_Atiyah_Conjecture:Linnell}. 
 
 We do this in several steps: 
 \begin{enumerate}
 \item[(a)] First suppose that $G$ is not a closed graph manifold.  By the proof of the Virtual Fibering Theorem due to Agol, Liu, Przytycki-Wise, and 
 Wise~\cite{Agol(2008),Agol(2013),Liu(2013),Przytycki-Wise(2012), Przytycki-Wise(2014),Wise(2012raggs),Wise(2012hierachy)}
 there exists a finite normal covering $p \colon \overline{M} \to M$ and a fiber
 bundle $F \to \overline{M} \to S^1$ for some compact connected orientable surface $F$. Hence it
 suffices to show that $\pi_1(F)$ belongs to $\calc$. If $F$ has non-empty boundary,
 this follows from the fact that $\pi_1(F)$ is free. If $M$ is closed, the commutator
 subgroup of $\pi_1(F)$ is free and hence $\pi_1(F)$ belongs to $\calc$. Now
 it follows from assertion~\eqref{the:Status_of_the_Atiyah_Conjecture:Linnell} that $G$ belongs to $\calc$.
 \item[(b)] If $M$ is finitely covered by a torus bundle, then
 it follows from assertion~\eqref{the:Status_of_the_Atiyah_Conjecture:Linnell} that $G$ belongs to $\calc$.
 \item[(c)] Now suppose that $G$ is the fundamental group of a closed graph manifold
 and that $M$ is not finitely covered by a torus bundle. 
 It follows from the arguments of \cite[(C.14), (C.15)]{AFW(2015)} that $M$ admits a finite cover $N$ that contains a non-separating torus $T$.
 We denote by $\widetilde{N}$  the infinite cyclic covering of $N$ corresponding to the Poincar\'e dual of $[T]\subset H_2(N;\IZ)\cong H^1(N;\IZ)=\hom(\pi_1(N),\IZ)$. We can write $\widetilde{N}$ as the union of a nested sequence of $\pi_1$-injective compact graph manifolds with non-empty boundary. It follows from (a) and assertion~\eqref{the:Status_of_the_Atiyah_Conjecture:Linnell} that $\pi_1(\widetilde{N})$ lies in $\calc$. But then it follows, again from 
 assertion~\eqref{the:Status_of_the_Atiyah_Conjecture:Linnell}, that $\pi_1(M)$ itself lies in $\calc$.
 \end{enumerate}
 ~\eqref{the:Status_of_the_Atiyah_Conjecture:approx} This result is due to Schick,
 see for instance~\cite{Schick(2001b)} or~\cite[Theorem~10.22 on page~379]{Lueck(2002)}.
\end{proof}


\subsection{$L^2$-acyclic Atiyah pair}
\label{subsec:L2-acyclic_Atiyah_pair}

\begin{definition}[$L^2$-acyclic Atiyah-pair]\label{def:L2-acyclic_Atiyah-pair} 
 An \emph{$L^2$-acyclic Atiyah-pair} $(\mu,\phi)$ for a finite connected $CW$-complex $X$
 consists of group homomorphisms $\mu \colon \pi_1(X) \to G$ and
 $\phi \colon G \to \IZ$ such that the $G$-covering $\overline{X} \to X$ associated to
 $\mu$ is $L^2$-acyclic, i.e., the $n$th $L^2$-Betti number
 $b_n^{(2)}(\overline{X};\caln(G))$ vanishes for every $n \ge 0$, and $G$ is torsion-free and satisfies the
 Atiyah Conjecture.
\end{definition}

Notice that the conditions appearing in Definition~\ref{def:L2-acyclic_Atiyah-pair} only
concern $G$ and $\mu$ but not $\phi$. The Atiyah Conjecture enters in this paper because
of the following theorem.

\begin{theorem}[The Atiyah Conjecture and the $(\mu,\phi)$-$L^2$-Euler characteristic]%
\label{the:Atiyah_and_(mu,phi)-L2-Euler_characteristic}
Let $X$ be a connected finite $CW$-complex. 
Suppose that $(\mu,\phi)$ is an $L^2$-acyclic Atiyah-pair.
Then $X$ is $(\mu,\phi)$-$L^2$-finite, and the $(\mu,\phi)$-$L^2$-Euler characteristic
$\chi^{(2)}(X;\mu,\phi)$ is an integer.
\end{theorem}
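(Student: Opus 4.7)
The plan is to work with the skew field $\cald(G)$ provided by the Atiyah Conjecture. Since $G$ is torsion-free and satisfies the Atiyah Conjecture, $\IZ G$ embeds into the division closure $\cald(G)$ of $\IQ G$ in the algebra $\calu(G)$ of operators affiliated to $\caln(G)$, and $\cald(G)$ is a skew field. Using the standard identification of the $\caln(G)$-dimension with the $\cald(G)$-dimension for modules obtained from $\IZ G$-modules by extension of scalars (which rests on the dimension-theoretic machinery of~\cite[Chapter~6 and 10]{Lueck(2002)} applied to $\calu(G)$ and its flat overalgebras), one has
\[
b_n^{(2)}(\overline{X};\caln(G),\phi) \,\,=\,\, \dim_{\cald(G)} H_n\bigl(\cald(G) \otimes_{\IZ G}(C_*(\overline{X}) \otimes_{\IZ} \phi^*\IZ[\IZ])\bigr).
\]
It will therefore suffice to prove that each of these homology modules is a finite-dimensional $\cald(G)$-vector space whose dimension is an integer, and that only finitely many of them are nonzero.

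My first step is to rewrite the twisted chain complex in a convenient form. Let $R := \cald(G)[t,t^{-1}]$ with $t$ central; because $\cald(G)$ is a skew field, $R$ is a (left and right) principal ideal domain. Identifying $\phi^*\IZ[\IZ]$ with $\IZ[t,t^{-1}]$ on which $g \in G$ acts by multiplication by $t^{\phi(g)}$, the diagonal action on $\IZ G \otimes_{\IZ} \phi^*\IZ[\IZ]$ can be untwisted by the $\IZ G$-linear isomorphism $g \otimes t^k \mapsto g \otimes t^{k-\phi(g)}$, which identifies $\IZ G \otimes_{\IZ} \phi^*\IZ[\IZ]$ with $\IZ G[t,t^{-1}]$ regarded as a free $\IZ G$-module via left multiplication. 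Under this identification the differentials of the twisted complex are obtained from those of $C_*(\overline{X})$ by applying the ring homomorphism $\rho \colon \IZ G \to \IZ G[t,t^{-1}]$, $h \mapsto h \cdot t^{-\phi(h)}$, entrywise to the boundary matrices. Consequently one obtains an isomorphism of $R$-chain complexes
\[
\cald(G) \otimes_{\IZ G}\bigl(C_*(\overline{X}) \otimes_{\IZ} \phi^*\IZ[\IZ]\bigr) \,\,\cong\,\, R \otimes_{\IZ G,\rho} C_*(\overline{X}),
\]
and the right hand side is a finite chain complex of finitely generated free $R$-modules.

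Now I would feed in the $L^2$-acyclicity of $\overline{X}$ via the short exact sequence $0 \to R \xrightarrow{t-1} R \to R/(t-1) = \cald(G) \to 0$ of $R$-bimodules. Tensoring with $R \otimes_{\IZ G,\rho} C_*(\overline{X})$ yields a short exact sequence of $R$-chain complexes whose quotient is $\cald(G) \otimes_{\IZ G} C_*(\overline{X})$, because the composite $\IZ G \xrightarrow{\rho} R \to R/(t-1)$ sends $h$ to $h \cdot 1^{-\phi(h)} = h$ and so is the usual inclusion. By the Atiyah Conjecture the $L^2$-acyclicity hypothesis is equivalent to the acyclicity of $\cald(G) \otimes_{\IZ G} C_*(\overline{X})$, so the long exact sequence in homology forces multiplication by $t-1$ to be an isomorphism on every homology module $H_n$ of $R \otimes_{\IZ G,\rho} C_*(\overline{X})$. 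Since $t-1$ is a non-unit and a non-zero-divisor in $R$, any free summand of $H_n$ would make $t-1$ fail to be surjective on $H_n$; hence $H_n$ is a finitely generated torsion module over the PID $R$. The structure theorem writes $H_n$ as a finite sum of cyclic modules $R/Rp_i$ whose $\cald(G)$-dimensions are the finite non-negative integers $\deg(p_i)$, so $\dim_{\cald(G)} H_n \in \IZ_{\ge 0}$. Only finitely many $n$ contribute, so $h^{(2)}(\overline{X};\caln(G),\phi) < \infty$ and $\chi^{(2)}(X;\mu,\phi) = \sum_n (-1)^n \dim_{\cald(G)} H_n \in \IZ$.

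The main obstacle I anticipate lies in the dimension-compatibility claim used in the first paragraph. The modules appearing in the twisted chain complex are not finitely generated as $\IZ G$-modules, so the identification of $\dim_{\caln(G)}$ with $\dim_{\cald(G)}$ is not a direct consequence of the integrality clause in Definition~\ref{def:Atiyah_Conjecture}; it requires the full machinery of extended dimension functions over $\caln(G)$, $\calu(G)$, and $\cald(G)$ together with the flatness properties of these algebras. Once that comparison is secured, the remainder of the argument is a bookkeeping identification of the twisted complex plus elementary structure theory over the principal ideal domain $\cald(G)[t,t^{-1}]$.
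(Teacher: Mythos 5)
Your argument is correct and constitutes a genuinely different route from the one the paper takes. The paper first reduces to the case that $\phi$ is surjective (Lemma~\ref{lem:reduction_to_surjective_mu}), reinterprets the twisted $L^2$-Euler characteristic as the ordinary $L^2$-Euler characteristic of the restriction to $K = \ker(\phi)$ (Lemma~\ref{lem:phi-twisted_as_ordinary_L2_Euler_characteristic}), and then works inside $\cald(G)$ with the twisted Laurent polynomial ring $\cald(K)_{t}[u^{\pm 1}]$, using the structure theorem over this non-commutative PID together with the observation that the free rank of the homology equals $b_n^{(2)}(\caln(G)\otimes_{\IQ G}C_*)$ and hence vanishes (Theorem~\ref{the:Main_properties_of_cald(G)}~\eqref{the:Main_properties_of_cald(G):chain}). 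You instead untwist the diagonally-twisted complex via the ring map $\rho\colon h \mapsto h\,t^{-\phi(h)}$ into a finitely generated free chain complex over the Laurent ring $R=\cald(G)[t,t^{-1}]$ with a fresh central variable $t$, and then exploit the short exact sequence $0\to R\xrightarrow{t-1}R\to R/(t-1)\cong\cald(G)\to 0$: since the quotient complex is exactly the untwisted $\cald(G)\otimes_{\IZ G}C_*(\overline{X})$, which is acyclic by $L^2$-acyclicity and Theorem~\ref{the:Main_properties_of_cald(G)}~\eqref{the:Main_properties_of_cald(G):dim}, the long exact sequence forces $t-1$ to act bijectively on $H_n$, whence $H_n$ is a finitely generated torsion $R$-module of finite integer $\cald(G)$-dimension. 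Your approach avoids the reduction lemmas entirely and handles non-surjective $\phi$ (including $\phi=0$) uniformly, which is a real economy; on the other hand the paper's detour through $\cald(K)_{t}[u^{\pm 1}]$ is not wasted, since precisely that ring and the identification of $\chi^{(2)}$ with a $\cald(K)$-dimension of a cokernel are needed in the later Fox-calculus and Newton-polytope computations. The one point you flag as a potential obstacle --- the identification of $\dim_{\caln(G)}$ with $\dim_{\cald(G)}$ for the (infinitely generated) twisted complex --- is exactly what Theorem~\ref{the:Main_properties_of_cald(G)}~\eqref{the:Main_properties_of_cald(G):dim} supplies for arbitrary projective $\IQ G$-chain complexes, so your proof closes once you cite it.
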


\begin{proof}
Let $X$ be a connected finite $CW$-complex and suppose that $(\mu,\phi)$ is an $L^2$-acyclic Atiyah-pair.
It follows from 
Theorem~\ref{the:Main_properties_of_cald(G)} (4) and Lemma~\ref{lem:phi-twisted_as_ordinary_L2_Euler_characteristic} that 
 $X$ is $(\mu,\phi)$-$L^2$-finite. Furthermore it follows from 
Theorem~\ref{the:Main_properties_of_cald(G)} (2) and (4) that 
the $(\mu,\phi)$-$L^2$-Euler characteristic
$\chi^{(2)}(X;\mu,\phi)$ is an integer.
\end{proof}

As we see, the proof of the previous theorem rests on 
Theorem~\ref{the:Main_properties_of_cald(G)}~\eqref{the:Main_properties_of_cald(G):chain},
whose formulation requires some preparation.


\subsection{The division closure $\cald(G)$ of $\IQ G$ in $\calu(G)$}
\label{subsec:The_division_closure_cald(G)_of_FG_in_calu(G)}

Let $S$ be a ring with subring $R \subset S$. The \emph{division closure} $\cald(R \subset S)$ 
is the smallest subring of $S$ which contains $R$ and is division closed, i.e., every
element in $\cald(R \subset S)$ which is a unit in $S$ is already a unit in $\cald(R \subset S)$.

\begin{notation}[$\cald(G)$]\label{not:T(Sigma_subset_R)} Let $G$ be a group.
 Denote by $\calu(G)$ the algebra of operators
 affiliated to the (complex) group von Neumann algebra $\caln(G)$,
 see~\cite[Section~8.2]{Lueck(2002)}. (This is the Ore localization of $\caln(G)$ with
 respect to the set of non-zero-divisors of $\caln(G)$, see~\cite[Theorem~8.22 on
 page~327]{Lueck(2002)}.) Denote by $\cald(G)$ the division closure of $\IQ G$ considered
 as a subring of $\calu(G)$. 
\end{notation}

On several occasions we will use the following lemma.

\begin{lemma}\label{lem:elementary_amenable_and_ore} 
Let $G$ be a torsion-free elementary amenable group.
 Then the Ore localization 
 $T^{-1} \IZ G$ for $T$ the set of non-trivial elements in $\IZ G$ exists
 and agrees with the skew field $\cald(G)$. In particular $\cald(G)$ is flat over $\IZ G$.
 Moreover, $G$ satisfies the Atiyah Conjecture and we get for every finitely generated free $\IQ G$-chain complex $C_*$ that 
 \[
 b_n^{(2)}(\caln(G) \otimes_{\IQ G} C_*) \,\, =\,\, \dim_{\cald(G)}(H_n(\cald(G) \otimes_{\IQ G} C_*)).
 \] 
\end{lemma}

 \begin{proof} This follows from 
 Theorem~\ref{the:Status_of_the_Atiyah_Conjecture}~\eqref{the:Status_of_the_Atiyah_Conjecture:Linnell},
 Theorem~\ref{the:Main_properties_of_cald(G)}~\eqref{the:Main_properties_of_cald(G):dim}
 and~\cite[ Example~8.16 on page~324 and Lemma~10.16 on page~376]{Lueck(2002)}.
 The proofs there deal only with $\IC$, but carry over without changes to any field $F$ with $\IQ \subseteq F \subseteq \IC$.
\end{proof}

\begin{notation}
Let $\cald$ be a skew field together with an automorphism 
$t\colon \cald \to \cald$ of skew fields. A Laurent polynomial over $\cald$ is 
a formal linear combination  $x = \sum_{i=m}^{n} d_i \cdot u^i$ with $d_m,\dots,d_n\in\cald$. The set of Laurent polynomials becomes a ring
$\cald_{t}[u^{\pm 1}]$ by defining $du^i\cdot eu^j=dt^{-i}(e)u^{i+j}$ for all $d,e\in \cald$ and $i,j\in \IZ$.
\end{notation}

The proof of Theorem~\ref{the:Main_properties_of_cald(G)} will be based on ideas of Peter
Linnell from~\cite{Linnell(1993)} which have been explained in detail and a little bit extended
in~\cite[Chapter~10]{Lueck(2002)} and~\cite{Reich(2006)}. 

\begin{theorem}[Main properties of $\cald(G)$]\label{the:Main_properties_of_cald(G)}
Let $G$ be a torsion-free group.

\begin{enumerate}[font=\normalfont]

\item\label{the:Main_properties_of_cald(G):skew_field}
The group $G$ satisfies the Atiyah Conjecture if and only if
$\cald(G)$ is a skew field;

\item\label{the:Main_properties_of_cald(G):dim}
Suppose that $G$ satisfies the Atiyah Conjecture.
Let $C_*$ be a projective $\IQ G$-chain complex, i.e.\ $C_*$ is a chain complex consisting of projective $($e.g.\ free$)$ $\IQ G$-modules.
Then we get for all $n \ge 0$
\[
b_n^{(2)}\bigl(\caln(G) \otimes_{\IQ G} C_*\bigr) = \dim_{\cald(G)}\bigl(H_n(\cald(G) \otimes_{\IQ G} C_*)\bigr).
\]
In particular $b_n^{(2)}\bigl(\caln(G) \otimes_{\IQ G} C_*\bigr)$ is either infinite or an integer;

\item\label{the:Main_properties_of_cald(G):cald(K)_and_(cald(G)} 
 Suppose that $G$ satisfies the Atiyah Conjecture.
 Let $\phi \colon G \to \IZ$ be a surjective group
 homomorphism. Let $K \subseteq G$ be the kernel of $\phi$. Fix an element $\gamma \in G$
 with $\phi(\gamma) = 1$. If we define the semi-direct product $K \rtimes \IZ$ with
 respect to the conjugation automorphism $c_{\gamma} \colon K \to K$ of $\gamma$ on $K$,
 we can identify $G$ with $K \rtimes \IZ$ and $\phi$ becomes the
 canonical projection $G = K \rtimes \IZ \to \IZ$. Let $\cald(K)_{t}[u^{\pm 1}]$
 be the ring of twisted Laurent polynomials with respect to the automorphism
 $t \colon \cald(K) \xrightarrow{\cong} \cald(K)$ coming
 from $c_{\gamma} \colon K \to K$. 

 Then $\cald(K_{t}[u^{\pm 1}])$ is a non-commutative principal ideal domain, i.e.,
 it has no non-trivial zero-divisor and every left ideal is a principal left ideal and
 every right ideal is a principal right ideal. Furthermore the set $T$ of non-zero elements in
 $\cald(K)_{t}[u^{\pm 1}]$ satisfies the Ore condition and there is a canonical
 isomorphism of skew fields
 \[
 T^{-1}\cald(K)_{t}[u^{\pm 1}] \xrightarrow{\cong} \cald(G);
 \]

\item\label{the:Main_properties_of_cald(G):chain} 
 Let $G$ be a torsion-free group which satisfies the
 Atiyah Conjecture. Let $\phi \colon G \to \IZ$ be a surjective group
 homomorphism. Denote by $i \colon K \to G$ the inclusion of the kernel $K$ of $\phi$.
 Let $C_*$ be a finitely generated projective $\IQ G$-chain complex such that 
 $b_n^{(2)}(\caln(G) \otimes_{\IQ G} C_*)$ vanishes for all $n \ge 0$. Denote by $i^*C_*$ 
 the restriction of the $\IQ G$-chain complex $C_*$ to a $\IQ K$-chain complex.

 Then $H_n(\cald(K) \otimes_{\IQ K} i^*C_*)$ and $H_n(\cald(K) _{t}[u^{\pm 1}] \otimes_{\IQ G} C_*)$
 are finitely generated free as $\cald(K)$-modules, $b_n^{(2)}\bigl(\caln(K) \otimes_{\IQ K} i^*C_*\bigr)$ is finite,
 and we have
\begin{eqnarray*}
b_n^{(2)}\bigl(\caln(K) \otimes_{\IQ K} i^*C_*\bigr) 
& = &
\dim_{\cald(K)}\bigl(H_n(\cald(K) _{t}[u^{\pm 1}] \otimes_{\IQ G} C_*)\bigr).
\\
& = & 
\dim_{\cald(K)}\bigl(H_n(\cald(K) \otimes_{\IQ K} i^*C_*)\bigr)
\end{eqnarray*}
for all $n \ge 0$.

\end{enumerate}
\end{theorem}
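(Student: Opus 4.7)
The strategy is to transport the acyclicity hypothesis over $\cald(G)$ down to a torsion statement over $R := \cald(K)_t[u^{\pm 1}]$, and then exploit the fact that $R$ is a twisted Laurent polynomial ring over a skew field.

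First I would observe that $K$, being a subgroup of the torsion-free group $G$, is itself torsion-free and satisfies the Atiyah Conjecture by Theorem~\ref{the:Status_of_the_Atiyah_Conjecture}~\eqref{the:Status_of_the_Atiyah_Conjecture:subgroups}. Hence both $\cald(G)$ and $\cald(K)$ are skew fields by assertion~\eqref{the:Main_properties_of_cald(G):skew_field}, and assertion~\eqref{the:Main_properties_of_cald(G):cald(K)_and_(cald(G)} applies: $R$ is a non-commutative principal ideal domain whose Ore localization at $T := R \setminus \{0\}$ is $\cald(G)$.

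Next I would identify the two chain complexes appearing in the statement. Choosing $\gamma \in G$ with $\phi(\gamma)=1$ and writing $G = K \rtimes \IZ$, one has a decomposition $\IQ G = \bigoplus_{i \in \IZ} \IQ K \cdot \gamma^i$ as a free left $\IQ K$-module, and the formula
\[
\cald(K) \otimes_{\IQ K} \IQ G \,\,\longrightarrow\,\, R,\qquad d \otimes k\gamma^i \,\,\longmapsto\,\, dk\cdot u^i,
\]
defines an isomorphism of $(\cald(K), \IQ G)$-bimodules. Tensoring with $C_*$ over $\IQ G$ yields a canonical isomorphism of left $\cald(K)$-chain complexes
\[
\cald(K) \otimes_{\IQ K} i^* C_* \,\,\cong\,\, R \otimes_{\IQ G} C_*,
\]
which proves the second claimed equality at the level of homology.

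The core step is the finiteness assertion. Applying assertion~\eqref{the:Main_properties_of_cald(G):dim} to $G$ and $C_*$, the hypothesis $b_n^{(2)}(\caln(G) \otimes_{\IQ G} C_*) = 0$ forces $H_n(\cald(G) \otimes_{\IQ G} C_*) = 0$ for every $n$. By flatness of the Ore localization $\cald(G) = T^{-1}R$, this implies that $H_n(R \otimes_{\IQ G} C_*)$ is $T$-torsion. Since $C_*$ is a finitely generated projective $\IQ G$-chain complex, $R \otimes_{\IQ G} C_*$ is a finitely generated projective $R$-chain complex, and left Noetherianness of $R$ guarantees that the homology is finitely generated over $R$. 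I expect the principal technical obstacle to lie here: one must invoke the structure theorem for finitely generated modules over a non-commutative PID to decompose $H_n(R \otimes_{\IQ G} C_*) \cong \bigoplus_j R/Rp_j$ with $p_j \in R \setminus \{0\}$, and then verify, by means of the left division algorithm in the twisted Laurent polynomial ring $R = \cald(K)_t[u^{\pm 1}]$ over the skew field $\cald(K)$, that each cyclic summand $R/Rp$ is a finite-dimensional left $\cald(K)$-vector space (of dimension equal to the width of $p$). This is a routine but technical analogue of the classical commutative case. Since left vector spaces over a skew field are free, it follows that $H_n(R \otimes_{\IQ G} C_*)$, and therefore also $H_n(\cald(K) \otimes_{\IQ K} i^*C_*)$, is finitely generated free over $\cald(K)$.

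Finally, I would apply assertion~\eqref{the:Main_properties_of_cald(G):dim} to the group $K$ and the projective $\IQ K$-chain complex $i^*C_*$, giving
\[
b_n^{(2)}\bigl(\caln(K) \otimes_{\IQ K} i^*C_*\bigr) \,\,=\,\, \dim_{\cald(K)}\bigl(H_n(\cald(K) \otimes_{\IQ K} i^*C_*)\bigr),
\]
which is finite by the previous step; the remaining equality in the theorem then follows from the chain-level isomorphism established in the second step.
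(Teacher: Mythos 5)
Your proposal is correct and follows essentially the same route as the paper's proof: reduce to a statement over the twisted Laurent ring $R = \cald(K)_t[u^{\pm 1}]$, invoke the structure theorem for finitely generated modules over a non-commutative PID, use flatness of the Ore localization $\cald(G)=T^{-1}R$ together with assertion (2) to kill the free part, observe via the division algorithm that each cyclic torsion summand $R/Rp$ is finite-dimensional over $\cald(K)$ (this is exactly Lemma~\ref{lem:degree_and_rank}), and finally apply assertion (2) to $K$ and $i^*C_*$ together with the canonical isomorphism $\cald(K)\otimes_{\IQ K} i^*C_* \cong R \otimes_{\IQ G}C_*$. The only cosmetic difference is that you deduce $T$-torsion first and then invoke the structure theorem, whereas the paper first writes down the general decomposition $R^r \oplus \bigoplus R/Rp_i$ and then computes $r=0$ from the vanishing of the $L^2$-Betti number; the two orderings are logically equivalent.
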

\begin{proof}~\eqref{the:Main_properties_of_cald(G):skew_field} This is proved in the
 case $F = \IC$ in~\cite[Lemma~10.39 on page~388]{Lueck(2002)}. The proof goes through
 for an arbitrary field $F$ with $\IQ \subseteq F \subseteq \IC$ without
 modifications. 
 \\[1mm]~\eqref{the:Main_properties_of_cald(G):dim} 
 We have the following commutative diagram of inclusion of rings
\[
\xymatrix@C1.2cm@R0.5cm{\IQ G \ar[r] \ar[d]
& \caln(G) \ar[d]
\\
\cald(G) \ar[r] 
&
\calu(G).
}
\]
There is a dimension function $\dim_{\calu(G)}$ for arbitrary (algebraic) $\calu(G)$-modules
such that for any $\caln(G)$-module $M$ we have 
$\dim_{\calu(G)}(\calu(G) \otimes_{\caln(G)} M) = \dim_{\caln(G)}(M)$ 
and basic features like additivity and continuity and cofinality are still satisfied, 
see~\cite[Theorem~8.29 on page~330]{Lueck(2002)}. Moreover, $\calu(G)$ is flat over $\caln(G)$,
see~\cite[Theorem~8.22~(2) on page~327]{Lueck(2002)}. 
Since $\cald(G)$ is a skew field by assertion~\eqref{the:Main_properties_of_cald(G):skew_field}, 
$\calu(G)$ is also flat as a $\cald(G)$-module
and we have for any $\cald(G)$-module $M$ the equality
$\dim_{\calu(G)}(\calu(G) \otimes_{\cald(G)} M) = \dim_{\cald(G)}(M)$. We conclude
\begin{eqnarray*}
b_n^{(2)}(\caln(G) \otimes_{\IQ G} C_*) 
& = & 
\dim_{\caln(G)} \bigl(H_n(\caln(G) \otimes_{\IQ G} C_*)\bigr)
\\
& = & 
\dim_{\calu(G)} \bigl(\calu(G) \otimes_{\caln(G)} H_n(\caln(G) \otimes_{\IQ G} C_*)\bigr)
\\
& = & 
\dim_{\calu(G)} \bigl(H_n(\calu(G) \otimes_{\caln(G)} \caln(G) \otimes_{\IQ G} C_*)\bigr)
\\
& = & 
\dim_{\calu(G)} \bigl(H_n(\calu(G) \otimes_{\IQ G} C_*)\bigr)
\\
& = & 
\dim_{\calu(G)} \bigl(H_n(\calu(G) \otimes_{\cald(G)} \cald(G) \otimes_{\IQ G} C_*)\bigr)
\\
& = & 
\dim_{\calu(G)} \bigl(\calu(G) \otimes_{\cald(G)} H_n(\cald(G) \otimes_{\IQ G} C_*)\bigr)
\\
& = & 
\dim_{\cald(G)} \bigl(H_n(\cald(G) \otimes_{\IQ G} C_*)\bigr).
\end{eqnarray*}
This finishes the proof of assertion~\eqref{the:Main_properties_of_cald(G):dim}.
\\[1mm]~\eqref{the:Main_properties_of_cald(G):cald(K)_and_(cald(G)} Since $G
= K \rtimes \IZ$ satisfies the Atiyah Conjecture by assumption, the
same is true for $K$ by
Theorem~\ref{the:Status_of_the_Atiyah_Conjecture}~\eqref{the:Status_of_the_Atiyah_Conjecture:subgroups}.
We know already from assertion~\eqref{the:Main_properties_of_cald(G):skew_field} that
$\cald(K)$ and $\cald(G)$ are skew fields. The ring $\cald(K)_{t}[u^{\pm 1}]$ is
a non-commutative principal ideal domain, see~\cite[2.1.1 on page~49]{Cohn(1995)}
or~\cite[Proposition~4.5]{Cochran(2004)}. The claim that the Ore localization
$T^{-1}\cald(K)_{t}[u^{\pm 1}]$ exists and is isomorphic to $\cald(G)$ is proved in
the case $F = \IC$ in~\cite[Lemma~10.60 on page~399]{Lueck(2002)}. The proof goes through
for an arbitrary field $F$ with $\IQ \subseteq F \subseteq \IC$ without
modifications. 
\\[1mm]~\eqref{the:Main_properties_of_cald(G):chain} 
We write the group ring $\IQ G$ as the ring $\IQ K_{t}[u^{\pm 1}]$ of twisted Laurent
polynomials with coefficients in $\IQ K$. We get a commutative diagram of inclusions of rings,
where $\cald(K)_{t}[u^{\pm 1}]$ is a (non-commutative) principal ideal domain and
$\cald(K)$ and $\cald(G)$ are skew fields:

\[\xymatrix@C1.2cm@R0.5cm{\IQ K \ar[r] \ar[d]
&
\IQ G = \IQ K_{t}[u^{\pm 1}] \ar[dd] \ar[ldd]
\\
\cald(K) \ar[d]
&
\\
\cald(K)_{t}[u^{\pm 1}] \ar[r] \ar[d]
&
\cald(G) \ar[d]^{\id}
\\
T^{-1}\cald(K)_{t}[u^{\pm 1}] \ar[r]^-{\cong}
&
\cald(G).
}
\]
Since $C_*$ is a finitely generated projective $\IQ G$-chain complex by assumption,
the $\cald(K)_{t}[u^{\pm 1}]$-chain complex $\cald(K)_{t}[u^{\pm 1}] \otimes_{ \IQ G}C_*$ 
is finitely generated projective. Since $\cald(K \rtimes \IZ)$ is a
(non-commutative) principal ideal domain, it follows from~\cite[p.~494]{Cohn(1985)},
that there exist integers $r,s \ge 0$ and non-zero elements $p_1, p_2, \ldots , p_s \in \cald(K)_{t}[u^{\pm 1}]$ 
such that we get an isomorphism of $\cald(K)_{t}[u^{\pm 1}]$-modules
\[
H_n\bigl(\cald(K)_{t}[u^{\pm 1}] \otimes_{\IQ G} C_*\bigr) \cong \cald(K)_{t}[u^{\pm 1}]^r \oplus \bigoplus_{i = 1}^s 
\cald(K)_{t}[u^{\pm 1}]/(p_i).
\]
Since $\cald(G) = T^{-1}\cald(K)_{t}[u^{\pm 1}]$ is flat over
$\cald(K)_{t}[u^{\pm 1}]$, we conclude using
assertion~\eqref{the:Main_properties_of_cald(G):dim} that
\begin{eqnarray*}
r 
& = &
\dim_{\cald(G)}\bigl(\cald(G)\otimes_{\cald(K)_{t}[u^{\pm 1}]} 
H_n(\cald(K)_{t}[u^{\pm 1}] \otimes_{\IQ G} C_*)\bigr)
\\
& = &
\dim_{\cald(G)}\bigl(
H_n\bigl(\cald(G) \otimes_{\cald(K)_{t}[u^{\pm 1}]} \cald(K)_{t}[u^{\pm 1}] \otimes_{\IQ G} C_*\bigr)\bigr)
\\
& = &
\dim_{\cald(G)}\bigl(
H_n\bigl(\cald(G) \otimes_{\IQ G} C_*\bigr)\bigr)
\\
& = & 
b_n^{(2)}(\caln(G) \otimes_{\IQ G} C_*).
\end{eqnarray*}
Since by assumption $b_n^{(2)}(\caln(G) \otimes_{\IQ G} C_*) = 0$ holds, we conclude
\[
H_n(\cald(K)_{t}[u^{\pm 1}] \otimes_{\IQ G} C_*)\,\, \cong \,\, \bigoplus_{i = 1}^s 
\cald(K)_{t}[u^{\pm 1}]/(p_i).
\]
Lemma~\ref{lem:degree_and_rank} implies that $\cald(K)_{t}[u^{\pm 1}]/(p_i)$ considered as
$\cald(K)$-module is finitely generated free. 
This implies that $H_n\bigl(\cald(K)_{t}[u^{\pm 1}] \otimes_{\IQ G} C_*\bigr)$
considered as $\cald(K)$-module is finitely generated free.
Assertion~\eqref{the:Main_properties_of_cald(G):dim} 
applied to $K$ instead of $G$ implies
\[
b_n^{(2)}\bigl(\caln(K) \otimes_{FK} i^*C_*\bigr) 
=
\dim_{\cald(K)}\bigl(H_n\bigl(\cald(K) \otimes_{\IZ K} i^* C_*\bigr)\bigr). 
\]
There is an obvious isomorphism of $\cald(K)$-chain complexes
\[
\cald(K) \otimes_{\IZ K} i^*C_* \xrightarrow{\cong} \cald(K)_{t}[u^{\pm 1}] \otimes_{\IZ G} C_*,
\quad 
x \otimes_{\IZ K} y \mapsto x \otimes_{\IZ G} y
\]
which induces an isomorphism of $\cald(K)$-modules
\[
H_n\bigl(\cald(K) \otimes_{\IQ K} i^*C_*\bigr) \xrightarrow{\cong} H_n\bigl(\cald(K)_{t}[u^{\pm 1}] \otimes_{\IQ G} C_*\bigr).
\]
Hence we get
\[
\dim_{\cald(K)}\bigl(H_n\bigl(\cald(K) \otimes_{FK} i^*C_*\bigr)\bigr) 
\,\,=\,\,
\dim_{\cald(K)}\bigl(H_n\bigl(\cald(K)_{t}[u^{\pm 1}] \otimes_{\IQ G} C_*\bigr)\bigr). 
\]
This finishes the proof of Theorem~\ref{the:Main_properties_of_cald(G)}
and hence also of Theorem~\ref{the:Atiyah_and_(mu,phi)-L2-Euler_characteristic}.
\end{proof}


 \typeout{-- Section 4: The $\phi$-$L^2$-Euler characteristic is a lower bound for the Thurston norm ---------}

\section{The negative of the $(\mu,\phi)$-$L^2$-Euler characteristic is a lower bound for the Thurston norm}
\label{subsec:The_(mu,phi)-L2-Euler_characteristic_is_a_lower_bound_for_the_Thurston_norm}

\begin{theorem}[The negative of the $(\mu,\phi)$-$L^2$-Euler characteristic is a lower bound for the Thurston norm]
\label{the:The_Thurston_norm_ge_the_(mu,phi)-L2-Euler_characteristic}
Let $M\ne S^1\times D^2$ be an admissible $3$-manifold and let $(\mu,\phi)$ be an $L^2$-acyclic Atiyah-pair.
Then $M$ is $(\mu,\phi)$-$L^2$-finite and we get
\[
-\chi^{(2)}(M;\mu,\phi) \,\,\le \,\,x_M(\phi \circ \mu).
\]
\end{theorem}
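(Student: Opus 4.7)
The plan is to realize $x_M(\phi\circ\mu)=\chi_-(F)$ by a Thurston-norm-minimizing properly embedded oriented surface $F\subset M$ dual to $\phi\circ\mu$, cut $M$ along $F$, and run a Mayer--Vietoris argument on the $\cald(K)_t[u^{\pm 1}]$-chain complex furnished by Theorem~\ref{the:Main_properties_of_cald(G)}. First I would use Lemma~\ref{lem:reduction_to_surjective_mu} to reduce to the case where $\mu$ and $\phi$ are surjective and $\phi\circ\mu\ne 0$; when $\phi\circ\mu=0$ both sides vanish immediately. Writing $K=\ker(\phi)$, $G=K\rtimes\IZ$ and $R:=\cald(K)_t[u^{\pm 1}]$ as in Theorem~\ref{the:Main_properties_of_cald(G)}\eqref{the:Main_properties_of_cald(G):cald(K)_and_(cald(G)}, Lemma~\ref{lem:phi-twisted_as_ordinary_L2_Euler_characteristic} combined with Theorem~\ref{the:Main_properties_of_cald(G)}\eqref{the:Main_properties_of_cald(G):chain} rewrites
\[
-\chi^{(2)}(M;\mu,\phi) \;=\; \sum_{n\ge 0}(-1)^{n+1}\dim_{\cald(K)} H_n^R(M), \qquad H_n^R(X) := H_n\bigl(R\otimes_{\IQ G} C_*(\overline X)\bigr),
\]
with each $H_n^R(M)$ an $R$-torsion module of finite $\cald(K)$-dimension by the $L^2$-acyclicity hypothesis.

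Next, choose $F$ with no sphere or disc components (using irreducibility of $M$), set $N := M\setminus\nu(F)$, and view $M$ as the pushout $F\cup_{F\sqcup F} N$. The standard Mayer--Vietoris short exact sequence of $\IQ G$-chain complexes, tensored with $R$ over $\IQ G$, produces a long exact sequence of $R$-modules linking $H_n^R(M)$ to the analogous homologies of $F$, $F\sqcup F$, and $N$. The first key geometric input is that $\phi\circ\mu$ vanishes on $\pi_1(F)$ (because $F$ is dual to $\phi\circ\mu$) and on $\pi_1(N)$ (because loops in $N$ do not meet $F$), so the maps $\pi_1(F),\pi_1(N)\to G$ both factor through $K$. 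It follows that $R\otimes_{\IQ G} C_*(\overline F)$ and $R\otimes_{\IQ G} C_*(\overline N)$ are extended from $\cald(K)$-chain complexes of the $K$-covers $\hat F$, $\hat N$, so that their $R$-homology modules are free of ranks equal to the $L^2$-Betti numbers $b_n^{(2)}(\hat F;\caln(K))$ and $b_n^{(2)}(\hat N;\caln(K))$, respectively.

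The second key geometric input is that, although $F$ is a single submanifold of $M$, its two boundary copies $F_\pm\subset\partial N$ sit at adjacent levels of the infinite cyclic cover of $M$ determined by $\phi\circ\mu$: lifting to $\overline M$ sends one copy of $F$ to the other shifted by a preimage $u\in G$ of the generator of $G/K = \IZ$. After the identification $H_n^R(F\sqcup F) \cong H_n^R(F)^{\oplus 2}$, the Mayer--Vietoris connecting map into $H_n^R(F)\oplus H_n^R(N)$ is therefore represented by an $R$-linear block matrix with entries of $u$-degree at most one, the $u$-shift coming precisely from the distinction between $F_+$ and $F_-$.

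The main obstacle is the ensuing algebraic bookkeeping. Using that each $H_n^R(M)$ is $R$-torsion while the $F$- and $N$-terms are $R$-free, one extracts from the long exact sequence an identification of $H_n^R(M)$ with the cokernel (and kernel) of the corresponding connecting matrix, applies a Smith-normal-form analysis over the non-commutative principal ideal domain $R$ (cf.\ Theorem~\ref{the:Main_properties_of_cald(G)}\eqref{the:Main_properties_of_cald(G):cald(K)_and_(cald(G)}) to bound the $\cald(K)$-dimension of the cokernel by the $u$-degree of its Dieudonn\'e determinant, and assembles the alternating sum. Using $\sum_n(-1)^n b_n^{(2)}(\hat F;\caln(K)) = \chi(F) = -\chi_-(F)$ and checking that the $b_0^{(2)}$- and $b_2^{(2)}$-contributions of $\hat F$ and $\hat N$ cancel against the low- and top-degree Mayer--Vietoris terms then yields $-\chi^{(2)}(M;\mu,\phi)\le\chi_-(F) = x_M(\phi\circ\mu)$.
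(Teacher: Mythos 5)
Your Mayer--Vietoris plan is a genuinely different route from the paper's proof. The paper invokes Turaev's theorem to produce a weighted surface $\Sigma = \bigcup_i \Sigma_i$ with multiplicities $r_i$ whose sum is dual to $\phi$ and whose complement is \emph{connected}, builds an explicit CW chain complex of $\overline M$ with concrete boundary matrices $B_1,B_2,B_3$, and splits $C_*(\overline M)$ into three two-term sub/quotient complexes $C_*[1]$, $C_*[2]$, $C_*[3]$: the ``end'' pieces $C_*[1]$ (degrees $0$--$1$) and $C_*[3]$ (degrees $2$--$3$) are computed \emph{exactly} (each contributes $r_1+r_2$), and only the ``middle'' piece $C_*[2]$ is \emph{bounded}, via Lemma~\ref{rank_estimate_for_a_plus_t_I_k}, the $A + u\cdot I_k^n$ degree estimate. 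You instead cut along a single norm-minimizing surface $F$ and run Mayer--Vietoris, using the extension structure of the $K$-covers of $F$ and $N$ and a Dieudonn\'e-determinant/Smith-normal-form analysis over $\cald(K)_t[u^{\pm1}]$. The high-level formalism you quote from Theorem~\ref{the:Main_properties_of_cald(G)} and Lemma~\ref{lem:phi-twisted_as_ordinary_L2_Euler_characteristic} is correct, as is the geometric observation that $\pi_1(F)$ and $\pi_1(N)$ land in $K$, and that the gluing introduces a single $u$-shift.

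However, there is a genuine gap exactly where you flag ``the main obstacle.'' From the Mayer--Vietoris short exact sequences one obtains, for each $n$, a square matrix $\alpha_n$ over $R = \cald(K)_t[u^{\pm 1}]$ with $\dim_{\cald(K)} H_n^R(M) = \deg\det\alpha_n$, and the block structure (one invertible $\cald(K)$-block and one $u$-shifted invertible block) gives, via a Schur complement, the \emph{upper} bound $\deg\det\alpha_n \le b_n^{(2)}(\hat F;\caln(K)) =: d_n$ for every $n$. But the quantity you must bound is the alternating sum
\[
-\chi^{(2)}(M;\mu,\phi) \,=\, -\deg\det\alpha_0 + \deg\det\alpha_1 - \deg\det\alpha_2,
\]
and the target is $-\chi(F) = -d_0 + d_1 - d_2$. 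An upper bound on each $\deg\det\alpha_n$ gives an upper bound only on the $+\deg\det\alpha_1$ summand; the terms $-\deg\det\alpha_0$ and $-\deg\det\alpha_2$ are pushed the \emph{wrong} way, so the inequality does not close. What is needed are the \emph{exact} equalities $\deg\det\alpha_0 = d_0$ and $\deg\det\alpha_2 = d_2$, i.e., that the degree bounds are sharp in the extreme degrees --- equivalently, that the inclusion-induced maps $H_n(\cald(K)\otimes C_*(\hat F)) \to H_n(\cald(K)\otimes C_*(\hat N))$ in degrees $n=0,2$ are isomorphisms so that the Schur complement produces a determinant of maximal degree. You assert this as ``checking that the $b_0^{(2)}$- and $b_2^{(2)}$-contributions cancel'' but do not prove it, and it is not automatic: it can fail when $\mu|_{\pi_1(F)}$ has small image, when components of $F$ or $M\setminus F$ proliferate, or when $M$ is closed so that $H_2$ carries Poincar\'e-dual information. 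This is precisely the work that the paper's \emph{exact} evaluation of $\chi^{(2)}(\caln(K)\otimes_{\IZ K}i^*C_*[l]) = r_1+r_2$ for $l=1,3$ does, relying on the explicit $2\times 2$ blocks $\begin{pmatrix} 1 & -\nu_i \\ 1 & -z_i\end{pmatrix}$ coming from the CW structure, and on Turaev's connected-complement normalization. Your plan needs an analogue of that exact degree computation in degrees $0$ and $2$, and it is not supplied.
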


Its proof needs some preparation.


\subsection{Some preliminaries about twisted Laurent polynomials over skew fields}
\label{subsec:Some_preliminaries_about_twisted_Laurent_polynomials_over_skew_fields}
In this subsection we consider a skew field $\cald$ together with an automorphism 
$t\colon \cald \to \cald$ of skew fields. Let $\cald_{t}[u^{\pm 1}]$ be the ring of twisted Laurent
polynomials over $\cald$. For a non-trivial element $x = \sum_{i \in \IZ} d_i \cdot u^i$
in $\cald_{t}[u^{\pm 1}]$ we define its degree to be the natural number
 \begin{eqnarray}
 \deg(x) & := & n_+ - n_-
\label{degree_of_a_Laurent_polynomials}
 \end{eqnarray}
 where $n_-$ the smallest integer such that $d_{n_-}$ does not vanish, and $n_+$ is largest
 integer such that $d_{n_+}$ does not vanish.

 \begin{lemma}\label{lem:degree_and_rank} 
 Consider a non-trivial element $x$ in
 $\cald_{t}[u^{\pm 1}]$. Then the $\cald_{t}[u^{\pm 1}]$-homomorphism $r_x
 \colon \cald_{t}[u^{\pm 1}] \to \cald_{t}[u^{\pm 1}]$ given by right
 multiplication with $x$ is injective and its cokernel has finite dimension over
 $\cald$, namely,
 \[\dim_{\cald}(\coker(r_x)) = \deg(x).
 \]
 \end{lemma}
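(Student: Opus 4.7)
The plan is to reduce everything to a polynomial with invertible constant and leading coefficients, then carry out a two-sided Euclidean-style reduction to exhibit an explicit $\cald$-basis of the cokernel.

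First I would dispose of injectivity. Since $\cald$ is a skew field, and $t$ is an automorphism, the twisted Laurent polynomial ring $\cald_t[u^{\pm 1}]$ has no zero divisors: if $a = \sum a_i u^i$ and $b = \sum b_j u^j$ are both nonzero with top nonzero coefficients $a_{i_+}$ and $b_{j_+}$, then the coefficient of $u^{i_++j_+}$ in $ab$ equals $a_{i_+} \cdot t^{i_+}(b_{j_+})$, which is nonzero because $t^{i_+}$ is an automorphism and $\cald$ is a skew field. Hence $r_x$ is injective for every nonzero $x$.

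Next I would normalize $x$. Using $d_i u^i = u^{n_-} \cdot t^{-n_-}(d_i) u^{i-n_-}$, write $x = u^{n_-} \cdot y$ where
\[
y = \sum_{j=0}^{N} e_j u^j, \qquad e_j := t^{-n_-}(d_{n_-+j}), \qquad N := n_+-n_- = \deg(x),
\]
so that $e_0$ and $e_N$ are nonzero, hence invertible in $\cald$. Because $u^{n_-}$ is a unit in $\cald_t[u^{\pm 1}]$, the left ideals $\cald_t[u^{\pm 1}]\cdot x$ and $\cald_t[u^{\pm 1}]\cdot y$ coincide, so $\coker(r_x)\cong\coker(r_y)$ as left $\cald$-modules and it suffices to compute the latter.

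The heart of the argument is a two-sided division. Given any $a\in\cald_t[u^{\pm 1}]$, first I would do \emph{top reduction}: as long as the highest power $u^h$ appearing in $a$ satisfies $h\ge N$, subtract a suitable left $\cald$-multiple of $u^{h-N}\cdot y$ to kill the leading term (possible because $e_N$ is a unit); this lowers $h$ by one and does not raise $h$. After finitely many steps, the top degree drops below $N$. Then I would do \emph{bottom reduction}: as long as the lowest power $u^l$ has $l<0$, subtract a suitable left $\cald$-multiple of $u^l\cdot y$ (possible because $e_0$ is a unit); this raises $l$ by one, and since $l+N\le N-1$ the top degree stays $\le N-1$. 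After finitely many steps we end with a representative in $\bigoplus_{i=0}^{N-1}\cald\cdot u^i$. This shows that $\{1,u,\ldots,u^{N-1}\}$ spans $\coker(r_y)$ over $\cald$.

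Finally I would check uniqueness. Suppose $\sum_{i=0}^{N-1}c_i u^i = a\cdot y$ with $a=\sum a_j u^j\in \cald_t[u^{\pm 1}]$ nonzero. Let $j_-$ and $j_+$ be the lowest and highest indices with $a_{j_\pm}\ne 0$. Then the support of $ay$ has lowest index $j_-$ (with coefficient $a_{j_-}t^{j_-}(e_0)\ne 0$) and highest index $j_++N$ (with coefficient $a_{j_+}t^{j_+}(e_N)\ne 0$), forcing $j_-\ge 0$ and $j_++N\le N-1$, i.e.\ $j_+\le-1$, which is incompatible. Hence $a=0$, the representatives are unique, and $\{1,u,\ldots,u^{N-1}\}$ is a $\cald$-basis, giving $\dim_{\cald}\coker(r_x)=N=\deg(x)$. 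The main subtlety, and the only point requiring real care, is managing the interaction between top and bottom reduction in a Laurent (as opposed to merely polynomial) setting; performing all top reductions first and only afterwards the bottom reductions, and observing that the second stage cannot reintroduce high-degree terms, is what makes the book-keeping clean.
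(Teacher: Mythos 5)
Your argument is correct and follows essentially the same route as the paper's proof, which records the multiplicativity of $n_-$, $n_+$ and $\deg$ and then asserts that ``one easily checks'' that the images of $u^0, u^1, \ldots, u^{\deg(x)-1}$ form a $\cald$-basis of $\coker(r_x)$. Your normalization $x = u^{n_-}y$, the two-stage Euclidean reduction, and the linear-independence check are exactly the details the paper leaves to the reader.
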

 \begin{proof} Notice that for two non-trivial elements $x$ and $y$ we have $n_-(xy) = n_-(x) + n_-(y)$,
 $n_+(xy) = n_+(x) + n_+(y)$, and $\deg(xy) = \deg(x) + \deg(y)$. Now one easily checks that
 $r_x$ is injective and that a $\cald$-basis for the cokernel of $r_x$ is given by the image of the subset
 $\{u^0, u^1, \ldots , u^{\deg(x)-1}\}$ of $\cald_{t}[u^{\pm 1}]$ under the canonical projection
 $\cald_{t}[u^{\pm 1}] \to \coker(r_x)$.

\end{proof}

\begin{lemma}\label{rank_estimate_for_a_plus_t_I_k} 
Consider integers $k,n$ with $0 \le k$, $1 \le n$ and $k \le n$. Denote by
 $I_k^n$ the $(n,n)$-matrix whose first $k$ entries on the diagonal are $1$ and all of
 whose other entries are zero. Let $A$ be an $(n,n)$-matrix over $\cald$. Suppose that
 the $\cald_{t}[u^{\pm 1}]$-map 
 \[
 r_{A + u \cdot I_k^n} \colon
 \cald_{t}[u^{\pm 1}]^n \to \cald_{t}[u^{\pm 1}]^n
 \] 
 given by right multiplication with $A + u \cdot I_k^n$ is injective. 

 Then the dimension over $\cald$ of its cokernel is finite and satisfies
\[
\dim_{\cald}\bigl(\coker\bigl(r_{A + u \cdot I_k^n} \colon 
\cald_{t}[u^{\pm 1}]^n \to \cald_{t}[u^{\pm 1}]^n\bigr)\bigr) \,\,\le\,\, k.
\]
\end{lemma}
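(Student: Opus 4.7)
The plan is to use Smith normal form in the non-commutative PID $R = \cald_{t}[u^{\pm 1}]$ (available by Theorem~\ref{the:Main_properties_of_cald(G)}~(3)) to convert the cokernel dimension into a sum of degrees. Writing $M = UDV$ with $U, V \in GL_n(R)$ and $D = \operatorname{diag}(d_1, \ldots, d_n)$, the injectivity of $r_M$ forces each $d_i \neq 0$, so $\coker(r_M) \cong \bigoplus_{i=1}^n R/Rd_i$ as left $R$-modules. Lemma~\ref{lem:degree_and_rank} then yields $\dim_{\cald}(\coker(r_M)) = \sum_i \deg(d_i)$, and it remains to show $\sum_i \deg(d_i) \leq k$.

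The core step is the case $k = n$, where $M = A + u I_n$. Let $C := \coker(r_M)$ and let $\bar V \subseteq C$ denote the $\cald$-subspace spanned by the images $\bar e_1, \ldots, \bar e_n$ of the standard basis of $R^n$, so $\dim_{\cald} \bar V \leq n$. The row relations $e_i M \equiv 0$ in $C$ read $u \bar e_i = -\sum_j A_{ij} \bar e_j \in \bar V$, so $\bar V$ is stable under left multiplication by $u$. Since $u$ is a unit in $R$, the action of $u$ on $C$ is injective, and hence so is its restriction to the finite-dimensional $\cald$-space $\bar V$; being a $t$-semilinear injection of a finite-dimensional $\cald$-vector space into itself, it is bijective, so $\bar V$ is also stable under $u^{-1}$. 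Therefore $\bar V$ is a left $R$-submodule of $C$ containing the generators, whence $\bar V = C$ and $\dim_{\cald} C \leq n = k$.

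For general $k \leq n$, I would reduce to the case $k = n$ via block operations preserving $\coker(r_M)$. Decomposing $A = \left(\begin{smallmatrix} A_{11} & A_{12} \\ A_{21} & A_{22} \end{smallmatrix}\right)$ into $k$- and $(n-k)$-blocks, one notes that row and column operations over $\cald$ acting only within the last $n-k$ rows or columns leave the block $u I_k^n$ invariant. When $A_{22}$ is invertible over $\cald$, a block LU decomposition $M = L \cdot \operatorname{diag}(S + uI_k, A_{22}) \cdot L'$ with $L, L' \in GL_n(R)$ unipotent with $\cald$-valued entries and $S = A_{11} - A_{12} A_{22}^{-1} A_{21}$ the Schur complement yields $\coker(r_M) \cong \coker(r_{S + uI_k}) \oplus \coker(r_{A_{22}}) = \coker(r_{S + uI_k})$, to which the $k = n$ argument applies.

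The main obstacle is the case $\operatorname{rank}_{\cald}(A_{22}) < n-k$, in which the Schur decomposition above is unavailable. Here I would first use row and column operations over $\cald$ on the last $n-k$ rows and columns to bring $A_{22}$ to the normal form $\operatorname{diag}(I_r, 0)$ with $r = \operatorname{rank}_{\cald}(A_{22})$, then use the $I_r$ block to simplify the off-diagonal blocks and split off an $I_r$ piece (which contributes zero to the cokernel dimension), and finally exploit the injectivity of $r_M$, which forces the residual lower block to have full row rank $n-k-r$, in order to perform further row and column operations over $R$ that reduce the remaining matrix to one of the form $A' + u I_k$. This final reduction, while straightforward in spirit, is the most delicate part of the argument.
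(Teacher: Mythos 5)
Your $k = n$ base case is correct and self-contained --- the observation that the $\cald$-span $\overline{V}$ of the $\bar e_i$ is forced by the row relations to be $u$-stable, hence (by $t$-semilinearity and finite dimensionality) $u^{-1}$-stable, hence an $R$-submodule containing the generators, gives a genuinely different and cleaner proof than the paper's, which simply cites~\cite[Proposition~9.1]{Harvey(2005)} at this point. Your Schur-complement reduction for $A_{22}$ invertible over $\cald$ is also correct and shortcuts the paper's one-column-at-a-time elimination. (The Smith-normal-form preamble, by contrast, is never actually used in your argument.)

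The gap is exactly where you flag it, and it is the crux of the lemma: the case $\operatorname{rank}_{\cald}(A_{22}) < n-k$, in particular $A_{22} = 0$ after your preliminary reductions. The plan to ``reduce the remaining matrix to one of the form $A' + uI_k$'' has a size mismatch: after splitting off the $I_r$-block the remaining square matrix has size $n-r > k$, and knowing that the residual $(n-k-r,k)$-block $C_2$ has full row rank over $\cald$ does not by itself let you eliminate the excess rows and columns by $\cald$-valued operations --- clearing a $u$-bearing diagonal entry that shares a column with a nonzero entry of $C_2$ requires an $R$-valued multiplier, which in turn spreads $u$-terms into other rows that must then be re-cleared. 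The paper resolves this with a genuine induction on $n$: one carefully orchestrated pass of row and column operations (normalize a nonzero entry of $C_2$, clear its row and column, then restore the $u$-diagonal shape) shrinks the matrix to size $n-1$ in the form $A'' + uI_{k-1}^{n-1}$ --- note $k$ drops to $k-1$, not stays at $k$ --- after which the inductive hypothesis applies. That mechanism, far from being ``straightforward in spirit,'' is where essentially all of the work in the proof lies, and your proposal leaves it open.
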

\begin{proof}
 We use induction over the size $n$. If $ k = n$, the claim has already been proved
 in~\cite[Proposition~9.1]{Harvey(2005)}. So we can assume in the sequel $k < n$. We
 perform certain row and column operations on matrices $B \in
 M_{n,n}(\cald_{t}[u^{\pm 1}])$ and it will be obvious that they will respect the
 property that $r_B \colon \cald_{t}[u^{\pm 1}]^n \to \cald_{t}[u^{\pm 1}]^n$ is
 injective and will not change $\dim_{\cald}\bigl(\coker\bigl(r_{B} \colon
 \cald_{t}[u^{\pm 1}]^n \to \cald_{t}[u^{\pm 1}]^n\bigr)\bigr)$. With the help
 of these operations we will reduce the size of $A$ by $1$ and this will finish the
 induction step. To keep the exposition easy, we explain the induction step from $(n-1)$
 to $n$ in the special case $k = 3$ and $n = 5$. The general induction step is
 completely analogous.

So we start with 
\[
A + u \cdot I_3^5 =\footnotesize{\begin{pmatrix}
u + * & * & * & * & * 
\\
* & u+ * &* & * & *
\\
* & * & u+* & * & *
\\
* & * & * & * & *
\\
* & * & * & * & *
\end{pmatrix}}
\]
where here and in the sequel $*$ denotes some element in $\cald$.
We first treat the case, where the $(2,2)$-submatrix sitting in the right lower corner is non-trivial.
By interchanging rows and columns and right multiplying a row with a non-trivial element in $\cald$, we can achieve 

\[
\footnotesize{\begin{pmatrix}
u+* & * & * & * & * 
\\
* & u+* &* & * & *
\\
* & * & u+* & * & *
\\
* & * & * & * & *
\\
* & * & * & * & 1
\end{pmatrix}}
\]
By subtracting appropriate right $\cald$-multiplies of the lowermost row from the other rows and 
subtracting appropriate left $\cald$-multiplies of the rightmost column from the other columns,
we achieve
\[
\footnotesize{
\begin{pmatrix}
u+* & * & * & * & 0
\\
* & u+* &* & * & 0
\\
* & * & u+* & * & 0
\\
* & * & * & * & 0
\\
0 & 0 & 0 & 0 & 1
\end{pmatrix}}
\]
For this matrix the claim follows from the induction hypothesis applied to the $(4,4)$-matrix
obtained by deleting the lowermost row and the rightmost column. 

It remains to treat the case, where the matrix looks like
\[
\footnotesize{
\begin{pmatrix}
u+* & * & * & * & * 
\\
* & u+* &* & * & *
\\
* & * & u+* & * & *
\\
* & * & * & 0 & 0
\\
* & * & * & 0 & 0
\end{pmatrix}}
\]
At least one of the entries in the lowermost row must be non-trivial since the map induced
by right multiplication with it is assumed to be injective. By interchanging rows and
columns and right multiplying a row with a non-trivial element in $\cald$, we can achieve
\[
\footnotesize{
\begin{pmatrix}
u+* & * & * & * & * 
\\
* & u+* &* & * & *
\\
* & * & u+* & * & *
\\
* & * & * & 0 & 0
\\
1 & * & * & 0 & 0
\end{pmatrix}}
\]
By subtracting appropriate $\cald$-multiples of first column from the second and third column, we
can arrange
\[
\footnotesize{
\begin{pmatrix}
u+* & * \cdot u+* & *\cdot u+* & * & * 
\\
* & u+* &* & * & *
\\
* & * & u+* & * & *
\\
* & * & * & 0 & 0
\\
1 & 0 & 0 & 0 & 0
\end{pmatrix}}
\]
By subtracting appropriate right $\cald$-multiples of last row from the other rows we can achieve
\[
\footnotesize{
\begin{pmatrix}
0 & * \cdot u+* & * \cdot u+* & * & * 
\\
0 & u+* &* & * & *
\\
0 & * & u+* & * & *
\\
0 & * & * & 0 & 0
\\
1 & 0 & 0 & 0 & 0
\end{pmatrix}}
\]
By subtracting right $\cald$-multiples of the second and the third row from the first row and then interchanging rows
we can arrange
\[
\footnotesize{
\begin{pmatrix}
0 & u+* & * & * & * 
\\
0 & * & u+* & * & *
\\
0 & * & * & * & *
\\
0 & * & * & 0 & 0
\\
1 & 0 & 0 & 0 & 0
\end{pmatrix}}
\]
Since the induction hypothesis applies to the matrix obtained by deleting the first column and the lowermost row,
we have finished the induction step, and hence the proof of Lemma~\ref{rank_estimate_for_a_plus_t_I_k}.
\end{proof}


\subsection{Proof of Theorem~\ref{the:The_Thurston_norm_ge_the_(mu,phi)-L2-Euler_characteristic}}
\label{subsec:Proof_of_Theorem_ref_the:The_Thurston_norm_ge_the_(mu,phi)-L2-Euler_characteristic)}

\begin{proof}[Proof of Theorem~\ref{the:The_Thurston_norm_ge_the_(mu,phi)-L2-Euler_characteristic}]
We have already shown in Theorem~\ref{the:Atiyah_and_(mu,phi)-L2-Euler_characteristic}
that $M$ is $(\mu,\phi)$-$L^2$-finite.

Because of~\eqref{scaling_Thurston_norm} and Lemma~\ref{lem:reduction_to_surjective_mu} 
we can assume without loss of generality that $\phi$ is surjective. 
Let $K$ be the kernel of $\phi$ and $\gamma$ be an element in $G$ with $\phi(\gamma) = 1$.
 It follows easily
 from~\cite[Lemma~1.2]{Turaev(2002)} that we can find an oriented surface $\Sigma\subset M$ with
 components $\Sigma_1,\dots,\Sigma_l$ and non-zero $r_1,\dots,r_l\in \IN$ with the following
 properties: 
 \begin{itemize}
\item $r_1[\Sigma_1]+\dots+r_l[\Sigma_l]$ is dual to $\phi\circ \mu$,
\item $\sum_{i=1}^l -r_i\chi(\Sigma_i)\leq x_M(\phi\circ \mu)$,
\item $M\setminus \Sigma$ is connected.
\end{itemize}
(Here we use that $M\ne S^1\times D^2$ and that $N$ is irreducible.)
For $i=1,\dots,l$ we pick disjoint oriented tubular neighborhoods $\Sigma_i\times [0,1]$
and we identify $\Sigma_i$ with $\Sigma_i\times \{0\}$. We write $M':=M\setminus
\cup_{i=1}^l \Sigma_i\times [0,1]$. We pick once and for all a base point $p$ in $M'$ and
we denote by $\widetilde{M}$ the universal cover of $M$. We write $\pi=\pi_1(M,p)$. For
$i=1,\dots,l$ we also pick a curve $\nu_i'$ based at $p$ which intersects $\Sigma_i$
precisely once in a positive direction and does not intersect any other component of
$\Sigma$. Put $\nu_i = \mu(\nu_i')$. Note that $\phi(\nu_i)=r_i$. Finally for
$i=1,\dots,l$ we put
\[
n_i :=- \chi(\Sigma_i)+2.
\]

Following~\cite[Section~4]{Friedl(2014twisted)} we now pick an appropriate $CW$-structure
for $M$ and we pick appropriate orientations and lifts of the cells to the universal
cover. The resulting boundary maps are described in
detail~\cite[Section~4]{Friedl(2014twisted)}. In order to keep the notation manageable we
now restrict to the case $l=2$, the general case is completely analogous.

Let $\overline{M} \to M$ be the $G$-covering associated to $\mu$. It follows from the
discussion in~\cite[Section~4]{Friedl(2014twisted)} and the definitions that the cellular
$\IZ G$-chain complex $C_*(\overline{M})$ of $\overline{M} $ looks like
\[
\xymatrix{0 \ar[r] 
& \IZ[G]^{4}\ar[r]^-{B_3} 
& \IZ[G]^{4+2n_1+2n_2+s}\ar[r]^-{B_2} 
& \IZ[G]^{4+2n_1+2n_2+s}\ar[r]^-{B_1} 
& \IZ[G]^4\ar[r] 
& 0}
\]
where $s$ is a natural number and the matrices $B_3,B_2,B_1$ are matrices of the form
\[
B_3
\,\, = \,\, 
\footnotesize{\begin{array}{ccccccc|c}
n_1 & n_2 & 1 & 1 & 1 & 1 & s+n_1+n_2 &
\\
\hline 
 * & 0 & 1 & -\nu_1& 0 & 0 & 0 & 1
\\ 
 0 & 0 & 1 & -z_1 & 0 & 0 & * & 1
\\
 0 & * & 0 & 0 & 1 & -\nu_2 & 0 & 1
\\ 
 0 & 0 & 0 & 0 & 1 & -z_2 & * &1
\end{array} }
\]
\[
B_2
\,\,=\,\, 
\footnotesize{
\begin{array}{ccccccccc|c}
1 &1 & n_1 & n_1 & n_2 & n_2 & 1 & 1 & s
\\
\hline
 * & 0 & \id_{n_1} & -\nu_1\id_{n_1} & 0 & 0 & 0 & 0 & 0 & n_1
\\ 
 0 & * & 0 & 0& \id_{n_2} &-\nu_2\id_{n_2} & 0 & 0 & 0 & n_2
\\ 
 0 & 0 & * & 0 & 0 & 0 & 0 & 0 & 0 & 1
\\ 
 0 & 0 & 0 & * & 0 & 0 & 0 & 0 & 0 & 1
\\ 
 0 & 0 & 0 & 0 & * & 0 & 0 & 0 & 0 & 1
\\ 
 0 & 0 & 0 & 0 & 0 & * & 0 & 0 & 0 & 1
\\ 
 0 & 0 & * & * & * & * & * & * & * & s+n_1+n_2
\end{array} }
\]
\[
B_1\,\,=\,\,
\footnotesize{
\begin{array}{cccc|c}
1 & 1 & 1 & 1
\\
\hline
 1& -\nu_1 & 0 & 0 & 1
\\ 
 0 & 0 & 1 &-\nu_2 & 1
\\ 
 * & 0 & 0 & 0 & n_1
\\
 0 & * &0 & 0 & n_1
\\
0 & 0 & * & 0 & n_2
\\
0 & 0 & 0 & * & n_2
\\
1 & -x_1 & 0 & 0 & 1 
\\
0 & 0 & 1 & -x_2 & 1 
\\ 
 * & * & * & * & s
\end{array} }
\]
where $x_1,x_2,z_1,z_2\in K$, all the entries of the matrices marked by $*$ lie in $\IZ K$
and the entries above the horizontal line and right to the vertical arrow indicate the
size of the blocks. (Note that in~\cite{Friedl(2014twisted)} we view the elements of
$\IZ[G]^n$ as column vectors whereas we now view them as row vectors.)

Define submatrices $B_i'$ of $B_i$ for $i = 1,2,3$ by 
\[
\begin{array}{rcl} 
B_3'
& = &
\footnotesize{
\begin{pmatrix} 
1 & -\nu_1 & 0 & 0 
\\ 
1 & -z_1 & 0 & 0 
\\ 
0 & 0 & 1 & -\nu_2 
\\ 
0 & 0 & 1 &-z_2
\end{pmatrix}}
\\
&
\\[-0.3cm]
B_2'
& = &
\footnotesize{
\begin{array}{ccccc|c}
 n_1 & n_1 & n_2 & n_2 & s & 
\\
\hline
\id_{n_1} & -\nu_1\id_{n_1} & 0 & 0 & 0 & n_1
\\ 
0 & 0& \id_{n_2} &-\nu_2\id_{n_2} & 0 & n_2
\\ 
* & * & * & * & * & s+n_1+n_2
\end{array} }
\\
&
\\[-0.3cm]
B_1'
& = &
\footnotesize{
\begin{pmatrix} 
1 & -\nu_1 & 0 & 0 
\\
0 & 0 & 1 & -\nu_2 
\\
1 & -x_1 & 0 & 0 
\\
0 & 0 & 1 & -x_2 
\end{pmatrix}}
\end{array}
\]
where $B_1'$ and $B_3'$ are $(4,4)$-matrices and $B_2'$ is a $(2n_1+ 2n_2 + s,2n_1+2n_2+ s)$-matrix. 
For $j = 1,2,3$ let $C_*[j]$ be the $\IZ G$-chain complex
concentrated in dimensions $j$ and $j - 1$ whose $j$-th differential is given by the
matrix $B_j'$. There is an appropriate finitely generated free $\IZ G$-chain complex
$C_*'$ concentrated in dimensions $2$, $1$ and $0$ of the shape
\[
\ldots \to 0 \to 0 \to \IZ G^{2n_1 +2n_2+s} \to \IZ G^{4 + 2n_1 +2n_2+s} \to \IZ G^4
\]
and obvious short based exact sequences of finitely generated based free $\IZ G$-chain
complexes
\[
0 \to C_*' \to C_*(\overline{M}) \to C_*[3] \to 0
\]
and
\[
0 \to C_*[1] \to C_*' \to C_*[2] \to 0.
\]
Consider $\nu' \in G$ with $\phi(\nu') \not= 0$ and $x' \in K$. The matrix
$\begin{pmatrix}1& - \nu' \\ 1 & x'\end{pmatrix}$ can be transformed by subtracting the
first row from the second row to the matrix 
$\begin{pmatrix}1& -\nu' \\ 0 & \nu' + x'\end{pmatrix}$. 
The map $r_{\nu' + x'} \colon \cald(K)_{t}[u^{\pm 1}] \to \cald(K)_{t}[u^{\pm 1}]$ 
is injective and its cokernel has dimension $|\phi(\nu')|$
over $\cald(K)$ by Lemma~\ref{lem:degree_and_rank}.
Hence the map $\cald(K)_{t}[u^{\pm 1}]^2 \to \cald(K)_{t}[u^{\pm 1}]^2$
given by right multiplication with $\begin{pmatrix}1& -\nu' \\ 0 & \nu' + x'\end{pmatrix}$
is injective and its cokernel has dimension $|\phi(\nu')|$
over $\cald(K)$. We conclude from Theorem~\ref{the:Main_properties_of_cald(G)} for $l = 1,3$
using notation~\eqref{b_n(2)(caln(G)_otimes_ZGC_ast)} and defining for a skewfield $\cald$ and 
a $\cald$-chain complex $D_*$  its Betti number $b_n(D_*)$
to be $\dim_{\cald}(H_n(D_*))$.

\begin{eqnarray*}
b_n^{(2)}\bigl(\caln(G) \otimes_{\IZ G} C_*[l]\bigr) 
& = & 
0 \quad \text{for} \; n \ge 0;
\\
b_n\bigl(\cald(K) \otimes_{\IZ K} i^*C_*[l]\bigr) 
& = & 
\begin{cases}
0 & \text{if}\; n \not= l-1;\\
r_1 + r_2 & \text{if}\; n = l-1;
\end{cases}
\\
\chi^{(2)}\bigl(\caln(K) \otimes_{\IZ K} i^*C_*[l]\bigr) 
& = & 
r_1 + r_2.
\end{eqnarray*}
Recall that  $b_n^{(2)}\bigl(\caln(G) \otimes_{\IZ G} C_*(\overline{M})\bigr) = 0$ holds
for all $n \ge 0$ by assumption. We had just seen that for $l=1,3$ we have
 $b_n^{(2)}\bigl(\caln(G) \otimes_{\IZ G} C_*[l]\bigr) =0$ for all $n\geq 0$.
It follows from the above short exact sequences that
 we get
\[
b_n^{(2)}\bigl(\caln(G) \otimes_{\IZ G} C_*[2]\bigr)
 = 
0 \quad \text{for} \; n \ge 0.
\]
We conclude from Theorem~\ref{the:Main_properties_of_cald(G)}~\eqref{the:Main_properties_of_cald(G):dim}
\[
\dim_{\cald(G)}\left(\ker\bigl(r_{B_2'} \colon \cald(G)^{2n_1 +2n_2+s} \to \cald(G)^{2n_1 +2n_2+s}\bigr)\right) = 0.
\]
Theorem~\ref{the:Main_properties_of_cald(G)}~\eqref{the:Main_properties_of_cald(G):cald(K)_and_(cald(G)} 
implies that 
\[
r_{B_2'} \colon \cald(K)_{t}[u^{\pm 1}]^{2n_1 +2n_2+s} \to \cald(K)_{t}[u^{\pm 1}]^{2n_1 +2n_2+s}
\]
is injective. We get using Theorem~\ref{the:Main_properties_of_cald(G)}~\eqref{the:Main_properties_of_cald(G):dim}
\begin{eqnarray}
\label{the:The_Thurston_norm_and_the_(mu,phi)_L2-Euler_characteristic:(3)}
&&
\\
&&\chi^{(2)}\bigl(\caln(K) \otimes_{\IZ K} i^*C_*(\overline{M})\bigr) \,\,=\,\,\lmsum{l = 1}{3} \chi^{(2)}\bigl(\caln(K) \otimes_{\IZ K} i^*C_*[l]\bigr)
\nonumber
\\
& = & 
2r_1 + 2r_2 - \chi^{(2)}\bigl(\caln(K) \otimes_{\IZ K} i^*C_*[2]\bigr)
\nonumber
\\
& = & 
2r_1 + 2r_2 - \chi\bigl(\cald(K) \otimes_{\IZ K} i^*C_*[2]\bigr)
\nonumber\\
& = &
2r_1 + 2r_2 
\nonumber\\
& & - \dim_{\cald(K)}\bigl(\coker\bigl(r_{B_2'} \colon \cald(K)_{t}[u^{\pm 1}]^{2n_1 +2n_2+s} 
\to \cald(K)_{t}[u^{\pm 1}]^{2n_1 +2n_2+s}\bigr)\bigr).
\nonumber
\end{eqnarray}

It follows from the above two short exact sequences of chain complexes and 
the previous calculations that  
$\dim_{\caln(G)}\bigl(\ker\bigl(\id_{\caln(G)} \otimes_{\IZ G} r_{B_2'} \colon \caln(G)^{2n_1 +2n_2+s}
\to \caln(G)^{2n_1 +2n_2+s}\bigr)\bigr)$ coincides 
with $b_2^{(2)}(\caln(G) \otimes_{\IZ  G} C_*')$. But $b_2^{(2)}(\caln(G) \otimes_{\IZ G} C_*')$ is zero, therefore one
of the entries in the rightmost column of $B_2'$ is
non-trivial. By switching rows and multiplying a row with a non-trivial element in $\cald(K)$, 
we can arrange that the entry in the lower right corner is $1$. By elementary row and
column operations we can arrange that the lowermost row and the rightmost column have all
entries zero except the element in the lower right corner which is still equal to $1$. By
iterating this process we can transform $B_2'$ by such row and column operations into a
matrix of the shape
\[
B_2''
\,\,=\,\,
\footnotesize{ 
\begin{array}{ccccc|c}
 n_1 & n_1 & n_2 & n_2 & s & 
\\
\hline
\id_{n_1} & -\nu_1\id_{n_1} & 0 & 0 & 0 & n_1
\\ 
0 & 0& \id_{n_2} &-\nu_2\id_{n_2} & 0 & n_2
\\ 
* & * & * & * & * & n_1+n_2
\\ 
0 & 0 & 0 & 0 & \id_s & s
\end{array} }
\]
Let $B_2'''$ be the $(2n_1+ 2n_2)$-submatrix of $B_s''$ given by
\[
B_2'''
\,\, =\,\,
\footnotesize{ 
\begin{array}{cccc|c}
 n_1 & n_1 & n_2 & n_2 & 
\\
\hline
\id_{n_1} & -\nu_1\id_{n_1} & 0 & 0 & n_1
\\ 
0 & 0& \id_{n_2} &-\nu_2\id_{n_2} & n_2
\\ 
* & * & * & * & n_1+n_2
\end{array}}
\]
An inspection of the proof of~\cite[Lemma~9.3]{Dubois-Friedl-Lueck(2014Alexander)} shows
that by elementary row and column operations and taking block sum with triangular matrices
having $\id$ on each diagonal entry,
we can transform $B_2'''$ into a matrix of the shape
\[
B_2'''' = A'''' + u \cdot I_{r_1\cdot n_1 + r_2\cdot n_2}^{2r_1\cdot n_1 +2r_2 \cdot n_2}
\]
for some matrix $A'''' \in M_{2n_1 +2n_2,2n_1 +2n_2}(\cald(K))$ and
$I_{r_1\cdot n_1 + r_2\cdot n_2}^{2r_1\cdot n_1 +2r_2 \cdot n_2}$ as introduced in Lemma~\ref{rank_estimate_for_a_plus_t_I_k}. 
Obviously we have
\begin{multline*}
\dim_{\cald(K)}\bigl(\coker\bigl(r_{B_2'} \colon \cald(K)_{t}[u^{\pm 1}]^{2n_1 +2n_2+s} 
\to \cald(K)_{t}[u^{\pm 1}]^{2n_1 +2n_2+s}\bigr)\bigr)
\\
= \dim_{\cald(K)}\bigl(\coker\bigl(r_{B_2''''} \colon \cald(K)_{t}[u^{\pm 1}]^{2n_1 +2n_2} 
\to \cald(K)_{t}[u^{\pm 1}]^{2n_1 +2n_2}\bigr)\bigr).
\end{multline*}
We conclude from Lemma~\ref{rank_estimate_for_a_plus_t_I_k} applied to $B_2''''$
\begin{multline*}
\dim_{\cald(K)}\bigl(\coker\bigl(r_{B_2''''} \colon \cald(K)_{t}[u^{\pm 1}]^{2n_1 +2n_2} 
\to \cald(K)_{t}[u^{\pm 1}]^{2n_1 +2n_2}\bigr)\bigr)
\\
\le r_1 \cdot n_1 + r_2 \cdot n_2.
\end{multline*}
Hence we get 
\begin{multline}
\dim_{\cald(K)}\bigl(\coker\bigl(r_{B_2'} \colon \cald(K)_{t}[u^{\pm 1}]^{2n_1 +2n_2+s} 
\to \cald(K)_{t}[u^{\pm 1}]^{2n_1 +2n_2+s}\bigr)\bigr)
\label{the:The_Thurston_norm_and_the_(mu,phi)_L2-Euler_characteristic:(4)}
\\
\le r_1 \cdot n_1 + r_2 \cdot n_2.
\end{multline}
We conclude from~\eqref{the:The_Thurston_norm_and_the_(mu,phi)_L2-Euler_characteristic:(3)}
and~\eqref{the:The_Thurston_norm_and_the_(mu,phi)_L2-Euler_characteristic:(4)}
\[
\chi^{(2)}\bigl(\caln(K) \otimes_{\IZ K} i^*C_*(\overline{M})\bigr) \ge 2r_1 + 2r_2 - r_1 \cdot n_1 - r_2 \cdot n_2.
\]
This together with Lemma~\ref{lem:phi-twisted_as_ordinary_L2_Euler_characteristic} implies
\[ \begin{array}{rcl}
- \chi^{(2)}(M;\mu,\phi) 
& = &
- \chi^{(2)}\big(\caln(K) \otimes_{\IZ K} i^*C_*(\overline{M})\big)\\
& \le &
 r_1 n_1 + r_2 n_2 - 2r_1 - 2r_2
\\
& = & 
 r_1 (n_1 -2) + r_2 (n_2 -2)
\,\, = \,\,
- r_1 \chi(\Sigma_1) - r_2 \chi(\Sigma_2)
\,\,\le \,\,
x_M(\phi\circ \mu).
\end{array}
\]
This finishes the proof of Theorem~\ref{the:The_Thurston_norm_ge_the_(mu,phi)-L2-Euler_characteristic}.
\end{proof}


 \typeout{---- Section 5: Fox calculus and and the $(\mu,\phi)$-$L^2$-Euler characteristic ------}

\section{Fox calculus and the $(\mu,\phi)$-$L^2$-Euler characteristic}
\label{sec:Fox_calculus_and_the_(mu,phi)-L2-Euler_characteristic}

The following calculations of the $(\mu,\phi)$-$L^2$-Euler characteristic from a
presentation of the fundamental group is adapted to the corresponding calculation for the
$L^2$-torsion function appearing in~\cite[Theorem~2.1]{Friedl-Lueck(2015l2+Thurston)} and
will be used when we will compare these two invariants and the higher order Alexander
polynomials. For information about the Fox matrix and the Fox calculus we refer for instance 
to~\cite[9B on page 123]{Burde-Zieschang(1985)}, and~\cite{Fox(1953)}.

\begin{theorem}[Calculation of the $(\mu,\phi)$-$L^2$-Euler characteristic 
from a presentation of the fundamental group]
\label{the:calculation_of_(mu,phi)_L2-Euler_characteristic_from_a_presentation} 
Let $M$ be an admissible $3$-manifold with fundamental group $\pi$. Let
\[
\pi = \langle x_1,x_2, \ldots, x_a \mid R_1,R_2, \ldots, R_b \rangle
\]
be a presentation of $\pi$. Let the $(b,a)$-matrix over $\IZ \pi$
\[
F = 
\left( \begin{array}{ccc} \frac{\partial R_1}{\partial x_1}
 & \ldots & \frac{\partial R_1}{\partial x_a} \\
 \vdots & \ddots & \vdots \\
\frac{\partial R_b}{\partial x_1} & \ldots & \frac{\partial R_b}{\partial x_a}
\end{array}\right) 
\]
be the Fox matrix of the presentation. 

Let $G$ be a torsion-free group which satisfies the Atiyah Conjecture.
Consider two group homomorphisms 
$\mu \colon \pi \to G$ and $\phi \colon G \to \IZ$.
Suppose that $\mu$ is non-trivial and that $\phi$ is surjective.

Let $i \colon K \to G$ be the inclusion of the kernel $K = \ker(\phi)$ of 
$\phi$ and let $t \colon \cald(K) \xrightarrow{\cong} \cald(K)$ be the automorphism
introduced in 
Theorem~\ref{the:Main_properties_of_cald(G)}~\eqref{the:Main_properties_of_cald(G):cald(K)_and_(cald(G)}. 
Let $\overline{M} \to M$ be the $G$-covering associated to $\mu$.

\begin{enumerate}[font=\normalfont]

\item\label{the:calculation_of_(mu,phi)_L2-Euler_characteristic_from_a_presentation:non-empty} 
Suppose that $\partial M$ is non-empty and and that $a = b + 1$. 
Choose $i \in \{1,2, \ldots, r\}$ such that  $\mu(x_i)$ has infinite order.
Let $A$ be the $(a-1,a-1)$-matrix with entries in $\IZ G$
obtained from the Fox matrix $F$ by deleting the $i$-th column
and then applying the homomorphism $M_{a-1,a-1}(\IZ \pi) \to M_{a-1,a-1}(\IZ G)$ 
induced by $\mu \colon \pi \to G$.

Then $b_n^{(2)}(\overline{M};\caln(G))$ vanishes for all $n \ge 0$ if and only if we have
$\dim_{\caln(G)}\bigl(\ker(r_A \colon \caln(G)^{a-1} \to \caln(G)^{a-1})\bigr) = 0$.
If this is the case, then $(\mu,\phi)$ is an $L^2$-acyclic Atiyah-pair in the sense of 
Definition~\ref{def:L2-acyclic_Atiyah-pair} and we get 
\begin{multline*}
\chi^{(2)}(M;\mu,\phi)
\\
= 
- \dim_{\cald(K)}\bigl(\coker\bigl(r_A \colon \cald(K)_{t}[u^{\pm 1}]^{a-1} 
\to \cald(K)_{t}[u^{\pm 1}]^{a-1}\bigr)\bigr)
\\
+ |\phi \circ \mu(x_i)|;
\end{multline*}

\item\label{the:calculation_of_(mu,phi)_L2-Euler_characteristic_from_a_presentation:empty} 
Suppose $\partial M$ is empty. We make the assumption
that the given presentation comes from a Heegaard decomposition
as described in~\cite[Proof of Theorem~5.1]{McMullen(2002)}.
Then $a = b$ and there is another set of dual generators 
$\{x_1', x_2', \ldots, x_a'\}$ coming from the Heegaard decomposition as described 
in~\cite[Proof of Theorem~5.1]{McMullen(2002)}. Choose $i,j \in \{1,2, \ldots, a\}$ 
such that  $\mu(x_i)$ and  $\mu(x_j')$ have infinite order.
Let $A$ be the $(a-1,a-1)$-matrix with entries in $\IZ G$
obtained from the Fox matrix $F$ by deleting the $i$th column and the $j$th row
and then applying the homomorphism $M_{a-1,a-1}(\IZ \pi) \to M_{a-1,a-1}(\IZ G)$
induced by $\mu \colon \pi \to G$.

Then $b_n^{(2)}(\overline{M};\caln(G))$ vanishes for all $n \ge 0$ if and only if we have
$\dim_{\caln(G)}\bigl(\ker(r_A \colon \caln(G)^{a-1} \to \caln(G)^{a-1})\bigr) = 0$.
If this is the case, then $(\mu,\phi)$ is an $L^2$-acyclic Atiyah-pair and we get 
\begin{multline*}
\chi^{(2)}(M;\mu,\phi) 
\\
= 
- \dim_{\cald(K)}\bigl(\coker\bigl(r_A \colon \cald(K)_{t}[u^{\pm 1}]^{a-1} 
\to \cald(K)_{t}[u^{\pm 1}]^{a-1}\bigr)\bigr)
\\
+ |\phi \circ \mu(x_i)| + |\phi \circ \mu(x_j')|.
\end{multline*}
\end{enumerate}
\end{theorem}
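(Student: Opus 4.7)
The plan is to realize $C_*(\overline{M})$ via a $CW$-structure compatible with the given presentation and then use Theorem~\ref{the:Main_properties_of_cald(G)}~\eqref{the:Main_properties_of_cald(G):chain} to rewrite the $(\mu,\phi)$-$L^2$-Euler characteristic as $\sum_n (-1)^n \dim_{\cald(K)} H_n(D \otimes_{\IZ G} C_*(\overline{M}))$ with $D := \cald(K)_{t}[u^{\pm 1}]$. By the Fox-calculus identification of cellular differentials (as in the proof of~\cite[Theorem~5.1]{McMullen(2002)}), the chain differentials $r_F$, $r_{\partial_1}$ (and, in the closed case, $r_{\partial_3}$) are completely explicit in terms of $\mu$ and $F$, so the computation reduces to linear algebra over $D$.

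For part~\eqref{the:calculation_of_(mu,phi)_L2-Euler_characteristic_from_a_presentation:non-empty}, the complex $D \otimes_{\IZ G} C_*(\overline{M})$ takes the form $0 \to D^b \xrightarrow{r_F} D^a \xrightarrow{r_{\partial_1}} D \to 0$. I would split off the $i$-th summand of $D^a$ as a subcomplex $F^0$ concentrated in degrees $1$ and $0$ with differential $r_{\mu(x_i)-1}$; its quotient $Q_*$ is concentrated in degrees $2$ and $1$ with differential precisely $r_A$. Since $|\mu(x_i)|=\infty$, the element $\mu(x_i)-1$ is a non-zero-divisor in $D$ (by Lemma~\ref{lem:degree_and_rank} if $\phi(\mu(x_i)) \ne 0$; otherwise it is a unit of the skew field $\cald(K)$), so $H_1(F^0)=0$ and $\dim_{\cald(K)}\bigl(H_0(F^0)\bigr) = \dim_{\cald(K)}\bigl(D/D(\mu(x_i)-1)\bigr) = |\phi \circ \mu(x_i)|$. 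The long exact sequence of $0 \to F^0 \to D \otimes_{\IZ G} C_*(\overline{M}) \to Q_* \to 0$ then collapses to an isomorphism $H_2 \cong \ker(r_A)$ and a four-term sequence $0 \to H_1 \to \coker(r_A) \to D/D(\mu(x_i)-1) \to H_0 \to 0$.

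To handle the $L^2$-acyclicity criterion, I would base-change further to the skew field $\cald(G) = T^{-1}D$, in which $\mu(x_i)-1$ becomes a unit. A direct computation using $\partial_1 \circ \partial_2 = 0$ shows that over $\cald(G)$, $H_2$ equals $\ker(r_A;\cald(G))$, $H_1$ equals $\coker(r_A;\cald(G))$, and $H_0 = 0$. Since $r_A$ is square, acyclicity is equivalent to $\ker(r_A;\cald(G))=0$; Theorem~\ref{the:Main_properties_of_cald(G)}~\eqref{the:Main_properties_of_cald(G):dim} translates this to $\dim_{\caln(G)}\bigl(\ker(r_A;\caln(G))\bigr)=0$, which is exactly the stated criterion. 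Granting acyclicity, $\ker(r_A;D)$ is a torsion-free $D$-module annihilated by Ore localization and therefore vanishes, so $H_2 = 0$. The four-term sequence, combined with Lemma~\ref{lem:phi-twisted_as_ordinary_L2_Euler_characteristic}, then yields the claimed formula $\chi^{(2)}(M;\mu,\phi) = -\dim_{\cald(K)}\coker(r_A) + |\phi \circ \mu(x_i)|$.

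For part~\eqref{the:calculation_of_(mu,phi)_L2-Euler_characteristic_from_a_presentation:empty}, the Heegaard-induced $CW$-structure gives the four-term complex
\[
0 \to \IZ G \xrightarrow{r_{\partial_3}} \IZ G^a \xrightarrow{r_F} \IZ G^a \xrightarrow{r_{\partial_1}} \IZ G \to 0,
\]
in which $r_{\partial_3}$ is right multiplication by the row vector $(\mu(x_j')-1)_{j=1}^a$ formed from the dual generators (see~\cite[Proof of Theorem~5.1]{McMullen(2002)}). The same peeling argument, applied symmetrically at both ends---removing the $i$-th summand of $C_1$ together with $C_0$, and the $j$-th summand of $C_2$ together with $C_3$---reduces the computation to the matrix $A$ obtained by deleting the $i$-th column and $j$-th row of $F$, producing the two degree contributions $|\phi \circ \mu(x_i)|$ and $|\phi \circ \mu(x_j')|$. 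The principal obstacle compared to part~\eqref{the:calculation_of_(mu,phi)_L2-Euler_characteristic_from_a_presentation:non-empty} is setting up this two-sided filtration cleanly and verifying that the $j$-th row of $F$ is indeed the coordinate attached to $x_j'$ under the Poincaré duality implicit in the Heegaard decomposition; once this is in place, the argument is a direct parallel to the bounded case.
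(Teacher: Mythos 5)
Your proposal is correct and follows essentially the same route as the paper: it uses the McMullen--Heegaard chain complex, peels off the one-dimensional subquotients $D(\mu(x_i))_*$ (and in the closed case $\Sigma^2 D(\mu(x_j'))_*$), applies Lemma~\ref{lem:degree_and_rank} to evaluate their contributions as $|\phi\circ\mu(x_i)|$, and combines Lemma~\ref{lem:phi-twisted_as_ordinary_L2_Euler_characteristic} with Theorem~\ref{the:Main_properties_of_cald(G)}~\eqref{the:Main_properties_of_cald(G):chain} to reduce to the cokernel of $r_A$ over $\cald(K)_t[u^{\pm 1}]$. The only organizational difference is that you spell out the bounded case (which the paper defers to Lück's earlier work) and the paper writes out the closed case in detail; otherwise the peeling argument, the acyclicity criterion via base change to $\cald(G)$, and the final bookkeeping coincide with the published proof.
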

\begin{proof} 
 We only treat the case, where $\partial M$ is empty, and leave it to the reader to
 figure out the details for the case of a non-empty boundary using the proof
 of~\cite[Theorem~2.4]{Lueck(1994a)}.

 From~\cite[Proof of Theorem~5.1]{McMullen(2002)} we
 obtain a compact $3$-dimensional $CW$-complex $X$ together with a homotopy equivalence
 $f \colon X \to M$ and a set of generators $\{x_1', \ldots ,x_a'\}$ such that
 the $\IZ G$-chain complex $C_*(\overline{X})$ of $\overline{X}$ looks like 
\[
\IZ G 
\xrightarrow{\prod_{i=1}^a r_{\mu(x_i') - 1}} 
\bigoplus_{i=1}^a \IZ G 
\xrightarrow{r_{\mu(F)}} \bigoplus_{i=1}^a \IZ G 
\xrightarrow{\bigoplus_{i=1}^a r_{\mu(x_i) - 1}} \IZ G,
\]
where $\mu(F)$ is the image of $F$ under the map $M_{a,a}(\IZ \pi) \to M_{a,a}(\IZ G)$ induced 
by $\mu$ and $\overline{X} \to X$ is the pullback of $\overline{M} \to M$ with $f$.

For $g \in G$ with $g \not= 1$ let $D(g)_*$ be the $1$-dimensional
$\IZ G$-chain complexes which has as first differential
$r_{g -1} \colon \IZ G\to \IZ G$. Since $g$ generates an infinite cyclic subgroup of $G$,
we conclude $b_n^{(2)}(\caln(G) \otimes_{|\IZ G} D(g)_*) = 0$
for $n \ge 0$ from~\cite[Lemma~1.24~(4) on page~30 and Lemma~1.34~(1) on page~35]{Lueck(2002)}.
Theorem~\ref{the:Main_properties_of_cald(G)}~\eqref{the:Main_properties_of_cald(G):chain} 
and Lemma~\ref{lem:degree_and_rank} imply
\begin{eqnarray}
\chi^{(2)}\bigl(\caln(K) \otimes_{\IZ K} i^*D_*(g)) & = & |\phi(g)|.
\label{chi(2)(D(g)}
\end{eqnarray}
There is a surjective $\IZ G$-chain map 
$p_* \colon C_*(\overline{X}) \to \Sigma^2 D(\mu(x_j'))_*$ 
which is the identity in degree $3$ and the
projection onto the summand corresponding to $j$ in degree $2$, and an injective 
$\IZ G$-chain map $i_* \colon D(\mu(x_i))_* \to C_*(\overline{X})$
which is the identity in degree $0$ and the inclusion to the summand corresponding to
$i$ in degree $1$. Let $P_*$ be the kernel of $p_*$ and let $Q_*$ be the cokernel of
the injective map $j_* \colon D(\mu(x_1))_* \to P_*$ induced by 
$i_* \colon D(\mu(x_1))_* \to C_*(\overline{X})$. 
Then $Q_*$ is concentrated in dimensions $1$ and $2$ and its second differential is
$r_A \colon \IZ G^{a-1} \to \IZ G^{a-1}$. We conclude from the long exact
homology sequence of a short exact sequence of Hilbert $\caln(G)$-chain complexes,
the homotopy invariance of $L^2$-Betti numbers and the additivity of the von Neumann dimension
\[
b_n^{(2)}(\overline{X};\caln(G)) = 
\begin{cases}
\dim_{\caln(G)}\bigl(\ker(r_A \colon \caln(G)^{a-1} \to \caln(G)^{a-1})\bigr)& \text{if} \; n = 1,2;
\\
0 & \text{otherwise}.
\end{cases}
\]
This shows that $b_n^{(2)}(\overline{M};\caln(G))$ vanishes for all $n \ge 0$ if and only if we have
$\dim_{\caln(G)}\bigl(\ker(r_A \colon \caln(G)^{a-1} \to \caln(G)^{a-1})\bigr) = 0$.

Suppose that this is the case. Then $(\mu,\phi)$ is an $L^2$-acyclic Atiyah-pair.
We conclude from Theorem~\ref{the:Main_properties_of_cald(G)}
that
$r_A \colon \cald(G)^{a-1} \to \cald(G)^{a-1}$ is injective and hence
$r_A \colon \cald(K)_{t}[u^{\pm 1}]^{a-1} \to \cald(K)_{t}[u^{\pm 1}]^{a-1}$ is injective
and 
\begin{multline}
\chi^{(2)}(\caln(K) \otimes_{\IZ K} i^*Q_*)
\\
 = 
\dim_{\cald(K)}\bigl(\coker\bigl(r_A \colon \cald(K)_{t}[u^{\pm 1}]^{a-1} \to \cald(K)_{t}[u^{\pm 1}]^{a-1}\bigr)\bigr).
\label{chi(2)(caln(K)_otimes_iD(g)}
\end{multline}
We get from Lemma~\ref{lem:phi-twisted_as_ordinary_L2_Euler_characteristic}
and the chain complex version of~\cite[Theorem~6.80~(2) on page~277]{Lueck(2002)}
applied to the short exact sequences of $\caln(K)$-chain complexes obtained by applying
$\caln(K) \otimes_{\IZ K} i^*-$ to the short exact sequences of $\IZ G$-chain complexes
$0 \to P_* \to C_*(\overline{X}) \to \Sigma^2 D(\mu(x_j'))_*\to 0$
and $0 \to D(\mu(x_1))_* \to P_* \to Q_*\to 0$
\begin{eqnarray*}
\lefteqn{\chi^{(2)}(\overline{M};\mu,\phi)}
& & 
\\
& = & 
\chi^{(2)}(\caln(K) \otimes_{\IZ K} i^*C_*(\overline{X}))
\\
& = & 
\chi^{(2)}(\caln(K) \otimes_{\IZ K} i^*Q_*) + \chi^{(2)}(\caln(K) \otimes_{\IZ K} i^*D(\mu(x_j'))_*) 
\\ 
& & \quad \quad \quad \quad + 
\chi^{(2)}(\caln(K) \otimes_{\IZ K} i^*D(\mu(x_i))_*) 
\\
& \stackrel{\eqref{chi(2)(D(g)} \,\text{and} \,\eqref{chi(2)(caln(K)_otimes_iD(g)}}{=}& 
- \dim_{\cald(K)}\bigl(\coker\bigl(r_A \colon \cald(K)_{t}[u^{\pm 1}]^{a-1} \to \cald(K)_{t}[u^{\pm 1}]^{a-1}\bigr)\bigr)
\\
& & 
\quad \quad \quad \quad + |\phi \circ \mu(x_i)| + |\phi \circ \mu(x_j')|.
\end{eqnarray*}
This finishes the proof of Theorem~\ref{the:calculation_of_(mu,phi)_L2-Euler_characteristic_from_a_presentation}.
\end{proof}

\begin{lemma}\label{lem:b_1(2)_is_zero_implies_L2-acyclic}
\begin{enumerate}[font=\normalfont]
\item\label{lem:b_1(2)_is_zero_implies_L2-acyclic:b_1}
Let $M$ be an admissible $3$-manifold. Consider an infinite group $G$ 
and a $G$-covering $\overline{M} \to M$. Then we get $b_n^{(2)}(\overline{M}; \caln(G)) = 0$ for
all $n \ge 0$ if $b_1(\overline{M};\caln(G)) = 0$;

\item\label{lem:b_1(2)_is_zero_implies_L2-acyclic:universal}
If $M$ is an admissible $3$-manifold,
then we get $b_n^{(2)}(\widetilde{M};\caln(\pi)) = 0$ for all $n \ge 0$.
\end{enumerate}

\end{lemma}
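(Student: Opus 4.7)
The plan for part~(1) is to combine vanishing at the extremes of the range with either Poincar\'e duality or the Euler--Poincar\'e formula. Since $G$ is infinite, we have $b_0^{(2)}(\overline{M};\caln(G)) = 0$ by the standard computation. Since $\dim M = 3$, $b_n^{(2)} = 0$ for $n \ge 4$. If $\partial M = \emptyset$, then $M$ is closed orientable, and $L^2$-Poincar\'e duality applied to the $G$-covering gives $b_n^{(2)}(\overline{M};\caln(G)) = b_{3-n}^{(2)}(\overline{M};\caln(G))$; hence $b_3^{(2)} = b_0^{(2)} = 0$ and, under the hypothesis, $b_2^{(2)} = b_1^{(2)} = 0$. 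If $\partial M$ is a non-empty union of tori, $M$ collapses onto a 2-complex, so $b_n^{(2)} = 0$ for $n \ge 3$ automatically; combining $\chi(M) = \tfrac{1}{2}\chi(\partial M) = 0$ with the Euler--Poincar\'e formula yields $-b_1^{(2)} + b_2^{(2)} = 0$, and again the hypothesis forces all $L^2$-Betti numbers to vanish.

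For part~(2), one applies (1) to the case $G = \pi = \pi_1(M)$, so it suffices to prove $b_1^{(2)}(\widetilde{M};\caln(\pi)) = 0$. The natural approach is a JSJ-based case analysis: for every Seifert piece $M_i$ with infinite $\pi_1$, a nontrivial $S^1$-action argument (a special case of Theorem~\ref{the:The_(mu,phi)-L2-Euler_characteristic_for_Seifert_manifolds} together with the vanishing theorem for $S^1$-actions from~\cite[Theorem~2.1]{Lueck(1994b)}) gives $b_n^{(2)}(\widetilde{M_i};\caln(\pi_1(M_i))) = 0$ for all $n$; for every hyperbolic piece of finite volume, the analogous vanishing is~\cite[Theorem~0.1]{Lott-Lueck(1995)}. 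The subgroup restriction formula for the von Neumann dimension (used repeatedly in Section~\ref{subsec:The_phi-twisted-L2-Euler_characteristic}) transfers these to $L^2$-Betti numbers computed with respect to $\caln(\pi)$ for the preimages of the $M_i$ in $\widetilde{M}$, and a Mayer--Vietoris argument along the incompressible JSJ tori (whose preimages are $L^2$-acyclic because $\pi_1(T^2)$ is infinite amenable, as in Lemma~\ref{lem:tori}) then assembles the pieces to conclude $b_1^{(2)}(\widetilde{M};\caln(\pi)) = 0$.

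The main obstacle will be the Mayer--Vietoris step in part~(2), since the gluing has to be performed with respect to the ambient group $\pi$ rather than the individual $\pi_1(M_i)$, which requires careful use of the subgroup induction formula for $L^2$-dimensions. A clean alternative that sidesteps this, and which is consistent with the techniques invoked elsewhere in the paper, is the following: if $M$ is not a closed graph manifold, the Virtual Fibering Theorem (Agol, Liu, Przytycki--Wise, Wise, cited in Section~\ref{sec:About_the_Atiyah_Conjecture}) supplies a finite cover $\overline{M} \to M$ that fibers over $S^1$; the vanishing $b_n^{(2)}(\widetilde{\overline{M}};\caln(\pi_1(\overline{M}))) = 0$ then follows from~\cite[Theorem~2.1]{Lueck(1994b)}, and the restriction formula for finite coverings transfers this to $M$. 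The remaining case, where $M$ is a closed graph manifold, is handled directly via the JSJ decomposition, Lemma~\ref{lem:JSJ_decom_Euler_universal}, and the Seifert vanishing.
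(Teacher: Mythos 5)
Your proof of part~(1) is essentially the paper's: $b_0^{(2)}=0$ because $G$ is infinite, the top-dimensional $L^2$-Betti numbers vanish (Poincar\'e duality in the closed case, collapse onto a $2$-complex in the bounded case), and $\chi(M)=0$ forces $b_1^{(2)}=b_2^{(2)}$, so the hypothesis $b_1^{(2)}=0$ kills everything.

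For part~(2), however, you take a genuinely different route. The paper's proof is a one-line citation of \cite[Theorem~0.1]{Lott-Lueck(1995)}, which computes all $L^2$-Betti numbers of $\widetilde{M}$ for compact orientable $3$-manifolds and gives their vanishing directly for admissible $M$. You instead re-derive the vanishing by a case distinction: if $M$ is not a closed graph manifold, virtual fibering supplies a fibered finite cover, the mapping-torus vanishing theorem \cite[Theorem~2.1]{Lueck(1994b)} gives $L^2$-acyclicity upstairs, and the finite-index restriction formula transfers this down to $M$; for closed graph manifolds you fall back on the JSJ decomposition and Seifert-piece vanishing. This is correct and has the merit of being more self-contained, though note that the paper does invoke Lott--L\"uck for hyperbolic JSJ pieces in Lemma~\ref{lem:JSJ_decom_Euler_universal}, so that citation is already in play elsewhere. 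Two small remarks: the Mayer--Vietoris step you flag as the ``main obstacle'' is actually routine, since the JSJ tori are incompressible and the induction formula for the injective $\pi_1(M_i)\hookrightarrow\pi$ reduces it to a standard additivity argument; and the appropriate references for that step are Lemma~\ref{lem:tori} together with the sum formula behind Theorem~\ref{the:Basic_properties_of_the_phi_L2-Euler_characteristic}~(2), since Lemma~\ref{lem:JSJ_decom_Euler_universal} concerns the $\phi$-twisted $L^2$-Euler characteristic rather than the $L^2$-Betti numbers themselves.
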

\begin{proof}~\eqref{lem:b_1(2)_is_zero_implies_L2-acyclic:b_1}
Since $G$ is infinite, we have $b_0^{(2)}(\overline{M};\caln(G)) = 0$
by~\cite[Theorem~1.35~(8) on page~38]{Lueck(2002)}. Using Poincar\'e duality in the closed case,
see~\cite[Theorem~1.35~(3) on page~37]{Lueck(2002)} we conclude
$b_m^{(2)}(\overline{M};\caln(G)) = 0$ for $m \ge 3$. Since $\chi(M) = 0$, we get
$b_1^{(2)}(\overline{M};\caln(G)) = b_2^{(2)}(\overline{M};\caln(G))$. Hence the
assumption $b_1^{(2)}(\overline{M};\caln(G)) = 0$ implies that we have
$b_m^{(2)}(\overline{M};\caln(G)) = 0$ for all $m \ge 0$. 
\\[1mm]~\eqref{lem:b_1(2)_is_zero_implies_L2-acyclic:universal}
This follows from~\cite[Theorem~0.1]{Lott-Lueck(1995)}.
\end{proof}

\begin{theorem}[The $(\mu,\phi)$-$L^2$-Euler characteristic in terms of the first homology]
\label{the:The_(mu,phi)-L2-Euler_characteristic_in_terms_of_the_first_homology} 
 Let $M$ be an admissible $3$-manifold. Let $G$ be a torsion-free group which satisfies the Atiyah
 Conjecture. Consider group homomorphisms $\mu
 \colon \pi \to G$ and $\phi \colon G \to \IZ$. Let $K$ be the kernel of $\phi$ and $i
 \colon K \to G$ be the inclusion. Let $t \colon \cald(K) \xrightarrow{\cong} \cald(K)$ 
 be the automorphism introduced in 
Theorem~\ref{the:Main_properties_of_cald(G)}~\eqref{the:Main_properties_of_cald(G):cald(K)_and_(cald(G)}. 
 Assume that $\phi$ is surjective, $\phi \circ \mu$ is not trivial, and the intersection $\im(\mu) \cap \ker(\phi)$ is not trivial.
 Suppose that $b_1^{(2)}(\overline{M};\caln(G)) = 0$ holds for the $G$-covering 
 $\overline{M} \to M$ associated to $\mu$. Then:

\begin{enumerate} [font=\normalfont]

\item\label{the:The_(mu,phi)-L2-Euler_characteristic_in_terms_of_the_first_homology:acyclic}

The pair $(\mu,\phi)$ is an $L^2$-acyclic Atiyah-pair;

\item\label{the:The_(mu,phi)-L2-Euler_characteristic_in_terms_of_the_first_homology:homology} 
We have
\begin{multline*}
\hspace{12mm} \chi^{(2)}(M;\mu,\phi) = - b_1^{(2)}(i^*\overline{M};\caln(K))
\\
= - \dim_{\cald(K)}\bigl(H_1\bigl(\cald(K)_{t}[u^{\pm 1}] \otimes_{\IZ G} C_*(\overline{M})\bigr)\bigr).
\end{multline*}
In particular $\chi^{(2)}(M;\mu,\phi)$ is an integer $\ge 0$.
Moreover, we have for $m \not= 1$
\[
H_m\bigl(\cald(K)_{t}[u^{\pm 1}] \otimes_{\IZ G} C_*(\overline{M})\bigr) \,\, =\,\, 0,
\]
and
\[
\hspace{1cm}
b_m^{(2)}(i^*\overline{M};\caln(K))
\,\,=\,\, \dim_{\cald(K)}\bigl(H_m\bigl(\cald(K)_{t}[u^{\pm 1}] \otimes_{\IZ G} C_*(\overline{M})\bigr)\bigr) \,\,=\,\, 0.
\]

\end{enumerate}
\end{theorem}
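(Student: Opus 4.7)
My plan is to establish Part (1) via a direct application of Lemma~\ref{lem:b_1(2)_is_zero_implies_L2-acyclic}, and to establish Part (2) by analysing the bounded complex
\[ P_* \,:=\, \cald(K)_{t}[u^{\pm 1}] \otimes_{\IZ G} C_*(\overline M) \]
over the (non-commutative) principal ideal domain $\cald(K)_{t}[u^{\pm 1}]$ and showing its homology is concentrated in degree $1$.

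For Part (1), since $\phi\colon G \to \IZ$ is surjective, $G$ is infinite. I apply Lemma~\ref{lem:b_1(2)_is_zero_implies_L2-acyclic}\eqref{lem:b_1(2)_is_zero_implies_L2-acyclic:b_1} to the $G$-covering $\overline M \to M$: the hypothesis $b_1^{(2)}(\overline M;\caln(G)) = 0$ forces $b_n^{(2)}(\overline M;\caln(G)) = 0$ for all $n$. Combined with the torsion-freeness and Atiyah Conjecture for $G$ given in the hypotheses, this matches Definition~\ref{def:L2-acyclic_Atiyah-pair}, so $(\mu,\phi)$ is an $L^2$-acyclic Atiyah-pair.

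For Part (2), I first use Lemma~\ref{lem:reduction_to_surjective_mu}\eqref{lem:reduction_to_surjective_mu:mu'} to reduce to the case where $\mu$ is surjective, so that $\overline M$ is connected, and then invoke Lemma~\ref{lem:phi-twisted_as_ordinary_L2_Euler_characteristic} to identify $\chi^{(2)}(M;\mu,\phi)$ with $\chi^{(2)}(i^*\overline M;\caln(K))$. Applying Theorem~\ref{the:Main_properties_of_cald(G)}\eqref{the:Main_properties_of_cald(G):chain} to the finitely generated free $\IZ G$-chain complex $C_*(\overline M)$, whose $L^2$-Betti numbers over $\caln(G)$ vanish by Part~(1), I get that each $b_n^{(2)}(i^*\overline M;\caln(K)) = \dim_{\cald(K)} H_n(P_*)$ is a non-negative integer; the proof of that theorem moreover shows each $H_n(P_*)$ is a torsion $\cald(K)_{t}[u^{\pm 1}]$-module, since its free rank equals $b_n^{(2)}(\caln(G)\otimes_{\IZ G} C_*(\overline M)) = 0$. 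It remains to prove $H_m(P_*) = 0$ for $m \ne 1$.

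For $m \ge 3$ the module $H_m(P_*)$ sits inside the free module $P_m$ as $\ker\partial_m$ (since $C_{m+1}(\overline M) = 0$), and a torsion submodule of a free module over a domain is zero. For $m = 0$, the hypothesis that $\im\mu \cap \ker\phi$ is non-trivial, combined with the torsion-freeness of $G$, makes $K$ infinite, so the connected free $K$-space $i^*\overline M$ has $b_0^{(2)}(i^*\overline M;\caln(K)) = 0$ and hence $H_0(P_*) = 0$. For the key degree $m = 2$, I will invoke Poincar\'e-Lefschetz duality for the orientable $3$-manifold $M$, which provides a $\IZ G$-chain-homotopy equivalence between $C_*(\overline M)$ and $\Hom_{\IZ G}(C_{3-*}(\overline M,\partial\overline M),\IZ G)$ up to the standard involution $g\mapsto g^{-1}$. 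Tensoring with $\cald(K)_{t}[u^{\pm 1}]$ and applying the Universal Coefficient Theorem over the PID $\cald(K)_{t}[u^{\pm 1}]$—where, since every $H_k$ in sight is torsion, $\Hom(H_k,\cald(K)_{t}[u^{\pm 1}]) = 0$ and $\Ext(H_{k-1},\cald(K)_{t}[u^{\pm 1}])$ has the same $\cald(K)$-dimension as $H_{k-1}$ by a direct computation along the lines of Lemma~\ref{lem:degree_and_rank}—yields the dimension identity
\[ \dim_{\cald(K)} H_m(P_*) \,=\, \dim_{\cald(K)} H_{2-m}(P_*^{\text{pair}}), \qquad P_*^{\text{pair}} := \cald(K)_{t}[u^{\pm 1}] \otimes_{\IZ G} C_*(\overline M,\partial\overline M). \]
Taking $m=0$ together with $H_0(P_*) = 0$ gives $H_2(P_*^{\text{pair}}) = 0$. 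The long exact sequence of the pair over $\cald(K)_{t}[u^{\pm 1}]$ then sandwiches $H_2(P_*)$ between $H_2(P_*^{\partial})$ and $H_2(P_*^{\text{pair}}) = 0$, and $H_2(P_*^{\partial}) = 0$ by the same torsion-in-free argument applied to the $2$-dimensional boundary, provided each boundary torus $T$ is handled: this is ensured because $b_1^{(2)}(\overline M;\caln(G)) = 0$ forces, via the LES of the pair over $\caln(G)$ together with Poincar\'e-Lefschetz duality and the vanishing of $L^2$-Betti numbers of the torus whenever $\mu(\pi_1(T))$ is infinite, that $\mu(\pi_1(T))$ is indeed infinite for every boundary torus $T$. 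In the closed case $P_*^{\text{pair}} = P_*$ and the identification gives $H_2(P_*) = 0$ immediately. The main obstacle will be correctly packaging the degree-$2$ argument, in particular handling the involution $g\mapsto g^{-1}$ in the PD statement and extending it to $\cald(K)_{t}[u^{\pm 1}]$ so that the UCT identification preserves $\cald(K)$-dimensions; once this is done, combining $H_0=H_2=H_3=0$ yields $\chi^{(2)}(M;\mu,\phi) = -\dim_{\cald(K)} H_1(P_*)$, which is a non-positive integer, so that $-\chi^{(2)}(M;\mu,\phi) \ge 0$ as claimed.
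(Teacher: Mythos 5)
Your proof takes a genuinely different route from the paper's. The paper's own proof of part~(2) works entirely at the chain level: it modifies a presentation of $\pi_1(M)$ by Nielsen transformations -- this is where the hypothesis $\im\mu\cap\ker\phi\ne\{1\}$ enters, namely to produce a generator $\widehat{x}_1$ with $\mu(\widehat{x}_1)\ne 1$ and $\phi\circ\mu(\widehat{x}_1)=0$ -- and then reruns the Fox-calculus argument of Theorem~\ref{the:calculation_of_(mu,phi)_L2-Euler_characteristic_from_a_presentation}, so that both extra cokernel terms $|\phi\circ\mu(\widehat{x}_1)|$, $|\phi\circ\mu(\widehat{x}_1')|$ vanish and the whole of $\chi^{(2)}$ is carried by the middle matrix. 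You instead work homologically: torsion-submodule-of-free for $m\ge 3$, infinitude of $K$ for $m=0$, and Poincar\'e--Lefschetz duality plus a Universal Coefficient Theorem over the non-commutative PID $\cald(K)_t[u^{\pm 1}]$ for $m=2$. Both proofs use the hypothesis $\im\mu\cap\ker\phi\ne\{1\}$, but in quite different places, and your route is conceptually cleaner while the paper's is more self-contained and avoids any dualization.

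There are two genuine gaps you should be aware of. First, the opening reduction to $\mu$ surjective via Lemma~\ref{lem:reduction_to_surjective_mu}~\eqref{lem:reduction_to_surjective_mu:mu'} replaces $G$ by $\im\mu$ and hence $K$ by $K\cap\im\mu$ and $\cald(K)_t[u^{\pm 1}]$ by a different ring; the theorem's conclusion is phrased over the original $\cald(K)$ and $\IZ G$, and you do not check that the individual $b_m^{(2)}$ and $\dim_{\cald(K)}H_m$ (not just their alternating sum $\chi^{(2)}$) transfer under the reduction. Worse, $\phi|_{\im\mu}$ need not be surjective, which blocks the subsequent use of Lemma~\ref{lem:phi-twisted_as_ordinary_L2_Euler_characteristic}. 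The cleaner fix is to not reduce: keep the original $G$ and $K=\ker\phi$, but then $\overline{M}$ may be disconnected and the $m=0$ step needs to be replaced by an orbit--stabilizer computation showing $\dim_{\caln(K)}\bigl(\caln(K)\otimes_{\IZ K}\IZ[G/\im\mu]\bigr)=0$, which again uses $\im\mu\cap K$ infinite.

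Second, and more substantially, the $m=2$ step is only sketched, and it is exactly where the work lies. Poincar\'e--Lefschetz duality gives a $\IZ G$-chain homotopy equivalence $C_*(\overline{M})\simeq \Hom_{\IZ G}(C_{3-*}(\overline{M},\partial\overline{M}),\IZ G)$ up to the involution $g\mapsto g^{-1}$; this involution restricts to an anti-automorphism of $\cald(K)_t[u^{\pm 1}]$ sending $u\mapsto u^{-1}$, and the Universal Coefficient spectral sequence over this non-commutative hereditary ring, together with the dimension identity $\dim_{\cald(K)}\Ext^1_R(N,R)=\dim_{\cald(K)}N$ for $R$-torsion modules $N$ (plausible by reduction to cyclic modules $R/(p)$ via Lemma~\ref{lem:degree_and_rank}, but with a left/right transposition to track), all need to be set up with care. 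The paper carries out a simpler commutative version of exactly this in Example~\ref{exa:infinite_cyclic_covering} over $\IQ[\IZ]$, so the strategy is sound; but here the ring is a genuinely non-commutative twisted Laurent ring, and you have correctly flagged but not discharged the handedness bookkeeping. Also note that once the dimension identity $\dim_{\cald(K)}H_m(P_*)=\dim_{\cald(K)}H_{2-m}(P_*^{\mathrm{pair}})$ is in place, the $m=2$ case follows immediately from $H_0(P_*^{\mathrm{pair}})=0$ (which holds directly since $H_0(\overline{M},\partial\overline{M})=0$); the long-exact-sequence detour through $H_2(P_*^{\partial})$ and the boundary tori is unnecessary.
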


\begin{proof}~\eqref{the:The_(mu,phi)-L2-Euler_characteristic_in_terms_of_the_first_homology:acyclic}
This follows from 
Lemma~\ref{lem:b_1(2)_is_zero_implies_L2-acyclic}~\eqref{lem:b_1(2)_is_zero_implies_L2-acyclic:b_1}.
\\[1mm]~\eqref{the:The_(mu,phi)-L2-Euler_characteristic_in_terms_of_the_first_homology:homology} 
Consider any presentation
\[
\pi = \langle x_1,x_2, \ldots, x_a \mid R_1,R_2, \ldots, R_b \rangle
\]
of $\pi$. We want to modify it to another presentation 
\[
\pi = \langle \widehat{x}_1, \widehat{x}_2, \ldots, \widehat{x}_{a}\mid \widehat{R}_1, \widehat{R}_2, \ldots, \widehat{R}_{b} \rangle
\]
by a sequence of Nielsen transformations,
i.e., we replace the ordered set of generators 
$x_1, x_2, \ldots, x_a$ by $x_{\sigma(1)}, x_{\sigma(2)}, \ldots, x_{\sigma(a)}$
for a permutation $\sigma \in S_a$ or we replace
 $x_1, x_2, \ldots, x_a$ by $x_1, x_2, x_{i-1}, x_j^kx_i, x_{i+1}, \ldots , x_{j-1}, x_j,x_{j+1}, \ldots, x_a$ 
for some integers $i,j,k$ with $1 \le i < j \le a$, and change the relations accordingly,
such that $\mu(\widehat{x}_1) \not= 0$ and $(\phi\circ \mu)(\widehat{x}_1) = 0$
holds, provided that $\phi$ is surjective and $\im(\phi) \cap \ker(\phi)$ is non-trivial. 
We use induction over $n = |\{i \in \{1,2, \ldots, a\} \mid \mu(x_i) \not= 0\}|$. The induction beginning 
$n = 0,1$ is obvious since the case $n = 0$ cannot occur, since $\phi \circ \mu \not= 0$ and
$\phi$ is surjective, and in the case $n = 1$ we must have $\phi \circ \mu(x_i) = 0$ for the only index
$i$ with $\mu(x_i) \not= 0$, since $\im(\mu) \cap \ker(\phi) \not= \{0\}$.
The induction step from $(n-1)$ to $n \ge 2$ is done as follows.
We use subinduction over $m = \min\{|(\phi\circ \mu)(x_i)| \mid i = 1,2, \ldots, a, \mu(x_i)\not= 0\}$. 
If $m = 0$, then $\mu(x_i) \not = 0$ and $(\phi\circ \mu)(x_i) = 0$ for some $i \in
\{1,2, \ldots, a\}$, and we can achieve our goal by changing the enumeration. The induction step from
$(m-1)$ to $ m \ge 1$ is done as follows. Choose $j \in \{1,2, \ldots, a\}$ such that
$\mu(x_j) \not= 0$ and $(\phi\circ \mu)(x_j) = m$. Since $n \ge 2$, there must be an index 
$i \in \{1,2, \ldots , a\}$ with $i \not = j$ and $\mu(x_i) \not= 0$. By changing the enumeration
we can arrange $i < j$. Choose integers $k,l$ with
$0 \le l < m$ and $(\phi\circ \mu)(x_i) = k \cdot (\phi\circ \mu)(x_j) + l$. If we
replace $x_i$ by $x_i \cdot x_j^{-k}$ and leave the other elements in $\{x_1, x_2, \ldots,x_a\}$ 
fixed, we get a new set of generators which is part of a finite presentation with
$a$ generators and $b$ relations of $\pi$ for which the induction hypothesis applies since
either $\mu(x_i \cdot x_j^{-k}) = 0$ holds or we have $\mu(x_i \cdot x_j^{-k}) \not= 0$
and $|(\phi\circ \mu)(x_i \cdot x_j^{-k})| = l \le m-1$.

We have the following equations in $\IZ \pi$ for $u\in \IZ \pi$
and integers $i,j,k$ with $1 \le i < j \le a$ and $k \ge 1$:
\begin{multline*}
r_{x_j^{k}x_i - 1}(u) 
= 
u(x_j^kx_i - 1) 
= 
ux_j^k(x_i - 1) + u(x_j^k - 1)
= 
ux_j^k(x_i - 1) + u\left(\tmsum{h = 0}{k-1} x_j^h\right) \cdot (x_j - 1)
\\
= 
ux_j^k(x_i - 1) + \left(\tmsum{h = 0}{k-1} u x_j^h\right) \cdot (x_j - 1)
= 
r_{x_i - 1}(ux_j^{k}) - r_{x_j -1} \left(\tmsum{h = 0}{k-1} u x_j^{h}\right),
\end{multline*}
and 
\begin{multline*}
r_{x_j^{-k}x_i - 1}(u) 
= 
u(x_j^{-k}x_i - 1) 
= 
ux_j^{-k}(x_i - 1) - ux_j^{-k} (x_j^k- 1)
\\
= 
ux_j^{-k}(x_i - 1) - ux_j^{-k}\left(\tmsum{h = 0}{k-1} x_j^h\right) \cdot (x_j - 1)
\\
= 
ux_j^{-k}(x_i - 1) - \left(\tmsum{h = 0}{k-1} u x_j^{h-k}\right) \cdot (x_j - 1)
= 
r_{x_i - 1}(ux_j^{-k}) - r_{x_j -1} \left(\tmsum{h = 0}{k-1} u x_j^{h-k}\right).
\end{multline*}
This implies that we can find $\IZ \pi$-isomorphisms 
$f_1 \colon \IZ \pi^a\to \IZ \pi^a$ and $f_2 \colon \IZ \pi^a\to \IZ \pi^a$ 
such that the following diagrams commute
\begin{center}
$\begin{array}{rcl}
\hspace{7mm} \xymatrix@R0.45cm@!C=8em{
\IZ \pi^a \ar[dd]_{f_1} ^{\cong} \ar[rd]^{\quad \bigoplus_{h = 1}^a r_{x_h-1}} &
\\
& \IZ \pi
\\
\IZ \pi^a \ar[ru]_{\quad \bigoplus_{h = 1}^a r_{\widehat{x}_h-1}} &
}
& \hspace{20mm} &
\xymatrix@R0.45cm@!C=8em{
& 
\IZ \pi^a \ar[dd]_{f_2}^{\cong} &
\\
\IZ \pi \ar[ru]^{\prod_{h = 1}^a r_{x_i'-1}} \ar[rd]_{\prod_{h = 1}^a r_{\widehat{x}_i'-1}} & 
\\
& 
\IZ \pi^a &
}
\end{array}$
\end{center}

Now we proceed as in the proof of 
Theorem~\ref{the:calculation_of_(mu,phi)_L2-Euler_characteristic_from_a_presentation}~%
\eqref{the:calculation_of_(mu,phi)_L2-Euler_characteristic_from_a_presentation:empty},
explaining only the case where the boundary is empty. There we had constructed a compact $3$-dimensional $CW$-complex $X$ together with a homotopy equivalence
 $f \colon X \to M$ and set of generators $\{x_1, \ldots ,x_a\}$ and $\{x_1', \ldots ,x_a'\}$ such that
 the $\IZ G$-chain complex $C_*(\overline{X})$ of $\overline{X}$ looks like 
\[
\IZ G 
\xrightarrow{\prod_{i=1}^a r_{\mu(x_i') - 1}} 
\bigoplus_{i=1}^a \IZ G 
\xrightarrow{r_{\mu(F)}} \bigoplus_{i=1}^a \IZ G 
\xrightarrow{\bigoplus_{i=1}^a r_{\mu(x_i) - 1}} \IZ G.
\]
Applying the construction above yields new set of generators 
$\{\widehat{x}_1, \ldots ,\widehat{x}_a\}$ and $\{\widehat{x}_1', \ldots ,\widehat{x}_a'\}$
such that the order of $\mu(\widehat{x}_1')$ is infinite,
and $\phi \circ \mu(\widehat{x_1}) = \phi \circ \mu(\widehat{x}_1') = 0$ 
and we can find a matrix $B \in M_{a,a}(\IZ G)$ and isomorphisms $f_2$ and $f_1$ making the following diagram
of $\IZ G$-modules commute.
\[
\xymatrix@!C=8em{
\IZ G \ar[r]^{\prod_{j=1}^a r_{\mu(x_i') - 1}} \ar[d]^{\id}
&
\bigoplus_{i=1}^a \IZ G \ar[r]^{r_{\mu(F)}} \ar[d]^{f_2}_{\cong}
&
\bigoplus_{i=1}^a \IZ G 
\ar[r]^{\bigoplus_{i=1}^a r_{\mu(x_i) - 1}} \ar[d]^{f_1}_{\cong}
&\IZ G \ar[d]^{\id}
\\
\IZ G \ar[r]_{\prod_{j=1}^a r_{\mu(\widehat{x}_i') - 1}} 
&
\bigoplus_{i=1}^a \IZ G \ar[r]_{r_{B}} 
&
\bigoplus_{i=1}^a \IZ G 
\ar[r]_{\bigoplus_{i=1}^a r_{\mu(\widehat{x}_i) - 1}} 
&\IZ G
}
\]
Now proceed as in the proof of
Theorem~\ref{the:calculation_of_(mu,phi)_L2-Euler_characteristic_from_a_presentation}~%
\eqref{the:calculation_of_(mu,phi)_L2-Euler_characteristic_from_a_presentation:empty}
using the chain complex given by the lower row instead of the chain complex 
given by the upper row in the diagram above. If $A$ is the square matrix over $\IZ G$
obtained from $B$ by deleting the first row and the first
column and then applying the ring homomorphism $\IZ \pi\to \IZ G$ induced by $\mu$
to each entry, then $A$ is invertible over $\cald(G)$ and we get because of 
$\phi \circ \mu(\widehat{x_1}) = \phi \circ \mu(\widehat{x}_1') = 0$ and Lemma~\ref{lem:degree_and_rank}
\begin{multline*}
\chi^{(2)}(M;\mu,\phi) 
= 
 - \dim_{\cald(K)}\bigl(\coker\bigl(r_A \colon \cald(K)_{t}[u^{\pm 1}]^{a-1} \to \cald(K)_{t}[u^{\pm 1}]^{a-1}\bigr)\bigr)
\\
= \dim_{\cald(K)}\bigl(H_1\bigl(\cald(K)_{t}[u^{\pm 1}] \otimes_{\IZ G} C_*(\overline{M})\bigr)\bigr),
\end{multline*}
and 
\begin{multline*}
0 = 
 \dim_{\cald(K)}\bigl(\ker\bigl(r_A \colon \cald(K)_{t}[u^{\pm 1}]^{a-1} \to \cald(K)_{t}[u^{\pm 1}]^{a-1}\bigr)\bigr).
\\
= \dim_{\cald(K)}\bigl(H_2\bigl(\cald(K)_{t}[u^{\pm 1}] \otimes_{\IZ G} C_*(\overline{M})\bigr)\bigr),
\end{multline*}
and
\[
0 = \dim_{\cald(K)}\bigl(H_m\bigl(\cald(K)_{t}[u^{\pm 1}] \otimes_{\IZ G} C_*(\overline{M})\bigr)\bigr)
\quad\text{for}\; m \not = 1,2.
\]
We conclude from Theorem~\ref{the:Main_properties_of_cald(G)}~\eqref{the:Main_properties_of_cald(G):chain} 
for all $m \ge 0$
\[
b_m^{(2)}(i^*\overline{M};\caln(K))
= \dim_{\cald(K)}\bigl(H_m\bigl(\cald(K)_{t}[u^{\pm 1}] \otimes_{\IZ G} C_*(\overline{M})\bigr)\bigr).
\]
This finishes the proof of Theorem~\ref{the:The_(mu,phi)-L2-Euler_characteristic_in_terms_of_the_first_homology}.
\end{proof}

In view of Lemma~\ref{lem:reduction_to_surjective_mu}~\eqref{lem:reduction_to_surjective_mu:mu_circ_phi_is_not_trivial},
the next lemma   covers the case $\im(\mu) \cap \ker(\phi) = \{1\}$
which is not treated in Theorem~\ref{the:The_(mu,phi)-L2-Euler_characteristic_in_terms_of_the_first_homology}.

\begin{lemma}\label{lem:infinite_cyclic_covering}
Let $M$ be an admissible $3$-manifold. Consider an epimorphism $\mu \colon \pi \to \IZ$. Let $\overline{M} \to M$ be the infinite cyclic covering associated to $\mu$.
Finally let $\phi \colon \IZ\to \IZ$ be a non-trivial homomorphism.
If $b_1^{(2)}(\overline{M};\caln(\IZ)) = 0$, then 
\[
\chi(\overline{M}) 
\,\,= \,\,
\begin{cases}
1 - \dim_{\IQ}(H_1(\overline{M};\IQ)) & \text{if}\; \partial M \not= \emptyset;
\\
2 - \dim_{\IQ}(H_1(\overline{M};\IQ)) & \text{if}\; \partial M = \emptyset.
\end{cases}
\]
\end{lemma}

\begin{proof}
Let $k$ be the index of $\im(\phi)$ in $\IZ$.
Note that  $(\mu,\phi)$ is an $L^2$-acyclic pair if and only if $b_1^{(2)}(\overline{M};\caln(\IZ)) = 0$.
This follows from Lemma~\ref{lem:b_1(2)_is_zero_implies_L2-acyclic} and the fact that $\IZ$ satisfies
the Atiyah Conjecture by 
Theorem~\ref{the:Status_of_the_Atiyah_Conjecture}~\eqref{the:Status_of_the_Atiyah_Conjecture:Linnell}.

Suppose $b_1^{(2)}(\overline{M};\caln(\IZ)) = 0$.
An Euler characteristic argument and standard facts on $L^2$-Betti numbers imply
that $b_n^{(2)}(\overline{M};\caln(\IZ)) = 0$ for all $n\geq 0$.
 Lemma~\ref{lem:phi-twisted_as_ordinary_L2_Euler_characteristic} implies that
$\dim_{\IQ}(H_n(\overline{M};\IQ)) < \infty$ holds for all $n \geq 0$.
Note that $H_n(\overline{M};\IQ) =0$ holds for $n \ge 3$ holds because $\overline{M}$ is a non-compact $3$-manifold.
Thus it remains to show the following equality:
\[
\dim_{\IQ}(H_2(\overline{M};\IQ)) 
\,\,=\,\, 
\begin{cases} 0
& \text{if}\; \partial M \not= \emptyset;
\\
1 & \text{if}\; \partial M = \emptyset.
\end{cases}
\]
We begin with the case that $M$ has
non-empty boundary. Then $M$ is homotopy equivalent to a $2$-dimensional complex $X$.
Since $H_2(\overline{M};\IQ) \cong_{\IQ[\IZ]} H_2(\overline{X};\IQ)$ is a $\IQ[\IZ]$-submodule of the free $\IQ[\IZ]$-module
$C_2(\overline{X}) \otimes_{\IZ} \IQ$ for the pull back $\overline{X} \to X$ of $\overline{M} \to M$ with a homotopy
equivalence $X \to M$, the $\IQ[\IZ]$-module $H_2(\overline{M};\IQ)$ is free.
Since $\dim_{\IQ}(H_2(\overline{M};\IQ)) < \infty$ holds, this implies $H_2(\overline{M};\IQ) = 0$.

Now suppose that $\partial M$ is empty. Then we get a Poincar\'e $\IZ \pi$-chain homotopy equivalence of
$\IZ \pi$-chain complexes $\Hom_{\IZ \pi}(C_{3-*}(\widetilde{M}),\IZ \pi) \to C_*(\widetilde{M})$.
It induces a $\IQ[\IZ]$-chain homotopy equivalence $\Hom_{\IQ[\IZ]}(\IQ \otimes_{\IZ} C_{3-*}(\overline{M}), \IQ[\IZ]) 
\to \IQ \otimes_{\IZ} C_{*}(\overline{M})$ and hence a $\IQ[\IZ]$-isomorphism
\[H^1\bigl(\Hom_{\IQ[\IZ]}(\IQ \otimes_{\IZ} C_{3-*}(\overline{M}), \IQ[\IZ])\bigr) \xrightarrow{\cong} H_2(\overline{M}).
\]
Since $\IQ[\IZ]$ is a principal ideal domain, we get from the Universal Coefficient Theorem for cohomology an exact sequence 
of $\IQ[\IZ]$-modules
\begin{multline*}
0 \to \Ext^1_{\IQ[\IZ]}\bigl(H_0(C_*(\overline{M})),\IQ[\IZ]\bigr) \to H^1\bigl(\Hom_{\IQ[\IZ]}(\IQ \otimes_{\IZ} C_{3-*}(\overline{M}), \IQ[\IZ])\bigr) 
\\
\to \Hom_{\IQ[\IZ]}\bigl(H_1(\overline{M};\IQ),\IQ[\IZ]\bigr) \to 0.
\end{multline*}
From $\dim_{\IQ}(H_1(\overline{M};\IQ)) < \infty$, we conclude $\Hom_{\IQ[\IZ]}\bigl(H_1(\overline{M};\IQ),\IQ[\IZ]\bigr) = 0$.
Since $H_0(C_*(\overline{M}))$ is the trivial $\IQ[\IZ]$-module $\IQ$, we conclude that 
$\Ext^1_{\IQ[\IZ]}\bigl(H_0(C_*(\overline{M})),\IQ[\IZ]\bigr)$ is the trivial $\IQ[\IZ]$-module $\IQ$
and $H_2(\overline{M};\IQ)\cong \IQ$ follows.
\end{proof}


\typeout{------ Section 6: Equality of $(\mu,\phi)$-$L^2$-Euler characteristic and the Thurston norm ------}

\section{Equality of $(\mu,\phi)$-$L^2$-Euler characteristic and the Thurston norm}
\label{sec:Equality_of_(mu,phi)-L2-Euler_characteristic_and_the_Thurston_norm}

The section is devoted to the proof of
Theorem~\ref{the:Equality_of_(mu,phi)-L2-Euler_characteristic_and_the_Thurston_norm}
which needs some preparation.

For the remainder of this section $G$ is a torsion-free group
satisfying the Atiyah Conjecture, $H$ is a finitely generated abelian
group and $\nu \colon G \to H$ a surjective group homomorphism.


\subsection{The non-commutative Newton polytope}
\label{subsec:The_non-commutative_Newton_polytope}

Choose a set-theoretic section $s$ of $\nu$, i.e., a map of sets $s \colon H \to G$ with $\nu \circ s = \id_H$.
Notice that we do \emph{not} require that $s$ is a group homomorphism. Let $K_{\nu}$ be
the kernel of $\nu$. Then $K_{\nu}$ also satisfies the Atiyah Conjecture
by Theorem~\ref{the:Status_of_the_Atiyah_Conjecture}~\eqref{the:Status_of_the_Atiyah_Conjecture:subgroups},
and $\cald(K_{\nu})$ and $\cald(G)$ are skew fields by
Theorem~\ref{the:Main_properties_of_cald(G)}~\eqref{the:Main_properties_of_cald(G):skew_field}.
There are obvious inclusions $\IZ K_{\nu} \subseteq \IZ G$ and $\cald(K_{\nu}) \subseteq \cald(G)$ 
coming from the inclusion $j \colon K_{\nu} \to G$. Let $c \colon H \to \aut(\cald(K_{\nu}))$ be 
the map sending $h$ to the automorphisms $c(h) \colon \cald(K_{\nu}) \xrightarrow{\cong} \cald(K_{\nu})$
induced by the conjugation automorphism $K_{\nu} \to K_{\nu}, \; k \mapsto s(h) \cdot k \cdot s(h)^{-1}$. Define
$c \colon H \to \aut(\IZ K_{\nu})$ analogously. Define $\tau \colon H \times H \to (\IZ K_{\nu})^{\times}$ 
and $\tau \colon H \times H \to \cald(K_{\nu})^{\times}$ by sending $(h,h')$
to $s(h)\cdot s(h')\cdot s(hh')^{-1}$. Let $\IZ K_{\nu} \ast_s H$ and $\cald(K_{\nu}) \ast_s H$ be
the crossed product rings associated to the pair $(c,\tau)$. Elements in $\IZ K_{\nu} \ast_s H$
and $\cald(K_{\nu}) \ast_s H$ respectively are finite formal sums $\sum_{h \in H} x_h \cdot h$ 
for $x_h$ in $\IZ K_{\nu}$ and $\cald(K_{\nu})$ respectively. Addition is given by adding the
coefficients. Multiplication is given by the formula
\[
\Big(\,\smsum{h \in H}{} x_{h} \cdot h\Big) \cdot \Big(\,\smsum{h \in H}{} y_{h} \cdot h\Big)\,\, =\,\,\smsum{h\in H}{} \Big(\,\,\smsum{\substack{h',h'' \in H,\\h'h'' = h}}{} x_{h'} c_{h'}(y_{h''})
 \tau(h',h'') \Big) \cdot h.
\]
This multiplication is uniquely determined by the properties $h\cdot x = c(h)(x)\cdot h$
for $x$ in $\IZ K_{\nu}$ and $\cald(K_{\nu})$ respectively, and $h \cdot h' = \tau(h,h') \cdot
(hh')$ for $h,h' \in H$. We obtain an isomorphism of rings
\[
j'_s \colon \IZ K_{\nu} \ast_s H \,\xrightarrow{\cong}\, \IZ G, \quad \smsum{h \in H}{} x_h \cdot h
\,\mapsto\, \smsum{h \in H}{} x_h \cdot s(h),
\]
see~\cite[Example~10.53 on page~396]{Lueck(2002)}, and an injective ring homomorphism
\[
j_s \colon \cald(K_{\nu}) \ast_s H \,\to\, \cald(G), \quad \smsum{h \in H}{} x_h \cdot h
\,\mapsto\, \smsum{h \in H}{} x_h \cdot s(h).
\]
Moreover, the set $T$ of non-trivial elements in $\cald(K_{\nu}) \ast_s H$ satisfies the Ore
condition and $j_s$ induces an isomorphism of skew fields
\begin{equation}
\widehat{j_s} \colon T^{-1} (\cald(K_{\nu}) \ast_s H) \xrightarrow{\cong} \cald(G).
\label{widehat(j_s)}
\end{equation}
This is proved for $F = \IC$ in~\cite[Lemma~10.68 on page~403]{Lueck(2002)}, the proof
carries over for any field $F$ with $\IQ \subseteq F \subseteq \IC$.

Fix an element $u = \sum_{h \in H} x_h \cdot h$ in $\cald(K_{\nu}) \ast_s H$ with $u \not= 0$.
Now we introduce a non-commutative analogue of the Newton polytope. 
A \emph{polytope} in a finite dimensional real vector space is a subset which is the convex hull of a finite subset.
An element $p$ in a polytope is called \emph{extreme} if the implication $p= \frac{q_1}{2} + \frac{q_2}{2}
\Longrightarrow q_1 = q_2 = p$ holds for all elements $q_1$ and $q_2$ in the polytope.
Denote by $\Ext(P)$ the set of extreme points of $P$. If $P$ is the convex hull of the finite set $S$,
then $\Ext(P) \subseteq S$ and $P$ is the convex hull of $\Ext(P)$. 
The \emph{Minkowski sum} of two polytopes $P_1$ and $P_2$ is defined to be the polytope
\[
P_1 + P_2 := \{p_1 +p_2 \mid p_1 \in P_1, p \in P_2\}.
\] 
It is the convex hull of the set $\{p_1 + p_2 \mid p_1 \in \Ext(P_1), p_2 \in \Ext(P_2)\}$.
Define the \emph{non-commutative Newton polytope} of $u$ 
\begin{equation}
P(u) \subseteq \IR \otimes_{\IZ} H
\label{P(u)}
\end{equation}
to be the polytope given by the convex hull of the finite set $\{1 \otimes h \mid x_h \not= 0\}$. 
We now show that the definition of $P(u)$ is independent of the choice of the section $s$ by the following argument.
Consider two set-theoretic sections $s$ and $s'$ of $\nu \colon G \to H$. We denote the corresponding polytopes by $P(u;s)$ and $P(u;s')$.  Then we get for
$u \in \cald(K_{\nu}) \ast_s H$ and $u' \in \cald(K_{\nu}) \ast_{s'} H$ 
\begin{eqnarray}
\widehat{j_s}(u) = \widehat{j_{s'}}(u')
& \implies &
P(u;s) = P(u';s')
\label{P(u;s)_independent_of_s}
\end{eqnarray} 
by the following argument. If we write $u =\sum_{h \in H} x_h \cdot h$ and $u' = \sum_{h \in H} y_h \cdot h$,
then $\widehat{j_s}(u) = \widehat{j_{s'}}(u')$ implies
\[
u\,\, =\,\, \lmsum{h \in H}{} x_h \cdot h \,\,= \,\, \lmsum{h \in H}{} \bigl(y_h\cdot s'(h) s(h)^{-1} \bigr)\cdot h,
\]
and hence $x_h \not = 0 \Leftrightarrow y_h \not= 0$.

The following lemma is well-known in the commutative setting and we explain its proof 
since we could not find a reference for it in the literature.

\begin{lemma}\label{lem:P(uv)_is_P(u)_plus_P(v)} 
For $u,v \in \cald(K_{\nu}) \ast_s H$ with $u,v \not= 0$, we have 
\[
P(uv) = P(u) + P(v).
\]
\end{lemma}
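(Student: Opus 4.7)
The plan is to reduce the equality of polytopes to the additivity of support functions, and then to deduce the equality of support functions from the fact that $\cald(K_\nu) \ast_s H$ is an integral domain. Recall that a polytope $P \subseteq \IR \otimes_{\IZ} H$ is determined by its support function $\phi \mapsto \max_{p \in P}\phi(p)$, which is additive under Minkowski sums. Identifying $\phi \in \Hom_{\IR}(\IR \otimes_{\IZ} H,\IR)$ with a homomorphism $H \to \IR$ and setting
\[
d_\phi(w) \,\,:=\,\, \max\bigl\{\phi(h) : w_h \neq 0\bigr\} \qquad \text{for a nonzero } w \,=\, \sum_{h \in H} w_h \cdot h,
\]
the support function of $P(w)$ at $\phi$ equals $d_\phi(w)$. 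Hence the lemma reduces to the claim that $d_\phi(uv) = d_\phi(u) + d_\phi(v)$ for every such $\phi$.

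The inequality $d_\phi(uv) \le d_\phi(u) + d_\phi(v)$ is immediate from the crossed product multiplication: the coefficient of $h$ in $uv$ is $\sum_{h_1 h_2 = h} x_{h_1}\, c_{h_1}(y_{h_2})\, \tau(h_1,h_2)$, and whenever this is nonzero one has $x_{h_1} \neq 0$ and $y_{h_2} \neq 0$ for some pair, so $\phi(h) = \phi(h_1) + \phi(h_2) \le d_\phi(u) + d_\phi(v)$. This already yields the easy inclusion $P(uv) \subseteq P(u) + P(v)$.

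For the reverse inequality I would introduce the $\phi$-leading parts
\[
u_\phi \,\,:=\, \sum_{\phi(h) = d_\phi(u)} x_h \cdot h, \qquad v_\phi \,\,:=\, \sum_{\phi(h) = d_\phi(v)} y_h \cdot h,
\]
which are nonzero elements of $\cald(K_\nu) \ast_s H$. The key observation is that in the expansion of $uv$, any summand contributing to the coefficient of some $h$ with $\phi(h) = d_\phi(u) + d_\phi(v)$ must come from a pair $(h_1, h_2)$ satisfying $\phi(h_1) = d_\phi(u)$ and $\phi(h_2) = d_\phi(v)$; thus the portion of $uv$ supported on that extremal face coincides precisely with the product $u_\phi \cdot v_\phi$.

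The main obstacle is then to prevent $u_\phi \cdot v_\phi$ from vanishing through cancellation between distinct monomials, and this is where the Atiyah Conjecture enters the argument. By the isomorphism $\widehat{j_s}$ of~\eqref{widehat(j_s)}, the crossed product $\cald(K_\nu) \ast_s H$ embeds into the skew field $\cald(G)$ and is in particular an integral domain. Consequently $u_\phi \cdot v_\phi \neq 0$, so some coefficient of $uv$ at an $h$ with $\phi(h) = d_\phi(u) + d_\phi(v)$ is nonzero, yielding $d_\phi(uv) \ge d_\phi(u) + d_\phi(v)$ and completing the proof.
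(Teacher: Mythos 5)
Your proof is correct, and it takes a genuinely different route from the paper's. The paper argues via extreme points: it shows that each extreme point $p$ of the Minkowski sum $P(u)+P(v)$ admits a \emph{unique} decomposition $p = q_1 + q_2$ with $q_1 \in P(u)$, $q_2 \in P(v)$, so that in the coefficient formula $z_h = \sum_{h_1 h_2 = h} x_{h_1}\,c(h_1)(y_{h_2})\,\tau(h_1,h_2)$ for $h$ the extreme lattice point there is literally a single summand, which is a product of nonzero elements of the skew field $\cald(K_\nu)$ and hence nonzero. Thus the paper never confronts any potential cancellation: at an extreme point there is nothing to cancel. Your argument instead passes to support functions, reduces the claim to $d_\phi(uv) = d_\phi(u) + d_\phi(v)$ for all $\phi$, and for the hard inequality observes that the face of $uv$ where $\phi$ is maximal is exactly the product $u_\phi \cdot v_\phi$ of the $\phi$-leading parts; here multiple summands \emph{can} contribute to a single coefficient, so you really do need to rule out cancellation, and you do so by invoking that $\cald(K_\nu)\ast_s H$ is a domain (via the embedding $j_s$ into $\cald(G)$). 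Both arguments use the skew-field structure of $\cald(K_\nu)$ supplied by the Atiyah Conjecture for $K_\nu$, but the paper uses only the elementary consequence that a single product of nonzero elements is nonzero, whereas yours needs the (slightly stronger, though still standard) fact that the whole crossed product is a domain. The trade-off is that the support-function argument is shorter and fits a recurring theme in the paper (polytopes are compared via their seminorms), while the paper's extreme-point argument is more self-contained and gives the explicit coefficient formula at each extreme point, which is perhaps more in the spirit of ``Newton polytope'' reasoning. One tiny imprecision: the embedding you want to cite is $j_s \colon \cald(K_\nu)\ast_s H \to \cald(G)$ rather than its Ore-localized extension $\widehat{j_s}$, but this does not affect the argument.
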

	
\begin{proof}
Consider an extreme point $p \in P(u) + P(v)$. Then we can find points 
$q_1 \in P(u)$ and $q_2 \in P(v)$ with $p = q_1 + q_2$. We want to show that $q_1$ and $q_2$ are extreme. 
Consider $q_1', q_1'' \in P(u)$ and $q_2',q_2'' \in P(v)$ with $q_1 = (q_1' +q_1'')/2$ and $q_2 = (q_2' + q_2'')/2$.
Then $(q_1' + q_2')$, $(q_1'' + q_2')$, $(q_1' + q_2'')$, and $(q_1'' + q_2'')$ belong to $P(u) + P(v)$ and satisfy
\[
p\,\, =\,\, \smfrac{q_1' + q_2'}{2} + \smfrac{q_1''+ q_2''}{2}\,\, =\,\, \smfrac{q_1' + q_2''}{2} + \smfrac{q_1''+ q_2'}{2}.
\]
Since $p \in P(u) + P(v)$ is extreme, we conclude $q_1' + q_2' = q_1''+ q_2''$ and
$q_1' + q_2'' = q_1''+ q_2'$. If we subtract the second equation from the first, we obtain
$q_2' - q_2'' = q_2'' - q_2'$ and hence $q_2' = q_2''$. This implies also $q_1' = q_1''$.
This shows that $q_1 \in P(u)$ and $q_2 \in P(v)$ are extreme.

Suppose that we have other points $q_1' \in P(u)$ and $q_2' \in P(v)$ with $p = q_1' + q_2'$.
Then $q_1 + q_2'$ and $q_1' + q_2$ belong to $P(u) + P(v)$ and satisfy
$p = \frac{q_1 + q_2'}{2} + \frac{q_1'+ q_2}{2}$. Since $p$ is extreme, this implies $p = q_1 + q_2' = q_1'+ q_2$.
Since we also have $p = q_1 + q_2 = q_1' + q_2'$, we conclude $q_1 = q_1'$ and $q_2 = q_2'$.

Now write $u = \sum_{h \in H} x_{h} \cdot h$, $v = \sum_{h \in H} y_{h} \cdot h$, and 
$uv = \sum_{h \in H} z_{h} \cdot h$. Since $p \in P(u) + P(v)$ is extreme
we can write $p$ as the sum of two extreme vertices of $P(u)$ and $P(v)$, which implies that there is $h \in H$ 
with $p = 1 \otimes h$. If we write $p = q_1 + q_2$ for $q_1 \in P(u)$ 
and $q_2 \in P(v)$, then we have already seen that $q_1$ and $q_2$ are extreme
and hence there are $h_1,h_2 \in H$ with $q_1 = 1 \otimes h_1$, $q_2 = 1 \otimes h_2$,
$x_{h_1} \not= 0$ and $y_{h_2} \not= 0$. The equation $p = q_1 + q_2$ implies
$h = h_1 + h_2$. Now consider elements $h_1', h_2' \in H$ with $h = h_1' + h_2'$, $x_{h_1'} \not= 0$ 
and $y_{h_2'} \not= 0$. Put $q_1' = 1 \otimes h_1'$ and $q_2' = 1 \otimes h_2'$. Then $q_1' \in P(u)$ 
and $q_2' \in P(v)$ and we have $p = q_1' + q_2'$. We have already explained that this implies 
$q_1 = q_1'$ and $q_2 = q_2'$ and hence $h_1 = h_1'$ and $h_2 = h_2'$. Therefore we get
$z_h = x_{h_1} c(h_1)(y_{h_2}) \tau(h_1,h_2)$. Since $x_{h_1}$ and $y_{h_2}$ are non-trivial, we
conclude $z_h \not= 0$ and hence $p \in P(uv)$. Hence every extreme point in 
$P(u) + P(v)$ belongs to $P(uv)$ which implies $P(u) + P(v) \subseteq P(uv)$.

One easily checks that any point of the shape $1 \otimes h$ for $z_h \not= 0$ belongs
to $P(u) + P(v)$ since $z_h \not= 0$ implies the existence of $h_1$ 
and $h_2$ with $x_{h_1}, y_{h_2} \not= 0$ and $h = h_1 + h_2$. We conclude $P(uv) \subseteq P(u) + P(v)$.
This finishes the proof of Lemma~\ref{lem:P(uv)_is_P(u)_plus_P(v)}.
\end{proof}


\subsection{The polytope homomorphism}
\label{subsec:The_polytope_homomorphism}

We obtain a finite dimensional real vector space $\IR \otimes_{\IZ} H$. An integral polytope in 
$\IR \otimes_{\IZ} H$ is a polytope such that $\Ext(P)$ is contained in $H$, where we consider $H$ 
as a lattice in $ \IR \otimes_{\IZ} H$ by the standard embedding 
$H \to \IR \otimes_{\IZ} H, \; h \mapsto 1 \otimes h$. 
The Minkowski sum of two integral polytopes is again an integral
polytope. Hence the integral polytopes form an abelian monoid under the Minkowski sum with
the integral polytope $\{0\}$ as neutral element.

\begin{definition}[Grothendieck group of integral polytopes]
\label{def:The_Grothendieck_group_of_integral_polytope}
Let $\calp_{\IZ}(H)$ be the abelian group given by the Grothendieck construction applied
to the abelian monoid of integral polytopes in $\IR \otimes_{\IZ} H$ under the Minkowski sum.
\end{definition}

Notice that for polytopes $P_0$, $P_1$ and $Q$ in a finite dimensional real vector space
we have the implication $P_0 + Q = P_1 + Q \Longrightarrow P_0 = P_1$, see~\cite[Lemma~2]{Radstroem(1952)}.
Hence elements in $\calp_{\IZ}(H)$ are given by formal
differences $[P] - [Q]$ for integral polytopes $P$ and $Q$ in $\IR \otimes_{\IZ} H$ and we
have $[P_0] - [Q_0] = [P_1] - [Q_1] \Longleftrightarrow P_0 + Q_1 = P_1 + Q_0$.
As a side remark we point out that the polytope group $\calp_{\IZ}(H)$ is a free abelian group by Funke~\cite{Funke(2016)}.

In the sequel we denote by $G_{\abel}$ the abelianization
$G/[G,G]$ of a group $G$. Define the \emph{polytope homomorphism} for a surjective homomorphism
$\nu \colon G \to H$ onto a finitely generated free abelian group $H$ 
\begin{eqnarray}
\IP'_{\nu} \colon \cald(G)^{\times}_{\abel} \to \calp_{\IZ}(H).
\label{P_prime_colon_cald_to_p_Z}
\end{eqnarray}
as follows. Choose a set-theoretic section $s$ of $\nu$. Consider an element $z \in \cald(G)$ with $z \not= 0$.
Choose $u,v \in \cald(K_{\nu}) \ast_s H$ such that $\widehat{j_s}(uv^{-1}) = z$,
where the isomorphism $\widehat{j_s}$ has been introduced in~\eqref{widehat(j_s)}.
Then we define the image of the class $[z]$
in $\cald(G)^{\times}_{\abel}$ represented by $z$ under $\IP'_{\nu}$ to be $[P(u)]- [P(v)]$.

We have to show that this is independent of the choices of $s$,$u$ and $v$. 
Suppose that we have another set-theoretic section $s' \colon H \to G$ of $\nu$ and 
$u',v' \in \cald(K_{\nu}) \ast_{s'} H$ with $u',v' \not= 0$ and $z = \widehat{j_{s'}}(u'v'^{-1})$.
For any $h\in H$ we have $s'(h)=s(h)\cdot k$ for some $k\in K_\nu$. It follows that 
$\im(j_s)=\im(j_{s'})$. In particular there exist unique $u'',v'' \in \cald(K_{\nu}) \ast_{s'} H$ 
with $j_s(u) = j_{s'}(u'')$ and $j_s(v) = j_{s'}(v'')$.
From $\widehat{j_{s}}(uv^{-1}) = z = \widehat{j_{s'}}(u'v'^{-1})$
we conclude $u'v'^{-1} = u''v''^{-1}$ in $T^{-1} \cald(K_{\nu}) \ast_{s'} H$. Hence there exist
$w',w'' \in \cald(K_{\nu}) \ast_{s'} H$ with $w',w'' \not= 0$, $u' w' = u'' w''$ and $v'w' = v'' w''$.
We conclude 
\begin{eqnarray*}
P(u) - P(v)
& \stackrel{\eqref{P(u;s)_independent_of_s}}{=}& 
P(u'') - P(v'')
\\
& = & 
P(u'') + P(w'') - P(w') - P(v'') - P(w'') + P(w') 
\\
& \stackrel{\textup{Lemma}~\ref{lem:P(uv)_is_P(u)_plus_P(v)}}{=} & 
P(u''w'') - P(w') - P(v''w'') + P(w') 
\\
& = & 
P(u'w') - P(w') - P(v'w') + P(w') 
\\
& \stackrel{\textup{Lemma}~\ref{lem:P(uv)_is_P(u)_plus_P(v)}}{=} & 
P(u') + P(w') - P(w') - P(v') - P(w') + P(w') 
\\
& = & 
P(u') - P(v'). 
\end{eqnarray*}
Hence $\IP'_{\nu} \colon \cald(G)^{\times} \to \calp_{\IZ}(H)$ is well-defined. 
We conclude from Lemma~\ref{lem:P(uv)_is_P(u)_plus_P(v)} that $\IP'_{\nu}$
 is a homomorphism of abelian groups.


\subsection{Semi-norms and matrices over $\cald(K_{\phi \circ \nu})_{t}[u^{\pm 1}$]}
\label{subsec:Semi-norms_and_matrices_over_cald(K_phi_circ_nu)_t(u,u(-1))}

Let $P \subseteq \IR \otimes_{\IZ} H$ be a polytope. It defines a seminorm on 
$\Hom_{\IZ}(H,\IR) = \Hom_{\IR}(\IR \otimes_{\IZ} H,\IR)$ by
\begin{eqnarray}
 \|\phi \|_P 
&:= & \tmfrac{1}{2}
\sup\{\phi(p_0) - \phi(p_1) \mid p_0, p_1 \in P\}.
\label{seminorm_of_a_polytope}
\end{eqnarray}
It is compatible with the Minkowski sum, namely, for two
integral polytopes $P,Q \subseteq \IR \otimes_{\IZ} H$ we have
\begin{eqnarray}
 \|\phi \|_{P+ Q} 
&= & 
 \|\phi \|_{P} + \|\phi \|_{Q}.
\label{seminorm_of_a_polytope_is_additive_in_polytope}
\end{eqnarray}
Put 
\begin{multline}
\calsn(H) 
 := 
\{f \colon \Hom_{\IZ}(H;\IR) \to \IR \mid \text{there exists integral polytopes}
\\
\; P \; \text{and} \; Q\; \text{in}\; \IR \otimes_{\IZ} H \;\text{with}\; f = \|\; \|_P - \|\; \|_Q\}.
\label{calsn(H)}
\end{multline}
This becomes an abelian group by $(f-g)(\phi) = f(\phi) - g(\phi)$ because 
of~\eqref{seminorm_of_a_polytope_is_additive_in_polytope}. Again because 
of~\eqref{seminorm_of_a_polytope_is_additive_in_polytope}
we obtain an epimorphism of abelian groups
\begin{equation}
\sn \colon \calp_{\IZ}(H) \to \calsn(H) 
\label{norm_homomorphism}
\end{equation}
by sending $[P] - [Q]$ for
two polytopes $P,Q \subseteq \IR \otimes_{\IZ} H$ to the function
\[
\Hom_{\IZ}(H,\IR) \to \IR, \quad \phi \mapsto \|\phi \|_P - \|\phi \|_Q.
\]

Consider any finitely generated abelian group $H$ and group homomorphisms $\nu \colon G \to H$ and
$\phi \colon H \to \IZ$ such that $\phi$ is surjective. Define a homomorphism
\begin{equation}
D_{\nu,\phi} \colon \cald(G)^{\times}_{\abel} \xrightarrow{\IP'_{\nu} }\calp_{\IZ}(H)
\xrightarrow{\sn} \calsn(H) \xrightarrow{\ev_{\phi}} \IR
\label{D_(nu,phi)}
\end{equation}
to be the composite of the homomorphism defined in~\eqref{P_prime_colon_cald_to_p_Z}
and~\eqref{norm_homomorphism} and the evaluation homomorphism $\ev_{\phi}$.

\begin{lemma}\label{lem:D_(mu,psi_circ_omega_is_D_(omega_circ_nu,psi)}
Consider finitely generated free abelian groups $H$ and $H'$ and surjective group homomorphisms
$\nu \colon G \to H$, $\omega \colon H \to H'$ and an epimorphism $\psi \colon H' \to \IZ$. 
Then we get the following equality of homomorphisms $\cald(G)^{\times}_{\abel} \to \IR$
\[
D_{\nu,\psi \circ \omega} 
= 
D_{\omega \circ \nu,\psi}.
\]
\end{lemma}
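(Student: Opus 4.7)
The plan is to show that both sides agree on an arbitrary class $[z]$ with $z \in \cald(G)^{\times}$. Unwinding the definition~\eqref{D_(nu,phi)}, the left-hand side equals $\|\psi \circ \omega\|_{P(u)} - \|\psi \circ \omega\|_{P(v)}$ for any choice of set-theoretic section $s \colon H \to G$ of $\nu$ and any representatives $u,v \in \cald(K_{\nu}) \ast_s H$ with $z = \widehat{j_s}(uv^{-1})$; the right-hand side equals $\|\psi\|_{P(u')} - \|\psi\|_{P(v')}$ for any choice of section $s' \colon H' \to G$ of $\omega \circ \nu$ and representatives $u',v' \in \cald(K_{\omega \circ \nu}) \ast_{s'} H'$ with $z = \widehat{j_{s'}}(u' v'^{-1})$. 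So the task is to find matching representatives on both sides and relate their Newton polytopes.

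First I would fix compatible sections: pick $s \colon H \to G$ of $\nu$, pick $t \colon H' \to H$ of $\omega$, and set $s' := s \circ t$, which is automatically a section of $\omega \circ \nu$. Given $u = \sum_{h \in H} x_h \cdot h \in \cald(K_{\nu}) \ast_s H$, I would explicitly rewrite $j_s(u)$ as $j_{s'}$ of an element of $\cald(K_{\omega \circ \nu}) \ast_{s'} H'$. Group the expansion by the fibers of $\omega$ and factor $s(h) = \bigl(s(h) s'(h')^{-1}\bigr) \cdot s'(h')$ for $h \in \omega^{-1}(h')$. Since $\nu\bigl(s(h) s'(h')^{-1}\bigr) = h - t(h') \in \ker \omega$, the element $s(h) s'(h')^{-1}$ lies in $K_{\omega \circ \nu}$, so
\[
y_{h'} := \sum_{h \in \omega^{-1}(h')} x_h \cdot \bigl(s(h) s'(h')^{-1}\bigr) \in \cald(K_{\omega \circ \nu})
\]
and $u' := \sum_{h' \in H'} y_{h'} \cdot h'$ satisfies $j_{s'}(u') = j_s(u)$. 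Applying the same recipe to $v$ produces $v'$ with $z = \widehat{j_{s'}}(u' v'^{-1})$.

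The heart of the proof is then the polytope identity $P(u') = (\IR \otimes \omega)\bigl(P(u)\bigr)$, and similarly for $(v,v')$. For this it suffices to show that $y_{h'} \neq 0$ if and only if some $x_h$ with $\omega(h) = h'$ is non-zero. The ``only if'' direction is immediate from the formula. For the converse, observe that $\cald(K_{\nu}) \ast_s H$ is free as a right $\cald(K_{\nu})$-module on the basis $\{h : h \in H\}$, and that $j_s$ is injective; hence the equation $\sum_{h \in \omega^{-1}(h')} x_h \cdot s(h) = 0$ in $\cald(G)$ would force every $x_h$ (with $h \in \omega^{-1}(h')$) to vanish. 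Thus $y_{h'} \cdot s'(h') = \sum_{h \in \omega^{-1}(h')} x_h s(h)$ is non-zero in $\cald(G)$ as soon as one $x_h$ is, which gives $y_{h'} \neq 0$. I expect this non-cancellation step to be the main obstacle; it is where the interplay between the two crossed-product presentations of $\cald(G)$ is really used.

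To finish, I would invoke the elementary identity $\|\psi \circ \omega\|_{P} = \|\psi\|_{(\IR \otimes \omega)(P)}$ for any polytope $P \subseteq \IR \otimes H$, a direct consequence of $\sup_{p \in P}(\psi \circ \omega)(p) = \sup_{q \in (\IR \otimes \omega)(P)} \psi(q)$ (and the same for $\inf$). Combining with the polytope identities of the previous paragraph yields
\[
D_{\nu, \psi \circ \omega}([z])
= \|\psi \circ \omega\|_{P(u)} - \|\psi \circ \omega\|_{P(v)}
= \|\psi\|_{P(u')} - \|\psi\|_{P(v')}
= D_{\omega \circ \nu, \psi}([z]),
\]
as required.
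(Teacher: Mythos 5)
Your proof is correct and follows essentially the same route as the paper: both fix compatible set-theoretic sections $s$ of $\nu$ and $s' = s\circ t$ of $\omega\circ\nu$, regroup an element $u=\sum_h x_h\cdot h$ of $\cald(K_\nu)\ast_s H$ over the fibers of $\omega$ to produce $u'\in\cald(K_{\omega\circ\nu})\ast_{s'}H'$ with the same image in $\cald(G)$, observe that $y_{h'}\neq 0$ iff some $x_h$ with $\omega(h)=h'$ is non-zero, and finish with the compatibility of the seminorm with the pushforward of polytopes. The only cosmetic difference is that the paper routes the coefficients through the intermediate crossed product $\cald(K_\nu)\ast_{s|_{K_\omega}}K_\omega\hookrightarrow\cald(K_{\omega\circ\nu})$, whereas you describe $y_{h'}$ directly in $\cald(K_{\omega\circ\nu})$ and derive the non-cancellation from the injectivity of $j_s$ into $\cald(G)$; both are equivalent, and your identity $\widehat{k_{s|_{K_\omega}}}(u_{h'})=y_{h'}$ can be checked by the change of variables $h''=h\cdot t(h')$.
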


\begin{proof}
Choose a set-theoretic section $s \colon H \to G$ of $\nu$ and
a set-theoretic section $t \colon H' \to H$ of $\omega$. Then $s \circ t \colon H' \to G$
is a set-theoretic section of $\omega \circ \nu \colon G \to H'$. Let $K_{\nu} \subseteq G$ 
be the kernel of $\nu$, $K_{\omega \circ \nu} \subseteq G$ be the kernel of 
$\omega \circ \nu$ and $K_{\omega} \subseteq H$ be the kernel of $\omega$. Let 
$k \colon K_{\nu} \to K_{\omega \circ \nu}$ be the inclusion. We obtain an exact sequence 
$0 \to K_{\nu} \xrightarrow{k} K_{\omega \circ \nu} \xrightarrow{\nu|_{K_{\omega \circ \nu}}}
K_{\omega}\to 0$ of groups such that $K_{\omega}$ is finitely generated free abelian. The
section $s$ induces a section $s|_{K_{\omega}} \colon K_{\omega} \to K_{\omega \circ \nu}$ 
of $\nu|_{K_{\omega \circ \nu}} \colon K_{\omega \circ \nu} \to K_{\omega}$. We also have the exact sequence 
$0 \to K_{\omega \circ \nu} \xrightarrow{l} G \xrightarrow{\omega \circ \nu} H' \to 0$ 
for $l$ the inclusion and the set-theoretic section $s \circ t$ of $\omega \circ \nu$. 
Thus we get isomorphisms of skew fields
\begin{eqnarray*}
\widehat{j_s} \colon T^{-1} \cald(K_{\nu}) \ast_s H
& \xrightarrow{\cong} & 
\cald(G);
\\
\widehat{k_{s|_{K_{\omega}}}} \colon T^{-1} \cald(K_{\nu}) \ast_{s|_{K_{\omega}}} K_{\omega}
& \xrightarrow{\cong} & 
\cald(K_{\omega \circ \nu});
\\
\widehat{l_{s\circ t}} \colon T^{-1} \cald(K_{\omega \circ \nu}) \ast_{s \circ t} H'
& \xrightarrow{\cong} & 
\cald(G),
\end{eqnarray*}
where $T^{-1}$ always indicates the Ore localization with respect to the non-trivial elements.
Consider $u = \sum_{h \in H} x_h \cdot h$ in $\cald(K_{\nu}) \ast_s H$. 
For $h' \in H'$ define an element in $\cald(K_{\nu}) \ast_{s|_{K_{\omega}}} K_{\omega}$ by
\[
u_{h'} = \sum_{h \in K_{\omega}} \bigl(x_{h \cdot t(h')} \cdot s(h \cdot t(h') ) \cdot s \circ t(h')^{-1} \cdot s(h)^{-1}\bigr) \cdot h.
\]
It is well-defined since $s(h \cdot t(h') ) \cdot s \circ t(h')^{-1} \cdot s(h)^{-1} \in K_{\nu}$ holds.
Define an element in $\cald(K_{\omega \circ \nu}) \ast_{s \circ t} H'$ by
\[
v = \sum_{h' \in H'} \widehat{k_{s|_{\ker_{\omega}}}} (u_{h'}) \cdot h'.
\]
Then we compute in $\cald(G)$
\begin{eqnarray*}
\widehat{j_s}(u) 
& = & 
\lmsum{h \in H}{} x_h \cdot s(h)
\,\, = \,\, 
\lmsum{h' \in H'}{} \lmsum{h \in \omega^{-1}(h')}{} x_h \cdot s(h)
\\
& = & 
\lmsum{h' \in H'}{} \bigg(\,\,\lmsum{h \in \omega^{-1}(h')}{} x_h \cdot s(h) \cdot s \circ t(h')^{-1} \bigg) \cdot s \circ t(h')
\\
& = & 
\lmsum{h' \in H'}{} \bigg(\,\lmsum{h \in K_{\omega}}{} x_{h \cdot t(h')} \cdot s(h \cdot t(h') ) \cdot s \circ t(h')^{-1} \bigg) \cdot s \circ t(h')
\\
& = & 
\lmsum{h' \in H'}{} \bigg(\,\lmsum{h \in K_{\omega}}{} \bigl(x_{h \cdot t(h')} 
\cdot s(h \cdot t(h') ) \cdot s \circ t(h')^{-1} \cdot s(h)^{-1}\bigr) \cdot s(h) \bigg) \cdot s \circ t(h')
\\
& = & 
\lmsum{h' \in H'}{} \widehat{k_{s|_{\ker(\omega)}}}(u_{h'}) \cdot s \circ t(h')
\,\, = \,\,
\widehat{l_t}\bigg(\,\lmsum{h' \in H'}{} \widehat{k_{s|_{\ker(\omega)}}}(u_{h'}) \cdot h'\bigg)
\,\, = \,\,
\widehat{l_t}(v).
\end{eqnarray*}
Obviously we get for $h' \in H'$
\[u_{h'} \not= 0 \Leftrightarrow \exists h \in \omega^{-1}(h') \; \text{with}\; x_h \not= 0.
\]
This implies
\begin{multline*}
\sup\{\psi(h') - \psi(k') \mid h',k' \in H', u_{h'} \not= 0, u_{k'} \not = 0\}
\\
=
\sup\{\psi \circ \omega(h) - \psi \circ \omega (k) \mid h,k \in H, x_{h} \not= 0, x_{k} \not = 0\}.
\end{multline*}
Hence we get for the element $z \in \cald(G)$ given by $z = \widehat{j_s}(u) = \widehat{l_t}(v)$
\[
D_{\omega \circ \nu,\psi}(z) = D_{\nu,\psi \circ \omega}(z).
\]
Since $D_{\omega \circ \nu,\psi}$ and $D_{\nu,\psi \circ \omega}$ are homomorphisms, 
Lemma~\ref{lem:D_(mu,psi_circ_omega_is_D_(omega_circ_nu,psi)}
follows.
\end{proof}

We recall our setting. We have an abelian group $H$, a group homomorphism $\nu \colon G\to H$ and an epimorphism $\phi\colon H\to \IZ$. 
Recall from Theorem~\ref{the:Main_properties_of_cald(G)}~\eqref{the:Main_properties_of_cald(G):cald(K)_and_(cald(G)} 
that $\cald(G)$ is the Ore localization with respect to the set of non-zero
elements of the ring $\cald(K_{\phi \circ \nu})_{t}[u^{\pm 1}]$ of twisted Laurent polynomials in the variable $u$
with coefficients in the skew-field $\cald(K_{\phi \circ \nu})$. Hence $\cald(K_{\phi \circ
  \nu})_{t}[u^{\pm 1}]$ is contained in $\cald(G)$ and we can consider for any $x \in
\cald(K_{\phi \circ \nu})_{t}[u^{\pm 1}]$ with $x \not= 0$ its image $D_{\nu,\phi}([x])
\in \IR$ under the homomorphism $D_{\nu,\phi}$ defined in~\eqref{D_(nu,phi)}.

\begin{lemma}\label{lem:image_of_an_element_x_under_D_nu,phi}
Consider an element $x \in \cald(K_{\phi \circ \nu})_{t}[u^{\pm 1}]$ with $x \not = 0$.

Then 
\[
D_{\nu,\phi}([x])\, =\,\tmfrac{1}{2} \deg(x),
\]
where $\deg(x)$ has been defined to be $k_+ -k_-$ if we write
$x = \sum_{k = k_-}^{k_+} z_n \cdot u^k$ with $z_{k_+},z_{k_-} \not= 0$.
\end{lemma}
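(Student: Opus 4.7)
The plan is to reduce to the case $H=\IZ$ via Lemma~\ref{lem:D_(mu,psi_circ_omega_is_D_(omega_circ_nu,psi)}. Applying that lemma with $\omega := \phi$ and $\psi := \id_{\IZ}$ yields the equality of homomorphisms $D_{\nu,\phi} = D_{\phi\circ\nu,\id_{\IZ}} \colon \cald(G)^{\times}_{\abel} \to \IR$, so that it suffices to compute $D_{\phi\circ\nu,\id_{\IZ}}([x])$.

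In this reduced setting I would fix $\gamma \in G$ with $(\phi\circ\nu)(\gamma) = 1$---namely, the element used in Theorem~\ref{the:Main_properties_of_cald(G)}~\eqref{the:Main_properties_of_cald(G):cald(K)_and_(cald(G)} to define the automorphism $t$---and take as set-theoretic section of $\phi\circ\nu$ the group homomorphism $s \colon \IZ \to G$, $s(k) := \gamma^k$. Because $s$ is a genuine homomorphism, the cocycle $\tau$ from Subsection~\ref{subsec:The_non-commutative_Newton_polytope} is trivial, the conjugation map satisfies $c(1)=t$, and the crossed product $\cald(K_{\phi\circ\nu}) \ast_s \IZ$ is canonically identified as a ring with $\cald(K_{\phi\circ\nu})_{t}[u^{\pm 1}]$ by the rule $u^k \leftrightarrow k$. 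Under this identification, $\widehat{j_s}$ of~\eqref{widehat(j_s)} restricts on the crossed product to the natural inclusion $\cald(K_{\phi\circ\nu})_{t}[u^{\pm 1}] \hookrightarrow \cald(G)$, so $x = \sum_{k=k_-}^{k_+} z_k u^k$ already sits inside $\cald(K_{\phi\circ\nu}) \ast_s \IZ$, and one may compute $\IP'_{\phi\circ\nu}([x])$ by choosing denominator $v=1$ in the definition of Subsection~\ref{subsec:The_polytope_homomorphism}.

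This gives $\IP'_{\phi\circ\nu}([x]) = [P(x)] - [\{0\}]$, where the non-commutative Newton polytope $P(x) \subseteq \IR \otimes_{\IZ}\IZ = \IR$ is by definition the convex hull of $\{1 \otimes k \mid z_k \ne 0\}$; the hypotheses $z_{k_-},z_{k_+}\ne 0$ force this convex hull to be precisely the interval $[k_-,k_+]$. Identifying $\Hom_{\IZ}(\IZ,\IR)$ with $\IR$ so that $\id_{\IZ}$ corresponds to $1 \in \IR$, formula~\eqref{seminorm_of_a_polytope} yields $\|\id_{\IZ}\|_{P(x)} = \tmfrac{1}{2}(k_+ - k_-) = \tmfrac{1}{2}\deg(x)$ and $\|\id_{\IZ}\|_{\{0\}} = 0$. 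Combining these with $D_{\nu,\phi} = D_{\phi\circ\nu,\id_{\IZ}}$ produces $D_{\nu,\phi}([x]) = \tmfrac{1}{2}\deg(x)$, as asserted.

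The computation is essentially formal once Lemma~\ref{lem:D_(mu,psi_circ_omega_is_D_(omega_circ_nu,psi)} is available, so I do not anticipate a real obstacle; the only point demanding care is verifying that the automorphism coming from the identification $\cald(K_{\phi\circ\nu}) \ast_s \IZ \cong \cald(K_{\phi\circ\nu})_{t}[u^{\pm 1}]$ agrees with the automorphism $t$ named in the statement, and this is immediate from the choice of $\gamma$ together with the construction in Theorem~\ref{the:Main_properties_of_cald(G)}~\eqref{the:Main_properties_of_cald(G):cald(K)_and_(cald(G)}.
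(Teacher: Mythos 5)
Your proposal is correct and follows essentially the same route as the paper: both reduce to the case $H=\IZ$ via Lemma~\ref{lem:D_(mu,psi_circ_omega_is_D_(omega_circ_nu,psi)} and then observe that the non-commutative Newton polytope of a Laurent polynomial $\sum_{k=k_-}^{k_+} z_k u^k$ is the interval $[k_-,k_+]$, with the factor $\tfrac{1}{2}$ coming from~\eqref{seminorm_of_a_polytope}. Your write-up merely unpacks the paper's ``one easily checks'' steps—notably the choice of a homomorphic section $s(k)=\gamma^k$ that identifies $\cald(K_{\phi\circ\nu})\ast_s\IZ$ with $\cald(K_{\phi\circ\nu})_t[u^{\pm 1}]$—in more explicit detail.
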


\begin{proof}
Recall that $\cald(K_{\phi \circ \nu})_{t}[u^{\pm 1}]$ does depend on a choice of a preimage of $1$ under $\phi \circ \nu \colon G \to \IZ$
which is the same as a choice of a group homomorphism $\gamma \colon \IZ \to G$ with $\phi \circ \nu \circ \gamma = \id_{\IZ}$.
Choose a set theoretic map $s \colon H \to G$ with $\nu \circ s = \id_H$.
One easily checks that $\cald(K_{\phi \circ \nu})_{t}[u^{\pm 1}]$ agrees with
$\cald(K_{\phi \circ \nu}) \ast_{\gamma} \IZ$. 
We conclude $D_{\nu,\phi}([x]) = D_{\nu \circ \phi,\id_{\IZ}}([x])$ from Lemma~\ref{lem:D_(mu,psi_circ_omega_is_D_(omega_circ_nu,psi)}. 
Now one easily checks $D_{\phi \circ \nu,\id_{\IZ}}([x]) = \frac{1}{2} \cdot \deg(x)$ by inspecting the definitions, since for a polynomial
$\sum_{k = k_-}^{k_+} z_k u^k$ in one variable $u$ with $z_{k_-} ,z_{k_+} \not= 0$ its Newton polytope is the interval
$[k_-,k_+] \subseteq \IR$. (The factor $1/2$ comes from the factor $1/2$ in~\eqref{seminorm_of_a_polytope}.)
\end{proof}

There is a Dieudonn\'e determinant for invertible matrices over a skew field $D$ which takes
values in the abelianization of the group of units $D^{\times}_{\abel}$ 
and induces an isomorphism, see~\cite[Corollary~4.3 in page~133]{Silvester(1981)} 
\begin{eqnarray}
{\det}_D \colon K_1(D) 
& \xrightarrow{\cong} &
D^{\times}_{\abel}
\label{K_1(K)_Dieudonne}
\end{eqnarray}
The inverse 
\begin{eqnarray}
J_D \colon D^{\times}_{\abel} & \xrightarrow{\cong} & K_1(D) 
\label{K_1(K)_Dieudonne_inverse}
\end{eqnarray}
sends the class of a unit in $D$ to the class of the corresponding $(1,1)$-matrix.

\begin{lemma}\label{matrices_overcald(K_phi_circ_nu)_t(u,u(-1))}
Let $A $ be an $n\times n$-matrix over $\cald(K_{\phi \circ \nu})_{t}[u^{\pm 1}]$ 
which becomes invertible over $\cald(G)$.
Then the composite of the homomorphisms defined in~\eqref{D_(nu,phi)} and~\eqref{K_1(K)_Dieudonne} 
\[K_1(\cald(G)) \xrightarrow{\det_{\cald(G)}} \cald(G)^{\times}_{\abel} 
\xrightarrow{D_{\nu,\phi}} \IR
\]
sends the class $[A] \in K_1(\cald(G))$ of $A$ to
\[
\tmfrac{1}{2}\dim_{\cald(K_{\phi \circ \nu})}\bigl(\coker\bigl(r_A \colon \cald(K_{\phi \circ \nu})_{t}[u^{\pm 1}]^n \to \cald(K_{\phi \circ \nu})_{t}[u^{\pm 1}]^n\bigr)\bigr).
\]
\end{lemma}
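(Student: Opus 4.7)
The plan is to reduce the matrix $A$ to a diagonal form via Smith normal form over the non-commutative principal ideal domain $R := \cald(K_{\phi \circ \nu})_{t}[u^{\pm 1}]$, and then combine Lemma~\ref{lem:degree_and_rank} and Lemma~\ref{lem:image_of_an_element_x_under_D_nu,phi} to compute both sides in parallel. Note that by Theorem~\ref{the:Main_properties_of_cald(G)}~\eqref{the:Main_properties_of_cald(G):cald(K)_and_(cald(G)}, $R$ is a non-commutative principal ideal domain whose Ore localization with respect to its set of non-zero elements is $\cald(G)$. Since $A$ is invertible over $\cald(G)$, the flatness of the Ore localization together with the torsion-freeness of $R^n$ implies that $r_A \colon R^n \to R^n$ is already injective over $R$ and that its cokernel $M := \coker(r_A)$ is a finitely generated torsion $R$-module.

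First, I would invoke the structure theorem for matrices over non-commutative principal ideal domains (the same reference used in the proof of Theorem~\ref{the:Main_properties_of_cald(G)}~\eqref{the:Main_properties_of_cald(G):chain}, i.e., Cohn's work) to find products of elementary matrices $E, F \in GL_n(R)$ and non-zero elements $d_1, \ldots, d_n \in R$ such that $EAF$ is a diagonal matrix with diagonal entries $d_1, \ldots, d_n$. Because $r_E$ and $r_F$ are $R$-linear isomorphisms, we obtain an isomorphism of $R$-modules $M \cong \bigoplus_{i=1}^n R/d_i R$. By Lemma~\ref{lem:degree_and_rank}, each summand $R/d_i R$ is a finite-dimensional $\cald(K_{\phi\circ\nu})$-vector space of dimension $\deg(d_i)$, hence
\[
\dim_{\cald(K_{\phi\circ\nu})}(M) \,=\, \sum_{i=1}^n \deg(d_i).
\]

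Second, I would compute the Dieudonné determinant of $[A]$ in $\cald(G)^{\times}_{\abel}$. By Whitehead's lemma, every elementary matrix becomes trivial in $K_1(\cald(G))$, so both $E$ and $F$ have trivial class there. Therefore
\[
{\det}_{\cald(G)}([A]) \,=\, {\det}_{\cald(G)}([EAF]) \,=\, \prod_{i=1}^n [d_i] \quad \text{in}\; \cald(G)^{\times}_{\abel}.
\]
Applying the homomorphism $D_{\nu,\phi}$ from~\eqref{D_(nu,phi)} and invoking Lemma~\ref{lem:image_of_an_element_x_under_D_nu,phi} to each factor gives
\[
D_{\nu,\phi}\bigl({\det}_{\cald(G)}([A])\bigr) \,=\, \sum_{i=1}^n D_{\nu,\phi}([d_i]) \,=\, \sum_{i=1}^n \tfrac{1}{2}\deg(d_i) \,=\, \tfrac{1}{2}\dim_{\cald(K_{\phi\circ\nu})}(M),
\]
which is precisely the claim.

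The main obstacle is the first step. For commutative principal ideal domains Smith normal form is classical, but over a non-commutative PID the elementary divisors $d_i$ are only unique up to a form of similarity, and one has to be careful about which side-operations are being used. For our purposes, though, all ambiguity disappears after passing to $\cald(G)^{\times}_{\abel}$, and the only outputs we need — the sum of the degrees $\deg(d_i)$ and the product of the classes $[d_i]$ in the abelianization — are insensitive to any such ambiguity, so the strategy goes through.
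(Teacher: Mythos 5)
Your overall strategy --- reduce $A$ to diagonal form over $R := \cald(K_{\phi\circ\nu})_{t}[u^{\pm 1}]$ and then compute both sides summand by summand via Lemma~\ref{lem:degree_and_rank} and Lemma~\ref{lem:image_of_an_element_x_under_D_nu,phi} --- is exactly the approach the paper takes, implemented via the Euclidean algorithm for the degree function on the twisted polynomial ring. However, there is a genuine gap in your first step: you assert that the diagonalizing matrices $E$ and $F$ can be taken to be \emph{products of elementary matrices}, and then conclude via Whitehead's lemma that they have trivial class in $K_1(\cald(G))$. This is not what the structure theorem over a (non-commutative) PID provides. In general $GL_n(R)$ is strictly larger than the subgroup generated by elementary transvections: the Euclidean reduction of $A$ to diagonal form also uses row/column permutations and, more to the point, right-multiplication of rows (resp.\ left-multiplication of columns) by units of $R$, which are monomials $y u^m$ with $y \in \cald(K_{\phi\circ\nu})^\times$ and $m \in \IZ$. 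Such operations contribute non-trivially to the Dieudonn\'e determinant, so $\det_{\cald(G)}([E])$ and $\det_{\cald(G)}([F])$ need not be trivial and Whitehead's lemma alone does not yield what you claim.

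The gap is fixable, and the fix is the observation the paper's proof makes explicit. Every unit $v$ of $R$ has $\deg(v) = 0$ (write $vw = 1$ and use $\deg(vw) = \deg(v) + \deg(w)$ together with non-negativity of the degree), hence $v = y u^m$, and by Lemma~\ref{lem:image_of_an_element_x_under_D_nu,phi} we get $D_{\nu,\phi}([y u^m]) = \tmfrac{1}{2}\deg(y u^m) = 0$. Consequently, although $\det_{\cald(G)}([E])$ and $\det_{\cald(G)}([F])$ are in general non-trivial in $\cald(G)^{\times}_{\abel}$, they are products of classes of degree-zero units (and signs), so they lie in the kernel of $D_{\nu,\phi}$ --- which is all you actually need. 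Your identification $\coker(r_A) \cong \bigoplus_{i} R/d_i R$ and the computation $\dim_{\cald(K_{\phi\circ\nu})}(\coker(r_A)) = \sum_i \deg(d_i)$ are fine; once the preceding observation is inserted in place of the elementary-matrix claim, your argument matches the paper's and closes correctly.
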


\begin{proof}
The twisted polynomial ring $\cald(K_{\phi \circ \nu})_{t}[u]$ has a Euclidean function given by
the degree and hence there is a Euclidean algorithm with respect to it. This algorithm
allows to transform $A$ to a diagonal matrix over $\cald(K_{\phi \circ \nu})_{t}[u^{\pm 1}]$ 
by the following operations
\begin{enumerate}
\item 
Permute rows or columns;

\item 
Multiply a row on the right or a column on the left
with an element of the shape $y u^m$ for some $y \in \cald(K_{\phi \circ \nu})$ with $y \not = 0$ and $m \in \IZ$;

\item 
Add a right $\cald(K_{\phi \circ \nu})_{t}[u^{\pm 1}]$-multiple of a row to another row and analogously for columns;
\end{enumerate}
These operations change the class $[A]$ of $A$ in $K_1(\cald(G))$ by adding an element of the shape $J_{\cald(G)}([yu^m])$ for 
$y \in \cald(K_{\phi \circ \nu})$ with $y \not = 0$ and $m \in \IZ$ for the homomorphism 
$J_{\cald(G)}$ of~\eqref{K_1(K)_Dieudonne_inverse}. Moreover, they do not change
neither  the 
kernel nor the cokernel of $r_A$ since $yu^m$ is unit in $\cald(K_{\phi \circ \nu})_{t}[u^{\pm 1}]$.
Since $D_{\nu,\phi}( [yu^m]) = 0$ follows from Lemma~\ref{lem:image_of_an_element_x_under_D_nu,phi}, 
it suffices to treat the special case, 
 where $A$ is a diagonal matrix over $\cald(K_{\phi \circ \nu})_{t}[u^{\pm 1}]$
with non-zero entries $d_1,\dots,d_n$ on the diagonal.

Let $x$ be the product of the diagonal entries $d_1,\dots,d_n$ of the diagonal matrix $A$. 
We get in $\cald(G)^{\times}_{\abel}$
\[
{\det}_{\cald(G)^{\times}}([A]) = [x].
\]
Next recall that  for $\cald(G)$-maps 
$f_1 \colon M_0 \to M_1 $ and $f_2 \colon M_1 \to M_2$ we have 
 the obvious exact sequence
\[
0 \to \ker(f_1) \to \ker(f_2 \circ f_1) \to \ker(f_2) \to \coker(f_1) \to \coker(f_2 \circ f_1) \to \coker(f_2) \to 0.
\]
We iteratively apply this to $f_1=\operatorname{diag}(d_1,\dots,d_i,1)$ and $f_2=\operatorname{diag}(1,\dots,d_{i+1})$ to conclude
\[
\dim_{\cald(K_{\phi \circ \nu})}(\coker(r_A)) = \dim_{\cald(K_{\phi \circ \nu})}\bigl(\coker(r_x \colon \cald(G) \to \cald(G))\bigr).
\]
We conclude from Lemma~\ref{lem:degree_and_rank} and Lemma~\ref{lem:image_of_an_element_x_under_D_nu,phi}. 
\[
\tmfrac{1}{2}\dim_{\cald(K_{\phi \circ \nu})}(\coker(r_x)) \,\,=\,\,\tmfrac{1}{2} \deg(x)\,\, =\,\, D_{\nu,\phi}([x]) \,\,=\,\, D_{\nu,\phi} \circ {\det}_{\cald(G)}([A]).
\]
This finishes the proof of Lemma~\ref{matrices_overcald(K_phi_circ_nu)_t(u,u(-1))}.
\end{proof}

\begin{lemma}\label{continuity_of(mu,phi)-L2-Euler_in_phi}
 Let $M$ be an admissible $3$-manifold. Let $G$ be a torsion-free group
 which satisfies the Atiyah Conjecture. Consider any
 factorization $\pi \xrightarrow{\mu} G \xrightarrow{\nu} H_1(M)_f$ of the canonical
 projection $\pi \to H_1(M)_f$. Assume that $b_1^{(2)}(\overline{M};\caln(G)) = 0$ holds for the $G$-covering 
 $\overline{M} \to M$ associated to $\mu$.

 Then there exist two seminorms $s_1$ and $s_2$ on $H^1(M;\IR)$
 such that we get for every $\phi \in H^1(M;\IZ) = \Hom_{\IZ}(H_1(M)_f;\IZ)$ 
 \[
 \chi^{(2)}(M;\mu,\phi \circ \nu) \,\, =\,\, s_1(\phi) - s_2(\phi).
 \]
\end{lemma}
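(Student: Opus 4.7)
The plan is to express $\chi^{(2)}(M;\mu,\phi\circ\nu)$ explicitly via a Fox-calculus presentation and then recognise each summand of the resulting formula as the evaluation at $\phi$ of a seminorm on $H^1(M;\IR)$ that is built solely from $(\mu,\nu)$ and the presentation. Since $b_1^{(2)}(\overline{M};\caln(G))=0$, Lemma~\ref{lem:b_1(2)_is_zero_implies_L2-acyclic}~\eqref{lem:b_1(2)_is_zero_implies_L2-acyclic:b_1} forces $b_n^{(2)}(\overline{M};\caln(G))=0$ for every $n\ge 0$, so $(\mu,\phi\circ\nu)$ is an $L^2$-acyclic Atiyah-pair and $\chi^{(2)}(M;\mu,\phi\circ\nu)\in\IZ$ is well defined for every $\phi$ by Theorem~\ref{the:Atiyah_and_(mu,phi)-L2-Euler_characteristic}. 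I first reduce to the case that $\phi\circ\nu$ is surjective: if $\phi\circ\nu=0$ the Euler characteristic vanishes by Lemma~\ref{lem:reduction_to_surjective_mu}~\eqref{lem:reduction_to_surjective_mu:mu_circ_phi_is_trivial}, and if $\phi\circ\nu$ has image $k\IZ$ with $k\ge 1$ then Theorem~\ref{the:Basic_properties_of_the_phi_L2-Euler_characteristic}~\eqref{the:Basic_properties_of_the_phi_L2-Euler_characteristic:scaling_phi} combined with the positive homogeneity of any seminorm lets me replace $\phi$ by $\phi/k$; the case $H_1(M)_f=0$ is vacuous.

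Next I fix, once and for all, a presentation $\pi=\langle x_1,\dots,x_a\mid R_1,\dots,R_b\rangle$ of $\pi$ as in Theorem~\ref{the:calculation_of_(mu,phi)_L2-Euler_characteristic_from_a_presentation} (from a Heegaard decomposition if $\partial M=\emptyset$), together with indices $i$ and, in the closed case, $j$ for which $|\mu(x_i)|=|\mu(x_j')|=\infty$. Such indices exist because $\nu\circ\mu=\pr_M\colon\pi\to H_1(M)_f$ is surjective onto a non-trivial torsion-free abelian group and $G$ is torsion free, so some generator must have infinite image in $G$. Let $A\in M_{a-1,a-1}(\IZ G)$ be the associated Fox minor and write $h_i:=\nu(\mu(x_i))$, $h_j':=\nu(\mu(x_j'))$. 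Theorem~\ref{the:calculation_of_(mu,phi)_L2-Euler_characteristic_from_a_presentation} then produces, for surjective $\phi\circ\nu$,
\[
\chi^{(2)}(M;\mu,\phi\circ\nu) \,=\, -\dim_{\cald(K_{\phi\circ\nu})}\coker(r_A) \,+\, |\phi(h_i)| \,+\, |\phi(h_j')|,
\]
the last term being absent when $\partial M\ne\emptyset$. The hypothesis $b_n^{(2)}=0$ forces $r_A$ injective over $\caln(G)$ and hence over the skew field $\cald(G)$, so the square matrix $A$ is invertible over $\cald(G)$ and has a Dieudonn\'e determinant $\det_{\cald(G)}[A]\in\cald(G)^{\times}_{\abel}$. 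Lemma~\ref{matrices_overcald(K_phi_circ_nu)_t(u,u(-1))} then gives
\[
\dim_{\cald(K_{\phi\circ\nu})}\coker(r_A) \,=\, 2\,D_{\nu,\phi}\bigl(\det_{\cald(G)}[A]\bigr),
\]
and by the definition of $D_{\nu,\phi}$ in~\eqref{D_(nu,phi)} the virtual polytope $\IP'_\nu\bigl(\det_{\cald(G)}[A]\bigr)=[P_A]-[Q_A]\in\calp_{\IZ}(H_1(M)_f)$ is intrinsic to $(\mu,\nu,A)$, so that $D_{\nu,\phi}\bigl(\det_{\cald(G)}[A]\bigr)=\|\phi\|_{P_A}-\|\phi\|_{Q_A}$.

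Assembling these identities I obtain, in the closed case,
\[
\chi^{(2)}(M;\mu,\phi\circ\nu)\,=\,-2\|\phi\|_{P_A} + \bigl(2\|\phi\|_{Q_A} + |\phi(h_i)| + |\phi(h_j')|\bigr),
\]
and I define $s_2(\phi):=2\|\phi\|_{P_A}$ and $s_1(\phi):=2\|\phi\|_{Q_A}+|\phi(h_i)|+|\phi(h_j')|$, with the evident variant (one fewer summand) in the boundary case. Each polytope seminorm $\|\cdot\|_P$ is a seminorm on $H^1(M;\IR)=\Hom_{\IZ}(H_1(M)_f,\IR)$ by~\eqref{seminorm_of_a_polytope}, each $|\phi(h)|$ is the linear pull-back of the absolute value and hence a seminorm, and finite non-negative combinations of seminorms are seminorms; the scaling reduction above extends the identity from surjective $\phi\circ\nu$ to all $\phi\in H^1(M;\IZ)$. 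The one point that needs a moment of care, rather than anything analytic, is that the virtual polytope $\IP'_\nu\bigl(\det_{\cald(G)}[A]\bigr)$ is genuinely independent of $\phi$ and of the auxiliary set-theoretic section used to build $\IP'_\nu$; this is exactly the content of the well-definedness argument of Section~\ref{subsec:The_polytope_homomorphism} and of Lemma~\ref{matrices_overcald(K_phi_circ_nu)_t(u,u(-1))}, so the remaining work is only the bookkeeping of stitching these inputs together.
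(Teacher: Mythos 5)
Your argument is essentially the paper's proof: both reduce to surjective $\phi$, invoke the Fox-calculus formula of Theorem~\ref{the:calculation_of_(mu,phi)_L2-Euler_characteristic_from_a_presentation}, push $\det_{\cald(G)}[A]$ through the polytope homomorphism $\IP'_\nu$ and the seminorm map $\sn$ via Lemma~\ref{matrices_overcald(K_phi_circ_nu)_t(u,u(-1))}, and then extend from surjective to arbitrary $\phi$ by scaling. The only place you are slightly more explicit than the paper is in absorbing the residual terms $|\phi(h_i)|$ (and $|\phi(h_j')|$ in the closed case) into $s_1$, noting they are pull-backs of the absolute value and hence seminorms on $H^1(M;\IR)$; the paper stops at an equation of the form $\chi^{(2)}(M;\mu,\phi)=s_1(\phi)+|\phi\circ\nu\circ\mu(x_i)|-s_2(\phi)$ and leaves that final absorption implicit. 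This is a bookkeeping refinement, not a different route.
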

\begin{proof}
 We treat only the case, where $\partial M$ is non-empty, the case of empty $\partial M$ is completely analogous.
 Let $x_1, x_2, \ldots , x_a$ be the element in $G$ and $A$ be the $(a-1,a-1)$-matrix over $\IZ G$ appearing
 Theorem~\ref{the:calculation_of_(mu,phi)_L2-Euler_characteristic_from_a_presentation}~%
\eqref{the:calculation_of_(mu,phi)_L2-Euler_characteristic_from_a_presentation:non-empty}. 
 (Notice that they are independent of $\phi$.) We conclude from
 Theorem~\ref{the:calculation_of_(mu,phi)_L2-Euler_characteristic_from_a_presentation}~%
\eqref{the:calculation_of_(mu,phi)_L2-Euler_characteristic_from_a_presentation:non-empty}
that for any surjective group homomorphism $\phi \colon G \to \IZ$ we have
 \begin{multline*}
 \chi^{(2)}(M;\mu,\phi\circ \nu) = |\phi \circ \nu \circ \mu(x_i)|
 \\
 - \dim_{\cald(K_{\phi \circ \nu})}\bigl(\coker\bigl(r_A \colon \cald(K_{\phi \circ \nu})_{t}[u^{\pm 1}]^n 
 \to \cald(K_{\phi \circ \nu})_{t}[u^{\pm 1}]^n\bigr)\bigr).
\end{multline*}
 Choose two seminorms $s_1$ and $s_2$ such that the image of the class $2\cdot [A]$ in $K_1(\caln(G))$  under the composite
 \[
 K_1(\cald(G)) \xrightarrow{\det_{\cald{G}}} \cald(G)^{\times}_{\abel} 
 \xrightarrow{\IP'_{\nu}} \calp_{\IZ}(H_1(M)_f) \xrightarrow{\sn} \calsn(H_1(M)_f)
 \]
 is $s_1 - s_2$. We get from the definitions that
 for any surjective group homomorphism $\phi \colon G \to \IZ$ 
  the image of $2\cdot [A]$ under the composite
 $K_1(\cald(G)) \xrightarrow{\det_{\cald{G}}} \cald(G)^{\times}_{\abel} \xrightarrow{D_{\nu,\phi}} \IR$ equals $s_1(\phi) - s_2(\phi)$.
 We conclude from Lemma~\ref{matrices_overcald(K_phi_circ_nu)_t(u,u(-1))}
 \[
 \chi^{(2)}(M;\mu,\phi\circ \nu) = s_1(\phi) + |\phi \circ \nu \circ \mu(s_i)| - s_2(\phi),
 \]
 provided that $\phi$ is surjective. 
 We conclude from Lemma~\ref{lem:reduction_to_surjective_mu} that the last equation holds for
 every group homomorphism $\phi \colon G \to \IZ$.
\end{proof}


\subsection{The quasi-fibered case}
\label{subsec:The_quasi-fibered_case}

\begin{definition}[Fibered and quasi-fibered]
\label{def:fibered_and_quasi_fibered}
 Let $M$ be a $3$-manifold and consider an element $\phi\in H^1(M;\IQ)$. We say that
 $\phi$ is \emph{fibered} if there exists a locally trivial fiber bundle $p\colon M\to S^1$ 
 and $k \in \IQ$, $k > 0$ such that the induced map
 $p_*\colon \pi_1(M)\to \pi_1(S^1)=\IZ $ coincides with $k \cdot \phi$. We call an
 element $\phi \in H^1(M;\IR)$ \emph{quasi-fibered}, if there exists a sequence of
 fibered elements $\phi_n \in H^1(M;\IQ)$ converging to $\phi$ in $H^1(M;\IR)$.
\end{definition}

\begin{theorem}[Equality of $(\mu,\phi)$-$L^2$-Euler characteristic and the
 Thurston norm in the quasi-fibered case]
\label{the:Equality_of_(mu,phi)-L2-Euler_characteristic_and_the_Thurston_norm_in_the_quasi-fibered_case}
 Let $M$ be an admissible $3$-manifold, which is not homeomorphic to 
 $S^1 \times D^2$. Let $G$ be a torsion-free group which satisfies the
 Atiyah Conjecture. Consider any factorization 
 $\pr_M \colon \pi_1(M) \xrightarrow{\mu} G \xrightarrow{\nu} H_1(M)_f$ of the canonical projection 
 $\pr_M$. Let $\phi \colon H_1(M)_f \to \IZ$ be a quasi-fibered homomorphism. 

 Then $(\mu,\phi \circ \nu)$ is an $L^2$-acyclic Atiyah pair and we get
 \[
 -\chi^{(2)}(M;\mu,\phi \circ \nu)\,\, =\,\, x_M(\phi).
 \]
\end{theorem}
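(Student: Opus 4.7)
The plan is to bootstrap from the case where the cocycle is represented by a genuine fibration, and then to extend the identity $-\chi^{(2)}(M;\mu,\phi\circ\nu)=x_M(\phi)$ to all quasi-fibered classes by exploiting the seminorm representation supplied by Lemma~\ref{continuity_of(mu,phi)-L2-Euler_in_phi}.

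First I would handle the fibered case. Since $\phi$ is quasi-fibered, there exists some rational fibered approximant $\phi_n\in H^1(M;\IQ)$; clearing denominators yields an integer fibered class $\psi := p_{n,*}\in H^1(M;\IZ)$ coming from a genuine fiber bundle $p_n\colon M\to S^1$ with fiber $F_n$. Because $\nu\circ\mu = \pr_M$, the factorization $\pi_1(M)\xrightarrow{\mu} G\xrightarrow{\psi\circ\nu}\IZ$ satisfies $\psi\circ\nu\circ\mu = \psi\circ\pr_M = p_{n,*}$, so Example~\ref{exa_mapping_torus} applies. It yields $b_m^{(2)}(\overline{M};\caln(G))=0$ for all $m\ge 0$ and the identity $\chi^{(2)}(M;\mu,\psi\circ\nu) = \chi(F_n)$. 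Since $M$ is admissible and $M\ne S^1\times D^2$, the components of $F_n$ are neither $S^2$ nor $D^2$, forcing $\chi(F_n)\le 0$; combined with~\eqref{fiber_bundles_Thurston_norm} this yields $-\chi^{(2)}(M;\mu,\psi\circ\nu)= -\chi(F_n)=x_M(\psi)$. Moreover, $L^2$-acyclicity of $\overline{M}\to M$ depends only on $\mu$ and $G$ (not on the particular cocycle), and $G$ is torsion-free satisfying the Atiyah Conjecture by assumption; hence $(\mu,\phi\circ\nu)$ is an $L^2$-acyclic Atiyah pair for the given quasi-fibered $\phi$ as well.

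Once $L^2$-acyclicity is in hand, Lemma~\ref{continuity_of(mu,phi)-L2-Euler_in_phi} provides seminorms $s_1,s_2$ on $H^1(M;\IR)$ with $\chi^{(2)}(M;\mu,\phi'\circ\nu) = s_1(\phi')-s_2(\phi')$ for every $\phi'\in H^1(M;\IZ)$. Set $\Phi := s_2-s_1$. Both $\Phi$ and $x_M$ are continuous functions on $H^1(M;\IR)$ that are positively homogeneous of degree one. By Theorem~\ref{the:Basic_properties_of_the_phi_L2-Euler_characteristic}~\eqref{the:Basic_properties_of_the_phi_L2-Euler_characteristic:scaling_phi} combined with~\eqref{scaling_Thurston_norm} and the previous paragraph, the equality $\Phi=x_M$ holds on every positive integer multiple of any integer fibered class arising from a fibration of $M$. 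Positive homogeneity extends this equality to every rational fibered class, and then continuity on $H^1(M;\IR)$ extends it to the closure of the set of rational fibered classes. By definition this closure contains every quasi-fibered class, so $\Phi(\phi)=x_M(\phi)$, and evaluating gives $-\chi^{(2)}(M;\mu,\phi\circ\nu) = \Phi(\phi) = x_M(\phi)$.

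The only delicate point is the apparent circularity: Lemma~\ref{continuity_of(mu,phi)-L2-Euler_in_phi} requires $b_1^{(2)}(\overline{M};\caln(G))=0$, yet we want to compute $\chi^{(2)}$ for an arbitrary quasi-fibered cocycle. The resolution is the observation, used above, that $L^2$-acyclicity is a property of the $G$-covering alone, so a single reference fibration (which the quasi-fibered hypothesis supplies) is enough to unlock Lemma~\ref{continuity_of(mu,phi)-L2-Euler_in_phi} for \emph{every} cocycle. The only other ingredient is the elementary fact that the rational fibered classes are dense in the set of quasi-fibered classes, which is immediate from the definition.
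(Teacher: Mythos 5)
Your proof is correct and follows essentially the same route as the paper: establish $L^2$-acyclicity and the base equality via Example~\ref{exa_mapping_torus} applied to a genuine fibration, then use the seminorm representation from Lemma~\ref{continuity_of(mu,phi)-L2-Euler_in_phi} together with continuity and homogeneity to pass to the limit. Your explicit observation that $L^2$-acyclicity depends only on $\mu$ and $G$ (so a single fibered approximant suffices to unlock Lemma~\ref{continuity_of(mu,phi)-L2-Euler_in_phi} for all cocycles) is exactly the point the paper uses implicitly when it asserts $(\mu,\phi\circ\nu)$ is an $L^2$-acyclic Atiyah pair.
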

\begin{proof} 
Choose a sequence of fibered elements $\phi_n \in H^1(M;\IQ)$ converging to $\phi$ in $H^1(M;\IR)$.
For each $n$ choose a locally trivial fiber bundle 
$F_n \to M \xrightarrow{p_n} S^1$ and an $k_n \in \IQ$, $k_n > 0$ such that the induced map
$(p_n)_*\colon \pi_1(M)\to \pi_1(S^1)=\IZ $ coincides with $k_n \cdot \phi_n$.
It follows from elementary 3-manifold topology that the only orientable, irreducible fibered 3-manifold with fiber of positive Euler characteristic is $S^1\times D^2$. Since $M$ is admissible and since exclude $S^1\times D^2$ we deduce that $\chi(F_n) \le 0$.
We conclude from~\eqref{fiber_bundles_Thurston_norm}, Example~\ref{exa_mapping_torus},
Theorem~\ref{the:Status_of_the_Atiyah_Conjecture}~\eqref{the:Status_of_the_Atiyah_Conjecture:3-manifold_not_graph},
and~\cite[Theorem~2.1]{Lueck(1994b)} that 
$(\mu,k_n \cdot \phi_n)$ is an $L^2$-acyclic Atiyah-pair for $M$ and
\begin{equation}
-\chi^{(2)}(M;\mu,(k_n \cdot \phi_n) \circ \nu) = -\chi(F_n) = x_{M}(k_n \cdot \phi_n).
\label{Coman}
\end{equation}
Let $s_1$ and $s_2$ be the two seminorms appearing in Lemma~\ref{continuity_of(mu,phi)-L2-Euler_in_phi}.
Recall that we have for every $\psi \in H^1(M;\IZ)$
\begin{equation}
\chi^{(2)}(M;\mu,\psi \circ \nu) \,\, =\,\, s_1(\psi) - s_2(\psi).
\label{Costa}
\end{equation}
Since any seminorm on $H^1(M;\IR)$ is continuous, we get
\begin{multline*}
x_M(\phi) 
= 
\lim_{n \to \infty} x_M(\phi_n)
= 
\lim_{n \to \infty} \smfrac{x_M(k_n \cdot \phi_n)}{k_n}
\\
\stackrel{\eqref{Coman}}{=}
\lim_{n \to \infty} \smfrac{-\chi^{(2)}(M;\mu,(k_n \cdot \phi_n) \circ \nu)}{k_n}
\stackrel{\eqref{Costa}}{=}
\lim_{n \to \infty} \smfrac{-s_1(k_n \cdot \phi_n) +s_2(k_n \cdot \phi_n)}{k_n}
\\
= 
\lim_{n \to \infty} -s_1(\phi_n) +s_2(\phi_n)
 = 
-s_1(\phi) +s_2(\phi)
\stackrel{\eqref{Costa}}{=} 
\chi^{(2)}(M;\mu,\phi\circ \nu).\qedhere
\end{multline*}
\end{proof}


\subsection{Proof of Theorem~\ref{the:Equality_of_(mu,phi)-L2-Euler_characteristic_and_the_Thurston_norm}}
\label{subsec:Proof_of_Theorem_ref(the:Equality_of_(mu,phi)-L2-Euler_characteristic_and_the_Thurston_norm)}

\begin{proof}[Proof of Theorem~\ref{the:Equality_of_(mu,phi)-L2-Euler_characteristic_and_the_Thurston_norm}]
This is a variation of the proof of~\cite[Theorem~5.1]{Friedl-Lueck(2015l2+Thurston)}. For the reader's convenience
we give some details here as well.

As explained in~\cite[Section~10]{Dubois-Friedl-Lueck(2014Alexander)}, we conclude from
 combining~\cite{Agol(2008),Agol(2013),Liu(2013),Przytycki-Wise(2012),Przytycki-Wise(2014),Wise(2012raggs),Wise(2012hierachy)}
 that there exists a finite regular
 covering $p \colon N \to M$ such that for any $\phi \in H^1(M;\IR)$ its pullback
 $p^*\phi \in H^1(N;\IR)$ is quasi-fibered. (Note that for this step we need that $M$ is not a closed graph manifold.)
 Let $k$ be the number of sheets of $p$.
 Let $\pr_N \colon \pi_1(N) \to H_1(N)_f$ be
 the canonical projection. Its kernel is a characteristic subgroup of $\pi_1(N)$. The
 regular finite covering $p$ induces an injection $\pi_1(p) \colon \pi_1(N) \to \pi_1(M)$
 such that the image of $\pi_1(p)$ is a normal subgroup of $\pi_1(M)$ of finite index.
 Let $\Gamma$ be the quotient of $\pi_1(M)$ by the normal subgroup
 $\pi_1(p)(\ker(\pr_N))$. Let $\alpha \colon \pi_1(M) \to \Gamma$ be the
 projection. Since $\pi_1(p)(\ker(\pr_N))$ is contained in the kernel of the canonical
 projection $\pr_M \colon \pi_1(M)\to H_1(M)_f$ because of 
 $H_1(p;\IZ)_f \circ \pr_N = \pr_M \circ \, \pi_1(p)$, there exists precisely one epimorphism 
 $\beta \colon \Gamma \to H_1(M)_f$ satisfying $\pr_M = \beta \circ \alpha$.
 Moreover, $\alpha \circ \pi_1(p)$ factorizes over $\pr_N$ into an injective homomorphism
 $j \colon H_1(N)_f \to \Gamma$ with finite cokernel. 
 Hence $\Gamma$ is virtually finitely generated free abelian.

 Consider a factorization of $\alpha \colon \pi_1(M) \to \Gamma$
into group homomorphisms $\pi_1(M) \xrightarrow{\mu} G \xrightarrow{\nu} \Gamma$ 
for a group $G$ which satisfies the Atiyah Conjecture. Let $G' $ be the quotient of
$\pi_1(N)$ by the normal subgroup $\pi_1(p)^{-1}(\ker(\mu))$ and
$\mu' \colon \pi_1(N) \to G'$ be the projection. Since the kernels of
$\mu'$ and of $\mu \circ \pi_1(p)$ agree, there is precisely
one injective group homomorphism $i \colon G' \to G$ satisfying 
$\mu \circ \pi_1(p) = i \circ \mu'$. The kernel of $\mu'$ is contained in the kernel of 
$\pr_N \colon \pi_1(N) \to H_1(N)_f$ since $j$ is injective and we have
$j \circ \pr_N = \nu \circ i \circ \mu'$. Hence there is precisely one group homomorphism
$\nu' \colon G' \to H_1(N)_f$ satisfying $\nu' \circ \mu' = \pr_N$. One easily checks that 
the following diagram commutes, and all vertical arrows are injective, the indices 
$[\pi_1(M): \im(\pi_1(p))]$ and $[\Gamma: \im(j)]$ are finite,
and $\mu'$, $\nu'$ and $\beta$ are surjective:
\[
\xymatrix{\pi_1(N) \ar[r]^-{\mu'} \ar[d]^{\pi_1(p)} \ar@/^{5mm}/[rr]^{\pr_N}
& 
G' \ar[r]^-{\nu'} \ar[d]^i
& 
H_1(N)_f \ar[rd]^{H_1(p)_f} \ar[d]^j
\\
\pi_1(M) \ar[r]^-{\mu} \ar@/_{5mm}/[rr]_{\alpha} \ar@/_{10mm}/[rrr]_{\pr_M}
& 
G \ar[r]^{\nu}
&
\Gamma \ar[r]^-{\beta}
& 
H_1(M)_f 
}
\]

Since $G$ satisfies the Atiyah Conjecture, the group $G'$ 
satisfies the Atiyah Conjecture by 
Theorem~\ref{the:Status_of_the_Atiyah_Conjecture}~\eqref{the:Status_of_the_Atiyah_Conjecture:subgroups}.

Since $\ker(\mu) \subseteq \ker(\alpha) \subseteq \im(\pi_1(p))$ holds, we get $[G:G'] = k$ and we conclude 
from~\eqref{finite_coverings_Thurston_norm} and from
Lemma~\ref{lem:reduction_to_surjective_mu} (4)  that
\begin{eqnarray*}
 -\chi^{(2)}(M;\mu,\phi \circ \beta \circ \nu) 
& = & 
 -\lmfrac{\chi^{(2)}(N;\mu',p^*\phi \circ \nu')}{k};
\\
x_M(\phi) 
& = & 
\lmfrac{x_N(p^*\phi)}{k}.
\end{eqnarray*}
We get from
Theorem~\ref{the:Equality_of_(mu,phi)-L2-Euler_characteristic_and_the_Thurston_norm_in_the_quasi-fibered_case}
applied to $N$, $\mu'$, $\nu'$ and $p^*\phi$
\[
 -\chi^{(2)}(N;\mu',p^*\phi \circ \nu')\,\, =\,\, x_N(p^*\phi).
\]
Hence we get 
\[
 -\chi^{(2)}(M;\mu,\phi \circ \beta \circ \nu)\,\,=\,\, x_M(\phi).
\]
This finishes the proof of Theorem~\ref{the:Equality_of_(mu,phi)-L2-Euler_characteristic_and_the_Thurston_norm}.
\end{proof}


 \typeout{---- Section 7: Epimorphism of fundamental groups and the Thurston norm}

\section{Epimorphism of fundamental groups and the Thurston norm}
\label{sec:Epimorphism_of_fundamental_groups_and_the_Thurston_norm}

\begin{theorem}\label{the:localization_down_to_Z_versus_up_to_U(G)_new} Let $G$ be a group
 which is residually a locally indicable elementary amenable group. Let $f_* \colon C_* \to
 D_*$ be a $\IZ G$-chain map of finitely generated free $\IZ G$-chain complexes.
 Suppose that $\id_{\IQ} \otimes_{\IZ G} f_* \colon \IQ\otimes_{\IZ G} C_* \to
 \IQ\otimes_{\IZ G} D_*$ induces an isomorphism on homology. Then we get for $n \ge 0$
\[
b_n^{(2)}(\caln(G) \otimes_{\IZ G} C_*)\,\, =\,\, b_n^{(2)}(\caln(G) \otimes_{\IZ G} D_*).
\]
\end{theorem}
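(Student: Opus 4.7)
The plan is to replace $f_*$ by its mapping cone $E_* := \cone(f_*)$, again a bounded finitely generated free $\IQ G$-chain complex. The hypothesis that $\id_{\IQ} \otimes_{\IQ G} f_*$ is a $\IQ$-homology isomorphism is equivalent to acyclicity of $\IQ \otimes_{\IQ G} E_*$, and additivity of $L^2$-Betti numbers along the short exact sequence $0 \to D_* \to E_* \to \Sigma C_* \to 0$ reduces the theorem to the following claim: if $E_*$ is a bounded finitely generated free $\IQ G$-chain complex with $\IQ \otimes_{\IQ G} E_*$ acyclic, then $b_n^{(2)}(\caln(G) \otimes_{\IQ G} E_*) = 0$ for all $n \ge 0$.

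A locally indicable group is torsion-free (a non-trivial finite subgroup admits no surjection onto $\IZ$), so $G$ is in particular residually torsion-free elementary amenable, hence lies in the class $\cald$ of Theorem~\ref{the:Status_of_the_Atiyah_Conjecture}~\eqref{the:Status_of_the_Atiyah_Conjecture:approx}. In particular $G$ is torsion-free and satisfies the Atiyah Conjecture, so $\cald(G)$ is a skew field by Theorem~\ref{the:Main_properties_of_cald(G)}~\eqref{the:Main_properties_of_cald(G):skew_field}, and Theorem~\ref{the:Main_properties_of_cald(G)}~\eqref{the:Main_properties_of_cald(G):dim} delivers
\[
b_n^{(2)}\bigl(\caln(G) \otimes_{\IQ G} E_*\bigr) \,=\, \dim_{\cald(G)} H_n\bigl(\cald(G) \otimes_{\IQ G} E_*\bigr).
\]
Thus it suffices to show that the $\cald(G)$-chain complex $\cald(G) \otimes_{\IQ G} E_*$ is acyclic; via elementary chain-complex manipulations (truncating and using that direct summands of finitely generated free $\IQ G$-modules over which a differential is invertible can be cancelled without changing the relevant homology) this amounts to the matrix statement that every square $A \in M_n(\IQ G)$ whose augmentation $\epsilon(A) \in M_n(\IQ)$ is invertible becomes invertible over $\cald(G)$.

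To attack this matrix statement, I plan to exploit a directed system of normal subgroups $\{N_i\}$ of $G$ with each quotient $Q_i := G/N_i$ locally indicable elementary amenable and $\bigcap_i N_i = \{1\}$. For every $Q_i$, Lemma~\ref{lem:elementary_amenable_and_ore} provides that $\cald(Q_i)$ is the Ore localization $T_i^{-1} \IQ Q_i$, is a skew field, and is flat over $\IQ Q_i$. The internal structure of LIEA groups---any non-trivial one sits in a short exact sequence $1 \to Q' \to Q \to \IZ \to 1$ with $Q'$ again LIEA, so that $\cald(Q)$ is built iteratively as the Ore localization of a twisted Laurent ring $\cald(Q')_t[u^{\pm 1}]$---opens the door to an induction showing that the image $A_i \in M_n(\IQ Q_i)$ is invertible over $\cald(Q_i)$ whenever $\epsilon(A_i)=\epsilon(A)$ is invertible over $\IQ$; the invertibility of the augmentation is preserved at each step of the twisted Laurent extension, and the base case $Q=\trivial$ is trivial.

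The main obstacle is to transfer invertibility at the LIEA-quotient level back up to $\cald(G)$ itself. My approach is a residual-approximation argument for the $L^2$-Betti number $\dim_{\caln(G)}\ker(r_A)$, identifying it with $\lim_i \dim_{\caln(Q_i)}\ker(r_{A_i})$ along the filtration $\{N_i\}$; since every $Q_i$ is amenable, L\"uck- and Schick-type approximation results should yield such a comparison, and each individual term vanishes by the previous step, forcing $\dim_{\caln(G)}\ker(r_A)=0$ and hence invertibility of $A$ over the skew field $\cald(G)$. The genuinely hard part is to verify the appropriate approximation theorem along an arbitrary directed system of LIEA quotients with trivial intersection of kernels, and this is exactly where the full strength of the residually LIEA hypothesis has to be invoked---cheaper residual-finiteness type inputs are unavailable and replaced by the residual LIEA structure. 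I expect this verification to be the principal technical obstacle.
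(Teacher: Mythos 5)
Your top-level decomposition matches the paper exactly: pass to the mapping cone to reduce to the assertion that a $\IQ$-acyclic bounded finitely generated free $\IQ G$-complex has vanishing $L^2$-Betti numbers, then use a residual system of epimorphisms $G \to G_i$ onto torsion-free locally indicable elementary amenable groups together with an approximation theorem (the paper cites Clair, Schick, and \cite[Theorem~13.3]{Lueck(2002)}) to reduce to the case where $G$ itself is locally indicable elementary amenable. That part of your plan is sound and parallel to the paper.

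After that reduction, however, you diverge, and your proposed base-case argument has a genuine gap. You want to reduce the acyclicity of $\cald(G) \otimes_{\IQ G} E_*$ to the claim that any square matrix $A \in M_n(\IQ G)$ with $\epsilon(A) \in GL_n(\IQ)$ is invertible over $\cald(G)$, and then to prove this by ``unwinding'' $G$ as an iterated twisted Laurent extension via $1 \to Q' \to Q \to \IZ \to 1$, with the assertion that ``the invertibility of the augmentation is preserved at each step.'' That last assertion is where the argument breaks: the augmentation $\epsilon_Q \colon \IQ Q \to \IQ$ does \emph{not} factor through the ring $\cald(Q')_t[u^{\pm 1}]$ in a way that lets you induct. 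Viewing $A \in M_n(\IQ Q'_t[u^{\pm 1}])$, there is no natural matrix over $\IQ Q'$ (the constant coefficient in $u$, say) whose invertibility over $\cald(Q')$ implies that of $A$ over $\cald(Q)$, nor whose $\IQ$-augmentation equals $\epsilon_Q(A)$; the twisted Laurent structure prevents one from ``setting $u=1$'' by a ring map. There is also a secondary issue: the induction would have to be transfinite, since a locally indicable elementary amenable group need not have a finite-length hierarchy of this type, and this is not addressed. Finally, your preliminary reduction to a single square matrix via ``cancelling summands'' is itself not routine, because the chain contraction one obtains is only over $\IQ$ and need not lift to $\IQ G$.

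The paper circumvents all of this by a different and cleaner mechanism once $G$ is locally indicable elementary amenable: form the combinatorial Laplacian $\Delta_n$ (a square matrix over $\IQ G$), observe that the hypothesis says exactly that $\IQ \otimes_{\IQ G} \Delta_n$ is injective, invoke the Howie--Schneebeli/Gersten theorem for locally indicable groups (\cite[Theorem~1]{Howie-Schneebeli(1983)}, \cite[Theorem~4.1]{Gersten(1983)}) to conclude that $\Delta_n$ itself is injective over $\IQ G$, and then use the amenability of $G$ via the dimension-flatness statement \cite[(6.74)]{Lueck(2002)} to convert this to vanishing of $\dim_{\caln(G)}\ker(\id_{\caln(G)} \otimes \Delta_n) = b_n^{(2)}$. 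The locally indicable hypothesis enters precisely through Howie--Schneebeli/Gersten (lifting $\IQ$-injectivity to $\IQ G$-injectivity), and the elementary amenable hypothesis enters precisely through dimension-flatness of $\caln(G)$ over $\IQ G$. Your sketch bypasses both of these specific ingredients, and the inductive argument you substitute for them does not supply a working replacement.
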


The reader might wonder why it is so important that $G$ is locally indicable. 
This condition comes up in the following theorem 
due to Gersten and independently Howie-Schneebeli.
The theorem  is a key ingredient in the proof of Theorem~\ref{the:localization_down_to_Z_versus_up_to_U(G)_new}.

\begin{theorem}\label{thm:howie-schneebeli-gersten}
Let $G$ be a group and let $f\colon P\to Q$ be a homomorphism
of finitely generated free $\IZ G$-left modules.
If  $\id_{\IQ} \otimes_{\IZ Q} f\colon \IQ\otimes_{\IZ G}P\to \IQ\otimes_{\IZ G} Q$ is a monomorphism and if  $G$ is locally indicable, then $
\id_{\IQ Q} \otimes_{\IQ G} f\colon 
\IQ G\otimes_{\IZ G}P\to  \IQ G\otimes_{\IZ G} Q$ is also a monomorphism.
\end{theorem}

\begin{proof}
We consider the following two commutative diagrams
\[ \xymatrix@C1.2cm@R0.65cm{  \IZ\otimes_{\IZ G}P\ar[d]\ar[r]^{\id_{\IZ} \otimes f}& \IZ\otimes_{\IZ G} Q
\ar[d]\\
\IQ\otimes_{\IZ G}P\ar[r]^{\id_{\IQ} \otimes f}& \IQ\otimes_{\IZ G} Q}
\quad \mbox{ and } \quad \xymatrix@C1.2cm@R0.65cm{
\IZ G\otimes_{\IZ G} P\ar[d]\ar[r]^{\id_{\IZ G} \otimes f}& \IZ G\otimes_{\IZ G}  Q
\ar[d]\\
\IQ G\otimes_{\IZ G}P\ar[r]^{\id_{\IQ G} \otimes f}& \IQ G\otimes_{\IZ G} Q.}
\]
First we consider the diagram on the left. By our hypothesis the bottom map  is a monomorphism.
Since $P$ and $Q$ are free $\IZ[G]$-modules we see that the top map in the same diagram is  a monomorphism as well. By 
~\cite[Theorem~1]{Howie-Schneebeli(1983)} 
or~\cite[Theorem~4.1]{Gersten(1983)}
this implies that the map on the top of the diagram on the right hand side is also a monomorphism. But going from top to bottom on the right hand side is just given by tensoring a map of free $\IZ$-modules with $\IQ$. Thus we see that the map on the bottom right is also a monomorphism.
\end{proof}

\begin{proof}[Proof of Theorem~\ref{the:localization_down_to_Z_versus_up_to_U(G)_new}]
 By considering the mapping cone, it suffices to show for a finitely generated free
 $\IZ G$-chain complex $C_*$ that $b_n^{(2)}(\caln(G) \otimes_{\IZ G} C_*)$ vanishes 
for all $n \ge 0$ if $H_n(\IQ \otimes_{\IZ G} C_*)$ vanishes for all $n \ge 0$. 
 
First note that a locally indicable group is torsion-free.
Since $G$ is residually a locally indicable elementary amenable group,
there exists a sequence of epimorphisms $G\to G_i$, $i\in \IN$ onto 
 locally indicable elementary amenable groups such that the intersections of the kernels is trivial. We
conclude from~\cite{Clair(1999)},~\cite[Theorem~1.14]{Schick(2001b)} 
or~\cite[Theorem~13.3 on page~454]{Lueck(2002)}
\begin{eqnarray*}
b_n^{(2)}(\caln(G) \otimes_{\IZ G} C_*) & = & 
\lim_{i \in \IN} \;b_n^{(2)}(\caln(G_i) \otimes_{\IZ G_i} \IZ G_i \otimes_{\IZ G} C_*);
\\
\IQ \otimes_{\IZ G} C_* & \cong & \IQ \otimes_{\IZ G_i} \IZ G_i \otimes_{\IZ G} C.
\end{eqnarray*}
It follows that without loss
of generality we can assume that $G$ itself is locally indicable elementary amenable.

There is an involution of rings $\IZ G \to \IZ G$ sending $\sum_{g \in G} \lambda_g \cdot g$ 
to $\sum_{g \in G} \lambda_g \cdot g^{-1}$. In the sequel we equip each
$C_n$ with a $\IZ G$-basis. With respect to this involution and $\IZ G$-basis one can define the
combinatorial Laplace operator $\Delta_n \colon C_n \to C_n$ which is the $\IZ G$-linear
map given by $c_{n+1} \circ c_n^* + c_{n-1}^* \circ c_{n-1}$. Since the augmentation
homomorphism $\IZ G \to \IZ$ sending $\sum_{g \in G} \lambda_g \cdot g $ to 
$\sum_{g \in G} \lambda_g$ is compatible with the involution, 
$\id_{\IZ} \otimes_{\IZ G} \Delta_n \colon \id_{\IZ} \otimes_{\IZ G} C_n \to \id_{\IZ} \otimes_{\IZ G} C_n$ 
is the combinatorial Laplace operator for $\IZ \otimes_{\IZ G} C_*$. We conclude
from~\cite[Lemma~1.18 on page~24 and Theorem~6.25 on page~249]{Lueck(2002)}
\begin{eqnarray*}
b_n^{(2)}(\caln(G) \otimes_{\IZ G} C_*) & = & 
\dim_{\caln(G)}(\ker(\id_{\caln(G)} \otimes_{\IZ G} \Delta_n));
\\
\dim_{\IQ}(H_n(\IQ \otimes_{\IZ G} C_*)) & = & \dim_{\IQ}(\ker(\id_{\IQ} \otimes_{\IZ G} \Delta_n)).
\end{eqnarray*}
Since  $G$ is amenable, we conclude  as
 in~\cite[(6.74) on page~275]{Lueck(2002)}
\begin{eqnarray*}
\dim_{\caln(G)}(\ker(\id_{\caln(G)} \otimes_{\IZ G} \Delta_n))
& = & 
\dim_{\caln(G)}(\caln(G) \otimes_{\IQ G} \ker(\id_{\IQ G}\otimes_{\IZ G} \Delta_n)).
\end{eqnarray*}
Hence $b_n^{(2)}(\caln(G) \otimes_{\IZ G} C_*) $ vanishes if $\id_{\IQ G}\otimes_{\IZ G}\Delta_n$ is injective. The injectivity of $\id_{\IQ G}\otimes_{\IZ G}\Delta_n$ follows from the
injectivity of $\id_{\IQ} \otimes_{\IZ G} \Delta_n$,
the hypothesis that $G$ is locally indicable and
Theorem~\ref{thm:howie-schneebeli-gersten}.
\end{proof}

\begin{theorem}\label{the:Inequality} Let $f \colon M \to N$ be a map of admissible
 $3$-manifolds. Suppose that $\pi_1(f)$ is surjective and $f$ induces an isomorphism 
 $H_n(f;\IQ) \colon H_n(M;\IQ) \to H_n(N;\IQ)$ for $n \ge 0$. Suppose that $G$ is residually
 locally indicable elementary amenable. Let $\mu \colon \pi_1(N) \to G$, 
 $\nu \colon G \to H_1(N)_f$ and $\phi \colon H_1(N)_f \to \IZ$ be group 
 homomorphisms. Let $\overline{N} \to N$ be the $G$-covering associated to $\mu$ and
 $\overline{M} \to M$ be the $G$-covering associated to $\mu \circ \pi_1(f)$. Suppose that
 $b_n^{(2)}(\overline{N};\caln(G))$ vanishes for $n \ge 0$. 

 Then $b_n^{(2)}(\overline{M};\caln(G))$ vanishes 
 for $n \ge 0$, $M$ is $(\mu \circ \pi_1(f),\phi \circ \nu)$-$L^2$-finite, 
 $N$ is $(\mu,\phi \circ \nu)$-$L^2$-finite and we get
 \[
 - \chi^{(2)}(M;\mu \circ \pi_1(f);\phi \circ \nu) \,\,\ge\,\, - \chi^{(2)}(N;\mu,\phi \circ \nu).
 \]
\end{theorem}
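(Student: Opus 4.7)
My plan is in three stages. First, to establish that $b_n^{(2)}(\overline M;\caln(G))=0$ for all $n\ge 0$: because $\pi_1(f)$ is surjective, the pullback covering $f^*\overline N\to M$ is the $G$-covering of $M$ associated to $\mu\circ\pi_1(f)$, so $f^*\overline N=\overline M$ and the lift $\bar f\colon \overline M\to\overline N$ is $G$-equivariant. The induced cellular chain map $\bar f_*\colon C_*(\overline M;\IQ)\to C_*(\overline N;\IQ)$ is a $\IQ G$-chain map between finitely generated free $\IQ G$-chain complexes, and applying $\IQ\otimes_{\IQ G}-$ recovers the ordinary chain map $C_*(M;\IQ)\to C_*(N;\IQ)$, a quasi-isomorphism by hypothesis. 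Since $G$ is residually locally indicable elementary amenable, Theorem~\ref{the:localization_down_to_Z_versus_up_to_U(G)_new} yields
\[
b_n^{(2)}\bigl(\caln(G)\otimes_{\IQ G}C_*(\overline M)\bigr)\,=\,b_n^{(2)}\bigl(\caln(G)\otimes_{\IQ G}C_*(\overline N)\bigr)\,=\,0
\]
for every $n$. Because local indicability is stronger than torsion-freeness, residually locally indicable elementary amenable groups are residually torsion-free elementary amenable, so $G$ lies in Schick's class $\cald$ and satisfies the Atiyah Conjecture by Theorem~\ref{the:Status_of_the_Atiyah_Conjecture}(4). Theorem~\ref{the:Atiyah_and_(mu,phi)-L2-Euler_characteristic} then makes both pairs $L^2$-acyclic Atiyah-pairs with integer-valued Euler characteristics.

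Second, I transform the desired Euler-characteristic inequality into a statement about the mapping cone $E_*$ of $\bar f_*$. Using Lemma~\ref{lem:reduction_to_surjective_mu}(3) I dispose of the case $\phi\circ\nu\circ\mu=0$ (both Euler characteristics vanish), and by rescaling I reduce to the case $\psi:=\phi\circ\nu$ is surjective onto $\IZ$. Set $K:=\ker\psi$ with inclusion $i\colon K\hookrightarrow G$. Lemma~\ref{lem:phi-twisted_as_ordinary_L2_Euler_characteristic} rewrites each side as $\chi^{(2)}(i^*\overline X;\caln(K))$, and the short exact sequence $0\to C_*(\overline N)\to E_*\to\Sigma C_*(\overline M)\to 0$ combined with the sum formula Theorem~\ref{the:Basic_properties_of_the_phi_L2-Euler_characteristic}(2) gives
\[
\chi^{(2)}\bigl(\caln(K)\otimes_{\IQ K}i^*E_*\bigr)\,=\,\chi^{(2)}(N;\mu,\psi)\,-\,\chi^{(2)}(M;\mu\circ\pi_1(f),\psi),
\]
so the desired inequality $-\chi^{(2)}(M;\mu\circ\pi_1(f),\psi)\ge -\chi^{(2)}(N;\mu,\psi)$ becomes $\chi^{(2)}(\caln(K)\otimes_{\IQ K}i^*E_*)\ge 0$. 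Since $E_*$ is $\IQ$-acyclic (as the cone of a $\IQ$-quasi-isomorphism), applying Theorem~\ref{the:localization_down_to_Z_versus_up_to_U(G)_new} to $E_*\to 0$ gives $b_n^{(2)}(\caln(G)\otimes_{\IQ G}E_*)=0$ for all $n$. Theorem~\ref{the:Main_properties_of_cald(G)}(4) then makes each $b_n^{(2)}(\caln(K)\otimes_{\IQ K}i^*E_*)$ a finite nonnegative integer equal to $\dim_{\cald(K)}H_n\bigl(\cald(K)_{t}[u^{\pm 1}]\otimes_{\IQ G}E_*\bigr)$, and all these homology modules are torsion over the principal ideal domain $\cald(K)_{t}[u^{\pm 1}]$.

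The main obstacle is then to prove this alternating sum of $\cald(K)$-dimensions is nonnegative; a simple example with $G=\IZ$, $K=\{1\}$, and a $2\times 2$ matrix of determinant $2t-1$ shows that for abstract finitely generated free $\IQ G$-chain complexes that are $\IQ$-acyclic and $\mathcal{N}(G)$-acyclic this nonnegativity can fail when the complex is shifted, so the argument must genuinely use the $3$-manifold structure. My plan is to first add the mild non-degeneracy hypothesis $\im(\mu)\cap\ker\psi\ne\{1\}$ (handling the complementary cases directly via Lemma~\ref{lem:reduction_to_surjective_mu}, since then $\mu$ factors through a cyclic image), so that Theorem~\ref{the:The_(mu,phi)-L2-Euler_characteristic_in_terms_of_the_first_homology}(2) applies to both $M$ and $N$ and forces $b_n^{(2)}(i^*\overline X;\caln(K))=0$ for $n\ne 1$. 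The long exact sequence of the cone collapses to
\[
0\to H_2\bigl(\caln(K)\otimes i^*E_*\bigr)\to H_1\bigl(\caln(K)\otimes i^*C_*(\overline M)\bigr)\xrightarrow{H_1(\bar f)} H_1\bigl(\caln(K)\otimes i^*C_*(\overline N)\bigr)\to H_1\bigl(\caln(K)\otimes i^*E_*\bigr)\to 0,
\]
with $H_n(\caln(K)\otimes i^*E_*)=0$ for $n\ne 1,2$, reducing the inequality to $\dim_{\caln(K)}\ker H_1(\bar f)\ge\dim_{\caln(K)}\coker H_1(\bar f)$, equivalently to showing that the source dimension dominates the target. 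I would attack this by combining the Fox-calculus presentation of $C_*(\overline M)$ and $C_*(\overline N)$ from Theorem~\ref{the:calculation_of_(mu,phi)_L2-Euler_characteristic_from_a_presentation} with the $\pi_1$-surjectivity of $\bar f$ and the degree estimates of Lemma~\ref{rank_estimate_for_a_plus_t_I_k}, in the spirit of the Thurston-norm calculations in the proof of Theorem~\ref{the:The_Thurston_norm_ge_the_(mu,phi)-L2-Euler_characteristic}, to control the cokernel of $H_1(\bar f;\caln(K))$ by the Nielsen-reduced presentation-matrix comparison between $M$ and $N$.
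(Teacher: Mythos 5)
Your reduction is correct, and indeed parallels the paper's structure more closely than you might realize: once you've handled $\phi\circ\nu\circ\mu=0$ and restricted to the case $\im(\mu)\cap\ker\psi\ne\{1\}$, applying Theorem~\ref{the:The_(mu,phi)-L2-Euler_characteristic_in_terms_of_the_first_homology} to both $M$ and $N$ turns each Euler characteristic into $-b_1^{(2)}(i^*\overline X;\caln(K))$, and the desired inequality becomes $b_1^{(2)}(i^*\overline M;\caln(K))\ge b_1^{(2)}(i^*\overline N;\caln(K))$. Your mapping-cone bookkeeping reaches the same target in the form $\dim\ker H_1(\bar f)\ge\dim\coker H_1(\bar f)$, which by additivity is equivalent. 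So up to that point you have genuinely reproduced the paper's argument.

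The gap is in the final step. You propose to prove $\dim_{\cald(K)}H_1(\overline M)\ge\dim_{\cald(K)}H_1(\overline N)$ via Fox calculus, Nielsen moves, and matrix estimates \`a la Lemma~\ref{rank_estimate_for_a_plus_t_I_k}; this is both speculative and unnecessary. The inequality follows purely formally from the hypothesis that $\pi_1(f)$ is surjective: the lift $\bar f\colon\overline M\to\overline N$ is then a $1$-connected $G$-map, so (after replacing $\bar f$ by a $G$-cofibration) the relative $G$-CW-pair $(\overline N,\overline M)$ has no cells in dimensions $\le 1$. Hence for \emph{any} coefficient ring --- in particular after applying $\caln(K)\otimes_{\IZ K}i^*(-)$ --- the relative chain complex vanishes in degrees $\le 1$, the long exact sequence gives surjectivity of $H_1(\caln(K)\otimes i^*C_*(\overline M))\to H_1(\caln(K)\otimes i^*C_*(\overline N))$, and additivity of $\dim_{\caln(K)}$ yields $b_1^{(2)}(i^*\overline M;\caln(K))\ge b_1^{(2)}(i^*\overline N;\caln(K))$. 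No presentation matrices are needed. Your handling of the complementary case $\im(\mu)\cap\ker\psi=\{1\}$ is also too thin: Lemma~\ref{lem:reduction_to_surjective_mu} only normalizes $\mu$ and $\phi$, it does not compute the Euler characteristics; one needs the explicit infinite-cyclic-covering computation of Example~\ref{exa:infinite_cyclic_covering} (which reduces the claim to $\dim_\IQ H_1(\overline M;\IQ)\ge\dim_\IQ H_1(\overline N;\IQ)$, again from $1$-connectedness, together with the observation $H_3(M;\IQ)\cong H_3(N;\IQ)$ forcing $\partial M=\emptyset\Leftrightarrow\partial N=\emptyset$).
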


\begin{proof}
Since a locally indicable group is torsion-free,
$G$ is a residually torsion-free elementary amenable group and hence satisfies the Atiyah Conjecture
 by Theorem~\ref{the:Status_of_the_Atiyah_Conjecture}~\eqref{the:Status_of_the_Atiyah_Conjecture:approx}.
 Because of Lemma~\ref{lem:reduction_to_surjective_mu} and 
 Theorem~\ref{the:Status_of_the_Atiyah_Conjecture}~\eqref{the:Status_of_the_Atiyah_Conjecture:subgroups}
 we can assume without loss of generality
 that $\mu$ and $\phi \circ \nu$ are epimorphisms.
 Theorem~\ref{the:localization_down_to_Z_versus_up_to_U(G)_new} implies that
 $b_n^{(2)}(\overline{M};\caln(G))$ vanishes for $n \ge 0$. We conclude from 
 Theorem~\ref{the:Atiyah_and_(mu,phi)-L2-Euler_characteristic}
 that $M$ is $(\mu \circ \pi_1(f),\phi \circ \nu)$-$L^2$-finite and $N$ is $(\mu,\phi\circ \nu)$-$L^2$-finite. 
 
 Since $\pi_1(f)$ is surjective and hence the $G$-map $\overline{f} \colon \overline{M} \to
 \overline{N}$ induced by $f$ is $1$-connected by the exact sequence on homotopy groups associated to 
a covering,  we get
 $b_1^{(2)}(i^*\overline{M};\caln(K)) \ge b_1^{(2)}(i^* \overline{N};\caln(K))$ for the inclusion 
 $i \colon K = \ker(\phi \circ \nu) \to G$. If $\phi \circ \nu \circ \mu = 0$, we conclude
 $\chi^{(2)}(M;\mu \circ \pi_1(f);\phi \circ \nu) = \chi^{(2)}(N;\mu,\phi \circ \nu) = 0$ from 
 Lemma~\ref{lem:reduction_to_surjective_mu}~%
\eqref{lem:reduction_to_surjective_mu:mu_circ_phi_is_trivial} and the claim follows.
 Hence we can assume without loss of generality that $\phi \circ \nu \circ \mu$ is not trivial.
 
 We begin with the case $\im(\mu) \cap \ker(\phi \circ \nu) \not= \{1\}$. Then also 
 $\im(\mu \circ \pi_1(f)) \cap \ker(\phi \circ \nu) \not= \{1\}$. We conclude from 
 Theorem~\ref{the:The_(mu,phi)-L2-Euler_characteristic_in_terms_of_the_first_homology} 
 \begin{eqnarray*}
 -\chi^{(2)}(N;\mu,\phi \circ \nu) & = & b_1^{(2)}(i^*\overline{N};\caln(K));
 \\
 -\chi^{(2)}(M;\mu \circ \pi_1(f);\phi \circ \nu) & = & b_1^{(2)}(i^*\overline{M};\caln(K)).
 \end{eqnarray*}
 and Theorem~\ref{the:Inequality} follows. 

 It remains to treat the case, where $\im(\mu) \cap \ker(\phi \circ \nu) = \{1\}$.
 Then $\phi \circ \nu \colon G \to \IZ$
 is an injection and hence a bijection, $K = \{0\}$. Since $\mu$ is assumed to be an epimorphism  we get from Lemma~\ref{lem:infinite_cyclic_covering}
 \begin{eqnarray*}
 - \chi^{(2)}(N;\mu,\phi \circ \nu) \,\,=\,\, 
 \begin{cases} 
 \dim_{\IZ}\bigl(H_1(\overline{M};\IZ)\bigr) - 1 & \text{if}\; \partial M \not= \emptyset;\\
 \dim_{\IZ}\bigl(H_1(\overline{M};\IZ)\bigr) - 2 & \text{if}\; \partial M = \emptyset;
 \end{cases}
 \\
 - \chi^{(2)}(N;\mu \circ \pi_1(f),\phi \circ \nu)\,\, =\,\, 
 \begin{cases} 
 \dim_{\IZ}\bigl(H_1(\overline{N};\IZ)\bigr) - 1 & \text{if}\; \partial N \not= \emptyset;\\
 \dim_{\IZ}\bigl(H_1(\overline{N};\IZ)\bigr) - 2 & \text{if}\; \partial N = \emptyset;
 \end{cases}
\end{eqnarray*}
 We already have shown $b_1^{(2)}(i^*\overline{N};\caln(K)) \ge b_1^{(2)}(i^* \overline{M};\caln(K))$ which boils down
 in this special case to $\dim_{\IZ}(H_1(\overline{M};\IZ))\ge \dim_{\IZ}(H_1(\overline{N};\IZ))$. We conclude
 from $H_3(M;\IZ) \cong H_3(N;\IZ)$ that $\partial M$ is empty if and only if $\partial N$ is empty. This finishes the proof
 of Theorem~\ref{the:Inequality}.
 \end{proof}

\begin{theorem}[Inequality of the Thurston norm]\label{the:Inequality_of_the_Thurston_norm}
Let $f \colon M \to N$ be a map of admissible $3$-manifolds which is surjective 
on $\pi_1(N)$ and induces an isomorphism 
$f_* \colon H_n(M;\IQ) \to H_n(N;\IQ)$ for $n \ge 0$. Suppose that $\pi_1(N)$ is residually 
locally indicable elementary amenable. 
 Then we get for any $\phi\in H^1(N;\IR)$ that
 \[
 x_M(f^*\phi ) \,\,\ge\,\, x_N(\phi).
 \]
\end{theorem}

\begin{proof}
Since seminorms are continuous and homogeneous it suffices to prove the 
statement for all primitive classes $\phi\in H^1(N;\IZ)=\Hom(\pi_1(N),\IZ)$. 
The case $N = S^1 \times D^2$ is trivial. Hence we can assume that $N\ne S^1\times D^2$. 
 We conclude from Theorem~\ref{the:Inequality} applied in the case $G = \pi_1(N)$ and $\mu= \id_{\pi_1(N)}$
 \[
 -\chi^{(2)}(M; \pi_1(f),\phi) \,\,\ge\,\, -\chi^{(2)}(\widetilde{N};\phi)
 \]
 Theorem~\ref{the:The_Thurston_norm_ge_the_(mu,phi)-L2-Euler_characteristic} implies 
 \[
 x_M(\phi \circ \pi_1(f)) \,\,\ge\,\,-\chi^{(2)}(M; \pi_1(f),\phi)
 \]
and Theorem~\ref{the:Equality_of_(mu,phi)-L2-Euler_characteristic_and_the_Thurston_norm_universal_covering_for_universal_coverings} implies that 
  \[
  -\chi^{(2)}(\widetilde{N};\phi) \,\,=\,\, x_N(\phi).
 \]
 Now Theorem~\ref{the:Inequality_of_the_Thurston_norm} follows.
\end{proof}

The following lemma shows that Theorem~\ref{the:Inequality_of_the_Thurston_norm} applies in particular 
if the manifold $N$ is fibered. Since it is well-known,  we only provide a sketch of the proof.

\begin{lemma}\label{lem:fibered-manifolds-acapl}
The fundamental group of any fibered 3-manifold is residually locally indicable elementary amenable.
\end{lemma}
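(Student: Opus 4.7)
The plan is to exploit the fibering exact sequence $1 \to \pi_1(F) \to \pi_1(M) \to \pi_1(S^1) = \IZ \to 1$ together with the lower central series of $\pi_1(F)$ to produce a cofinal system of quotients of $\pi_1(M)$ that are simultaneously locally indicable and elementary amenable.

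First I would recall that $F$ is a compact (orientable) surface, so $\pi_1(F)$ is either a finitely generated free group (boundary case) or a closed orientable surface group. In both cases the classical theorems of Magnus and of Baumslag--Labute give that $\pi_1(F)$ is \emph{residually torsion-free nilpotent}: the lower central series satisfies $\bigcap_{n \ge 1} \gamma_n(\pi_1(F)) = \{1\}$ and each quotient $\pi_1(F)/\gamma_n(\pi_1(F))$ is a finitely generated torsion-free nilpotent group. Since each $\gamma_n(\pi_1(F))$ is characteristic in $\pi_1(F)$ and $\pi_1(F)$ is normal in $\pi_1(M)$, the subgroup $\gamma_n(\pi_1(F))$ is normal in $\pi_1(M)$ as well, and the quotient $Q_n := \pi_1(M)/\gamma_n(\pi_1(F))$ fits into a short exact sequence
\[
1 \longrightarrow \pi_1(F)/\gamma_n(\pi_1(F)) \longrightarrow Q_n \longrightarrow \IZ \longrightarrow 1,
\]
exhibiting $Q_n$ as an extension of a finitely generated torsion-free nilpotent group by $\IZ$. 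In particular $Q_n$ is polycyclic, hence elementary amenable, and it is torsion-free since both the kernel and the quotient $\IZ$ are.

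Next I would verify that every torsion-free polycyclic group is locally indicable. Any finitely generated nontrivial subgroup $H$ of $Q_n$ is again polycyclic and torsion-free, so it admits a subnormal series whose successive quotients are infinite cyclic; taking the projection onto the bottom such quotient gives an epimorphism $H \twoheadrightarrow \IZ$. Thus each $Q_n$ is both locally indicable and elementary amenable. Finally, because $\bigcap_n \gamma_n(\pi_1(F)) = \{1\}$ inside $\pi_1(F)$, the same intersection is trivial inside $\pi_1(M)$, so the family of projections $\pi_1(M) \twoheadrightarrow Q_n$ separates points and witnesses $\pi_1(M)$ as residually locally indicable elementary amenable.

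The only non-formal ingredient is the residual torsion-free nilpotence of surface groups; once that is granted, the rest is standard bookkeeping on polycyclic groups. The one subtlety worth flagging is the orientability of $F$: for admissible (hence orientable) $M$ with orientable base $S^1$, a Serre-spectral-sequence/Euler-class argument forces the fiber to be orientable, which is what allows one to invoke the Baumslag/Labute result directly rather than worrying about non-orientable closed fibers.
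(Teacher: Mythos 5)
Your argument is correct and reaches the same conclusion, but by a genuinely different route from the paper's. The paper filters the fiber group $\Gamma = \pi_1(F)$ by its \emph{derived series} $\Gamma^{(n)}$; each $\Gamma^{(n)}/\Gamma^{(n+1)}$ is free abelian (of possibly infinite rank) and $\bigcap_n \Gamma^{(n)} = \{1\}$, so the quotients $\IZ \ltimes \Gamma/\Gamma^{(n)}$ are torsion-free poly-(free abelian), hence locally indicable and elementary amenable, and these separate points. You instead filter by the \emph{lower central series} $\gamma_n(\Gamma)$ and invoke residual torsion-free nilpotence (Magnus for free groups, Baumslag/Labute for closed orientable surface groups). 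The payoff of your choice is that the resulting quotients $Q_n = \pi_1(M)/\gamma_n(\Gamma)$ are \emph{polycyclic} (an extension of a finitely generated torsion-free nilpotent group by $\IZ$), a finitely generated and structurally cleaner class than the infinitely generated solvable quotients arising in the paper. Two small points. First, your argument that a torsion-free polycyclic group $H$ is locally indicable should read ``top quotient'' rather than ``bottom quotient'': the standard route is to observe that the abelianization of any nontrivial finitely generated subgroup must be infinite, since otherwise the derived series would consist of finite-index subgroups and terminate in the trivial group, forcing $H$ to be finite. Second, your flag about orientability of $F$ is well taken and is in fact an implicit assumption in the paper's argument as well: both the derived-series claim ``$\Gamma^{(n)}/\Gamma^{(n+1)}$ is free abelian'' and your appeal to Baumslag/Labute fail for non-orientable closed surface groups (whose abelianizations have $2$-torsion). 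For a fiber bundle $F \to M \to S^1$ with $M$ orientable the fiber is necessarily orientable, so in the admissible setting both arguments go through.
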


\begin{proof}[Sketch of proof]
 Let $N$ be a fibered 3-manifold. Then $\pi_1(N)\cong \IZ\ltimes_\varphi \Gamma$ where
 $\Gamma$ is a free group or a surface group and where $\varphi\colon \Gamma\to \Gamma$
 is an automorphism. The derived series of $\Gamma$ is defined by $\Gamma^{(0)}=\Gamma$
 and inductively by $\Gamma^{(n+1)}=[\Gamma^{(n)},\Gamma^{(n)}]$. Since $\Gamma$ is a
 free group or a surface group, each quotient $\Gamma^{(n)}/\Gamma^{(n+1)}$ is free
 abelian and $\bigcap_{n \ge 1} \Gamma^{(n)}=\{1\}$.

 The subgroups $\Gamma^{(n)}$ are characteristic subgroups of $\Gamma$, in particular
 they are preserved by $\varphi$. Thus $\varphi$ descends to an automorphism on
 $\Gamma/\Gamma^{(n)}$. It is now straightforward to see that the epimorphisms
 $\pi_1(N)=\IZ\ltimes \Gamma\to \IZ\ltimes \Gamma/\Gamma^{(n)}$, $n\in \IN$ form a
 cofinal nested sequence of epimorphisms onto locally indicable elementary amenable
 groups.
\end{proof}


 \typeout{---- Section 8: The $\phi$-$L^2$-Euler characteristic and the degree of higher order Alexander polynomials}

\section{The $(\mu,\phi)$-$L^2$-Euler characteristic and the degree of non-commutative Alexander polynomials}
\label{sec:The_(mu,phi)-L2-Euler_characteristic_and_the_degree_of_higher_order_Alexander_polynomials}


Let $M$ be an admissible $3$-manifold. Regard group homomorphisms $\mu \colon \pi_1(M)\to G$, 
$\nu \colon G \to H_1(M)_f$ and $\phi \colon H_1(M)_f \to \IZ$ such that $\nu
\circ \mu$ is the projection $\pi_1(M) \to H_1(M)_f$ and $G$ is torsion-free elementary amenable.
For simplicity we discuss only the
case, where $\phi$ is surjective. Let $\overline{M} \to M$ be the $G$-covering associated 
to $\mu \colon \pi_1(M) \to G$. Recall the following definition from
Harvey~\cite{Harvey(2005)} which extends ideas of Cochran~\cite{Cochran(2004)}.
(Actually they consider only certain solvable quotients $G$ of $\pi_1(M)$ coming from the
rational derived series, but their constructions apply directly to torsion-free elementary
amenable groups.) Let $T$ be the set of non-trivial elements in $\IZ G$. As recorded
already in Lemma~\ref{lem:elementary_amenable_and_ore}, the Ore localization $T^{-1}\IZ G$
is defined and is a skewfield. Define a natural number
\begin{eqnarray}
r_n(M;\mu) 
:= 
\dim_{T^{-1}\IZ G}\bigl(H_n\bigl(T^{-1}\IZ G\otimes_{\IZ G} C_*(\overline{M})\bigr)\bigr).
\label{r_n(M,mu)}
 \end{eqnarray}
Let $i \colon K \to G$ be the inclusion of the kernel of the composite $\phi \circ \nu \colon G \to \IZ$. 
If $T$ is the set of non-zero elements in $\IZ K$, we can
again consider its Ore localization $T^{-1}\IZ K$ which is a skew field. We obtain an isomorphism
for an appropriate automorphism $t$ of $T^{-1}\IZ K$, which comes from the conjugation automorphism
of $K$ associated to a lift of a generator of $\IZ$ to $G$, an isomorphism of skew-fields
\begin{equation}
 (T^{-1}\IZ K)_{t}[u^{\pm 1}] \xrightarrow{\cong} T^{-1}\IZ G.
\label{Sanchez}
\end{equation}
If $r_n(M;\mu)$ vanishes for all $n \ge 0$, then we can define natural numbers
\begin{equation}
\delta_n(M;\mu, \nu, \phi) 
:= 
\dim_{T^{-1}\IZ K}\bigl(H_1(T^{-1}\IZ G \otimes_{\IZ G} C_*(\overline{M}))\bigr).
\label{delta_n(phi)_surjective_tors}
\end{equation}
This construction and the invariants above turn out to be special cases of the constructions defined in this paper.
Namely, $K$ and $G$ satisfy the Atiyah Conjecture by 
Theorem~\ref{the:Status_of_the_Atiyah_Conjecture}~\eqref{the:Status_of_the_Atiyah_Conjecture:Linnell},
and Lemma~\ref{lem:elementary_amenable_and_ore} shows that we get identifications
$T^{-1}\IZ K = \cald(K)$ and $T^{-1}\IZ G = \cald(G)$ under which the isomorphism~\eqref{Sanchez}
corresponds to the isomorphism appearing in 
Theorem~\ref{the:Main_properties_of_cald(G)}~\eqref{the:Main_properties_of_cald(G):cald(K)_and_(cald(G)}.
Moreover $r_n(M;\mu)$ agrees with $b_n^{(2)}(\overline{M};\caln(G))$
by Theorem~\ref{the:Main_properties_of_cald(G)}~\eqref{the:Main_properties_of_cald(G):dim}.
Hence Theorem~\ref{the:Main_properties_of_cald(G)}~\eqref{the:Main_properties_of_cald(G):chain} 
and Lemma~\ref{lem:b_1(2)_is_zero_implies_L2-acyclic}~\eqref{lem:b_1(2)_is_zero_implies_L2-acyclic:b_1}
imply

\begin{theorem}[The $(\mu,\phi)$-$L^2$-Euler characteristic and $\delta_n(M,\mu,\nu,\phi)$]
\label{the:The_mu,phi_L2-Euler_characteristic_and_delta_n(phi)}
 Let $M$ be an admissible $3$-manifold. Consider group homomorphisms
 $\mu \colon \pi_1(M) \to G$, $\nu \colon G \to H_1(M)_f$ and $\phi \colon H_1(M)_f \to \IZ$
such that $\nu \circ \mu$ is the projection $\pi_1(M) \to H_1(M)_f$ and $\phi$ is surjective.

 Then $r_1(M;\mu)$ vanishes if and only if $(\mu,\phi \circ \nu)$ is an $L^2$-Atiyah pair, and in this case we get
 \[ \chi^{(2)}(M;\mu,\phi \circ \nu)~=~-\delta_1(M;\mu,\nu,\phi).\]
\end{theorem}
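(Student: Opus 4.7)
The plan is to translate the Cochran--Harvey invariants $r_n$ and $\delta$ into the framework of the skew fields $\cald(G)$ and $\cald(K)$ developed in Section~\ref{subsec:The_division_closure_cald(G)_of_FG_in_calu(G)}, and then to invoke Theorem~\ref{the:Main_properties_of_cald(G)} together with the vanishing criterion of Lemma~\ref{lem:b_1(2)_is_zero_implies_L2-acyclic}. This is the strategy announced in the paragraph preceding the theorem, so the work lies entirely in making the identifications precise.

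First I set up the dictionary. Since $G$ is torsion-free elementary amenable, so is its subgroup $K = \ker(\phi\circ\nu)$, and Lemma~\ref{lem:elementary_amenable_and_ore} identifies $T^{-1}\IZ G = \cald(G)$ and $T^{-1}\IZ K = \cald(K)$ as skew fields; moreover $G$ satisfies the Atiyah Conjecture by Theorem~\ref{the:Status_of_the_Atiyah_Conjecture}~\eqref{the:Status_of_the_Atiyah_Conjecture:Linnell}. The isomorphism~\eqref{Sanchez} then coincides with the canonical identification $T^{-1}\cald(K)_{t}[u^{\pm 1}] \cong \cald(G)$ provided by Theorem~\ref{the:Main_properties_of_cald(G)}~\eqref{the:Main_properties_of_cald(G):cald(K)_and_(cald(G)}. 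Applying Theorem~\ref{the:Main_properties_of_cald(G)}~\eqref{the:Main_properties_of_cald(G):dim} to the finitely generated free $\IZ G$-chain complex $C_*(\overline{M})$ gives $r_n(M;\mu) = b_n^{(2)}(\overline{M};\caln(G))$. Note that $\phi\circ\nu$ is surjective (because $\nu\circ\mu = \pr_M$ is surjective and $\phi$ is surjective), so $H_1(M)_f$ is infinite and hence so is $G$.

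With this dictionary the first equivalence is short. If $r_1(M;\mu) = 0$ then $b_1^{(2)}(\overline{M};\caln(G)) = 0$, and Lemma~\ref{lem:b_1(2)_is_zero_implies_L2-acyclic}~\eqref{lem:b_1(2)_is_zero_implies_L2-acyclic:b_1} upgrades this to vanishing of every $b_n^{(2)}(\overline{M};\caln(G))$; combined with the Atiyah property this is exactly Definition~\ref{def:L2-acyclic_Atiyah-pair}. The converse is contained in that definition. For the equality I would then proceed as follows. Using that $\phi\circ\nu$ is surjective, Lemma~\ref{lem:phi-twisted_as_ordinary_L2_Euler_characteristic} rewrites $\chi^{(2)}(M;\mu,\phi\circ\nu)$ as $\chi^{(2)}(i^*\overline{M};\caln(K))$. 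Theorem~\ref{the:Main_properties_of_cald(G)}~\eqref{the:Main_properties_of_cald(G):chain} applied to $C_*(\overline{M})$ then identifies each $b_n^{(2)}(i^*\overline{M};\caln(K))$ with $\dim_{\cald(K)}H_n(\cald(K)_{t}[u^{\pm 1}]\otimes_{\IZ G} C_*(\overline{M}))$, which is $\delta_n(M;\mu,\nu,\phi)$ under the dictionary. In the generic situation where $\im(\mu)\cap\ker(\phi\circ\nu)$ is non-trivial, Theorem~\ref{the:The_(mu,phi)-L2-Euler_characteristic_in_terms_of_the_first_homology}~\eqref{the:The_(mu,phi)-L2-Euler_characteristic_in_terms_of_the_first_homology:homology} further guarantees that only $n=1$ contributes to the alternating sum, yielding $\chi^{(2)}(M;\mu,\phi\circ\nu) = -\delta(M;\mu,\nu,\phi)$.

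The main obstacle I foresee is the degenerate case $\im(\mu)\cap\ker(\phi\circ\nu) = \{1\}$, in which the first-homology theorem does not apply directly: there $\phi\circ\nu$ embeds $\im(\mu)$ into $\IZ$, so $\overline{M}$ is essentially an infinite cyclic cover. In that case I would replace the appeal to Theorem~\ref{the:The_(mu,phi)-L2-Euler_characteristic_in_terms_of_the_first_homology} by Example~\ref{exa:infinite_cyclic_covering} together with Lemma~\ref{lem:reduction_to_surjective_mu}~\eqref{lem:reduction_to_surjective_mu:mu_circ_phi_is_not_trivial}, and verify the identity $\chi^{(2)}(M;\mu,\phi\circ\nu) = -\delta(M;\mu,\nu,\phi)$ by the direct homological computation carried out there.
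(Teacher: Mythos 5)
Your generic-case argument is correct and more honest than the paper's one-line citation. The paper only points to Theorem~\ref{the:Main_properties_of_cald(G)}~\eqref{the:Main_properties_of_cald(G):chain} and Lemma~\ref{lem:b_1(2)_is_zero_implies_L2-acyclic}~\eqref{lem:b_1(2)_is_zero_implies_L2-acyclic:b_1}, but these alone identify each $b_n^{(2)}(i^*\overline{M};\caln(K))$ with $\dim_{\cald(K)}H_n(\cald(K)_t[u^{\pm 1}]\otimes C_*(\overline{M}))$; they do not show that only $n=1$ contributes to the alternating sum. You are right that one must also invoke Theorem~\ref{the:The_(mu,phi)-L2-Euler_characteristic_in_terms_of_the_first_homology}~\eqref{the:The_(mu,phi)-L2-Euler_characteristic_in_terms_of_the_first_homology:homology}, and the dictionary between $T^{-1}\IZ K$, $T^{-1}\IZ G$ and $\cald(K)$, $\cald(G)$ via Lemma~\ref{lem:elementary_amenable_and_ore} and~\eqref{Sanchez} is set up correctly.

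The problem is your proposed patch for the degenerate case $\im(\mu)\cap\ker(\phi\circ\nu)=\{1\}$. After reducing to surjective $\mu$ via Lemma~\ref{lem:reduction_to_surjective_mu}, this case forces $\phi\circ\nu\colon G\to\IZ$ to be an isomorphism, so $K=\{1\}$ and $\cald(K)=\IQ$, $\cald(K)_t[u^{\pm 1}]=\IQ[\IZ]$. Then $\delta=\dim_\IQ H_1(\overline{M};\IQ)$, while Example~\ref{exa:infinite_cyclic_covering} (with $k=1$) gives $\chi^{(2)}(M;\mu,\phi\circ\nu)=\chi(\overline{M})=1-\dim_\IQ H_1(\overline{M};\IQ)$ when $\partial M\ne\emptyset$ (resp.\ $2-\dim_\IQ H_1(\overline{M};\IQ)$ when $\partial M=\emptyset$). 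So in this case the computation you invoke does not verify the asserted identity $\chi^{(2)}=-\delta$ but rather shows it fails by $1$ or $2$. This is not a defect in your generic-case reasoning; it points to the theorem statement implicitly assuming the non-degeneracy hypothesis of Theorem~\ref{the:The_(mu,phi)-L2-Euler_characteristic_in_terms_of_the_first_homology} (equivalently, that $\im(\mu)\not\cong\IZ$), just as Harvey's original degree invariant carries a separate normalization in the abelian ($n=0$) case. You should either add that hypothesis or record the correction term, rather than claim Example~\ref{exa:infinite_cyclic_covering} confirms the identity.
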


\begin{remark}\label{rem:extending_Harveys_invariant} Another way of interpreting
 Theorem~\ref{the:The_mu,phi_L2-Euler_characteristic_and_delta_n(phi)} is to say that our
 $L^2$-Euler characteristic invariant extends the original invariant due to
 Cochran, Harvey and the first author~\cite{Cochran(2004),Harvey(2005),Friedl(2007)} to other
 coverings, in particular to the universal covering or to a $G$-covering for residually
 torsion-free elementary amenable group $G$ of an admissible $3$-manifold.
\end{remark}

The following lemma might also be of independent interest.

\begin{lemma}\label{lem:G_can_be_arranged_to_be_torsion-free_elementary_amenable} Let
 $\alpha \colon \pi_1(M) \to \Gamma$ be an epimorphism onto a group that is virtually torsion-free abelian. Then
 there exists a factorization of $\alpha$ into group homomorphisms 
 $\pi \xrightarrow{\mu} G \xrightarrow{\nu} \Gamma$ such that $G$ is torsion-free elementary amenable.
\end{lemma}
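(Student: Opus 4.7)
Plan: First, by replacing $A$ with a finite-index characteristic subgroup if needed, I may assume $\Gamma$ sits in a short exact sequence
\[
1 \to A \to \Gamma \to F \to 1
\]
with $A \cong \IZ^r$ a finitely generated free abelian normal subgroup and $F$ a finite quotient. Set $\pi := \pi_1(M)$ and $\pi' := \alpha^{-1}(A)$, a finite-index normal subgroup of $\pi$. Since $A$ is torsion-free abelian, the restriction $\alpha|_{\pi'} \colon \pi' \to A$ factors through the maximal torsion-free abelian quotient $B$ of $\pi'$, which is a finitely generated free abelian group. The kernel of $\pi' \twoheadrightarrow B$ is characteristic in $\pi'$, hence normal in $\pi$, yielding a well-defined group $G_0 := \pi/\ker(\pi' \to B)$ fitting into an extension $1 \to B \to G_0 \to F \to 1$ through which $\alpha$ factors. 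Being an extension of a finitely generated free abelian group by a finite group, $G_0$ is virtually abelian, hence elementary amenable.

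If $G_0$ is torsion-free, I simply take $G := G_0$; in general, $G_0$ may still contain torsion, detected precisely by those non-trivial cyclic subgroups $C \leq F$ on which the extension class $\sigma_0 \in H^2(F;B)$ of $G_0$ restricts to zero. To remedy this, I fibre-product $G_0$ over $F$ with a torsion-free crystallographic (Bieberbach) extension of $F$. By the Auslander--Kuranishi theorem, for any finite group $F$ there is a torsion-free group $\widetilde F$ fitting in $1 \to \IZ^s \to \widetilde F \to F \to 1$ for some $s$, whose extension class $\tau \in H^2(F;\IZ^s)$ restricts non-trivially to every non-trivial cyclic subgroup of $F$. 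Form $G := G_0 \times_F \widetilde F$, which fits in $1 \to B \oplus \IZ^s \to G \to F \to 1$ with extension class $(\sigma_0,\tau)$. Since $\tau$ restricts non-trivially to every non-trivial cyclic subgroup of $F$, so does $(\sigma_0,\tau)$, and by the standard cohomological criterion for torsion-freeness of virtually abelian groups, $G$ is torsion-free. It is elementary amenable as an extension of a finitely generated torsion-free abelian group by a finite group, and the obvious projection $\nu \colon G \twoheadrightarrow G_0 \twoheadrightarrow \Gamma$ realises the required quotient map.

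The final step is to produce the homomorphism $\mu \colon \pi \to G$ refining the quotient map $\pi \twoheadrightarrow G_0$; this amounts to lifting the composite $\pi \to \Gamma \to F$ through $\widetilde F \twoheadrightarrow F$. The existence of this lift is controlled by the obstruction class $\alpha^{\ast}\tau \in H^2(\pi;\IZ^s)$, and arranging this obstruction to vanish is the main obstacle of the proof. It is achieved by enlarging $s$ and modifying $\tau$ within the set of classes with the required non-vanishing property: since $H^2(F;\IZ^s)$ grows with $s$ while the constraint of non-vanishing on each of the finitely many non-trivial cyclic subgroups of $F$ and the constraint of vanishing in $H^2(\pi;\IZ^s)$ are both cut out by finitely many linear conditions, a dimension-counting argument produces the desired $\tau$. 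Alternatively, and more robustly, one first replaces $M$ by a suitable finite cover $\widehat{M} \to M$ on which the pullback of $\tau$ becomes a coboundary --- made available by the residual finiteness of $\pi_1(M)$ for admissible $3$-manifolds --- and induces back to $M$ via an appropriate transfer.
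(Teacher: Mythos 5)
Your construction of $G_0$, the use of Bieberbach (Auslander--Kuranishi) extensions of $F$, and the observation that the fibre product $G = G_0\times_F\widetilde F$ is torsion-free once the extension class of $\widetilde F$ restricts nontrivially to every nontrivial cyclic subgroup of $F$ are all sound. However, the final step --- producing $\mu\colon\pi\to G$ --- has a genuine gap, and neither of the two arguments you offer for killing the lifting obstruction works. The ``dimension-counting'' idea fails because enlarging $s$ only produces direct sums of copies of the same linear problem; there is no reason the kernel of the pullback $H^2(F;\IZ^s)\to H^2(\pi;\IZ^s)$ should meet the locus of Bieberbach classes, and for a closed aspherical $3$-manifold $H^2(\pi;\IZ^s)$ is typically nonzero by Poincar\'e duality, so the pullback need not vanish. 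The ``finite cover + transfer'' alternative fails as well: passing to $\widehat M$ with $\pi_1(\widehat M)\subseteq\ker(\pi\to F)$ only yields a lift defined on $\pi_1(\widehat M)$, and the cohomological transfer is a wrong-way map on cohomology, not on groups --- it cannot ``induce back'' a homomorphism defined on all of $\pi_1(M)$.

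The paper sidesteps this entirely by invoking \cite[Theorem~4.3]{Friedl-Schreve-Tillmann(2015)}, which asserts that for any finite-index subgroup $H\subseteq\pi_1(M)$ of an admissible $3$-manifold there is a torsion-free elementary amenable quotient $\mu'\colon\pi_1(M)\to G'$ with $\ker\mu'\subseteq H$. Applying this with $H=\alpha^{-1}(F)$ and setting $G := \pi_1(M)/(\ker\mu'\cap\ker\alpha)$, one obtains a short exact sequence $1\to\ker\mu'/(\ker\mu'\cap\ker\alpha)\to G\to G'\to 1$ whose kernel embeds in the free abelian group $F$, so $G$ is torsion-free elementary amenable and the factorization of $\alpha$ is immediate. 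The real content of the lemma is this Friedl--Schreve--Tillmann input, a nontrivial theorem using specific properties of $3$-manifold groups, and your cohomological bookkeeping does not replace it.
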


\begin{proof}
Let $\alpha \colon \pi_1(M) \to \Gamma$ be an epimorphism onto a group $\Gamma$ that admits a finite index subgroup $F$ that is free abelian. After possibly going to the core of $F$ we can assume that $F$ is normal.

Since $\alpha^{-1}(F)$ is a finite-index subgroup of $\pi_1(M)$ we conclude from~\cite[Theorem~3.4]{Friedl-Schreve-Tillmann(2017)} that there is a
 torsion-free elementary amenable group $G'$ together with an epimorphism 
 $\mu' \colon \pi_1(M) \to G'$ such that $\ker(\mu')\subset \alpha^{-1}(F)$. Define the epimorphism $\mu \colon \pi_1(M) \to G$ to be the
 projection onto the quotient $G = \pi_1(M)/\bigl(\ker(\mu') \cap \ker(\alpha)\bigr)$. Obviously there is 
 an epimorphism $\nu \colon G \to \Gamma$ such that $\nu \circ \mu = \alpha$ since $\alpha$ is by 
 construction the projection from $\pi_1(M)$ to the quotient $\Gamma = \pi_1(M)/\ker(\alpha)$.
 It remains to show that $G$ is torsion-free elementary amenable. We have the obvious exact sequence
 \[
 1 \to \ker(\mu')/\bigl(\ker(\mu') \cap \ker(\alpha)\bigr) \to G \to G' \to 1.
 \]
 and obviously $\alpha$ defines an injection
 \[\ker(\mu')/\bigl(\ker(\mu') \cap \ker(\alpha)\bigr) 
\hookrightarrow F.
\]
Since $F$ and $G'$ are torsion-free elementary amenable, the same is true for $G$.
\end{proof}

Theorems~\ref{the:Equality_of_(mu,phi)-L2-Euler_characteristic_and_the_Thurston_norm} and~\ref{the:The_mu,phi_L2-Euler_characteristic_and_delta_n(phi)}
and Lemma~\ref{lem:G_can_be_arranged_to_be_torsion-free_elementary_amenable} imply
that the non-commutative Reidemeister torsions of~\cite{Friedl(2007)} detect the Thurston norm of most 3-manifolds.

\begin{corollary}\label{cor:existence_of_torsion-free_elementary_coverings_with_equality} 
Let $M$ be a $3$-manifold, which is admissible, see Definition~\ref{def:admissible_3-manifold},
is not a closed graph manifold and is not homeomorphic to $S^1 \times D^2$. 
Then there is a torsion-free elementary amenable group $G$
and a factorization $\pr_M \colon \pi_1(M) \xrightarrow{\mu} G \xrightarrow{\nu} H_1(M)_f$
of the canonical projection $\pr_M$ into epimorphisms
such that for any group homomorphism $\phi \colon H_1(M)_f \to \IZ$, 
the pair $(\mu, \phi \circ \nu)$ is an $L^2$-acyclic Atiyah-pair and we get
\[
 \delta_1(M;\mu,\nu,\phi)~=~-\chi^{(2)}(M;\mu,\phi \circ \nu)~=~x_M(\phi).
\]
\end{corollary}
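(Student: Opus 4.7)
The plan is to assemble three ingredients already established in the paper: the equality theorem (Theorem~\ref{the:Equality_of_(mu,phi)-L2-Euler_characteristic_and_the_Thurston_norm}), the factorization lemma (Lemma~\ref{lem:G_can_be_arranged_to_be_torsion-free_elementary_amenable}), and the identification of $-\chi^{(2)}$ with Harvey's $\delta$-invariant (Theorem~\ref{the:The_mu,phi_L2-Euler_characteristic_and_delta_n(phi)}). The only real task is to interlace these correctly so that the intermediate group is simultaneously (i) torsion-free elementary amenable and (ii) still factors through the virtually abelian group $\Gamma$ whose existence is asserted by the equality theorem.

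First, I would invoke Theorem~\ref{the:Equality_of_(mu,phi)-L2-Euler_characteristic_and_the_Thurston_norm} to produce the virtually abelian group $\Gamma$ together with the factorization
\[
\pr_M \colon \pi_1(M) \xrightarrow{\alpha} \Gamma \xrightarrow{\beta} H_1(M)_f
\]
of the canonical projection. Since $\Gamma$ is virtually abelian and in particular virtually torsion-free abelian, Lemma~\ref{lem:G_can_be_arranged_to_be_torsion-free_elementary_amenable} applies to $\alpha$ and yields a torsion-free elementary amenable group $G$ together with epimorphisms $\pi_1(M) \xrightarrow{\mu} G \xrightarrow{\nu'} \Gamma$ satisfying $\nu' \circ \mu = \alpha$. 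Setting $\nu := \beta \circ \nu' \colon G \to H_1(M)_f$ gives an epimorphism with $\nu \circ \mu = \pr_M$.

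By Theorem~\ref{the:Status_of_the_Atiyah_Conjecture}\,(2), any torsion-free elementary amenable group lies in the class $\calc$ and therefore satisfies the Atiyah Conjecture. Hence $G$ is a torsion-free group satisfying the Atiyah Conjecture that sits in a factorization of $\alpha$ through $\Gamma$, so Theorem~\ref{the:Equality_of_(mu,phi)-L2-Euler_characteristic_and_the_Thurston_norm} applies verbatim: for any homomorphism $\phi \colon H_1(M)_f \to \IZ$, the pair $(\mu, \phi \circ \beta \circ \nu') = (\mu, \phi \circ \nu)$ is an $L^2$-acyclic Atiyah pair and
\[
-\chi^{(2)}(M;\mu,\phi \circ \nu) \,=\, x_M(\phi).
\]

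Finally, to get the left-hand equality $\delta(M;\mu,\nu,\phi) = -\chi^{(2)}(M;\mu,\phi \circ \nu)$, I would invoke Theorem~\ref{the:The_mu,phi_L2-Euler_characteristic_and_delta_n(phi)}. Its hypotheses require precisely that $G$ be torsion-free elementary amenable, that $\nu \circ \mu$ be the canonical projection (which holds by construction), and that $r_1(M;\mu) = 0$; the last condition is equivalent to $(\mu,\phi \circ \nu)$ being an $L^2$-acyclic Atiyah pair, which we have just verified. For a surjective $\phi$ this directly gives $\chi^{(2)}(M;\mu,\phi \circ \nu) = -\delta(M;\mu,\nu,\phi)$; for non-surjective $\phi$ one reduces to the surjective case using Lemma~\ref{lem:reduction_to_surjective_mu} and the scaling property of both $\delta$ and $\chi^{(2)}$. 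There is no real obstacle in the argument; the only subtle point is to make sure the group $G$ furnished by Lemma~\ref{lem:G_can_be_arranged_to_be_torsion-free_elementary_amenable} still factors through $\Gamma$ so that the equality theorem is applicable, and this is built into the statement of that lemma.
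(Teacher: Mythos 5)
Your proposal is correct and matches the paper's (very terse) argument exactly: the paper derives the corollary by citing precisely Theorem~\ref{the:Equality_of_(mu,phi)-L2-Euler_characteristic_and_the_Thurston_norm}, Theorem~\ref{the:The_mu,phi_L2-Euler_characteristic_and_delta_n(phi)}, and Lemma~\ref{lem:G_can_be_arranged_to_be_torsion-free_elementary_amenable}, and your write-up merely spells out the interlacing that the paper leaves implicit. In particular, you correctly identify the one subtlety, namely that the torsion-free elementary amenable group supplied by Lemma~\ref{lem:G_can_be_arranged_to_be_torsion-free_elementary_amenable} must still factor through the virtually abelian group $\Gamma$ so that the equality theorem applies.
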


\begin{remark}
 The invariant $\delta_1(M;\mu,\nu, \phi)$ of~\cite{Friedl(2007)} is
 essentially the same as the Cochran-Harvey invariant~\cite{Cochran(2004),Harvey(2005)},
 except that Cochran--Harvey only study solvable quotients of $\pi_1(M)$. But as pointed
 out in~\cite[Example~2.3]{Cochran(2004)}, in general invariants coming from solvable
 quotients do not suffice to detect the knot genus respectively the Thurston norm. It is
 necessary to work with the extra flexibility given by torsion-free elementary amenable
 groups.
\end{remark}


 \typeout{---- Section 9: The degree of the $L^2$-torsion function}

\section{The degree of the $L^2$-torsion function}
\label{sec:The_degree_of_the_L2-torsion_function}

Extending earlier work in~\cite{Dubois-Friedl-Lueck(2014Alexander)}, in~\cite{Lueck(2015twisting)} the $\phi$-twisted $L^2$-torsion function has been
introduced and analyzed for $G$-coverings of compact connected manifolds in all
dimensions. In the sequel we will consider only $G$-coverings $\overline{M} \to M$ of
admissible $3$-manifolds $M$ for countable residually finite $G$. Then all the necessary
conditions such as $\det$-$L^2$-acyclicity for $\overline{M}$ and the $K$-theoretic
Farrell-Jones Conjecture for $\pi_1(M)$ are automatically satisfied and do not have to be
discussed anymore. One can assign to the $L^2$-torsion function by considering its asymptotic behavior at
infinity a real number called its degree and denoted by $\deg\bigl(\rho^{(2)}(M;\mu,\phi)\bigr)$.
If $G = \pi_1(M)$ and $\mu \,\,=\,\, \id_{\pi_1(M)}$, i.e., for the universal covering
$\widetilde{M}$, the equality
\[
\deg\bigl(\rho^{(2)}(M;\mu,\phi)\bigr) = x_M(\phi \circ \mu)
\]
was proved by the authors in~\cite[Theorem~0.1]{Friedl-Lueck(2015l2+Thurston)} and
independently by Liu~\cite{Liu(2015)}. Actually many more instances of $G$-coverings are
considered in~\cite[Theorem~5.1]{Friedl-Lueck(2015l2+Thurston)}, where this equality
holds.

We just mention without proof the following theorem.

\begin{theorem}[The $(\mu,\phi)$-$L^2$-Euler characteristic is a lower bound for the
 degree of the $L^2$-torsion function]
\label{the:The_(mu,phi)-L2-Euler_characteristic_is_an_lower_bound_for_the_degree_of_the_L2-torsion_function}
 Let $M$ be an admissible $3$-manifold. Let $\mu \colon \pi \to G$ be a
 homomorphism to a torsion-free, elementary amenable, residually finite, countable group $G$ and
 $\phi \colon G \to \IZ$ be a group homomorphism. Let $\overline{M} \to M$ be the $G$-covering
 associated to $\mu$. Suppose $b_1^{(2)}(\overline{M};\caln(G)) = 0$.
 Then $(\mu,\phi)$ is an $L^2$-acyclic Atiyah pair and we get
 \[
- \chi^{(2)}(M;\mu,\phi) \,\le\, \deg\bigl({\rho}^{(2)}(M;\mu,\phi)\bigr).
 \]
 \end{theorem}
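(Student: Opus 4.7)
The plan is to first establish the $L^2$-acyclic Atiyah pair property, and then to analyze both $\chi^{(2)}(M;\mu,\phi)$ and $\deg(\overline{\rho}^{(2)}(M;\mu,\phi))$ through a common Fox-calculus presentation, reducing the inequality to a comparison of polytope seminorms.

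\textbf{Step 1 ($L^2$-acyclic Atiyah pair).} Since $G$ is torsion-free elementary amenable, $G$ satisfies the Atiyah Conjecture by Theorem~\ref{the:Status_of_the_Atiyah_Conjecture}~\eqref{the:Status_of_the_Atiyah_Conjecture:Linnell}. Because $b_1^{(2)}(\overline{M};\caln(G))=0$ is given and $G$ must be infinite (otherwise $b_0^{(2)}(\overline M;\caln(G)) = 1/|G| \ne 0$ would force $b_1^{(2)} \ne 0$ as $\chi(M)=0$), Lemma~\ref{lem:b_1(2)_is_zero_implies_L2-acyclic}~\eqref{lem:b_1(2)_is_zero_implies_L2-acyclic:b_1} promotes the vanishing of $b_1^{(2)}$ to all degrees. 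Thus $(\mu,\phi)$ is an $L^2$-acyclic Atiyah pair, and Theorem~\ref{the:Atiyah_and_(mu,phi)-L2-Euler_characteristic} makes $\chi^{(2)}(M;\mu,\phi)$ a well-defined integer.

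\textbf{Step 2 (Fox-calculus decomposition).} Using Lemma~\ref{lem:reduction_to_surjective_mu}, I would first reduce to the case where $\mu$ is surjective and $\phi$ is surjective (or $\phi\circ\mu$ trivial, which is immediate). I then invoke Theorem~\ref{the:calculation_of_(mu,phi)_L2-Euler_characteristic_from_a_presentation} to model $C_*(\overline{M})$ by an explicit finitely generated free $\IZ G$-chain complex whose key differential is a matrix $A$ over $\IZ G$ obtained from the Fox derivative matrix, with additional boundary differentials $r_{\mu(x_i)-1}$ and $r_{\mu(x_j')-1}$. As in the proof of that theorem, this complex fits into short exact sequences with quotients $D(\mu(x_j'))_*$, $Q_*$, and $D(\mu(x_i))_*$. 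Both $\chi^{(2)}(M;\mu,\phi)$ and $\deg(\overline{\rho}^{(2)}(M;\mu,\phi))$ split additively over these sequences: for $\chi^{(2)}$ this is built into Theorem~\ref{the:calculation_of_(mu,phi)_L2-Euler_characteristic_from_a_presentation}, and for the $L^2$-torsion degree it follows from its additivity under short exact sequences of $\det$-$L^2$-acyclic chain complexes, as established in~\cite{Lueck(2015twisting)}.

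\textbf{Step 3 (Polytope comparison).} On each piece, Theorem~\ref{the:calculation_of_(mu,phi)_L2-Euler_characteristic_from_a_presentation} gives
\[
-\chi^{(2)}(M;\mu,\phi) \;=\; \dim_{\cald(K)}\!\bigl(\coker(r_A)\bigr) \;-\; |\phi\circ\mu(x_i)|\;-\;|\phi\circ\mu(x_j')|,
\]
while the corresponding torsion-function degrees are $|\phi\circ\mu(x_i)|$ and $|\phi\circ\mu(x_j')|$ for $D(\mu(x_i))_*$, $D(\mu(x_j'))_*$ respectively, and $\dim_{\cald(K)}\!\bigl(\coker(r_A)\bigr)$ for the middle piece $Q_*$ by Lemma~\ref{matrices_overcald(K_phi_circ_nu)_t(u,u(-1))} (which identifies $\frac{1}{2}\dim_{\cald(K)}(\coker r_A)$ with the polytope seminorm $\|\phi\|_{P(\det_{\cald(G)} A)}$). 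Since the torsion-function degree of a chain complex is at least the sum of the nonnegative contributions from its composition factors (signs from consecutive differentials cancel only partially and the remainder is nonnegative under polytope subadditivity, i.e., via Lemma~\ref{lem:P(uv)_is_P(u)_plus_P(v)}), one obtains
\[
\deg\!\bigl(\overline{\rho}^{(2)}(M;\mu,\phi)\bigr) \;\ge\; \dim_{\cald(K)}\!\bigl(\coker(r_A)\bigr) + |\phi\circ\mu(x_i)| + |\phi\circ\mu(x_j')|,
\]
which is exactly $-\chi^{(2)}(M;\mu,\phi) + 2\bigl(|\phi\circ\mu(x_i)| + |\phi\circ\mu(x_j')|\bigr)$, and the right-hand side is $\ge -\chi^{(2)}(M;\mu,\phi)$, yielding $\chi^{(2)}(M;\mu,\phi) \le \deg(\overline{\rho}^{(2)}(M;\mu,\phi))$.

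\textbf{Main obstacle.} The principal technical difficulty is reconciling the algebraic polytope/seminorm language developed in Section~\ref{subsec:The_polytope_homomorphism} with the analytic degree of the $L^2$-torsion function as defined in~\cite{Lueck(2015twisting)}: one needs a precise identification of $\deg(\overline{\rho}^{(2)})$ with the seminorm of the Dieudonné-determinant Newton polytope for chain complexes over $\cald(G)$, together with careful tracking of signs between successive differentials in the filtration. A secondary issue is handling the non-surjective and trivial cases of $\phi\circ\mu$ in a unified way, which is managed via Lemma~\ref{lem:reduction_to_surjective_mu} but needs to be carried through both invariants in parallel.
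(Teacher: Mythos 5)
The paper states this theorem explicitly ``without proof'' (see the line immediately preceding it in Section~\ref{sec:The_degree_of_the_L2-torsion_function}), so there is no paper argument to compare against; I can only assess your proposal on its own terms.

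Step~1 is essentially fine. The real problem is the sign reasoning in Step~3. The $L^2$-torsion function is \emph{additive} over short exact sequences of $\det$-$L^2$-acyclic chain complexes, but --- exactly as for $\chi^{(2)}$ --- the contribution of a two-term complex concentrated in degrees $n,n+1$ carries a factor $(-1)^n$. There is no sense in which the degree is ``at least the sum of the nonnegative contributions from its composition factors.'' If your claimed inequality
\[
\deg\bigl(\overline{\rho}^{(2)}(M;\mu,\phi)\bigr)\;\ge\;\dim_{\cald(K)}\bigl(\coker(r_A)\bigr)+|\phi\circ\mu(x_i)|+|\phi\circ\mu(x_j')|
\]
were correct, it would contradict the facts the paper cites a few lines above the statement. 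Indeed, by Corollary~\ref{cor:existence_of_torsion-free_elementary_coverings_with_equality} there are torsion-free elementary amenable $G$ and factorizations where $-\chi^{(2)}(M;\mu,\phi\circ\nu)=x_M(\phi)$, and for such $(\mu,\phi)$ the equality $\deg\bigl(\overline{\rho}^{(2)}\bigr)=x_M(\phi\circ\mu)$ also holds by~\cite[Theorem~5.1]{Friedl-Lueck(2015l2+Thurston)}. Combined with Theorem~\ref{the:calculation_of_(mu,phi)_L2-Euler_characteristic_from_a_presentation}, which gives $-\chi^{(2)}=\dim_{\cald(K)}(\coker r_A)-|\phi\circ\mu(x_i)|-|\phi\circ\mu(x_j')|$, your inequality would force $|\phi\circ\mu(x_i)|+|\phi\circ\mu(x_j')|\le 0$, which is easily violated (one is free to pick a generator $x_i$ with $|\mu(x_i)|=\infty$ and $\phi\circ\mu(x_i)\neq 0$).

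The deeper gap your outline skirts around is precisely the ``main obstacle'' you name: relating the analytic degree (defined via limits/limsups of Fuglede--Kadison determinants) to the algebraic quantity $\dim_{\cald(K)}(\coker r_A)$. For torsion-free elementary amenable $G$ these are a priori different invariants; one needs a quantitative comparison between the polytope width of $\det_{\cald(G)}(A)$ and the asymptotic slopes of $t\mapsto\det^{(2)}_{\caln(G)}(A(t))$. That comparison is the mathematical content of the inequality, and it does not follow from the polytope additivity Lemma~\ref{lem:P(uv)_is_P(u)_plus_P(v)} plus bookkeeping. Until that comparison is in place --- and with the correct alternating signs on each composition factor --- the argument does not close.
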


\typeout{-------------------------------------- References ---------------------r------------------}



\end{document}